\newtheorem{theorem}{Theorem}[section]
\newtheorem{proposition}[theorem]{Proposition}
\newtheorem{corollary}[theorem]{Corollary}
\newtheorem{lemma}[theorem]{Lemma}
\newtheorem{definition}[theorem]{Definition}
\newtheorem{remark}[theorem]{Remark}
\numberwithin{equation}{section}
\title[Decay estimates for higher order elliptic operators ]
{Decay estimates for higher order elliptic operators}
\author{Hongliang Feng, \ Avy Soffer, \ Zhao Wu \ and Xiaohua Yao}
\address {Hongliang Feng, School of Mathematics, Sun Yat-sen University, Guangzhou, 510275, P.R. China}
\email{fenghongliang@aliyun.com}
\address{Avy Soffer, School of Mathematics and Statistics, Central China
Normal University, Wuhan, 430079, P.R. China\\
On leave from Rutgers University}
\email{soffer@math.rutgers.edu}
\address{Zhao Wu, School of Mathematics and Statistics, Central China Normal University, Wuhan, 430079, P.R. China}
\email{wuzhao218@yahoo.com}
\address{Xiaohua Yao, Department of Mathematics and  Hubei Province Key Laboratory of Mathematical Physics, Central China Normal University, Wuhan, 430079, P.R. China}
\email{yaoxiaohua@mail.ccnu.edu.cn}
\date{\today}
\keywords{Higher-order Schr\"odinger type operator, lower energy asymptotic expansion, zero-resonance, Kato-Jensen decay estimates, Strichartz estimates, absence of positive eiganvalue}
\begin{document}

	\begin{abstract}\baselineskip=13pt
		This paper is mainly devoted to study time decay estimates of the higher-order Schr\"{o}dinger type operator $H=(-\Delta)^{m}+V(x)$ in $\mathbf{R}^{n}$ for $n>2m$ and $m\in\mathbf{N}$. For certain decay potentials $V(x)$, we first derive the  asymptotic expansions of  resolvent $R_{V}(z)$ near zero threshold with the presence of zero resonance or zero eigenvalue, as well identify the resonance space for each kind of zero resonance which displays different effects on time decay rate.  Then we establish   Kato-Jensen type estimates and local decay estimates for higher order Schr\"odinger propagator $e^{-itH}$  in  the presence of zero resonance or zero eigenvalue. As a consequence,  the endpoint Strichartz estimate and $L^{p}$-decay estimates can also be obtained.  Finally, by a virial argument, a criterion on the absence of positive embedding eigenvalues is given for  $(-\Delta)^{m}+V(x)$ with a repulsive potential.
	\end{abstract}
		\maketitle

	\baselineskip=15pt
	\section{Introduction}
	
	\subsection{Backgrounds and problems}
	
	Consider the higher-order Schr\"odinger type operators in $\mathbf{R}^{n}$:
	$$H=(-\Delta)^{m}+V(x),\,\, H_{0}=(-\Delta)^{m},$$
where $n>2m$, $m\in\mathbf{N}$ and $V(x)$ is a real-valued function  satisfying $|V(x)|\lesssim (1+|x|)^{-\beta}$ for some $\beta>0$.
It was well-known that the higher-order elliptic operator $P(D)+V$ has been extensively studied as general Hamiltonian operator by many people in different contexts. For instance, one can see Schechter \cite{Schechter} for spectral theory, Kuroda \cite{Kur}, Agmon \cite {Agmon}, H\"ormander \cite{H2} for scattering theory, Davies \cite{Davies}, Davies and Hinz \cite{DaHi}, Deng et al \cite{DDY} for semigroup theory,  and  as well \cite{Herbst-Skibsted-Adv-2015, Herbst-Skibsted-Adv-2017, BS, Mourre, SYY} for many other interesting studies.

 In this paper, we are interested in establishing some dispersive estimates for the higher-order Schr\"{o}dinger type operator $H=(-\Delta)^{m}+V$  with $m\ge 2$, among which, including {\it Kato-Jensen type estimates, local decay estimates, Strichartz estimate and $L^{p}$-decay estimates}. We also show that the presence of zero resonance or eigenvalue of $H$ will affect the time decay rate of $e^{-itH}$. For classical Schr\"odinger operator $-\Delta+V$ (i.e. $m=1$), recall that  in the last thirty years, dispersive estimates of Schr\"odinger operator have been one of the key topics, which were applied broadly to  nonlinear Schr\"odinger equations, see e.g. \cite{Cazenave, TT2, JSS, KeelTao, Schlag, Simon-Review-1, Simon-Review-2} and  references therein.

In the sequel, the basic estimate is the Kato-Jensen type estimate for $H=(-\Delta)^m+V$.  The key idea is motivated by Kato and Jensen \cite{JK}. We first the splitting trick
\begin{equation*}
R_{0}(z)=\big[(-\Delta)^{m}-z\big]^{-1}=\frac{1}{mz}\sum_{\ell=0}^{m-1}z_\ell\big(-\Delta-z_\ell\big)^{-1},\ \ \ z_\ell=z^{\frac{1}{m}}e^{i\frac{2\ell\pi}{m}}
\end{equation*}
to obtain  informations of $R_{0}(z)$ by the known results of Laplacian, and then use them to study the three types behaviors of resolvent $R_{V}(z)=\big((-\Delta)^{m}+V-z\big)^{-1}$: {\it Resolvent asymptotic expansions at zero energy,  Higher energy resolvent estimates and Limiting absorption principle of $R_{V}(z)$}.  Among these studies, the spectral analysis of the operators $(-\Delta)^{m}+V$ is an indispensable part. In particular, the classifications of zero energy (eigenvalue/resonance) play distinguished roles in Kato-Jensen decay estimates. Local decay estimate can be established as a corollary of the uniformly estimate for resolvent $R_{V}(z)=\big((-\Delta)^{m}+V-z\big)^{-1}$  on the weighted $L^2$ spaces $B(L^2_{s}, L^{2}_{-s})$ with suitable $s>0$. Finally, with the help of Kato-Jensen type estimate, we also establish the endpoint Strichartz estimates, which are very useful to nonlinear dispersive problems.

To further understand our results in this paper, let us first review the famous work \cite{JK} for Schr\"odinger operator $-\Delta+V$ in $\mathbf{R}^3$, where Jensen and Kato established the time asymptotic expansion:
	\begin{equation}\label{asmpexp}
	e^{-it(-\Delta+V)}=\sum_{j=0}^{N}e^{-it\lambda_{j}}P_{j}+t^{-1/2}C_{-1}+t^{-3/2}C_{0}+\cdots
	\end{equation}
	as $t\rightarrow\infty$  on the polynomially weighted $L^2$ spaces $B\big(L^{2}_{s}, L^{2}_{-s'}\big )$ with suitable $s, s'>0$. Here the $\lambda_{j}$ are negative eigenvalues of $-\Delta+V$ with associated eigen-projection $P_{j}$.  In fact, Rauch in \cite{Rauch-CMP-1978} had already obtained the expansion for $-\Delta+V$  on the exponentially weighted $L^2$ spaces.  The spectral property at zero threshold of $-\Delta+V$ affects the leading term of the asymptotic expansion.  In \cite{JK}, they pointed out that $C_{-1}=0$  if $0$ is a regular point, and the operator $C_{-1}$ does not vanish  if $0$ is purely a resonance of $-\Delta+V$. Here, resonance means that there exists a solution to $(-\Delta+V)\psi=0$ in the distributional sense with $\psi\in L^{2}_{-s}\setminus L^{2}$ for some $s>0$.
	In particular, the following four cases were discussed: {\it zero is a regular point; zero is purely a resonance; zero is purely an eigenvalue; zero is both resonance and eigenvalue}. Zero is a  regular point of $-\Delta+V$ means zero is neither an eigenvalue nor a resonance.

To obtain \eqref{asmpexp}, their fundamental idea in \cite{JK} is to deduce the lower energy expansion and higher energy decay estimate of resolvent $R(-\Delta+V; z)$.
The lower energy expansion means the asymptotic expansion of perturbed resolvent $R(-\Delta+V; z)$ as $z$ near zero in $B\big(L^{2}_{s}, L^{2}_{-s'}\big )$. Higher energy decay estimate means the decay estimate of $R(-\Delta+V; z)$ as $z\rightarrow \infty$ in $B\big(L^{2}_{s}, L^{2}_{-s'}\big )$.
As a consequence, the following pointwise decay estimates can be obtained:
	\begin{equation}\label{Ka-Jen}
	\Big\|(1+|x|)^{-\sigma} e^{-it(-\Delta+V)}P_{ac}(-\Delta+V)	(1+|x|)^{-\sigma}\Big\|_{L^2(\mathbf{R}^3)\rightarrow L^2(\mathbf{R}^3)}\lesssim (1+|t|)^{-3/2}
	\end{equation}
provided that zero is a regular point of $-\Delta+V$.   The formula \eqref{asmpexp} also shows that, in the presence of zero resonance or eigenvalue, the time decay rate of Kato-Jensen decay estimates is slower than \eqref{Ka-Jen} in the regular case. Indeed,  if zero is purely a resonance of $-\Delta+V$, then the time decay rate is $(1+|t|)^{-1/2}$ by \eqref{asmpexp}.   For the similar results of $n=4$ and $n\geq 5$, see Jensen's works \cite{J1} and \cite{J} respectively. Also see \cite{JN} for $n=1$ and $n= 2$.

Soon afterward, Murata in \cite{MM}  generalized Kato and Jensen's  work \cite{JK} to a certain class of $P(D)+V$. In \cite{MM}, Murata required that $P(D)$ satisfies
	\begin{equation}\label{nondegenerate}
	\big(\nabla P\big)(\xi_0)=0, \ \ \ \  \det\big[\partial_{i}\partial_{j}P(\xi)\big]\Big|_{\xi_0}\neq0.
	\end{equation}
	However, the polyharmonic operators $H_0=(-\Delta)^m$ do not satisfy the  nondegenerate condition \eqref{nondegenerate} at zero but the case $m=1$.  For the fourth order Schr\"odinger operator $(-\Delta)^2+V$, i.e. the case $m=2$,  the first two authors and the last author in \cite{FSY} established the Kato-Jensen decay estimates with the time decay rate is $(1+|t|)^{-n/4}$ for $n\geq5$ and $(1+|t|)^{-5/4}$ for $n=3$ in the regular case. Recently, the authors in \cite{FWY} further studied Kato-Jensen decay estimates in non-regular case for $d\ge5$.

Therefore, in the following, we mainly deal with the higher order Schr\"odinger operator $(-\Delta)^m+V$ for all $m\geq3$. For instance, when $3m-1\leq n\leq 4m$ and  $n$ is odd , we have established the complete asymptotic formulas even in the presence of zero resonance or eigenvalue as $t\rightarrow \infty$ (see Subsection \ref{main result} below):
	\begin{equation}\label{Higher order}
	e^{-itH}P_{ac}(H)=\begin{cases}
	|t|^{-n/2m}\bar{B}+o( |t|^{-n/2m}),\,\,\,  & 0\,\, \text{ is a regular point};\\
	|t|^{-(2-\frac{n}{2m}-\frac{j-1}{m})}\bar{B}_{j}+o\big(|t|^{-(2-\frac{n}{2m}-\frac{j-1}{m})}\big),\,\,\,  & 0\,\, \text{ is the} \ $j-$\text{th kind resonance};\\
	|t|^{-1/2m}\bar{B}_{k+1}+o( |t|^{-1/2m}),\,\,\,  & 0\,\, \text{ is an eigenvalue},
	\end{cases}
	\end{equation}
where $\bar{B},~ \bar{B}_{j},~ \bar{B}_{k+1} \in B(s, -s')$ for suitable $s, s'>0$ and $\bar{B}_{j},~ \bar{B}_{k+1}$ are finite rank.

In order to prove the estimate \eqref{Higher order},  we will use the following spectral formula:
$$e^{-itH}P_{ac}(H)=\int_{0}^{\infty}e^{-it\lambda}E'(\lambda) d\lambda$$
where $E(\lambda)$ is the spectral projection of higher order operator $H=(-\Delta)^m+V$.  The key step is to get the asymptotic expansion of spectral density $E'(\lambda)$ at zero.  Note that by Stone's formula, we have (formally)
 $$E'(\lambda)=\frac{1}{2\pi i}\big(R_{V}(\lambda+i0)-R_{V}(\lambda-i0)\big),$$
which will make us to study the resolvent asymptotic expansion of $R_V(z)=\big((-\Delta)^m+V-z\big)^{-1}$ at zero by the following symmetric identity:
	\begin{equation*}
	R_{V}(z)=R_{0}(z)-R_{0}(z)v\big(U+vR_{0}(z)v\big)^{-1}vR_{0}(z).
	\end{equation*}
	 Here $v(x)=|V(x)|^{1/2}$ and $U={\rm sign}\big(V(x)\big)$.

Among these processes above,   the difficult point is how to establish the asymptotic expansion of  the inverse operator $\big(U+vR_{0}(z)v\big)^{-1}$ with the presence of zero resonance or eigenvalue.
Due to the degenerate of $H_{0}=(-\Delta)^m$ at zero, the classifications of zero resonance for $H=(-\Delta)^m+V$ are more complex than  $-\Delta+V$ with spatial dimension $n\geq3$. We need to iterate a few more steps than Schr\"odinger operator. The additional iterative steps lead us to split the resonance space into several subspaces. Heuristically, we need to split the whole resonance space into serval subspaces depending on the process of  the inverse expansion of $U+vR_{0}(z)v$ as $z$ approaches zero. {\it In this paper, we will devote the last Section\ref{proof} to give the proof on the zero asymptotic expansion and zero resonance classifications, which is long and complex.}  Recall that,  for $-\Delta+V$ in 3 and 4 dimensional  do not need to split the resonance space, see \cite{JK, J} respectively. However, for $-\Delta+V$ in 2-dimensional, Jensen and Nenciu in \cite{JN} need to  split the resonance space.

Besides zero resonance or eigenvalue,  in the decay estimates \eqref{Higher order}, we also assume that  $H=(-\Delta)^{m}+V$ does not exist any positive eigenvalue embedded into the absolutely continuous spectrum.
For Schr\"odinger operator $-\Delta+V$, Kato in the famous work \cite{K} first showed that $-\Delta+V$ has no positive eigenvalues if the potential $V(x)$ decays fast enough at infinity (e.g. $o(|x|^{-1})$). The result was  further generalized by Agmon \cite{Agmon1}, Simon \cite{Simon3}, Froese, Herbst, M. Hoffmann and T. Hoffmann \cite{FHHH} et al.
In particular, Ionescu and Jerison in \cite{IJ}  showed  such  criterion on absence of positive eigenvalues of  Schr\"{o}dinger operator with integrable potentials $V\in L^{n/2}(\mathbf{R}^{n})$.  Koch and Tataru \cite{KoTa} proved the same result for  $V\in L^{(n+1)/2}(\mathbf{R}^{n})$, where $(n+1)/2$ is the highest possible integrable exponent due to the counterexample in \cite{IJ}.

For higher order operator $P(D)+V$, the situations are much more complicate than the second order operator. Since there exit some examples even with compactly supported smooth potentials such that the positive eigenvalues appear, so it would be interesting and useful to establish some effective criterion for $P(D)+V$ on absence of positive embedded eigenvalue.  In the sequel, a simple criterion can be given on absence of positive eigenvalues for the higher order operators by a virial argument, which works for repulsive potentials. Besides, we also notice that for a general selfadjoint operator $\mathcal{H}$ on $L^{2}(\mathbf{R}^n)$, even if $\mathcal{H}$ has a simple embedded eigenvalue $\lambda_{0}$, Costin and Soffer in \cite{CoSo} have proved that $\mathcal{H}+\epsilon W$ can kick off the eigenvalue in a small interval around $\lambda_{0}$ under certain small perturbation of potential.

	\subsection{Main results}\label{main result}
	In the subsection, we  will state our main results. To the end, some notations are listed as follows:
For $a\in\mathbf{R}$, we write $a\pm$ to mean $a\pm\epsilon$ for any small $\epsilon>0$. $[a]$ denotes the largest integer less than or equal to $a$.  For $s, s'\in\mathbf{R}$, $B(s, s')$ denote the space of the bounded operators from $L^{2}_{s}(\mathbf{R}^n)$ to $L^{2}_{s'}(\mathbf{R}^n)$,  where $L^{2}_{s}(\mathbf{R}^n)$ is the weighted $L^2$ space:
	$$L^{2}_{s}(\mathbf{R}^n)=\Big\{f\in \mathscr{S}^{\prime}(\mathbf{R}^n): (1+|\cdot|)^{s}f\in L^{2}(\mathbf{R}^n)\Big\}.$$

Let us first give Kato-Jensen estimates for $H=(-\Delta)^m+V(x)$  assuming that zero is neither a resonance nor an eigenvalue.
\begin{theorem}\label{Kato-Jensen decay regular}
Let $n>2m$ and $H=(-\Delta)^m+V(x)$ with $|V(x)|\lesssim (1+|x|)^{-\beta}$ for some $\beta>n$.  Assume that $H$ has no positive embedded eigenvalue and zero is a regular point. $P_{ac}(H)$ denotes the projection onto the absolutely continuous spectrum space of $H$. Then for any $s, s'>n/2$, we have
\begin{equation*}
  \big\|e^{-itH}P_{ac}(H)\big\|_{B(s,-s')}\lesssim (1+|t|)^{-\frac{n}{2m}}, \,\,  t\in\mathbf{R}.
\end{equation*}
\end{theorem}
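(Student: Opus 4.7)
The plan is a standard Jensen--Kato low/high energy splitting applied to Stone's formula for the self-adjoint operator $H$. Since $H$ has no positive embedded eigenvalue and, in the regular case, no zero eigenvalue or resonance, the limiting absorption principle (that the paper establishes en route) gives absence of singular continuous spectrum, and
\begin{equation*}
e^{-itH}P_{ac}(H) \;=\; \frac{1}{2\pi i}\int_{0}^{\infty} e^{-it\lambda}\bigl[R_{V}(\lambda+i0) - R_{V}(\lambda-i0)\bigr]\,d\lambda
\end{equation*}
holds as a Bochner integral in $B(s,-s')$. Fix a cut-off $\chi\in C_{c}^{\infty}(\mathbf{R})$ equal to $1$ near $0$ and decompose the integral into a low-energy piece $I_{\mathrm{low}}(t)$ (weight $\chi$) and a high-energy piece $I_{\mathrm{high}}(t)$ (weight $1-\chi$). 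For $|t|\le 1$ the estimate is trivial from $\|e^{-itH}\|_{L^2\to L^2}=1$, so only $|t|\gg 1$ matters.

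For $I_{\mathrm{low}}$, the decisive input is the zero-energy resolvent expansion established in the last section: in the regular case $\bigl(U+vR_{0}(z)v\bigr)^{-1}$ extends continuously to $z=0$ as a bounded operator, and by the symmetric identity
\begin{equation*}
R_{V}(z) \;=\; R_{0}(z) - R_{0}(z)\,v\bigl(U+vR_{0}(z)v\bigr)^{-1}v\,R_{0}(z),
\end{equation*}
the low-energy behaviour of $R_{V}(z)$ is inherited from that of $R_{0}(z)$. Using the author's splitting $R_{0}(z)=(mz)^{-1}\sum_{\ell}z_{\ell}(-\Delta-z_{\ell})^{-1}$ together with the classical expansion of the Laplacian resolvent at zero, one obtains, in $B(s,-s')$ for $s,s'>n/2$,
\begin{equation*}
R_{V}(\lambda+i0)-R_{V}(\lambda-i0) \;=\; \lambda^{\frac{n}{2m}-1}\, T(\lambda), \qquad 0<\lambda\ll 1,
\end{equation*}
where $T(\lambda)$ is a sufficiently smooth operator-valued function (with possible $\log\lambda$ factors in the even-$n$ case, handled separately). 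A change of variable $\mu=\lambda^{1/(2m)}$ followed by repeated integration by parts (equivalently, the Erd\'elyi oscillatory integral lemma for endpoint singularities) yields
\begin{equation*}
\bigl\| I_{\mathrm{low}}(t)\bigr\|_{B(s,-s')} \;\lesssim\; |t|^{-n/(2m)}.
\end{equation*}

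For $I_{\mathrm{high}}$ one invokes the high-energy resolvent estimates for $R_{V}(z)$ that follow from the splitting formula and the classical LAP for $-\Delta$: the derivatives $\partial_{\lambda}^{k}R_{V}(\lambda\pm i0)$ are bounded in $B(s,-s')$ by integrable functions on $[1,\infty)$ for all $k\le N$, provided $s,s'$ are large enough (the assumption $s,s'>n/2$ is more than enough, given $\beta>n$). Integrating by parts $N$ times in $\lambda$ then produces $\|I_{\mathrm{high}}(t)\|_{B(s,-s')}\lesssim |t|^{-N}$ for arbitrary $N$, which is dwarfed by the low-energy bound. Summing the two estimates gives the conclusion.

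The principal obstacle is the low-energy analysis, which rests entirely on the zero-energy asymptotic expansion of $R_{V}$ proved in the paper's final section. In the regular case no resonance subspace has to be peeled off, yet because $H_{0}=(-\Delta)^{m}$ is degenerate at zero for $m\ge 2$ (failing Murata's nondegeneracy \eqref{nondegenerate}), even formulating the expansion requires the authors' iterated Feshbach/Grushin-type inversion and a careful bookkeeping of powers of $z^{1/m}$ (and of logarithmic factors when $n$ is even). Once that expansion is in hand, the oscillatory integral estimate for $I_{\mathrm{low}}$ and the integration-by-parts bound for $I_{\mathrm{high}}$ are direct adaptations of the Jensen--Kato--Murata scheme to the higher-order degenerate setting.
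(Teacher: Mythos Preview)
Your proposal is correct and matches the paper's approach: Stone's formula, a low/high cutoff, the regular-case resolvent expansion feeding an oscillatory-integral asymptotic (the paper quotes Stein's formula $\int_0^\infty e^{i\lambda x}f(x)x^{\tau}\,dx\sim\sum_j\theta_j\lambda^{-j-1-\tau}$) for the low-energy piece, and the LAP-based bounds $\|R_V^{(k)}(\lambda\pm i0)\|_{B(s,-s')}=O(|\lambda|^{-(2m-1)(k+1)/(2m)})$ together with integration by parts for the high-energy piece. One small correction to your closing commentary: in the regular case no Feshbach/Grushin iteration is needed at all---$T_0=U+vG_0v$ is invertible by hypothesis and a direct Neumann series already gives the expansion; the iterated inversion is invoked only in the resonance and eigenvalue cases.
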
	
	
It was quite known that the presence of zero resonance or zero eigenvalue affect the time decay rate of Kato-Jensen decay estimates. Recall that Schr\"odinger operator $-\Delta+V$ does not exist resonance at zero for $n>4$.	Similarly, if $n>4m$ and $m\ge2$,  then $H=(-\Delta)^m+V$ do not exist zero resonance (see Remark \ref{no resonance} below), and if $n\leq 4m$, then $H=(-\Delta)^m+V$ may have zero resonances. However, since the polyhamonic operators $(-\Delta)^m$ are degenerate at zero for $m\ge 2$,
	the resonance space for $H=(-\Delta)^m+V$ is more complex than Schr\"odinger operator $-\Delta+V$.

In the sequel, we will define resonance space of $(-\Delta)^m+V$  for $n\le 4m$ and then give different resonance types, which lead to different decay rates of time. For $\sigma\in\mathbf{R}$,  let $W_{\sigma}(\mathbf{R}^n)$ denotes the intersection  space
	$$ W_{\sigma}(\mathbf{R}^n)= \bigcap_{s>\sigma} L^{2}_{-s}(\mathbf{R}^n).$$
	Note that, $W_{\sigma_1}(\mathbf{R}^n)\supset W_{\sigma_2}(\mathbf{R}^n)$ if $\sigma_1>\sigma_2$.  Especially, $W_{0}(\mathbf{R}^n)\supset L^{2}(\mathbf{R}^n)$.

		\begin{definition}\label{resdef}
		For $n\le 4m$ and $H=(-\Delta)^m+V$ with $|V(x)|\lesssim(1+|x|)^{-\beta}$ for some $\beta>0$. If there exists some  $\psi(x)\in W_{2m-\frac{n}{2}}(\mathbf{R}^{n})\setminus L^2(\mathbf{R}^{n}) $ such that $H\psi(x)=0$ in the distributional sense,  then we say zero is  a resonance  of $H$. If $H\psi(x)=0$ for some nonzero $\psi\in L^2(\mathbf{R}^{n})$,  then we say zero is  an eigenvalue  of $H$.
			\end{definition}
		
We can further define the $k$ different types of resonance, where $k$ is a positive integer by
\begin{equation}\label{type number}
		k=k(n,m)=\begin{cases}
			\frac{4m-n+1}{2},\ \ &\ \ n\le4m\ \ n\text{ odd}; \\
			\frac{4m-n+2}{2},\ \ &\ \ n\le4m\ \ n\text{ even}.
		\end{cases}
\end{equation}
For $l\in \mathbf{N}$, if  $$\psi(x)\in W_{2m-\frac{n}{2}-(l-1)}(\mathbf{R}^{n})\setminus W_{2m-\frac{n}{2}-l}(\mathbf{R}^{n}), \ 1\leq l\leq k-1, $$ satisfies $H\psi(x)=0$ in the distributional sense,  then we say zero is {\it the $l$-th kind of resonance }  of $H$. If $\psi(x)\in W_{1/2}(\mathbf{R}^{n})\setminus L^2(\mathbf{R}^{n})$ ($n$ odd) or $\psi(x)\in W_{0}(\mathbf{R}^{n})\setminus L^2(\mathbf{R}^{n})$ ($n$ even), then we say zero is {\it the $k$-th kind of resonance} of $H$.

\begin{remark}
	Note that $H\psi=0$ in the distributional sense if and only if $ (1+R_{0}(0)V)\psi=0$ with $R_{0}(0)=(-\Delta)^{-m}$. Denote
	$$\mathfrak{M}_s=\Big\{\psi\in L^{2}_{-s}: \big(1+R_{0}(0)V\big)\psi=0\Big\},\,\, \mathfrak{N}_s=\Big\{\psi\in L^{2}_{s}: \big(1+VR_{0}(0)\big)\psi=0\Big\}.$$
Since $\mathfrak{M}_s$ and $\mathfrak{N}_s$ are independent of $s\in (2m-\frac{n}{2},\,\, \beta+\frac{n}{2}-2m)$ due to the compactness of $R_0(0)V$, the zero resonance function must belong to the intersection space $W_{\sigma}(\mathbf{R}^n)=\cap_{s>\sigma} L^{2}_{-s}(\mathbf{R}^n)$ for  $\sigma=2m-n/2$.  Recall that, for Schr\"odinger operator $-\Delta+V$, $\sigma=1/2$ when $n=3$ and $\sigma=0$ when $n=4$. Also, $-\Delta+V$ has only one resonance type (i.e. $k=1$) for $n=3, 4$. When $n\geq5$, $-\Delta+V$ does not exist resonance at zero.  See  \cite{JK, J, J1}.
\end{remark}

\begin{remark}\label{no resonance}
	For $n>4m$, $H=(-\Delta)^m+V$ does not exist resonance  at zero.  Indeed, let $\psi\in L^{2}_{-s}(\mathbf{R}^n)$ with some $s>0$ satisfying $H\psi=0$.  Since the Riesz potential $R_{0}(0)=(-\Delta)^{-m}$ is bounded from $L^2_{s}(\mathbf{R}^n)$ to $L^2_{-s'}(\mathbf{R}^n)$ with $s+s'\geq 2m$ and $s, s'\geq0$, then $R_{0}(0)$ will pull $V\psi\in L^{2}_{\beta-s}(\mathbf{R}^n)$ into $L^{2}(\mathbf{R}^n)$ if $\beta-s\geq2m$ by the identity $\psi=-R_{0}(0)V\psi$.  Hence $\psi\in L^{2}(\mathbf{R}^n)$ and  $H=(-\Delta)^m+V$ does not exist zero resonance if $n>4m$.
\end{remark}


Now, we state the Kato-Jensen decay estimates of $H=(-\Delta)^m+V$ with the presence of zero resonance or eigenvalue.
	\begin{theorem}\label{Kato-Jensen decay}
 Let $n>2m$ and $H=(-\Delta)^m+V(x)$ with $|V(x)|\lesssim (1+|x|)^{-\beta}$ for some $\beta>0$.  Assume that $H$ has no positive embedded eigenvalue. $P_{ac}(H)$ denotes the projection onto the absolutely continuous spectrum space of $H$.
	
	(I) For $n>4m$.  Let  $\beta>n+4$ and $s, s'>\frac{n}{2}+2$. If zero is an eigenvalue of $H$,  then
		\begin{equation*}\label{kato-4m-eigenvalue}
		\big\|e^{-itH}P_{ac}(H)\big\|_{B(s,-s')}\lesssim (1+|t|)^{2-\frac{n}{2m}},\ \ \ t\in\mathbf{R}.
		\end{equation*}

	(II) For $n\le 4m$ and $n$ is odd. Let $\beta>n+4k$, $s, s'>\frac{n}{2}+2k$,  where $2k=4m-n+1$ and $1\le k \le [\frac{m}{2}]+1$. Then
	
	\begin{itemize}	
			
		\item If  zero is   the $j$-th kind of resonance of $H$ with $1\le j\le k$, then
		\begin{equation*}\label{kato-2m-odd-j}
		\big\|e^{-itH}P_{ac}(H)\big\|_{B(s, -s')}\lesssim (1+|t|)^{-\frac{4m+2-n-2j}{2m}},\ \ \ t\in\mathbf{R}.
		\end{equation*}
		
		\item If  zero is an eigenvalue of $H$, then
		\begin{equation*}\label{kato-2m-odd-k+1}
		\big\|e^{-itH}P_{ac}(H)\big\|_{B(s,-s')}\lesssim (1+|t|)^{-\frac{1}{2m}},\ \ \ t\in\mathbf{R}.
		\end{equation*}
	\end{itemize}
	
	(III) For $n\le 4m$ and  $n$ is even.   Let $\beta>n+4k+2$, $s, s'>\frac{n}{2}+2k+1$, where $2k=4m-n+2$ and $1\le k \le [\frac{m}{2}]+1$. Then
	\begin{itemize}
		
		\item If zero is  the $j$-th kind of resonance of $H$ and  $1\le j\le k-1$, then
		\begin{equation*}\label{kato-2m-even-j}
		\big\|e^{-itH}P_{ac}(H)\big\|_{B(s,-s')}\lesssim (1+|t|)^{-\frac{4m+2-n-2j}{m}},\ \ \ t\in\mathbf{R}.
		\end{equation*}

		\item If zero is  the $k$-th kind of resonance or an eigenvalue of $H$, then
		\begin{equation*}\label{kato-2m-even-k}
		\big\|e^{-itH}P_{ac}(H)\big\|_{B(s,-s')}\lesssim \big(1+\ln(1+|t|)\big)^{-1},\ \ \  t\in\mathbf{R}.
		\end{equation*}
	\end{itemize}
\end{theorem}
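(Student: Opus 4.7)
The plan is to start from the spectral representation
\begin{equation*}
e^{-itH}P_{ac}(H) = \int_0^{\infty} e^{-it\lambda}\,E'(\lambda)\,d\lambda, \qquad E'(\lambda) = \tfrac{1}{2\pi i}\bigl(R_V(\lambda+i0) - R_V(\lambda-i0)\bigr),
\end{equation*}
and split the integral by a smooth cutoff $\chi$ supported near $0$ into a low-energy part (near the threshold) and a high-energy part (bounded away from $0$). For the high-energy part I would use the limiting absorption principle for $H$ on $B(s,-s')$, which follows from the splitting trick $R_0(z) = \tfrac{1}{mz}\sum_{\ell=0}^{m-1} z_\ell(-\Delta-z_\ell)^{-1}$ plus Agmon/Kuroda-type bounds, together with the assumption of no positive embedded eigenvalues, to deduce that $R_V(\lambda\pm i0)$ is smooth on $(0,\infty)$ with decay at infinity. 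Integrating by parts in $\lambda$ as many times as $s,s'$ allow then yields arbitrarily fast decay of the high-energy piece, hence it is negligible compared to the rates claimed in (I)--(III).

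The heart of the proof is the low-energy part. Here one writes the symmetric resolvent identity
\begin{equation*}
R_V(z) = R_0(z) - R_0(z)\,v\,M(z)^{-1}\,v\,R_0(z), \qquad M(z) = U + vR_0(z)v,
\end{equation*}
and derives an asymptotic expansion of $R_V(z)$ as $z\to 0$ in $B(s,-s')$. The expansion of $R_0(z)$ comes from the splitting trick and the known expansion of $(-\Delta-\mu)^{-1}$ at $\mu=0$; it produces a series in fractional powers of $z^{1/m}$ (with logarithmic terms in even dimensions). Inverting $M(z)$ via the Jensen--Nenciu lemma reduces the problem to successively inverting finite-rank operators on the chain of resonance subspaces $\mathfrak{M}_s \supset \mathfrak{M}^{(2)} \supset \cdots \supset \mathfrak{M}^{(k)} \supset \mathfrak{M}^{(k+1)} = \ker(H)\cap L^2$, where $\mathfrak{M}^{(j)}/\mathfrak{M}^{(j+1)}$ carries precisely the $j$-th kind of resonance. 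Each time one is forced to pass to a deeper subspace, $M(z)^{-1}$ accumulates an extra singular factor $z^{-1/m}$ (odd $n$) or $z^{-1/m}\log z$-type factor (even $n$), so that the $j$-th kind resonance contributes a leading singularity of the correct order to $R_V(z)$, and an eigenvalue contributes a full $z^{-1}$.

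Inserting this expansion into Stone's formula and the spectral integral, one obtains a finite sum
\begin{equation*}
\chi(H)\,e^{-itH}P_{ac}(H) = \sum_{\alpha} B_{\alpha}\int_0^{\infty} e^{-it\lambda}\chi(\lambda)\,\lambda^{\alpha}(\log\lambda)^{\epsilon_\alpha}\,d\lambda \;+\; \text{remainder},
\end{equation*}
where the exponents $\alpha$ are read off from the spectral density expansion, $B_\alpha \in B(s,-s')$ (finite rank for those $\alpha$ coming from resonances or the eigenvalue), and the remainder is controllable by the assumed regularity $\beta>n+4k$ (resp.\ $n+4k+2$) of $V$ and the choice of $s, s'$. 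Standard oscillatory integral estimates give $\int_0^{\infty} e^{-it\lambda}\chi(\lambda)\lambda^{\alpha}\,d\lambda = O(|t|^{-(1+\alpha)})$ for $\alpha > -1$, and $O\bigl((1+\log|t|)^{-1}\bigr)$ in the borderline logarithmic case; matching $1+\alpha$ with $\tfrac{n}{2m}$ in the regular regime, $\tfrac{4m+2-n-2j}{2m}$ in the $j$-th kind resonance regime, etc., recovers exactly the rates in (I)--(III), including the logarithmic rate for the $k$-th kind resonance or eigenvalue in even dimensions.

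The main obstacle is producing the low-energy expansion of $M(z)^{-1}$ with the precise singular structure claimed, because the degeneracy of $(-\Delta)^m$ at $\xi=0$ for $m\ge 2$ requires up to $k(n,m)$ iterations of the Jensen--Nenciu lemma and a correspondingly fine decomposition of the resonance space, much more delicate than in the second-order case. The paper postpones this purely algebraic/functional-analytic computation to Section~\ref{proof}; once it is available, the oscillatory-integral translation above is essentially mechanical, so the remaining work in proving Theorem~\ref{Kato-Jensen decay} is the careful bookkeeping of which exponent $\alpha$ survives in the leading term in each of the cases (I), (II), (III).
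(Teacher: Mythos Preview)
Your proposal is correct and follows essentially the same route as the paper: the spectral representation is split by a cutoff into a high-energy piece (handled via the limiting-absorption/high-energy decay estimates of Section~\ref{high energy} and integration by parts, giving arbitrarily fast decay) and a low-energy piece (handled by plugging the threshold expansion of $R_V$ from Theorems~\ref{RV-4m}, \ref{RV-expansions-2m-odd}, \ref{RV-expansions-2m-even} into Stone's formula and reading off the leading power of $\lambda$ in $E'(\lambda)$, then applying the standard oscillatory-integral asymptotics, including the $O((\ln|t|)^{-1})$ case for the logarithmic terms in even dimensions). Your identification of the iterated Jensen--Nenciu inversion of $M(\mu)$ on the chain of resonance subspaces as the main technical obstacle is exactly right, and the paper likewise defers that computation to Section~\ref{proof}.
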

\vskip0.3cm
\begin{remark}
In the following Table 1, we list the known results of Kato-Jensen decay estimates for $H=(-\Delta)^m+V$. Moreover, if zero is a regular point of $H$, for any order $2\leq m\in\mathbf{N}$ and all dimension $n\geq1$, one can obtain the Kato-Jensen decay estimate for $e^{-itH}P_{ac}(H)$ by the similar process as in this paper. In the non-regular cases, i.e. zero is a resonance or an eigenvalue $H$, for $m\geq2$ and $n$ in the remainder range, the Kato-Jensen decay estimates are still open at present.
\end{remark}

\begin{table}[h]
\renewcommand\arraystretch{2}
\centering
\begin{tabular}{|p{2cm}<{\centering}|p{3cm}<{\centering}|p{7cm}<{\centering}|}
  \specialrule{0.03em}{0pt}{0pt}
  {\bf Order}      & {\bf Dimension}     & {\bf Results} \\
  \specialrule{0.03em}{0pt}{0pt}
  $m=1$      & $n\geq1$      & \makecell{ $n=1, 2$: \cite{JN}; $n=3$: \cite{JK};  \\ $n=4$: \cite{J1}; $n\geq5$: \cite{J}} \\
  \specialrule{0.03em}{0pt}{0pt}
  $m=2$      & $n\geq3$      & \makecell{$n=3, 4$: \cite{Erdogan-Green-Toprak, Green-Toprak};\\ $n\geq5$: \cite{FSY, FWY}} \\
  \specialrule{0.03em}{0pt}{0pt}
  $m$ odd    & $n\geq3m$   & \makecell{Regular case: Theorem \ref{Kato-Jensen decay regular}; \\ Non-regular cases: Theorem \ref{Kato-Jensen decay} } \\
  \specialrule{0.03em}{0pt}{0pt}
  $m$ even   & $n\geq3m-1$     & \makecell{Regular case: Theorem \ref{Kato-Jensen decay regular}; \\ Non-regular cases: Theorem \ref{Kato-Jensen decay} } \\
  \specialrule{0.03em}{0pt}{0pt}
\end{tabular}
\vspace{0.2cm}
\caption{Known results of Kato-Jensen decay estimates}
\end{table}


Next, we state the local decay estimates which is related to Kato's $\mathcal{H}$-smooth perturbation theory.  Recall that for a general selfadjoint operator $\mathcal{H}$, a closed operator $A$ is {\it $\mathcal{H}$-smooth }if and only if
	\begin{equation*}
	\sup_{z\notin\mathbf{R}; \phi\in\mathcal{D}(A^*), \|\phi\|=1}\big|(A^*\phi, {\rm Im}[R_{\mathcal{H}}(z)]A^*\phi)\big|<\infty,
	\end{equation*}
where $R_{\mathcal{H}}(z)=(\mathcal{H}-z)^{-1}$. There are many equivalent characterizations of $A$ is $\mathcal{H}$-smooth in \cite{RS2}. Moreover, if the above uniformly bound holds without taking the imaginary part, then $A$ is called to be {\it $\mathcal{H}$-supersmooth}.  Kato's $\mathcal{H}$-smooth perturbation theory is not only used in the spectral analysis, but also widely applied in the nonlinear dispersive problems. See e.g. \cite{RS2, DAnFane, DAncona, Simon-Review-1, Simon-Review-2} and references therein. 	As a direct consequence of the resolvent estimates, we will  prove that {\it $(1+|x|)^{-\sigma}P_{ac}(H)$ is $H$-supersmooth}.  Then it follows immediately that the following local decay estimate of $H=(-\Delta)^m+V$ holds ,  which can  be applied to establish the endpoint Strichartz estimates (see Section \ref{endpoint-strichartz} below).
\begin{theorem}\label{theo-local decay}
	For $n>2m$ and  $H=(-\Delta)^m+V$ with $|V(x)|\lesssim (1+|x|)^{-\beta}$ for some $\beta>n$. Assume zero is a regular point of $H$ and $H$ has no positive embedded eigenvalue. 	Then for $\phi\in L^{2}(\mathbf{R}^{n})$ and $\sigma>n/2$, we have
	\begin{equation}\label{local decay}
	\Big\| \langle x\rangle^{-\sigma}e^{-itH}P_{ac}(H)\phi\Big\|_{L^{2}_{t}L^{2}_{x}(\mathbf{R}^{n+1})}\lesssim\|\phi\|_{L^{2}(\mathbf{R}^{n})}.
	\end{equation}
where $\langle x\rangle=(1+|x|^2)^{1/2}$.
\end{theorem}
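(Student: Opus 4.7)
The plan is to deduce \eqref{local decay} from Kato's theory of $\mathcal{H}$-smooth perturbations. Indeed, the local decay estimate \eqref{local decay} is equivalent to saying that the operator $A := \langle x\rangle^{-\sigma} P_{ac}(H)$ is $H$-smooth on $L^{2}(\mathbf{R}^{n})$, and by the standard equivalent characterizations (see e.g.\ \cite{RS2}) it even suffices to establish the stronger $H$-supersmoothness bound
\begin{equation*}
\sup_{z \in \mathbf{C} \setminus \mathbf{R}} \big\|\langle x\rangle^{-\sigma} P_{ac}(H) R_V(z) P_{ac}(H) \langle x\rangle^{-\sigma}\big\|_{L^{2} \to L^{2}} < \infty.
\end{equation*}
Hence the entire task reduces to a uniform weighted resolvent estimate, and the projection $P_{ac}(H)$ is used to screen off the simple poles of $R_V(z)$ at the negative eigenvalues of $H$.

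To verify this uniform bound, I would split the spectral parameter $z$ into three regimes: $\{|z|\le\delta\}$ (low energy near the threshold), $\{\delta\le|z|\le M\}$ (intermediate compact regime), and $\{|z|\ge M\}$ (high energy). In the low-energy regime, the hypothesis that zero is a regular point of $H$ together with the asymptotic expansion of $R_V(z)$ near zero (developed in Section~\ref{proof}) yields a leading term that is bounded in $B(\sigma,-\sigma)$ for $\sigma>n/2$, with correction terms vanishing as $z\to 0$; the regularity of zero is precisely what prevents the singular $z^{-\alpha}$ factors from spoiling uniformity. In the intermediate regime, one applies the symmetric resolvent identity
\begin{equation*}
R_V(z) = R_0(z) - R_0(z)\, v \big(U + v R_0(z) v\big)^{-1} v R_0(z),
\end{equation*}
uses a Fredholm argument for $(U + v R_0(\lambda \pm i0) v)^{-1}$, and invokes the assumption that $H$ has no positive embedded eigenvalue to conclude that $\langle x\rangle^{-\sigma} R_V(z) \langle x\rangle^{-\sigma}$ extends continuously up to each half-plane boundary and is uniformly bounded on compact subsets of $(0,\infty)$ -- this is the limiting absorption principle announced in the introduction.

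For the high-energy regime, I would exploit the splitting trick
\begin{equation*}
R_0(z) = \frac{1}{mz}\sum_{\ell=0}^{m-1} z_\ell\, \big(-\Delta - z_\ell\big)^{-1}
\end{equation*}
together with classical weighted decay estimates for the Laplacian resolvent to prove that $\|\langle x\rangle^{-\sigma} R_0(z) \langle x\rangle^{-\sigma}\|_{L^{2}\to L^{2}}\to 0$ as $|z|\to\infty$; a Neumann series then transfers this decay to $R_V(z)$. Gluing the three regimes produces the uniform bound, and Kato's equivalence then delivers \eqref{local decay}. To my mind the main obstacle is the intermediate regime: promoting the pointwise limiting absorption principle to a uniform bound in $z$ requires both invertibility of $U + v R_0(\lambda \pm i0) v$ at every positive $\lambda$ (this is exactly where the exclusion of positive embedded eigenvalues is used) and a continuity/compactness argument, whereas the low- and high-energy regimes follow relatively directly from the expansion of Section~\ref{proof} and the splitting trick respectively.
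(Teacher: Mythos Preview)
Your proposal is correct and follows essentially the same approach as the paper: establish a uniform bound for $\langle x\rangle^{-\sigma}P_{ac}(H)R_V(z)\langle x\rangle^{-\sigma}$ by combining the low-energy asymptotic expansion (regular case), the limiting absorption principle, and the high-energy decay of $R_V(z)$, and then invoke Kato's $H$-smoothness criterion from \cite{RS2}. The paper's proof is terser---it only states the $|z|\to 0$ and $|z|\to\infty$ bounds and cites the Corollary on p.~146 of \cite{RS2}---but the intermediate regime you single out is exactly what the paper's Lemma~\ref{RV-LAP} and Theorem~\ref{RV-limiting absorption} cover, so the content is the same.
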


\begin{remark}
	We remark that the above estimate \eqref{local decay} also holds even if zero is an eigenvalue of $H$. Indeed,  $R_{V}(z)$ is the Laplace transform of  $e^{-itH}$, i.e.
	$$\Big|\langle x\rangle^{-\sigma}P_{ac}(H)R_{V}(z)\langle x\rangle^{-\sigma}\Big|=\Big|\int_{0}^{\infty}\langle x\rangle^{-\sigma}P_{ac}(H)e^{-itH}\langle x\rangle^{-\sigma}e^{itz}dt\Big|, \ \ {\rm Im} z>0.$$
The integral is uniformly bounded provided $n>6m$  by (I) of Theorem \ref{Kato-Jensen decay},  which implies estimate \eqref{local decay}.
\end{remark}

\begin{remark}
	Note that, we obtain the Kato-Jensen decay estimates and local decay estimates under the assumption that $H=(-\Delta)^m+V$ does not exists positive embedded eigenvalue. In fact, if $H$ exists one positive eigenvalue $\lambda_{0}>0$, one can also obtain the Kato-Jensen decay estimates and local decay estimates for $\bar{H}$ by the positive commutator methods and Mourre's theory. Where $\bar{H}=\bar{P}H\bar{P}$ and $\bar{P}=1-P_{\lambda_{0}}$. $P_{\lambda_{0}}$ is the orthogonal projection onto eigen-subspace related to $\lambda_{0}$. In \cite{LS}, Larenas and Soffer established the Kato-Jensen type decay estimates for general Hamiltonian $\mathcal{H}$ using the commutator methods. In \cite{GLS}, they applied the approach in \cite{LS} to Schr\"odinger operator. They start from the local decay estimate while the Mourre estimates implies this estimate, please see \cite{Mourre, ABG}.  In order to ensure Mourre's theory can be applied to $\bar{H}=\bar{P}H\bar{P}$, one need $\bar{H}\in C^{2}(\mathcal{A})$ where conjugator $\mathcal{A}=-\frac{i}{2}(x\cdot\nabla+\nabla\cdot x)$, see \cite{ABG, LS}. Note that with the projection $\bar{P}$, operator $\bar{H}=\bar{P}H\bar{P}$ only has continuous spectrum in the interval $(\lambda_{0}-\epsilon, \, \lambda_{0})\cup (\lambda_{0},\, \lambda_{0}+\epsilon)$. Hence, the Mourre estimate can hold for $\bar{H}$ in the neighborhood of $\lambda_{0}$. Moreover, the Mourre estimate implies the local decay estimate and the limiting absorption principle hold for $\bar{H}$ in the neighborhood of $\lambda_{0}$. Please see \cite{Mourre, MoWe, LS} and Amrein, Boutet~de Monvel and Georgescu's book \cite{ABG}.  Furthermore, we will discuss that $H=(-\Delta)^m+V$ does not exist positive embedded eigenvalue for certain class of potential in Section \ref{eigenvalue-problem} below.
\end{remark}

Note that the following  $L^{1}-L^{\infty}$ estimates of  free propagator $e^{-it(-\Delta)^m}$ always hold:
  \begin{equation*}
			\big\|e^{-it(-\Delta)^{m}}u\big\|_{L^{\infty}(\mathbf{R}^{n})}\lesssim |t|^{-\frac{n}{2m}}\|u\|_{L^{1}(\mathbf{R}^{n})}, \ \ t\neq0.
		\end{equation*}
Hence it would be a natural problem to establish  the $L^{1}-L^{\infty}$ estimates for higher order Schr\"odinger operators with some potentials. For $H=(-\Delta)^2+V$, in \cite{FSY}, the first two authors and the last author applied the Kato-Jensen decay estimates and local decay estimate to  obtain the $L^{1}(\mathbf{R}^3)- L^{\infty}(\mathbf{R}^3)$ estimate, which is not optimal. Green and Toprak  in \cite{Green-Toprak} further studied the $L^{1}- L^{\infty}$  estimate for $e^{-itH}$ in 4-dimension in the regular case and the cases of zero resonance. Recently, Erdo\u{g}an, Green and Toprak in \cite{Erdogan-Green-Toprak} give the 3-dimensional results.  For Schr\"odinger operators $-\Delta+V$, Journ\'e, Soffer and Sogge in \cite{JSS} first established the $L^{1}-L^{\infty}$ estimate in the regular case when $n\geq3$. For $n\le 3$, one can see Schlag and Goldberg \cite{Goldberg-Schlag}, Schlag \cite{Schlag-CMP}, Rodnianski and Schlag \cite{RodSchl} and so on.  Yajima \cite{Yajima-JMSJ-95} also proved the $L^{1}-L^{\infty}$  estimates for $-\Delta+V$ by wave operator method.
For many further studies, one can refer to \cite{ES1, ES2, Schlag, GV,  Goldberg-Green-1, Goldberg-Green-2} and references therein.

In the sequel, as a consequence of the Kato-Jensen decay estimates, we can establish the following $L^1\cap L^2- L^{\infty}+L^2$-decay estimate (Ginibre argument) in the presence of zero resonance or eigenvalue.

	\begin{theorem}\label{Ginibre-argument}
		For $H=(-\Delta)^m+V$ with $|V(x)|\lesssim (1+|x|)^{-\beta}$ for some $\beta>0$. Assume that $H$ has no positive embedded eigenvalue.
		
		(I) For $n>2m$, let $\beta>n$.  If zero is a regular point of $H$, then
		\begin{equation*}
		\big\|e^{-itH}P_{ac}(H)u\big\|_{L^2+L^\infty(\mathbf{R}^n)}\lesssim(1+|t|)^{-\frac{n}{2m}}\|u\|_{L^2\cap L^1(\mathbf{R}^n)},\ \ \ t\in\mathbf{R}.
		\end{equation*}
		
		(II) For $n>4m$, let $\beta>n+4$.  If zero is an eigenvalue of $H$, then
			\begin{equation*}
			\big\|e^{-itH}P_{ac}(H)u\big\|_{L^2+L^\infty(\mathbf{R}^n)}\lesssim(1+|t|)^{2-\frac{n}{2m}}\|u\|_{L^2\cap L^1(\mathbf{R}^n)},\ \ \ t\in\mathbf{R}.
			\end{equation*}
		
		(III) For $2m<n\leq 4m$ and $n$ is odd. Let $\beta>n+4k$, where $2k=4m-n+1$ and $1\le k\le [\frac{m}{2}]+1$.
		\begin{itemize}
					
			\item If zero is  the $j$-th kind  resonance of $H$ with $1\le j \le k-1$, then
			\begin{equation*}
			\big\|e^{-itH}P_{ac}(H)u\big\|_{L^2+L^\infty(\mathbf{R}^n)}\lesssim\big(1+|t|\big)^{-2+\frac{n}{2m}+\frac{j-1}{m}}\|u\|_{L^2\cap L^1(\mathbf{R}^n)},\ \ \ t\in\mathbf{R}.
			\end{equation*}
			
			\item If zero is the $k$-th  kind of resonance or an eigenvalue of $H$, then
			\begin{equation*}
			\big\|e^{-itH}P_{ac}(H)u\big\|_{L^2+L^\infty(\mathbf{R}^n)}\lesssim(1+|t|)^{-\frac{1}{2m}}\|u\|_{L^2\cap L^1(\mathbf{R}^n)},\ \ \ t\in\mathbf{R}.
			\end{equation*}
		\end{itemize}
		
		(IV) For $2m<n\leq 4m$ and $n$ is even. Let $\beta>n+4k+2$, where $2k=4m-n+2$ and $1\le k\le [\frac{m}{2}]+1$.
		\begin{itemize}
			\item If zero is the $j$-th kind of resonance of $H$ with $1\le j \le k-1$, then
			\begin{equation*}
			\big\|e^{-itH}P_{ac}(H)u\big\|_{L^2+L^\infty(\mathbf{R}^n)}\lesssim\big(1+|t|\big)^{-2+\frac{n}{2m}+\frac{j-1}{m}}\|u\|_{L^2\cap L^1(\mathbf{R}^n)},\ \ \ t\in\mathbf{R}.
			\end{equation*}
			
			\item If zero is the $k$-th  kind of resonance or an eigenvalue of $H$,  then
			\begin{equation*}
			\big\|e^{-itH}P_{ac}(H)u\big\|_{L^2+L^\infty(\mathbf{R}^n)}\lesssim(1+\ln |t|)^{-1}\|u\|_{L^2\cap L^1(\mathbf{R}^n)},\ \ \  t\in\mathbf{R}.
			\end{equation*}
	\end{itemize}
	\end{theorem}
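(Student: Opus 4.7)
The plan is to deduce the $L^1\cap L^2\to L^\infty+L^2$ estimate from the weighted Kato--Jensen bounds of Theorems \ref{Kato-Jensen decay regular}--\ref{Kato-Jensen decay} by a Ginibre-type Duhamel argument. The starting point is the Duhamel identity
\begin{equation*}
e^{-itH}P_{ac}(H)u = e^{-itH_0}P_{ac}(H)u - i\int_0^t e^{-i(t-s)H_0}\, V\, e^{-isH}P_{ac}(H)u\, ds,
\end{equation*}
combined with the standard free dispersive estimate $\|e^{-itH_0}u\|_{L^\infty}\lesssim |t|^{-n/(2m)}\|u\|_{L^1}$ and the rapid polynomial decay of $V$.

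First I would treat the free piece $e^{-itH_0}P_{ac}(H)u$ in $L^\infty$. Writing $P_{ac}(H) = I - P_p(H)$, with $P_p(H)$ the finite-rank projection onto the $L^2$-eigenspace of $H$, the free dispersive bound yields $\|e^{-itH_0}P_{ac}(H)u\|_{L^\infty}\lesssim |t|^{-n/(2m)}\|P_{ac}(H)u\|_{L^1}$, and $\|P_{ac}(H)u\|_{L^1}\lesssim \|u\|_{L^1\cap L^2}$ because the $H$-eigenfunctions decay polynomially (from Definition \ref{resdef} at the zero threshold) or exponentially (by Agmon-type estimates at negative eigenvalues), so they lie in $L^1\cap L^\infty$. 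For the Duhamel integral I would iterate Duhamel once, substituting $e^{-isH}P_{ac}(H)u = e^{-isH_0}P_{ac}(H)u - i\int_0^s e^{-i(s-\tau)H_0}Ve^{-i\tau H}P_{ac}(H)u\,d\tau$ inside. This produces a Born term $-i\int_0^t e^{-i(t-s)H_0}Ve^{-isH_0}P_{ac}(H)u\,ds$ and a second-order remainder carrying two factors of $V$. The point of the iteration is that the innermost appearance of $e^{-i\tau H}P_{ac}(H)$ is followed by multiplication by $V$, producing a function with built-in spatial decay to which the weighted Kato--Jensen estimate applies, delivering the decay rate $(1+|\tau|)^{-\alpha}$ of Theorem \ref{Kato-Jensen decay}. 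I would then split the time integrals at $s=t/2$ (and analogously in $\tau$): on the far half use the factor $|t-s|^{-n/(2m)}\sim |t|^{-n/(2m)}$ from the free dispersive estimate and measure the output in $L^\infty$; on the near half use unitarity of $e^{-i(t-s)H_0}$ on $L^2$ and measure the output in $L^2$.

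The main obstacle is that $u\in L^1\cap L^2$ carries no spatial decay, whereas the Kato--Jensen estimate requires weights $\langle x\rangle^{-s}$ on both input and output; the iterated Duhamel resolves this since the factor of $V$ at the innermost propagator stage supplies the missing input weight, using $\|\langle x\rangle^{s}V\|_{L^\infty}<\infty$ for $V$ decaying faster than $\langle x\rangle^{-s}$ (this is the role of the enlarged $\beta$ thresholds in each part of the statement). A secondary technical issue is the small-time regime where the dispersive factor $|s|^{-n/(2m)}$ degenerates; this is handled by reverting to the $L^2$ unitarity bound for $|s|\le 1$ and using the dispersive bound only for $|s|>1$, then matching at $|s|\sim 1$. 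The four cases of Theorem \ref{Ginibre-argument} then inherit their decay rates directly from the corresponding $(1+|t|)^{-\alpha}$ in Theorem \ref{Kato-Jensen decay regular} or Theorem \ref{Kato-Jensen decay}, and the critical logarithmic case in Part (IV) requires tracking the marginal convergence of the time integral at the critical rate, analogous to the two-dimensional Schr\"odinger analysis of \cite{JN}.
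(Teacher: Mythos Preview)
Your overall strategy---iterated Duhamel plus free dispersive estimate plus Kato--Jensen---is exactly the paper's, but the specific iteration you describe has a structural flaw that prevents you from invoking the Kato--Jensen bound. In your second-order remainder
\[
-\int_0^t\!\!\int_0^s e^{-i(t-s)H_0}\,V\,e^{-i(s-\tau)H_0}\,V\,e^{-i\tau H}P_{ac}(H)u\,d\tau\,ds,
\]
the full propagator $e^{-i\tau H}P_{ac}(H)$ acts directly on $u$. The factor of $V$ to its left supplies an \emph{output} weight, but there is nothing on the input side: you need $\langle x\rangle^{-\sigma}e^{-i\tau H}P_{ac}(H)\langle x\rangle^{-\sigma}$ bounded, and $u\in L^1\cap L^2$ does not factor as $\langle x\rangle^{-\sigma}\tilde u$ with $\tilde u\in L^2$. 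So the Kato--Jensen estimate of Theorem~\ref{Kato-Jensen decay} cannot be applied here, and your claim that ``the factor of $V$ at the innermost propagator stage supplies the missing input weight'' is incorrect---it supplies the output weight only.

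The fix (and what the paper does) is to iterate Duhamel in the \emph{opposite} direction the second time: after
\[
e^{-itH}P_{ac}(H)=e^{-itH_0}P_{ac}(H)-i\int_0^t e^{-i(t-s)H_0}V e^{-isH}P_{ac}(H)\,ds,
\]
expand the inner $e^{-isH}$ using the variant $e^{-isH}=e^{-isH_0}-i\int_0^s e^{-i(s-\tau)H}V e^{-i\tau H_0}\,d\tau$ (full propagator first, free propagator last). The remainder becomes
\[
-\int_0^t\!\!\int_0^s e^{-i(t-s)H_0}\,V\,e^{-i(s-\tau)H}P_{ac}(H)\,V\,e^{-i\tau H_0}u\,d\tau\,ds,
\]
with the full propagator now sandwiched between two factors of $V$: one supplies the input weight, the other the output weight, and the innermost $e^{-i\tau H_0}u$ is the free flow acting on $u$, handled by the $L^1\to L^\infty$ dispersive bound (or $L^2$ unitarity for small $\tau$). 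With this correction the rest of your outline---splitting small and large times, reading off the rate from Theorem~\ref{Kato-Jensen decay}, handling the logarithmic case in (IV)---goes through essentially as you describe; see the paper's Lemma~\ref{time decay} for the time-convolution bounds that close the argument.
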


Finaly,  we will give some  results on the  absence of positive embedded eigenvalue for $h(D)+V$, where $h\geq0$ is a homogeneous real-valued function of order $\varrho$.
	
	\begin{theorem}\label{absence of positive eigenvalue}
		Let  $H_{h}=h(D)+V$, where $h$ is a nonnegative real-valued function satisfying $h(\gamma\xi)=\gamma^{\varrho}h(\xi)$ with $\varrho>0$ and $\gamma>0$. $V(x)$ is a real-valued function on $\mathbf{R}^{n}$. Suppose that $V$ is $h(D)$-bounded with relative bound less than one. Then we have the following conclusions:
		\par (i) If $V(x)$ is repulsive, that is $V(\gamma x)\leq V(x)$ for all $\gamma>1$ and $x\in\mathbf{R}^{n}$, then $H_{h}$ has no bound states, i.e. the point spectrum $\sigma_{p}(H_{h})\cap\mathbf{R}=\emptyset$.
		\par (ii) If $V(x)$ is homogeneous of degree $-\nu$ with $0<\nu<\varrho$, that is $V(\gamma x)=\gamma^{-\nu}V(x)$, then $H_{h}$ has no nonnegative eigenvalues, i.e. the point spectrum $\sigma_{p}(H_{h})\cap[0, +\infty)=\emptyset$.
		\par (iii) If there exists a multiplication operator $\mathcal{V}$ on $L^{2}(\mathbf{R}^{n})$ with $\mathcal{D}(\mathcal{V})\supset \mathcal{D}\big(h(D)\big)$, such that for all $\phi\in \mathcal{D}\big(h(D)\big)$,
		\begin{equation}\label{V-limit}
			s-\lim_{\theta\rightarrow 1}(\theta-1)^{-1}\big(V(\theta\cdot)-V\big)\phi(x)=\mathcal{V}\phi(x),
		\end{equation}
		where $\mathcal{D}\big(h(D)\big)$ and $\mathcal{D}(\mathcal{V})$ are the self-adjoint domain of $h(D)$ and $\mathcal{V}$ respectively. Moreover, if  for some $a>0$, $V$ satisfies,
		\begin{equation}\label{V condition 3}
			h(D)-\frac{1}{\varrho}(1+a)\mathcal{V}-aV\geq0,
		\end{equation}
		then $H_{h}$ has no strictly positive eigenvalues, i.e. $\sigma_{p}(H_{h})\cap(0, +\infty)=\emptyset$.
	\end{theorem}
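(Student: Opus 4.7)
My plan is to exploit the scaling structure of $h(D)$ via the dilation group $U_{\theta}\phi(x):=\theta^{n/2}\phi(\theta x)$, which is unitary with generator (up to normalization) $\mathcal{A}=-\tfrac{i}{2}(x\cdot\nabla+\nabla\cdot x)$. A direct Fourier-side calculation using $h(\theta\xi)=\theta^{\varrho}h(\xi)$ gives
\[
U_{\theta}^{*}h(D)U_{\theta}=\theta^{\varrho}h(D),\qquad U_{\theta}^{*}V(x)U_{\theta}=V(x/\theta).
\]
For a normalized eigenfunction $\psi$ of $H_h$ with real eigenvalue $\lambda$, set
\[
g(\theta):=\langle U_{\theta}\psi,H_h U_{\theta}\psi\rangle=\theta^{\varrho}\langle h(D)\psi,\psi\rangle+\int_{\mathbf{R}^n}V(x/\theta)|\psi(x)|^{2}\,dx,
\]
so that $g(1)=\lambda$.

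Using $\|U_{\theta}\psi\|\equiv 1$, $(H_h-\lambda)\psi=0$, and the self-adjointness of $H_h$, I argue that $g$ is differentiable at $\theta=1$ with $g'(1)=0$ (virial identity). Computing $g'(1)$ from the explicit form of $g$ then yields
\begin{equation*}
\varrho\,\langle h(D)\psi,\psi\rangle+\frac{d}{d\theta}\bigg|_{\theta=1}\int_{\mathbf{R}^n} V(x/\theta)|\psi(x)|^{2}\,dx=0. \qquad (\ast)
\end{equation*}
For (i), the repulsive hypothesis is equivalent to $V(x/\theta)\geq V(x)$ for $\theta>1$ and $V(x/\theta)\leq V(x)$ for $\theta<1$, so $\theta\mapsto\int V(x/\theta)|\psi|^{2}dx$ is non-decreasing and its derivative at $\theta=1$ is non-negative; together with $\langle h(D)\psi,\psi\rangle\geq 0$, this forces both summands in $(\ast)$ to vanish. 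In particular $\int h(\xi)|\widehat\psi(\xi)|^{2}d\xi=0$, and the non-degeneracy of the continuous homogeneous function $h\geq 0$ on its support then forces $\widehat\psi=0$, i.e.\ $\psi=0$, a contradiction. For (ii), $V(x/\theta)=\theta^{\nu}V(x)$, so $(\ast)$ reduces to $\varrho\langle h(D)\rangle_{\psi}+\nu\langle V\rangle_{\psi}=0$; combined with $\lambda=\langle h(D)\rangle_{\psi}+\langle V\rangle_{\psi}$, this gives $\lambda=\tfrac{\nu-\varrho}{\nu}\langle h(D)\rangle_{\psi}\leq 0$, with equality only in the trivial case. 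For (iii), the change of variable $\eta=1/\theta$ converts hypothesis \eqref{V-limit} into $\tfrac{d}{d\theta}|_{\theta=1}V(\cdot/\theta)\phi=-\mathcal{V}\phi$ on $\mathcal{D}(h(D))$; hence $(\ast)$ reads $\varrho\langle h(D)\psi,\psi\rangle=\langle\mathcal{V}\psi,\psi\rangle$. Taking the expectation of \eqref{V condition 3} in $\psi$ and substituting this virial relation collapses the inequality to $-a\lambda\geq 0$, i.e.\ $\lambda\leq 0$.

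The main obstacle is the rigorous justification of the virial identity $g'(1)=0$ when $\psi$ need not lie in the domain of $\mathcal{A}$ and $V$ is only relatively $h(D)$-bounded. In (iii) the strong-limit hypothesis \eqref{V-limit} is tailored to supply exactly the regularity needed to pass to the limit in the difference quotient $(g(\theta)-g(1))/(\theta-1)$ on the dense subspace $\mathcal{D}(h(D))\ni\psi$; in (i) the monotonicity of $\theta\mapsto\int V(x/\theta)|\psi|^{2}$ together with the convexity of $\theta\mapsto\theta^{\varrho}$ produces one-sided inequalities that, once paired with the difference-quotient formulation, deliver the conclusion even without differentiability of $V$; case (ii) is completely explicit and avoids the issue.
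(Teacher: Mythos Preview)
Your approach is essentially the same as the paper's: both derive the virial identity $\varrho\langle h(D)\psi,\psi\rangle=\langle\mathcal{V}\psi,\psi\rangle$ via the dilation group $U_{\theta}$ and then read off (i)--(iii) exactly as you do. The only cosmetic difference is that the paper, instead of arguing abstractly that $g'(1)=0$, computes $\lambda(\theta^{\varrho}-1)(\psi_{\theta},\psi)$ two ways (from $H_{h}\psi=\lambda\psi$ and from its dilated version $(h(D)+\theta^{\varrho}V_{\theta})\psi_{\theta}=\theta^{\varrho}\lambda\psi_{\theta}$), cancels the $h(D)$ terms by self-adjointness, divides by $\theta-1$, and passes to the limit using precisely the hypothesis \eqref{V-limit}; this is the same difference-quotient manipulation you outline in your last paragraph, just written out directly.
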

	
	\begin{remark} The conclusion (i) can be applied to the potential $V(x)$ satisfying $x\cdot\nabla V(x)\le 0$.
		The conclusion (iii) can be applied to certain homogeneous potentials $V(x)$ of degree $-\varrho$. Indeed, if $V(x)$ does, then $\mathcal{V}=-\varrho V$. Thus,
		\begin{equation*}
			h(D)-\frac{1}{\varrho}(1+a)\mathcal{V}-aV=h(D)+V\geq0.
		\end{equation*}
		For instance, if consider the higher order polyhamonic operators $(-\Delta)^m-c|x|^{-2m}$ on $L^{2}(\mathbf{R}^{n})$, by Rellich's type inequality (see e.g. \cite{Davies-Hinz-MathZ}): if $1<p<\infty$ and $n>2mp$, then
 \begin{equation*}
 \big\||x|^{-2m}u(x)\big\|_{L^p(\mathbf{R}^n)}\leq c(m, p, n)\big\|\Delta^m\big\|_{L^{p}(\mathbf{R}^n)},
 \end{equation*}
 where $c(m, p, n)=p^{2m}\Big(\prod_{l=1}^{m}(n-2lp)\big(2(l-1)p+(p-1)n\big)\Big)^{-1}$.
 Thus  $-c|x|^{-2m}$  is the $(-\Delta)^m$-bounded with relative bound less than one if $n>4m$ and $0<c<C(m, 2, n)$. Hence Theorem \ref{absence of positive eigenvalue} (iii) can apply to the higher order operator $(-\Delta)^m-c|x|^{-2m}$.
	\end{remark}

	The paper is organized as follows. In Section \ref{free-resolvent}, we state the asymptotic expansions of $R_{V}(z)$ as $z\rightarrow0$ with presence of zero resonance or eigenvalue and the classification of resonance spaces. In Section \ref{high energy}, we show the higher energy decay estimates.   In Section \ref{Kato-Jensen-estimate},  we give the proof of local decay estimate and derive the asymptotic expansions in time for $e^{-itH}P_{ac}(H)$  with presence of zero resonance. As a consequence,  we get the  Kato-Jensen type decay estimate. In Section \ref{Lp} and Section \ref{endpoint-strichartz}, as applications of Kato-Jensen type decay estimate and local decay estimate, we prove the $L^{p}$-decay estimates and the endpoint Strichartz estimates for  $H=(-\Delta)^m+V$. In Section \ref{eigenvalue-problem}, we construct examples to show that $H_{h}=h(D)+V$ exists positive eigenvalue and give the proof of Theorem \ref{absence of positive eigenvalue}. In the last section, we show the iterated processes of how to derive the expansion of $R_{V}(z)$ and the proof of  classification of resonance spaces.

	\section{Resolvent asymptotic expansions at zero threshold}\label{free-resolvent}
	In this section, we derive the asymptotic expansions of  $R_{V}(z)$ as $z\rightarrow0$.
	
	\subsection{Free resolvent asymptotic expansions}
	For $z\in\mathbf{C}\setminus\mathbf{R}^{+}$, denote
	\begin{equation*}
		R_{0}(z)=(H_{0}-z)^{-1}=\big[(-\Delta)^m-z\big]^{-1}, \,\, R_{V}(z)=(H-z)^{-1}=\big[(-\Delta)^m+V-z\big]^{-1}.
	\end{equation*}
	For free resolvent $R_{0}(z)$ with $z\in\mathbf{C}\setminus\mathbf{R}^{+}$ and $0<\arg(z)<2\pi$ , we have the following decomposition identity
	\begin{equation}\label{free-resolvent-identity}
		R_{0}(z)=\big[(-\Delta)^{m}-z\big]^{-1}=\frac{1}{mz}\sum_{\ell=0}^{m-1}z_\ell\big(-\Delta-z_\ell\big)^{-1},\ \ \ z_\ell=z^{\frac{1}{m}}e^{i\frac{2\ell\pi}{m}}.
	\end{equation}
	Identity  \eqref{free-resolvent-identity} shows that the free resolvent $R_{0}(z)$ of $(-\Delta)^m$ is a linear combination of the free resolvent of Laplacian. Thus we can obtain the asymptotic expansions of $R_0(z)$ around zero from the asymptotic expansions of  the free resolvent of Laplacian. Note that $z_{\ell}$ satisfy ${\rm Im}\, z_{\ell}^{1/2}>0$ for $0\leq \ell\leq m-1$ with chosen $0<\arg(z)<2\pi$. Recall  the  asymptotic expansions of  the free resolvent of Laplacian $R(-\Delta; \zeta):=(-\Delta-\zeta)^{-1}$ as follows, see \cite{JK, J, J1}.
	
	\begin{lemma}\label{laplacian}
		For $\zeta\in \mathbf{C}\setminus[0, \infty)$ and ${\rm Im}\, \zeta^{1/2}>0$.
		
		(i) If $n\ge3$ and $n$ is odd, then
		\begin{equation*}\label{lap odd}
			R(-\Delta; \zeta)=\sum_{j=0}^{\infty}(i\zeta^{1/2})^{j}G_{j}^{odd}
		\end{equation*}
		where the operators $G_j^{odd}$ are given by the following integral kernels:
		\begin{equation*}
			G_j^{odd}(x, y)=\frac{(-1)^{(n-3)/2}}{2(2\pi)^{(n-1)/2}}n_j|x-y|^{j+2-n}
		\end{equation*}
		with $\displaystyle n_j=\sum_{\ell=0, \ell\ge(n-3)/2-j}^{(n-3)/2}\frac{\big((n-3)/2+\ell\big)!}{\ell!\big((n-3)/2-\ell\big)!}\frac{(-2)^{-\ell}}{\big(\ell+j-(n-3)/2\big)!}$.
		
		(ii) If $n\ge 4$ and $n$ is even, then
		\begin{equation*}\label{lap even}
			R(-\Delta; \zeta)=\sum_{j=0}^{\infty}\sum_{\ell=0}^{1}\zeta^{j}(\ln \zeta)^{\ell}G_{j}^{\ell, e}
		\end{equation*}
		For $0\le j\le \frac{n}{2}-2,$  the operators $G_j^{\ell, e}$ are given by the following integral kernels:
		\begin{equation*}
			G_j^{0, e}(x, y)=\pi^{-\frac{n}{2}}\frac{(\frac{n}{2}-j-2)!}{j!}4^{-j-1}|x-y|^{2j+2-n};\,\,\, G_j^{1, e}(x, y)=0.
		\end{equation*}
		For $j\ge\frac{n}{2}-1,$  the operators $G_j^{k, e}$ are given by the following integral kernels:
		\begin{equation*}
			\begin{split}
				G_j^{0, e}(x, y) =&(4\pi)^{-n/2}[\varphi(j+1)+\varphi(j+2-n/2)]\frac{(-1/4)^{j+1-n/2}}{j!(n/2-1+j)!}|x-y|^{2j+2-n}\\
				&-2(4\pi)^{-n/2}(-1/4)^{j+1-n/2}\frac{\ln(|x-y|/2)}{j!(n/2-1+j)!}|x-y|^{2j+2-n}\\
				&+\frac{i}{4}(4\pi)^{-n/2+1}\frac{1}{j!(n/2-1+j)!}(-1/4)^{j+1-n/2}|x-y|^{2j+2-n}.
			\end{split}
		\end{equation*}
		\begin{equation*}
			G_j^{1, e}(x, y)=-(4\pi)^{-n/2}\frac{(-1/4)^{j+1-n/2}}{j!(n/2-1+j)!}|x-y|^{2j+2-n}.
		\end{equation*}
		where $\varphi(1)=-1,\ \varphi(\ell)=\sum\limits_{j=1}^{\ell-1}\frac{1}{j}-\kappa$ and $\kappa$ is the Euler's constant.
	\end{lemma}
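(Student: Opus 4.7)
The plan is to start from the explicit Green's function of $(-\Delta-\zeta)^{-1}$ on $\mathbf{R}^n$, expressed via a Hankel function of the first kind, and then unfold the known small-argument expansions of that Hankel function in the two parity regimes. Concretely, for $\kappa=\zeta^{1/2}$ with ${\rm Im}\,\kappa>0$, the integral kernel is
\begin{equation*}
R(-\Delta;\zeta)(x,y)=\frac{i}{4}\Bigl(\frac{\kappa}{2\pi|x-y|}\Bigr)^{\frac{n}{2}-1}H^{(1)}_{\frac{n}{2}-1}\bigl(\kappa|x-y|\bigr),
\end{equation*}
which is the starting point of the whole analysis; verifying this formula amounts to checking that $(-\Delta-\zeta)$ applied to it gives the Dirac mass, and that the ${\rm Im}\,\kappa>0$ branch selects the outgoing resolvent.

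For part (i), when $n$ is odd the index $\nu=n/2-1$ is a half-integer, so $H^{(1)}_\nu(w)$ reduces to the elementary function $e^{iw}$ times a polynomial of degree $(n-3)/2$ in $1/w$; explicitly
\begin{equation*}
H^{(1)}_{\frac{n}{2}-1}(w)=\Bigl(\frac{2}{\pi w}\Bigr)^{1/2} e^{i(w-\frac{n-2}{4}\pi)}\sum_{\ell=0}^{(n-3)/2}\frac{i^{\ell}\bigl((n-3)/2+\ell\bigr)!}{\ell!\bigl((n-3)/2-\ell\bigr)!}(2w)^{-\ell}.
\end{equation*}
Plugging $w=\kappa|x-y|$ into the kernel, expanding $e^{iw}=\sum_{p\ge 0}(iw)^{p}/p!$ and reindexing $j=p-\ell+(n-3)/2$, one obtains a power series in $i\kappa=i\zeta^{1/2}$ whose coefficients are exactly the kernels $G_j^{odd}(x,y)$ displayed in the lemma; the prefactor $(-1)^{(n-3)/2}/\bigl(2(2\pi)^{(n-1)/2}\bigr)$ comes from collecting the constants $i/4$, $(2/\pi)^{1/2}$ and $e^{-i(n-2)\pi/4}$. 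The lower-limit constraint $\ell\ge (n-3)/2-j$ in the definition of $n_j$ is precisely the condition $p\ge 0$.

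For part (ii), when $n$ is even the index $\nu$ is an integer and the expansion of $H^{(1)}_\nu(w)=J_\nu(w)+iY_\nu(w)$ near $w=0$ contains both a polynomial part from $J_\nu$ and a logarithmic part from $Y_\nu$; the standard Frobenius expansion gives
\begin{equation*}
J_\nu(w)=\sum_{k\ge 0}\frac{(-1)^k}{k!(k+\nu)!}\bigl(w/2\bigr)^{2k+\nu},
\end{equation*}
together with the corresponding series for $Y_\nu(w)$ involving $\ln(w/2)$ and the digamma-like function $\varphi$. Substituting $w=\zeta^{1/2}|x-y|$ absorbs every $\zeta^{1/2}$ into an even power $\zeta^{j}$ (since the fractional powers of $\kappa$ cancel against the $\kappa^{n/2-1}$ prefactor together with $|x-y|^{-n/2+1}$), and isolates the coefficient of $\ln\zeta$ from $\ln(w/2)=\tfrac12\ln\zeta+\ln(|x-y|/2)$. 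For $0\le j\le n/2-2$ only the singular polynomial part of $J_\nu$ contributes, yielding $G_j^{0,e}(x,y)\propto |x-y|^{2j+2-n}$ and $G_j^{1,e}=0$; for $j\ge n/2-1$ the $Y_\nu$ part switches on and produces the three-term expression with $\varphi(j+1)+\varphi(j+2-n/2)$, the $\ln(|x-y|/2)$ piece, and the $i/4\cdot(4\pi)^{-n/2+1}$ piece corresponding respectively to the regular $J_\nu$ coefficient, the $\ln$ coefficient of $Y_\nu$, and the $i$-shift from $H^{(1)}_\nu=J_\nu+iY_\nu$.

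The hard part is not any individual step but the combinatorial bookkeeping: tracking half-integer shifts, $i^\ell$ phases, factorial ratios and signs through the re-indexing, and in the even case separating cleanly the contribution of $\ln\zeta$ from the $\ln|x-y|$ term and from the constant $i$-term so that the three summands of $G_j^{0,e}$ appear in the stated form. One also needs to justify termwise validity of the expansion in the operator sense on weighted $L^2$-spaces, which follows because each remainder after truncation can be bounded by a kernel of the form $|\zeta|^{N/2}|x-y|^{2N+2-n}\cdot (1+|\log(\zeta^{1/2}|x-y|)|)$ whose mapping properties on $L^2_s\to L^2_{-s'}$ for $s,s'$ large enough are controlled by standard Schur/fractional-integration estimates; this is the estimate one really uses later in the paper and it is needed here only to legitimize the infinite sum representation.
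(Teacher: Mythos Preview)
Your approach is the standard derivation and is essentially what the references \cite{JK,J,J1} carry out; the paper itself does not prove this lemma but simply quotes it from those works. So your proposal actually supplies more than the paper does, and the strategy---start from the Hankel-function representation of the Green's kernel and expand---is exactly the classical route.

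One small correction worth flagging in your even-dimensional discussion: you attribute the range $0\le j\le \tfrac{n}{2}-2$ to ``the singular polynomial part of $J_\nu$'', but $J_\nu$ is regular at the origin. Those terms come from the finite singular sum in $Y_\nu$ (the piece $-\tfrac{1}{\pi}\sum_{k=0}^{\nu-1}\tfrac{(\nu-k-1)!}{k!}(w/2)^{2k-\nu}$), which, after multiplication by the overall factor $i/4$ and the $i$ in $H^{(1)}_\nu=J_\nu+iY_\nu$, yields real coefficients---consistent with the stated $G_j^{0,e}$ being real for $j\le \tfrac{n}{2}-2$ and $G_j^{1,e}=0$ there. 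This is a bookkeeping slip, not a structural gap; the rest of your outline is sound.
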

	\begin{remark}\label{rem2}
		 If $j$ is odd and $0<j<n-2$, then $n_j=0$, see \cite[Lemma 3.3]{J}.  For any $j=0, 1, 2,\cdots$, the operators $G_j^{odd},\ G_j^{\ell, e}\in B(s, -s')$ with $s, s'$ depend on $j$ and $n$.
	\end{remark}
	
	Based on the expansions of $R(-\Delta;\zeta)$ and the resolvent identity \eqref{free-resolvent-identity}, we obtain the formally expansions of $R_0(z)$ directly.
	
	\begin{lemma}\label{lemma-free-expansions}
		For $z\in\mathbf{C}\setminus[0, +\infty)$ and $0<\arg(z)<2\pi$, we obtain the following formally expansions of $R_0(z)$:
		
		(i) If $n\ge3$ and $n$ is odd, then
		\begin{equation}\label{eq-odd-free}
			R_0(z)=\sum_{j=0}^\infty z^{\frac{j+2-2m}{2m}}g_jG_j^{odd}
		\end{equation}
		with $g_j=i^j$ for $j\in2m\mathbf{N}-2$ and $g_j=\frac{1}{m}\frac{i^j-(-i)^j}{1-e^{i\frac{(j+2)\pi}{m}}}$ for $j\notin2m\mathbf{N}-2$.
		
		(ii) If $n\ge4$ and $n$ is even, then
		\begin{equation}\label{eq-even-free}
			R_0(z)=\sum_{j=0}^\infty\sum_{\ell=0}^1(\ln z^{\frac{1}{m}})^\ell z^{\frac{j+1-m}{m}}\widetilde{G}_j^{\ell, e},
		\end{equation}
		where $\widetilde{G}_j^{0, e}=g_j^0C_j^{0, e}+g_j^{1, 0}G_{j}^{1, e}$ and $\widetilde{G}_j^{1, e}=g_j^{1, 1}G_j^{1, e}$. Here,  $$ g_j^0=g_j^{1, 1}=1,\,\, g_j^{1, 0}=i\frac{(m-1)\pi}{m},\,\,\,\, j\in m\mathbf{N}-1;$$ $$g_j^0=g_j^{1, 1}=0,\,\, g_j^{1, 0}=\frac{-2i\pi}{1-e^{i\frac{(j+1)2\pi}{m}}},\,\,\,\, j\notin m\mathbf{N}-1.$$
	\end{lemma}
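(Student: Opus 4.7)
The proof is essentially a substitution-and-reorganization argument, so I would not need any new ideas beyond the decomposition identity \eqref{free-resolvent-identity} and the expansions of Lemma \ref{laplacian}. The plan is: plug the Laplacian expansions into the identity
\[
R_{0}(z)=\frac{1}{mz}\sum_{\ell=0}^{m-1}z_\ell\,R(-\Delta;z_\ell),\qquad z_\ell=z^{1/m}e^{i2\ell\pi/m},
\]
then exchange the order of summation, collect terms by powers of $z^{1/(2m)}$ (odd $n$) or by $z^{1/m}(\ln z^{1/m})^\ell$ (even $n$), and evaluate the resulting sums of $m$-th roots of unity.

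For part (i) (odd $n$), I would substitute $R(-\Delta;z_\ell)=\sum_{j\ge 0}(iz_\ell^{1/2})^j G_j^{odd}$, using the branch with ${\rm Im}\,z_\ell^{1/2}>0$, so that $z_\ell^{1/2}=z^{1/(2m)}e^{i\ell\pi/m}$. Then $z_\ell\cdot z_\ell^{j/2}=z^{(j+2)/(2m)}e^{i(j+2)\ell\pi/m}$, which after dividing by $mz$ yields
\[
R_{0}(z)=\sum_{j\ge 0} i^{j}\,G_{j}^{odd}\,z^{(j+2-2m)/(2m)}\cdot\frac{1}{m}\sum_{\ell=0}^{m-1}e^{i(j+2)\ell\pi/m}.
\]
The inner geometric sum equals $m$ exactly when $e^{i(j+2)\pi/m}=1$, i.e.\ $j\in 2m\mathbf{N}-2$, which gives $g_j=i^j$; otherwise it equals $\bigl(1-(-1)^{j}\bigr)/\bigl(1-e^{i(j+2)\pi/m}\bigr)$, and multiplying by $i^j/m$ and using $i^j\bigl(1-(-1)^{j}\bigr)=i^j-(-i)^j$ produces the stated coefficient.

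For part (ii) (even $n$), the same recipe applies but with the additional presence of $\ln z_\ell$. The key observation is that $\ln z_\ell = \ln z^{1/m} + i\,2\ell\pi/m$, so after substitution the $k=1$ terms split into a piece proportional to $\ln z^{1/m}\sum_\ell w_\ell^{j+1}$ and a piece proportional to $\sum_\ell \ell\,w_\ell^{j+1}$, with $w_\ell=e^{i2\ell\pi/m}$. The first sum contributes only when $j\in m\mathbf{N}-1$, while the second requires the standard identity
\[
\sum_{\ell=0}^{m-1}\ell\,w_\ell^{j+1}=\begin{cases}\tfrac{m(m-1)}{2}, & j\in m\mathbf{N}-1,\\[2pt] \dfrac{-m}{1-e^{i 2(j+1)\pi/m}}, & j\notin m\mathbf{N}-1,\end{cases}
\]
obtained from differentiating $\sum x^\ell=(1-x^m)/(1-x)$ and using $x^m=1$. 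Collecting the $\ln z^{1/m}$-free and $\ln z^{1/m}$-containing terms and matching coefficients with $\widetilde G_j^{0,e},\widetilde G_j^{1,e}$ then yields exactly the formulas for $g_j^{0},g_j^{1,0},g_j^{1,1}$.

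The main (very modest) obstacle is bookkeeping in the even case: one has to keep track of the two branches $k=0,1$, of the split $\ln z_\ell=\ln z^{1/m}+i2\ell\pi/m$, and of the two regimes $j\in m\mathbf{N}-1$ versus $j\notin m\mathbf{N}-1$, while ensuring that the branch of $\ln z$ chosen here is consistent with $0<\arg(z)<2\pi$ so that $\ln z_\ell$ is unambiguous. There is no analytic subtlety: the expansions of Lemma \ref{laplacian} are already known to be valid in the relevant $B(s,-s')$-topologies by Remark \ref{rem2}, so the termwise manipulation of the series is justified, and \eqref{eq-odd-free}--\eqref{eq-even-free} are obtained as formal identities with coefficients in $B(s,-s')$.
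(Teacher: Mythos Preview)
Your proposal is correct and matches exactly what the paper does. The paper does not give a detailed proof of this lemma; it simply states the formulas and remarks that they follow by substituting the Laplacian expansions of Lemma~\ref{laplacian} into the decomposition identity~\eqref{free-resolvent-identity}, noting afterward that the factors $i^{j}-(-i)^{j}$ and~$0$ cause many terms to cancel. Your write-up is precisely that computation made explicit, with the roots-of-unity sums and the identity for $\sum_{\ell}\ell\,w^{\ell}$ handling the bookkeeping in the even case.
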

	
	Notice that there are factors $i^j-(-i)^j$ and $0$ in the expansions of $R_0(z)$,  then so many terms can be cancelled. According to Remark \ref{rem2}, we obtain the above formal expansions in the following sense. For simplifying the notation, we let $z=\mu^{2m}$ with $0<\arg(\mu)<\pi/m$. Denote $R_{0}(\mu^{2m}; x, y)$ be the convolution kernel of $R_{0}(\mu^{2m})$.
	\begin{proposition}\label{free-expansions}
		For $\mu\in\mathbf{C}\setminus[0,+\infty)$ and $0<\arg(\mu)<\pi/m$, we obtain the following expansions for $0<|\mu|\ll1$ in $B(s, -s')$ with $s, s'>\frac{n}{2}+2$:
		
		(i) If $n>4m$ and $n$ is odd, denotes $\aleph=[\frac{n}{2m}]$. Then we have
		\begin{equation}\label{4m-odd-free}
			\begin{split}
				R_0(\mu^{2m}; x, y)=&a_0|x-y|^{2m-n}+\sum_{j=1}^{\aleph-1}a_j\mu^{2mj}|x-y|^{2m(j+1)-n}
				+a_{\aleph}\mu^{n-2m}|x-y|^0\\
				&\ \ +E_1(\mu; x,y)
			\end{split}
		\end{equation}
		where $a_{\aleph}\in \mathbf{C}\setminus\mathbf{R}$ and $a_j\in\mathbf{R}\setminus \{0\}$ for $0\le j\le \aleph-1$.  Furthermore, for $s, s'>\frac{n}{2}+2$,
		\[E_{1}(\mu; x, y)\in B(s, -s')\ \ \ \text{and}\ \ \ \|E_{1}(\mu; x, y)\|_{B(s,-s')}=O(\mu^{n-2m+1}).\]
		
		(ii) If $n>4m$ and $n$ is even, denotes $\aleph=[\frac{n}{2m}]$. Then we have
		\begin{equation}\label{4m-even-free}
			\begin{split}
				R_0(\mu^{2m}; x, y)=&b_0|x-y|^{2m-n}+\sum_{j=1}^{\aleph-1}b_j\mu^{2mj}|x-y|^{2m(j+1)-n}
				+b_{\aleph}\mu^{n-2m}|x-y|^0\\
				&\ \ +E_2(\mu; x, y)
			\end{split}
		\end{equation}
		where $b_\aleph\in \mathbf{C}\setminus\mathbf{R}$ and $b_j\in\mathbf{R}\setminus \{0\}$ for $0\le j\le \aleph-1$.  Furthermore, for $s, s'>\frac{n}{2}+2$,
		\[E_2(\mu; x, y)\in B(s, -s')\ \ \ \text{and}\ \ \ \|E_2(\mu; x, y)\|_{B(s,-s')}=O(\mu^{n-2m}+2).\]
		
		(iii) If $2m<n\le 4m$ and $n$ is odd, let $n=4m+1-2k$ with $1\le k\le m$.  Then we have
		\begin{equation}\label{2m-odd-free}
			\begin{split}
				R_0(\mu^{2m}; x, y)=&c_0|x-y|^{2m-n}+\sum_{j=1}^{k}c_j\mu^{2(m+j-k)-1}|x-y|^{2j-2}\\
				&\ \ +c_{k+1}\mu^{2m}|x-y|^{2k-1}+E_3(\mu; x,   y)
			\end{split}
		\end{equation}
		where $c_j\in\mathbf{C}\setminus \mathbf{R}$ for $1\le j\le k$ and $c_0,\ c_{k+1}\in\mathbf{R}\setminus \{0\}$.  Furthermore, for $s, s'>\frac{n}{2}+2k$,
		\[E_3(\mu; x, y)\in B(s,-s')\ \ \ \text{and}\ \ \ \|E_3(\mu; x, y)\|_{B(s, -s')}=O(\mu^{2m+1}).\]
		
		(iv) If $2m<n\le 4m$ and $n$ is even, let $n=4m+2-2k$ with $1\le k\le m$.  Then we have
		\begin{equation}\label{2m-even-free}
			\begin{split}
				R_0(\mu^{2m}; x, y)=&d_0|x-y|^{2m-n}+\sum_{j=1}^{k-1}d_j\mu^{2(m+j-k)}|x-y|^{2j-2}
				+\mu^{2m}g(\mu)|x-y|^{2k-2}\\
				&+d_{k+1}\mu^{2m}|x-y|^{2k-2}\ln(|x-y|)+ E_4(\mu; x, y)
			\end{split}
		\end{equation}
		where $g(\mu)=d_k\ln(\mu)+c_k$, $d_j\in\mathbf{C}\setminus \mathbf{R}$ for $1\le j\le k-1$ and $d_0, d_k, d_{k+1}\in\mathbf{R}\setminus\{0\}$. $c_{k}\in\mathbf{C}\setminus\mathbf{R}$ that can be calculated by Lemma \ref{lemma-free-expansions} and identity \eqref{free-resolvent-identity}.  Furthermore, for $s, s'>\frac{n}{2}+2k$,
		\[E_4(\mu; x, y)\in B(s, -s')\ \ \ \text{and}\ \ \ \|E_4(\mu; x, y)\|_{B(s,-s')}=O(\mu^{2m+2}).\]
	\end{proposition}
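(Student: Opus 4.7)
\textbf{Proof proposal for Proposition \ref{free-expansions}.}
My plan is to derive the four expansions directly from the formal series for $R_0(z)$ recorded in Lemma \ref{lemma-free-expansions} by substituting $z=\mu^{2m}$, identifying which terms survive the cancellations encoded in the coefficients $g_j$, $g_j^0$, $g_j^{1,0}$, $g_j^{1,1}$, and then controlling the tail as a remainder in $B(s,-s')$. The starting point is the splitting identity \eqref{free-resolvent-identity}: once $z=\mu^{2m}$, each Laplacian resolvent $R(-\Delta;z_\ell)$ admits Lemma \ref{laplacian}'s expansion in powers of $z_\ell^{1/2}=\mu e^{i(2\ell+1)\pi/m}$ (with the branch ${\rm Im}\, z_\ell^{1/2}>0$), and summing over $\ell$ produces the combinatorial coefficients $g_j$ and $g_j^{\ell,*}$. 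The essential arithmetic fact I would use is that $g_j$ and $g_j^{1,0}$ vanish unless $j$ lies in a specific residue class mod $m$ (namely $j\in 2m\mathbf{N}-2$ in the odd-dimensional case, $j\in m\mathbf{N}-1$ in the even case), while in the odd case Remark \ref{rem2} supplies the further cancellation $n_j=0$ for odd $j$ with $0<j<n-2$.

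Next I would sort the surviving terms by the exponent of $\mu$ and read off the polynomial-in-$\mu$ part of each expansion. For $n>4m$ odd, the surviving indices $j\in 2m\mathbf{N}-2$ up to $j=2m(\aleph-1)-2$ yield the real kernels $|x-y|^{2m(j+1)-n}$ with real coefficients $a_j$ (coming from $i^j$ when $j$ is even), while the index producing $\mu^{n-2m}|x-y|^0$ sits at the boundary and acquires a genuinely complex coefficient $a_\aleph$ because $n-2m$ is not a multiple of $2m$ and $G_{n-2m}^{odd}(x,y)$ no longer has the real prefactor structure; an analogous bookkeeping in the three remaining cases gives $b_j$, $c_j$, $d_j$, together with the logarithmic term $\mu^{2m}g(\mu)|x-y|^{2k-2}$ in case (iv), which arises from the $\ell=1$ branch of Lemma \ref{laplacian} at the critical index $j=k-1\in m\mathbf{N}-1$. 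I would verify the claimed reality/non-reality of each coefficient by a direct computation with the explicit $g_j$'s and the $i$-powers in $G_j^{odd}$, $G_j^{\ell,e}$.

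For the remainder bounds, the plan is standard: define $E_r(\mu;x,y)$ as the tail after the last listed term, and estimate it in $B(s,-s')$ by absolutely summing the remaining series. Each term is a convolution with a kernel of the form $\mu^\alpha|x-y|^\gamma$ (possibly multiplied by $\log\mu$ or $\log|x-y|$), and the weighted-$L^2$ mapping $L^2_s\to L^2_{-s'}$ of such convolutions is controlled by $s+s'>\gamma+n/2+\varepsilon$ type conditions (Schur / Hardy--Littlewood--Sobolev on weighted spaces); the hypotheses $s,s'>n/2+2$ in (i)--(ii) and $s,s'>n/2+2k$ in (iii)--(iv) are exactly what is needed for the first discarded term to give an operator norm of the stated order, and for the series of further terms to converge. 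The smallness factor $\mu^{n-2m+1}$, $\mu^{n-2m+2}$, $\mu^{2m+1}$, or $\mu^{2m+2}$ is then just the $\mu$-weight of the first term after the displayed polynomial.

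The main obstacle I anticipate is not conceptual but combinatorial: correctly tabulating, for each parity of $n$ and each range of $n$ relative to $4m$, precisely which $j$'s survive and at which $j$ a previously real coefficient first becomes complex (this is what distinguishes $a_j\in\mathbf{R}$ for $j<\aleph$ from $a_\aleph\in\mathbf{C}\setminus\mathbf{R}$, and similarly for the threshold index $k$ in cases (iii)--(iv)). The even-dimensional, low-dimensional case (iv) is the most delicate because both logarithmic branches contribute and one must check that the combination $d_k\log\mu+c_k$ appears with the asserted $d_k\in\mathbf{R}\setminus\{0\}$ and $c_k\in\mathbf{C}\setminus\mathbf{R}$; I would treat it last, using the general formulas in Lemma \ref{laplacian} for $G_{k-1}^{0,e}$ and $G_{k-1}^{1,e}$ and the explicit $g_{k-1}^{1,0}=i(m-1)\pi/m$ given in Lemma \ref{lemma-free-expansions}.
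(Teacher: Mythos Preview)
Your proposal is correct and follows the same route as the paper, which simply records that ``the proof follows from \cite[Lemma 2.3]{J} and \cite[Lemma 3.5, Lemma 3.9]{J1}, since we derive the expansions of $R_0(\mu^{2m})$ by the expansions of $R(-\Delta;\zeta)$.'' Your plan is precisely an unpacking of that sentence: substitute $z=\mu^{2m}$ into Lemma~\ref{lemma-free-expansions}, use the vanishing pattern of the $g_j$'s (even $j\notin 2m\mathbf{N}-2$ gives $g_j=0$ via $i^j-(-i)^j=0$) together with Remark~\ref{rem2} ($n_j=0$ for odd $0<j<n-2$) to identify the surviving terms, and then invoke Jensen's weighted-space lemmas (the paper's Lemma~\ref{Riesz-potential-boundedness} is \cite[Lemma 2.3]{J}) for the remainder bounds. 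Two small corrections to keep in mind when you execute it: the first nonvanishing index in the odd case is $j=2m-2$ (not $j=0$), which is what produces the leading $|x-y|^{2m-n}$ term with $\mu^0$; and your branch formula $z_\ell^{1/2}=\mu e^{i(2\ell+1)\pi/m}$ should read $z_\ell^{1/2}=\mu e^{i\ell\pi/m}$ (up to the positive-imaginary-part branch choice), though this does not affect the structure of the argument.
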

	\begin{proof}
		The proof follows from \cite[Lemma 2.3]{J} and \cite[Lemma 3.5, Lemma 3.9]{J1}, since we derive the expansions  of $R_0(\mu^{2m})$ by the expansions of $R(-\Delta;\zeta)$.
	\end{proof}

	\subsection{Asymptotic expansions of $R_{V}(z)$ around zero }
	
	For $R_{V}(\mu^{2m})$, we apply the following symmetric resolvent identity to derive the asymptotic expansions as $\mu \rightarrow 0$.
	\begin{equation}\label{symmetric-resolvent-idnetity}
		R_{V}(\mu^{2m})=R_{0}(\mu^{2m})-R_{0}(\mu^{2m})vM(\mu)^{-1}vR_{0}(\mu^{2m}),
	\end{equation}
	where $v(x)=|V(x)|^{1/2}$,
	\begin{equation*}
		M(\mu)=U+vR_{0}(\mu^{2m})v
	\end{equation*}
	and $U(x)=\rm{sign}\big (V(x)\big)$.  Furthermore, let $w(x)=U(x)v(x)$, then
	\begin{equation}\label{weighted-resolvent-identity}
		wR_{V}(\mu^{2m})w=U-M(\mu)^{-1}.
	\end{equation}
Note that, for $0<|\mu|\ll1$, the identity \eqref{weighted-resolvent-identity} shows $R_{V}(\mu^{2m})$ in $B(s, -s)$ with suitable $s>0$ has the same asymptotic behaviors of $M(\mu)^{-1}$ in $B(0, 0)$ if $w(x)(1+|x|)^{s}\in L^{\infty}(\mathbf{R}^{n})$.
	
	Since the number of  expansion terms depends on dimension $n$ for a fixed order $m$, we divide the dimension into three intervals to derive the asymptotic expansion of $R_{V}(\mu^{2m})$.

	\textbf{Case 1: $n>4m$.}
	By Proposition \ref{free-expansions} (i) and (ii), we have the following expansions of $R_0(\mu^{2m}; x, y)$ in $B(s, -s')$ with $s, s'>\frac{n}{2}+2$:
	\begin{equation}\label{4m-free}
		\begin{split}
			R_{0}(\mu^{2m}; x, y)=&G_{0}(x, y)+\sum_{j=1}^{\aleph-1}\mu^{2mj}G_j(x, y)+\mu^{n-2m}G_\aleph(x, y)\\
			&\ \ +\mu^{n-2m+2}G_{\aleph+1}(x,y)+E_0(\mu; x, y)
		\end{split}
	\end{equation}
	where
	\begin{align*}
		&G_0(x, y)=\begin{cases}
			a_0|x-y|^{2m-n},\ \ &\ \ n>4m\ \ \text{odd};\\
			b_0|x-y|^{2m-n},\ \ &\ \ n>4m\ \ \text{even};
		\end{cases}\\
		&G_j(x, y)=\begin{cases}
			a_j|x-y|^{2m(j+1)-n},\ \ &\ \ n>4m\ \ \text{odd};\\
			b_j|x-y|^{2m(j+1)-n},\ \ &\ \ n>4m\ \ \text{even};
		\end{cases}\ \ \text{for}\ \ 1\le j\le \aleph-1,\\
		&G_\aleph(x, y)=\begin{cases}
			a_\aleph|x-y|^0,\ \ & \ \ n>4m\ \ \text{odd};\\
			b_\aleph|x-y|^0,\ \ & \ \ n>4m\ \ \text{even};\\
		\end{cases}\\
		&G_{\aleph+1}(x, y)=\begin{cases}
			a_{\aleph+1}|x-y|^2,\ \ &\ \ n>4m\ \ \text{odd};\\
			b_{\aleph+1}|x-y|^2,\ \ &\ \ n>4m\ \ \text{even};
		\end{cases}\\
		&E_0(\mu; x, y)=\begin{cases}
			E_1(\mu; x, y),\ \ &\ \ n>4m\ \ \text{odd};\\
			E_2(\mu; x, y),\ \ &\ \ n>4m\ \ \text{even}.
		\end{cases}
	\end{align*}
	Substituting \eqref{4m-free} into the symmetric resolvent identity \eqref{symmetric-resolvent-idnetity}, we have
	\begin{equation}\label{M-4m}
		M(\mu)=U+vG_0v+\sum_{j=1}^{\aleph-1}\mu^{2mj}vG_jv+\mu^{n-2m}vG_\aleph v+vE_0(\mu)v.
	\end{equation}
	Recall that $a_{\aleph}\in\mathbf{C}\setminus\mathbf{R}$. Note that we need to expand to the order $\mu^{n-2m}$ here. The reason is that $G_{j} (0\leq j\leq \aleph-1)$ are selfadjoint operators which has no contribution to the spectral density.
	
	Denotes $T_0=U+vG_0v$.  If $T_0$ is invertible on $L^2(\mathbf{R}^n)$, then we say zero is {\it a regular point of $H=(-\Delta)^m+V$.} Otherwise, if $T_{0}$ is not invertible,  let $S_1$ to be the Riesz projection onto $\ker(T_{0})$ as an operator on $L^2(\mathbf{R}^n)$. Then  $T_0+S_1$ is invertible on $L^2(\mathbf{R}^n)$. Accordingly, let
	$D_0=(T_0+S_1)^{-1}$.  If $T_{0}$ is not invertible and $T_{1}=S_{1}vG_{1}vS_{1}$ is invertible, then we say zero is {\it a ``resonance" of $H$}.  In the case $n>4m$, the subspace $S_{1}L^{2}(\mathbf{R}^n)$ is actually the zero eigenspace of $H$.  We will show $T_{1}$ is invertible on $L^2(\mathbf{R}^n)$, see  Theorem \ref{RV-4m} and Proposition \ref{classification-4m}  below.  Hence, $H=(-\Delta)^m+V$ does not exist zero resonance if $n>4m$.
	
	Next, we give the expansions of $R_{V}(\mu^{2m})$ in $B(s, -s')$ with $s, s'>\frac{n}{2}+2$.
	\begin{theorem}\label{RV-4m}
		For $n>4m$, let $|V(x)|\lesssim (1+|x|)^{-\beta}$ with some $\beta>n+4$. Then for $0<|\mu|\ll1$, we have the following expansions of $R_V(\mu^{2m})$ in $B(s, -s')$:
		
		(i) If zero is a regular point of $H$, then
		\begin{equation*}\label{Rv-regular-4m}
			R_V(\mu^{2m})=A_0+\sum_{\ell=1}^{\aleph-1}\mu^{2m\ell}A_\ell+\alpha_\aleph\mu^{n-2m} A_{\aleph}+O(\mu^{n-2m+2})
		\end{equation*}
		where  $A_\ell\in B(s, -s')$ are selfadjoint operators and $\alpha_\aleph\in\mathbf{C}\setminus\mathbf{R}$. Furthermore,  $A_0=G_0-G_0v(T_0)^{-1}vG_0$.
		
		(ii) If zero is an eigenvalue of $H$, then
		\begin{equation*}\label{Rv-eigenvalue-4m}
			R_V(\mu^{2m})=\frac{A_1^e}{\mu^{2m}}+\sum_{\ell=2}^{\aleph-1}\mu^{2m(\ell-2)}A_\ell^e+\alpha_{\aleph}^e\mu^{n-6m}A_\aleph^e+O(\mu^{n-6m+2})
		\end{equation*}
		where $A_\ell^e\in B(s, -s')$ are selfadjoint operators and $\alpha_\aleph^e\in\mathbf{C}\setminus\mathbf{R}$. Furthermore,  $A_1^e=-G_0vS_{1}(S_1vG_1vS_1)^{-1}S_{1}vG_0$ is finite rank.
	\end{theorem}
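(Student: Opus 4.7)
The plan is to insert the expansion (M-4m) of $M(\mu) = U + v R_0(\mu^{2m}) v$ into the symmetric resolvent identity (symmetric-resolvent-identity) and invert $M(\mu)$ separately in the two cases. Writing $M(\mu) = T_0 + N(\mu)$ with
\[
N(\mu) = \sum_{j=1}^{\aleph-1}\mu^{2mj}\, vG_j v + \mu^{n-2m}\, vG_\aleph v + vE_0(\mu)v,
\]
the decay assumption $\beta>n+4$ makes each $vG_j v$ bounded on $L^2$, so that $\|N(\mu)\|_{L^2\to L^2}\to 0$ as $\mu\to 0$, which validates all Neumann expansions below and controls the tail via Proposition \ref{free-expansions}.

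In the regular case, $T_0$ is invertible on $L^2$, so $M(\mu)^{-1} = (I+T_0^{-1}N(\mu))^{-1}T_0^{-1}$ is expanded by Neumann series. Collecting powers of $\mu$ and plugging back into the symmetric identity along with the $R_0(\mu^{2m})$ expansion yields the stated asymptotics with $A_0 = G_0 - G_0 v T_0^{-1} v G_0$; self-adjointness of $A_\ell$ for $0\le \ell\le \aleph-1$ follows because $G_0,\ldots,G_{\aleph-1}$ and $T_0$ are self-adjoint, while the non-real factor $\alpha_\aleph\in\mathbf{C}\setminus\mathbf{R}$ is inherited from $a_\aleph$ (resp.\ $b_\aleph$) in Proposition \ref{free-expansions}, which is the first term in $R_0$ with a nonzero imaginary part.

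In the eigenvalue case, $T_0$ has nontrivial kernel with Riesz projection $S_1$, and the Jensen--Nenciu inversion lemma applies. The shifted operator $M_0(\mu):=M(\mu)+S_1 = (T_0+S_1)+N(\mu)$ is invertible for $|\mu|\ll 1$ with $M_0(\mu)^{-1}=D_0+O(\mu^{2m})$, and Jensen--Nenciu gives
\[
M(\mu)^{-1} = M_0(\mu)^{-1} + M_0(\mu)^{-1}S_1 B(\mu)^{-1} S_1 M_0(\mu)^{-1},\qquad B(\mu) = S_1 - S_1 M_0(\mu)^{-1}S_1,
\]
provided $B(\mu)$ is invertible on $S_1 L^2$. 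The identities $D_0 S_1 = S_1 D_0 = S_1$ (direct consequences of $T_0 S_1=0$) cause the constant term of $B(\mu)$ to cancel, leaving $B(\mu) = -\mu^{2m}T_1 + O(\mu^{4m})$ with $T_1 = S_1 v G_1 v S_1$. Granting invertibility of $T_1$ on $S_1 L^2$, one obtains $B(\mu)^{-1}=-\mu^{-2m}T_1^{-1}+O(1)$, and substituting back into the symmetric identity produces the singular term $A_1^e/\mu^{2m}$ with $A_1^e = -G_0 v S_1 T_1^{-1} S_1 v G_0$, finite-rank because $S_1$ is. The higher-order coefficients $A_\ell^e$ and the remainder $O(\mu^{n-6m+2})$ arise by the same bookkeeping, shifted by $\mu^{-2m}$.

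The main obstacle is the invertibility of $T_1$ on $S_1 L^2$, which is inseparable from the structural claim that the map $\psi \mapsto -G_0 v\psi$ is an isomorphism from $S_1 L^2$ onto $\ker H$ when $n>4m$; both are the content of Proposition \ref{classification-4m}. Once this identification is in hand, the quadratic form $\langle T_1\phi,\phi\rangle$ reduces (after integration by parts using the explicit kernel of $G_1$) to a positive constant times $\|(-\Delta)^{m/2}(G_0 v\phi)\|_{L^2}^2$, which is strictly positive for $\phi\neq 0$ since $G_0 v$ is injective on $S_1 L^2$; invertibility on the finite-dimensional space $S_1 L^2$ follows. Everything else is routine: tracking which Neumann terms are self-adjoint, isolating the single non-real contribution at order $\mu^{n-2m}$, and deriving the error estimates from the remainder $E_0(\mu)$ in Proposition \ref{free-expansions}.
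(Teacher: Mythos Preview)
Your approach is essentially the same as the paper's: expand $M(\mu)$, invert by Neumann series in the regular case and by the Feshbach/Jensen--Nenciu scheme in the eigenvalue case, then feed everything back through the symmetric resolvent identity. Two small slips to flag. First, the sign of $B(\mu)$: since $M_0(\mu)^{-1}=D_0-D_0N(\mu)D_0+\cdots$ and $S_1D_0S_1=S_1$, one gets $B(\mu)=S_1N(\mu)S_1+\cdots=\mu^{2m}T_1+O(\mu^{4m})$, not $-\mu^{2m}T_1$; this only shifts a sign downstream. Second, and more substantively, your identification of the quadratic form is off: $\langle T_1\phi,\phi\rangle$ equals (a positive constant times) $\|G_0v\phi\|_{L^2}^2$, not $\|(-\Delta)^{m/2}(G_0v\phi)\|_{L^2}^2$. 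The paper obtains this cleanly without appealing to the explicit kernel of $G_1$: it writes $G_1=\lim_{\mu\to0}\mu^{-2m}\bigl(R_0(\mu^{2m})-G_0\bigr)$, passes to the Fourier side, and uses dominated convergence (choosing $\mathrm{Re}(\mu^{2m})<0$) to get
\[
\langle G_1 v\phi,\,v\phi\rangle=\int_{\mathbf{R}^n}\frac{|\widehat{v\phi}(\xi)|^2}{|\xi|^{4m}}\,d\xi=\|G_0v\phi\|_{L^2}^2,
\]
from which $\ker T_1=\{0\}$ follows via $\phi=-UvG_0v\phi$. Your route through the Riesz-potential kernel of $G_1$ can be made to give the same answer (using $I_{4m}=I_{2m}\circ I_{2m}$), but as written the formula and the assertion that the constant is positive are not justified. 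Also, invertibility of $T_1$ is proved in the paper independently of Proposition~\ref{classification-4m}; the two are related but not ``inseparable.''
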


	\textbf{Case 2: $n=4m+1-2k$ with $k=1, 2, \cdots, m$. }(i.e. $2m<n\le 4m$ and $n$ odd).
	In this case, by Proposition \ref{free-expansions} (iii),  we have the following expansions of $R_0(\mu^{2m}; x, y)$ in $B(s, -s')$ with $s, s'>n/2+2k$:
	\begin{equation}\label{2m-odd-free-2}
		\begin{split}
			R_{0}(\mu^{2m}; x, y)=&G_{0}(x, y)+c_{1} \mu^{2(m-k)+1} I+\sum_{j=2}^kc_j \mu^{2(m+j-k)-1}G_j(x, y)\\
			&\ \ +\mu^{2m}G_{k+1}(x, y)+E_3(\mu; x, y)
		\end{split}
	\end{equation}
	where
	\begin{align*}
		&G_{0}(x, y)=c_{0}|x-y|^{2m-n};\\
		&G_{j}(x, y)=|x-y|^{2j-2}, \,\,\,\, 2\le j\le k;\\
		&G_{k+1}(x, y)=c_{k+1}|x-y|^{2k-1}.
	\end{align*}
	
	Let $P$ be the projection onto the span of $v$, i.e. $P=\|v\|_{L^{2}(\mathbf{R}^{n})}^{-1}v\langle v, \cdot\rangle$. Thus $P$ is a self-adjoint operator with  $\dim(P)=1$. Substituting \eqref{2m-odd-free-2} into  \eqref{symmetric-resolvent-idnetity}, we have
	\begin{equation}\label{M-2m-odd}
		M(\mu)=U+vG_0v+\tilde{c}_1\mu^{2(m-k)+1}P+\sum_{j=2}^kc_j\mu^{2(m+j-k)-1}vG_jv+\mu^{2m}vG_{k+1}v+vE_3(\mu)v.
	\end{equation}
	Depending on the inverse processes,  now we give  the equivalent definition of each kinds of zero resonance of $H$.
	\begin{definition} \label{resonance}
		Let $T_{0}=U+vG_{0}v$.
		\begin{enumerate}
			\item If $T_{0}$ is invertible on $L^{2}(\mathbf{R}^n)$, then we call zero is a regular point of $H$.
			\item Assume that $T_0$ is not invertible on $L^2(\mathbf{R}^n)$. Let $S_1$ be the Riesz projection onto  $\ker (T_0)$, then $T_0+S_1$ is invertible on $L^2(\mathbf{R}^n)$.  If $T_0$ is not invertible and $T_1:=S_1PS_1$ is invertible on $S_1L^2(\mathbf{R}^n)$, then we call zero is the first kind of resonance of $H$.
			\item Assume that $T_1$ is not invertible on $S_1L^2(\mathbf{R}^n)$.  Let $S_2$ be the Riesz projection onto  $\ker(T_1)$, then $T_1+S_2$ is invertible on $S_1L^2(\mathbf{R}^n)$.  If $T_1$ is not invertible and $T_2:=S_2vG_2vS_2$ is invertible on $S_2L^2(\mathbf{R}^n)$, then we call zero is the second kind of resonance of $H$.
			\item For $3\le j\le k$ and $1\le k\le [\frac{m}{2}]+1$. Assume that $T_{j-1}:=S_{j-1}vG_{j-1}vS_{j-1}$ is not invertible on $S_{j-1}L^2(\mathbf{R}^n)$.  Let $S_{j}$ be the Riesz projection onto $\ker(T_{j-1})$, then $T_{j-1}+S_{j}$ is invertible on $S_{j-1}L^{2}(\mathbf{R}^n)$. If $T_{j-1}$ is not invertible and $T_j:=S_jvG_jvS_j$ is invertible on $S_jL^2(\mathbf{R}^n)$, then we call zero is the $j$-th kind of resonance of $H$.
			\item Assume that $T_k$ is not invertible on $S_kL^2(\mathbf{R}^n)$. Let $S_{k+1}$ be the Riesz projection onto $\ker(T_k)$, then $T_k+S_{k+1}$ is invertible on $S_kL^2(\mathbf{R}^n)$.  In this case, the operator $T_{k+1}:=S_{k+1}vG_{k+1}vS_{k+1}$ is always invertible, then we say there is a $(k+1)$-th kind of ``resonance" at zero.
		\end{enumerate}	
	\end{definition}
	
	\begin{remark}\label{properties-of-S-T}
		(1) Definition \ref{resonance} is equivalent to the Definition \ref{resdef} actually. Indeed, we will identify all the subspaces $S_{j}L^2(\mathbf{R}^n)$, see Proposition \ref{classification-2m-odd} and  Proposition \ref{classification-2m-even}.  From the proof of the identification processes, these two definitions are equivalent, see subsection \ref{identfication} below. Furthermore, the last kind of zero ``resonance" is actually zero eigenvalue.
		
	   \par (2) The projections $S_{j}$  $(1\le j\le k+1)$ are finite rank operators. Indeed, $T_{0}=U+vG_{0}v$ which is a compact perturbation of the invertible operator $U$, thus  the Fredholm alternative theorem guarantees that $S_{1}$ is of finite rank. By  Definition \ref{resonance}, we have $S_{k+1}\leq S_k\le\cdots \le S_2\le S_1$, hence all $S_{j}$ ($1\le j \le k+1$)  are finite rank.
	   \par (3) By  Definition \ref{resonance}, let $D_j=(T_j+S_{j+1})^{-1}$ for $0\le j\le k$, then we have
		\begin{align*}
			&S_{j+1}D_j=D_jS_{j+1} =S_{j+1},\ \ \ 0\le j\le k;\\
			&S_jD_j=D_jS_j=D_j,\ \ \ 1\le j\le k.
		\end{align*}
		\par (4) For Schr\"odinger operator $-\Delta+V$ in 3 and 4 dimensional cases, i.e. $m=1$ with $n=3, 4$, we have  $k=1$.  Thus $-\Delta+V$ has only one kind resonance for $n=3, 4$. Recall that $-\Delta+V$ does not exist zero resonance for $n\geq5$, see \cite{J}. Thus  Definition \ref{resonance} matches the resonance definition of Kato and Jensen in \cite{JK, J}.
	\end{remark}
	
	Now, we give the asymptotic expansions of $R_{V}(\mu^{2m})$ in $B(s, -s')$ with suitable chosen $s, s'\in\mathbf{R}$ with the presence of each kind of zero rensonance. Note that, the following expansions imply that how the zero rensonance affect the behavior of spectral density $dE(\lambda)$ of $H=(-\Delta)^m+V$ as $\lambda$ close to 0.
	\begin{theorem}\label{RV-expansions-2m-odd}
	For $n=4m+1-2k$ with $k\in\mathbf{N}$ chosen as follows, let $|V(x)|\lesssim (1+|x|)^{-\beta}$ with some $\beta>n+4k$.  Then for  $0<|\mu|\ll1$, we have the following expansions of $R_V(\mu^{2m})$ in $B(s, -s')$ with $s, s'>\frac{n}{2}+2k$ :
		
		(i) For $1\le k\le m$, if zero is a regular point of $H$, then we have
		\begin{equation*}\label{RV-2m-odd-regular-case}
			R_{V}(\mu^{2m})=B_0+\sum_{\ell=1}^k\beta_\ell\mu^{2(m+\ell-k)-1}B_\ell+\mu^{2m}B_{k+1}+O(\mu^{2m+})
		\end{equation*}
		where $B_{\ell}\in B(s, -s')$ are selfadjoint operators and $\beta_\ell\in\mathbf{C}\setminus\mathbf{R}$. Furthermore, $B_0=G_0-G_0v(T_0)^{-1}vG_0$.
		
		(ii) For $1\le k\le[\frac{m}{2}]+1$, if zero is  the $j$-th kind of resonance  of $H$  with $1\le j\le k$, then we have
		\begin{equation*}\label{RV-2m-odd-j-res}
			\begin{split}
				R_{V}(\mu^{2m})=&\frac{\beta_0^j B_0^j}{\mu^{2(m-k+j)-1}}+\sum_{\ell=1}^{k-j}\frac{\beta_\ell^jB_\ell^j}{\mu^{2(m-k+j-\ell)-1}}
				+\sum_{\ell=k-j+1}^{2m-3k+3j-2}\frac{\beta_\ell^jB_{\ell}^j}{\mu^{2m-3k+3j-\ell-1}}\\
				& +\beta_{2m-3k+3j-1}^jB_{2m-3k+3j-1}^j+O(\mu),
			\end{split}
		\end{equation*}
		where  $B_\ell^j\in B(s, -s')$ are selfadjoint operators and $\beta_\ell^j\in\mathbf{C}\setminus\mathbf{R}$. Furthermore, $B_0^j=-G_0vS_{j}(T_j)^{-1}S_{j}vG_0$ and all $B_{\ell}^{j}~ (0\leq\ell\leq 2m-3k+3j-2)$ are finite rank.
		
		(iii) For $1\le k\le[\frac{m}{2}]+1$, if zero is an eigenvalue of $H$, then we have
		\begin{equation*}\label{RV-2m-odd-k+1-res}
			\begin{split}
				R_{V}(\mu^{2m})=&\frac{B_0^{k+1}}{\mu^{2m}}+\sum_{\ell=1}^{2m-1}\frac{\beta_{\ell}^{k+1}B_\ell^{k+1}}{\mu^{2m-\ell}}+\beta_{2m}^{k+1}B_{2m}^{k+1}+O(\mu),
			\end{split}
		\end{equation*}
		where  $B_\ell^{k+1}\in B(s,-s')$ are selfadjoint operators and $\beta_\ell^{k+1}\in\mathbf{C}\setminus\mathbf{R}$. Furthermore, $B_{0}^{k+1}=-G_0 vS_{k+1}(T_{k+1})^{-1}S_{k+1}v G_0$ and all $B_{\ell}^{k+1}~ (0\leq\ell\leq 2m-1)$ are finite rank.
	\end{theorem}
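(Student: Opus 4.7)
The plan is to work through the symmetric resolvent identity \eqref{symmetric-resolvent-idnetity}: once a sufficiently detailed expansion of $M(\mu)^{-1}$ in $B(0,0)$ is available, conjugation by $vR_0(\mu^{2m})$ and substitution of \eqref{2m-odd-free-2} immediately delivers the claimed expansion of $R_V(\mu^{2m})$ in $B(s,-s')$. Plugging \eqref{2m-odd-free-2} into $M(\mu)=U+vR_0(\mu^{2m})v$ gives \eqref{M-2m-odd}: the leading term is $T_0=U+vG_0v$, the next term (of order $\mu^{2(m-k)+1}$) is the rank-one operator $\tilde c_1 P$, then come $\mu^{2(m+j-k)-1}vG_jv$ for $2\le j\le k$ and $\mu^{2m}vG_{k+1}v$, plus a remainder $vE_3(\mu)v=O(\mu^{2m+1})$ in $B(0,0)$ whose control requires $\beta>n+4k$ so that $v\langle x\rangle^{s}\in L^\infty$ for the relevant $s$.

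In the regular case $T_0$ is invertible and a Neumann series gives (i) directly with $B_0=G_0-G_0vT_0^{-1}vG_0$. In the non-regular cases I iterate the Jensen--Nenciu inversion lemma: whenever the operator currently being inverted has the form $A_0+A_1(\mu)$ with $A_0$ non-invertible and $A_0+S$ invertible (with $S$ the Riesz projection onto $\ker A_0$), one has $(A_0+A_1(\mu))^{-1}=(A+S)^{-1}+(A+S)^{-1}SB(\mu)^{-1}S(A+S)^{-1}$ with $B(\mu)=S-S(A+S)^{-1}S$, after which one expands $B(\mu)^{-1}$ asymptotically on $SL^2$. First I apply this to $M(\mu)$ with $S=S_1$: the reduced operator on $S_1L^2$ has leading term $\mu^{2(m-k)+1}S_1PS_1=\mu^{2(m-k)+1}T_1$, which is invertible precisely in the first-kind resonance case and then contributes $-\mu^{-(2(m-k)+1)}\tilde c_1^{-1}S_1 T_1^{-1}S_1$ to $M(\mu)^{-1}$ modulo lower-order terms. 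Otherwise I peel off $S_2\subset S_1$ and continue; at step $j$ the reduced operator on $S_jL^2$ has leading term $\mu^{2(m+j-k)-1}T_j$ with $T_j=S_jvG_jvS_j$ for $j\ge 2$. The iteration terminates either when some $T_j$ becomes invertible on $S_jL^2$ (case (ii)) or, if it has not terminated earlier, at step $k+1$ where $T_{k+1}=S_{k+1}vG_{k+1}vS_{k+1}$ is automatically invertible on $S_{k+1}L^2$ (case (iii)).

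With $M(\mu)^{-1}$ written as a leading singular term of order $\mu^{-(2(m-k+j)-1)}$ (or $\mu^{-2m}$ in case (iii)) supported on $S_j$ (resp. $S_{k+1}$), plus lower-order Laurent contributions, I insert it into $R_V(\mu^{2m})=R_0(\mu^{2m})-R_0(\mu^{2m})vM(\mu)^{-1}vR_0(\mu^{2m})$. In the most singular term only the $G_0$ summand of $R_0$ on the two outer factors survives projected onto $S_j$, yielding $B_0^j=-G_0vS_j(T_j)^{-1}S_jvG_0$. Every subleading coefficient $B_\ell^j$ carries at least one factor of $S_j v$ (or $S_{k+1}v$) produced inside the Feshbach reduction, hence is finite rank by Remark~\ref{properties-of-S-T}(2). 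Self-adjointness of the $B_\ell^j$ follows from the self-adjointness of $U$, $v$, the real-valued kernels $G_j$, the projections $S_j$, and the operators $T_j$, while the complex coefficients $\beta_\ell^j\in\mathbf{C}\setminus\mathbf{R}$ arise as products of the non-real constants $c_j$ in \eqref{2m-odd-free}.

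The main obstacle is the combinatorial bookkeeping of orders through up to $k$ successive Feshbach reductions. The two distinct index ranges in statement (ii)---namely $\ell\in[0,k-j]$, where iterated singular inversions generate exponents with step $2$, and $\ell\in[k-j+1,\,2m-3k+3j-2]$, where the regular $vG_\ell v$ contributions in \eqref{M-2m-odd} take over and the step in the exponent of $\mu^{-1}$ becomes $1$---correspond to two distinct phases of the iteration and must be matched index by index. Verifying that the remainder stays at $O(\mu)$ (respectively $O(\mu^{2m+})$ in (i)), given the $O(\mu^{2m+1})$ error from \eqref{2m-odd-free-2}, is precisely where the hypothesis $\beta>n+4k$ is consumed, ensuring that all intermediate operators remain bounded in the required $B(s,-s')$-norms. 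This inductive bookkeeping, straightforward in spirit but long to execute, is carried out in detail in Section~\ref{proof}.
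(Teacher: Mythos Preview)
Your proposal is correct and follows essentially the same route as the paper: reduce to the expansion of $M(\mu)^{-1}$ via the symmetric resolvent identity \eqref{symmetric-resolvent-idnetity}, then treat the regular case by a Neumann series and the resonance/eigenvalue cases by iterated Feshbach--Jensen--Nenciu reductions through the chain $S_1\ge S_2\ge\cdots\ge S_{k+1}$, terminating because $T_{k+1}=S_{k+1}vG_{k+1}vS_{k+1}$ has trivial kernel (the paper isolates this as Lemma~\ref{2m-odd-kernel}). The paper carries out exactly this scheme in Proposition~\ref{M-2m odd}, normalizing at each step by the leading power of $\mu$ before applying the next reduction, and then substitutes back into \eqref{symmetric-resolvent-idnetity}; your description of the two index ranges in (ii) and the origin of the finite-rank and self-adjointness properties matches the paper's argument.
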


	\textbf{Case 3: $n=4m+2-2k$ with $k=1, 2, \cdots, m$. } (i.e. $2m<n\le 4m$ and $n$ even).
	In this case, by Proposition \ref{free-expansions} (iv),  we have the following expansions of $R_0(\mu^{2m}; x, y)$ in $B(s, -s')$ with $s, s'>\frac{n}{2}+2k+1$:
	\begin{equation}\label{2m-even-free-2}
		\begin{split}
			R_{0}(\mu^{2m}; x, y)=&G_{0}(x, y)+d_1\mu^{2(m-k+1)}I+\sum_{j=2}^{k-1}d_j\mu^{2(m+j-k)}G_j(x, y)\\
			&+\mu^{2m}g(\mu)G_k(x, y)+\mu^{2m}G_{k+1}(x, y)+E_4(\mu; x, y)
		\end{split}
	\end{equation}
	where
	\begin{align*}
		&G_{0}(x, y)=d_{0}|x-y|^{2m-n};\\
		&G_{j}(x, y)=|x-y|^{2j-2}, \,\,\,\, 2\le j\le k;\\
		&G_{k+1}(x, y)=d_{k+1}|x-y|^{2k-2}\ln(|x-y|).
	\end{align*}
	Substituting \eqref{2m-even-free-2} into  identity \eqref{symmetric-resolvent-idnetity}, we have
	\begin{equation}\label{M-2m-even}
		\begin{split}
			M(\mu)=&U+vG_0v+\tilde{d}_1\mu^{2(m-k+1)}P+\sum_{j=2}^{k-1}d_j\mu^{2(m+j-k)}vG_jv\\
			&+\mu^{2m}g(\mu)vG_kv
			+\mu^{2m}vG_{k+1}v+vE_4(\mu)v.
		\end{split}
	\end{equation}	
	
	The definition of resonance for this case is the same as Definition \ref{resonance} by replacing the representations of $G_{j}$ in the corresponding case for $n$ is even. Next, we give the expansions of $R_{V}(\mu)$ in $B(s, -s')$ with $s, s'>\frac{n}{2}+2k$. Recall that $g(\mu)=d_{k}\ln(\mu)+c_{k}$ with $d_{k}\in\mathbf{R}\setminus\{0\}$ and $c_{k}\in\mathbf{C}\setminus\mathbf{R}$, see Proposition \ref{free-expansions} $(iv)$.
	\begin{theorem}\label{RV-expansions-2m-even}
	For $n=4m+2-2k$ with $k\in\mathbf{N}$ chosen as follows, let $|V(x)|\lesssim (1+|x|)^{-\beta}$ with some $\beta>n+4k$. Then for $0<|\mu|\ll1$, we have the following expansions of $R_V(\mu^{2m})$ in $B(s, -s')$ with $s, s'>\frac{n}{2}+2k$ :
		
		(i) For $1\le k\le m$, if zero is a regular point of $H$, then we have
		\begin{equation*}\label{RV-2m-odd-regular}
			R_{V}(\mu^{2m})=C_0+\sum_{\ell=1}^{k-1}\mu^{2(m+\ell-k)}\tau_{\ell}C_{\ell}+\mu^{2m}g(\mu)C_k+\mu^{2m}C_{k+1}+O(\mu^{2m+})
		\end{equation*}
		where  $C_{\ell}\in B(s, -s')$ are selfadjoint operators and $\tau_{\ell}\in\mathbf{C}\setminus\mathbf{R}$ for all $\ell$. Furthermore, $C_0=G_0-G_0v(T_0)^{-1}vG_0$.
		
		(ii) For $1 \le k \le [\frac{m}{2}]+1$, if zero is  the $j$-th kind of resonance of $H$ with $1\le j\le k-1$, then  we have
		\begin{equation*}\label{RV-2m-odd-j}
			\begin{split}
				R_{V}(\mu^{2m})=&\frac{\tau_{0, 0}^jC_{0, 0}^j}{\mu^{2(m-k+j)}}+\sum_{\ell=1}^{k-j}\frac{\tau_{\ell, 0}^jC_{\ell,0}^j}{\mu^{2(m-k+j-\ell)}}+
				\sum_{\ell=k-j+1}^{m-k+j-1}\Bigg[\frac{\tau_{\ell, 0}^jC_{\ell, 0}^j}{\mu^{2(m-k+j-\ell)}}+\frac{\ln(\mu)\tau_{\ell, 1}^jC_{\ell, 1}^j}{\mu^{2(m-k+j-\ell)}}\Bigg]\\
				& +\ln(\mu)\tau_{m-k+j, 1}^jC_{m-k+j, 1}^j+\tau_{m-k+j, 0}^jC_{m-k+j, 0}^j+O(\mu^{0+}),
			\end{split}
		\end{equation*}
		where  $C_{\ell, 0}^j, C_{\ell, 1}^j\in B(s, -s')$ are selfadjoint operators and $\tau_{\ell, 0}^j, \tau_{\ell, 1}^j\in\mathbf{C}\setminus\mathbf{R}$ for all $\ell$. Furthermore, $C_{0, 0}^j=-G_0vS_{j}(T_j)^{-1}S_{j}vG_0$ and all $C_{\ell, 0}^{j} , C_{\ell, 1}^{j}$ for $ 0\leq\ell\leq m-k+j-1 $ are finite rank.
		
		(iii) For $1\le k\le [\frac{m}{2}]+1$, if zero is  the $k$-th kind of resonance of $H$, then  we have
		\begin{equation*}\label{RV-2m-odd-k}
			\begin{split}
				R_{V}(\mu^{2m})=&\frac{\tau_{0, 1}^kC_{0, 1}^k}{\mu^{2m}g(\mu)}+\frac{\tau_{0, 2}^k C_{0, 2}^k}{\mu^{2m}\big(g(\mu)\big)^2}+\sum_{\ell=1}^{m-1}\Bigg[\frac{\tau_{\ell, 0}^kC_{\ell, 0}^k}{\mu^{2(m-\ell)}}+\frac{\tau_{\ell, 1}^kC_{\ell, 1}^k}{\mu^{2(m-\ell)}g(\mu)}
				+\frac{\tau_{\ell, 2}^kC_{\ell, 2}^k}{\mu^{2(m-\ell)}\big(g(\mu)\big)^2}\Bigg]\\
				&+\tau_{m, 0}^kC_{m, 0}^k+\big(g(\mu)\big)^{-1}\tau_{m, 1}^kC_{m, 1}^k+\big(g(\mu)\big)^{-2}\tau_{m, 2}^k C_{m, 2}^k+O(\mu^{0+}),
			\end{split}
		\end{equation*}
		where  $C_{\ell, 0}^k, C_{\ell, 1}^k, C_{\ell, 2}^k\in B(s, -s')$ are selfadjoint operators and $\tau_{\ell, 0}^k, \tau_{\ell,1}^k, \tau_{\ell, 2}^k\in\mathbf{C}\setminus\mathbf{R}$ for all $\ell$. Furthermore, $C_{0, 1}^k=-G_0vS_{k}(T_k)^{-1}S_{k}vG_0$ and all $C_{\ell, 0}^{k} , C_{\ell, 1}^{k}$ for $ 0\leq\ell\leq m-1 $ are finite rank.
		
		(iv)  For $1\le k\le [\frac{m}{2}]+1$, if zero is an eigenvalue of $H$, then we have
		\begin{equation*}\label{RV-2m-odd-k+1}
			\begin{split}
				R_{V}(\mu^{2m})=&\frac{C_{0, 0}^{k+1}}{\mu^{2m}}+\frac{\tau_{0, 1}^{k+1}C_{0, 1}^{k+1}}{\mu^{2m}g(\mu)}+\sum_{\ell=1}^{m-1}\Bigg[\frac{\tau_{\ell, 0}^{k+1}C_{\ell, 0}^{k+1}}{\mu^{2(m-\ell)}}
				+\frac{\tau_{\ell, 1}^{k+1}C_{\ell, 1}^{k+1}}{\mu^{2m}g(\mu)}\Bigg]\\
				&+\tau_{m, 0}^{k+1}C_{m, 0}^{k+1}+\big(g(\mu)\big)^{-1}\tau_{m, 1}^{k+1}C_{m, 1}^{k+1}+O(\mu^{0+}),
			\end{split}
		\end{equation*}
		where $C_{\ell, 0}^{k+1}, C_{\ell, 1}^{k+1}\in B(s, -s')$ are selfadjoint operators and $\tau_{\ell, 0}^{k+1}, \tau_{\ell, 1}^{k+1}\in\mathbf{C}\setminus\mathbf{R}$ for all $\ell$. Furthermore, $C_{0, 0}^{k+1}=-G_0vS_{k+1}(T_{k+1})^{-1}S_{k+1}vG_0$ and all $C_{\ell, 0}^{k+1} , C_{\ell, 1}^{k+1}$ for $ 0\leq\ell\leq m-1 $ are finite rank.
	\end{theorem}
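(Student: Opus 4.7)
The proof runs in parallel to the odd-dimensional expansion of Theorem \ref{RV-expansions-2m-odd}, with the additional bookkeeping forced by the logarithmic factor $g(\mu)=d_k\ln\mu+c_k$ that appears in the $G_k$ coefficient of \eqref{2m-even-free-2}. The overall strategy is: expand $M(\mu)=U+vR_0(\mu^{2m})v$ to the relevant order using \eqref{M-2m-even}, invert $M(\mu)$ in $B(0,0)$ by an iterated Jensen--Nenciu (Schur complement) reduction, and substitute into the symmetric resolvent identity \eqref{symmetric-resolvent-idnetity} to read off the expansion of $R_V(\mu^{2m})$ in $B(s,-s')$.

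In the regular case, with $T_0=U+vG_0v$ invertible, write $M(\mu)=T_0+F(\mu)$ with $F(\mu)$ of order $\mu^{2(m-k+1)}$ and use the Neumann series $M(\mu)^{-1}=\sum_{j\ge 0}(-T_0^{-1}F(\mu))^j T_0^{-1}$ truncated beyond $\mu^{2m+}$. The unique $g(\mu)$-carrying contribution at that order is the summand $\mu^{2m}g(\mu)vG_kv$, which produces the $\mu^{2m}g(\mu)C_k$ term, while $\mu^{2m}vG_{k+1}v$ yields $\mu^{2m}C_{k+1}$, and the zero-order piece gives $C_0=G_0-G_0v(T_0)^{-1}vG_0$. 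In the non-regular cases I would iterate the Jensen--Nenciu lemma: with $S_1$ the Riesz projection onto $\ker T_0$ and $D_0=(T_0+S_1)^{-1}$, inverting $M(\mu)$ is equivalent to inverting its Schur complement on $S_1L^2$, whose leading term is $\tilde d_1\mu^{2(m-k+1)}T_1$ with $T_1=S_1PS_1$ by Definition \ref{resonance}. If $T_1$ is invertible (first kind of resonance) the iteration stops; otherwise project onto $\ker T_1$ by $S_2$ and repeat, each successful reduction producing a singular factor $\mu^{-2(m-k+j)}$ exactly matching the first sums in (ii)--(iv).

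The critical novelty lies at the layer $j=k$, where the leading coefficient on $S_kL^2$ carries the transcendental factor $g(\mu)$. In case (iii), the local inversion therefore produces leading singularity $(\mu^{2m}g(\mu))^{-1}$, while a second Neumann expansion against the $\mu^{2m}vG_{k+1}v$ correction supplies the $(g(\mu))^{-2}$ contribution; subleading powers $\mu^{-2(m-\ell)}$ for $1\le\ell\le m-1$ inherit possible $(g(\mu))^{-1}$ and $(g(\mu))^{-2}$ coefficients dictated by how $g(\mu)^{-1}$ re-expands against the subleading non-log operators. In case (iv) one descends one further step to $T_{k+1}=S_{k+1}vG_{k+1}vS_{k+1}$, which by item (5) of Definition \ref{resonance} is unconditionally invertible on $S_{k+1}L^2$ (this is precisely where the eigenspace sits), yielding the pure $\mu^{-2m}$ singular term $C_{0,0}^{k+1}/\mu^{2m}$ together with $(g(\mu))^{-1}$ corrections coming from the mixing of the two adjacent layers.

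Finally, inserting $M(\mu)^{-1}$ back into \eqref{symmetric-resolvent-idnetity} and expanding the outer $R_0(\mu^{2m})$ factors via \eqref{2m-even-free-2} gives the claimed operator expansions once terms are sorted by powers of $\mu$ and $\ln\mu$; contributions with real coefficients assemble into selfadjoint operators $C_{\ell,0}^j,C_{\ell,1}^j,C_{\ell,2}^j$ (since $U$, $v$ and each $G_j$ are selfadjoint and the Jensen--Nenciu reduction preserves this structure), while the $\tau_{\ell,i}^j\in\mathbf{C}\setminus\mathbf{R}$ originate from the non-real coefficients $c_k$ and $d_j$, $2\le j\le k-1$, in \eqref{2m-even-free-2}. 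Finite-rankness of $C_{0,0}^j,C_{0,1}^k,C_{0,2}^k,C_{0,0}^{k+1}$ follows from the finite-rankness of the chain $S_1\supseteq\cdots\supseteq S_{k+1}$ recorded in Remark \ref{properties-of-S-T}(2). The main obstacle I expect is the combinatorial bookkeeping at level $j=k$: one must verify that mixing $g(\mu)^{-1}$ with subleading expansions produces only $(g(\mu))^{-1}$ and $(g(\mu))^{-2}$ factors and no spurious $\ln^p\mu$ with $p\ge 3$ within the stated remainders, which is exactly what forces the structural distinction between cases (iii) and (iv).
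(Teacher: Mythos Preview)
Your proposal is correct and follows essentially the same route as the paper: reduce to expanding $M(\mu)^{-1}$ in $B(0,0)$ via Neumann series in the regular case and via iterated Feshbach/Jensen--Nenciu reductions (Lemma \ref{Feshbach-formula}) in the non-regular cases, with the $g(\mu)$ factor entering at level $j=k$, and then substitute into \eqref{symmetric-resolvent-idnetity}. The one point you defer to Definition \ref{resonance}(5)---that $T_{k+1}=S_{k+1}vG_{k+1}vS_{k+1}$ is always invertible---is actually proved separately in the paper (Lemma \ref{2m-odd-kernel}, whose argument is dimension-parity independent), so you should cite that rather than the definition.
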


	\subsection{Identification of zero resonance spaces}
	
	In the above subsection, we obtain the asymptotic expansion of  $R_{V}(\mu^{2m})$ as $\mu\rightarrow0$ with the presence of zero resonance or eigenvalue. For different dimensional cases, the number of the kind of zero resonance is different by Definition \ref{resonance}.  In this subsection, we identify all the kinds of zero resonance spaces. The proofs of the following propositions,  Proposition \ref{classification-4m} --  \ref{classification-2m-even} are placed in Section \ref{proof}.
	
	\begin{proposition}\label{classification-4m}
	For $n>4m$,	let $|V(x)|\lesssim(1+|x|)^{-\beta}$ with some $\beta>n+4$. Then $\phi\in S_1L^2(\mathbf{R}^d)\setminus\{0\}$ if and only if $\phi=Uv\psi$ with $\psi\in L^2(\mathbf{R}^n)$ such that $H\psi=0$ holds in the distributional sense and
		\[\psi(x)=-c\int_{\mathbf{R}^n}\frac{v(y)\phi(y)}{|x-y|^{n-2m}}dy=-G_{0}v\psi.\]
	\end{proposition}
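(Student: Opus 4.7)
My approach is to establish the bijective correspondence $\phi = Uv\psi \leftrightarrow \psi = -G_0 v \phi$ between $S_1 L^2(\mathbf{R}^n)\setminus\{0\}$ and the nonzero zero-energy eigenfunctions of $H$. Two algebraic facts will organize the argument. First, the operator $G_0$, whose kernel is a scalar multiple of $|x-y|^{2m-n}$ (with the normalization from Proposition \ref{free-expansions}), coincides with the Riesz potential $(-\Delta)^{-m}$, so $(-\Delta)^m(G_0 f) = f$ on suitable test classes. Second, the relation $T_0 \phi = U\phi + vG_0 v \phi = 0$ can be re-read as $U\phi = v\psi$ once one introduces $\psi := -G_0 v\phi$, and multiplication by $U$ together with $U^2 = 1$ then yields $\phi = Uv\psi$.

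For the forward direction, I will take $\phi \in S_1 L^2(\mathbf{R}^n)\setminus\{0\}$ and set $\psi := -G_0 v\phi$. The identity $T_0 \phi = 0$ immediately gives $\phi = U v\psi$. To verify $H\psi = 0$ distributionally, I will compute $(-\Delta)^m \psi = -v\phi$ from the Riesz identification of $G_0$, and $V\psi = U v^2 \psi = Uv \cdot v\psi = Uv \cdot U\phi = v\phi$ using $v\psi = U\phi$; the two contributions cancel. The remaining ingredient is $\psi \in L^2(\mathbf{R}^n)$, which will follow from the bootstrap recalled in Remark \ref{no resonance}: the factor $v\phi$ has arbitrarily fast polynomial decay thanks to $\beta > n + 4$ and $\phi \in L^2$, and the Riesz potential $(-\Delta)^{-m}$ applied to such an input produces $\psi$ in a weighted $L^2$ space; iterating the identity $\psi = -G_0 V\psi$ (which follows from $H\psi = 0$) then upgrades $\psi$ into $L^2$, the iteration terminating because $n > 4m$.

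For the reverse direction, I will suppose $\psi \in L^2(\mathbf{R}^n)$ satisfies $H\psi = 0$ with $\psi = -G_0 v\phi$, and set $\phi = Uv\psi$. Since $v \in L^\infty$, $\phi \in L^2$ is clear. The equation $H\psi = 0$ rewrites as $(-\Delta)^m \psi = -V\psi$; applying $G_0 = (-\Delta)^{-m}$ yields $G_0 V\psi = -\psi$. A direct computation then produces $T_0 \phi = U(Uv\psi) + vG_0 v(Uv\psi) = v\psi + vG_0 V\psi = v\psi - v\psi = 0$, so $\phi \in \ker T_0 = S_1 L^2(\mathbf{R}^n)$. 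Nontriviality is automatic: were $\phi = 0$, one would have $v\psi = 0$, hence $V\psi = 0$, hence $(-\Delta)^m \psi = 0$ with $\psi \in L^2$, forcing $\psi = 0$ by Fourier analysis, contradicting that we want a nonzero eigenfunction.

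The main obstacle I expect is the $L^2$ regularity step in the forward direction. This is where the dimensional restriction $n > 4m$ does real work: only then does the Riesz kernel $|x-y|^{2m-n}$ decay fast enough at infinity that $G_0$ applied to a sufficiently weighted $L^2$ function lands back in $L^2$. The decay hypothesis $\beta > n + 4$ supplies the surplus needed to iterate the bootstrap $\psi = -G_0 V\psi$ as in Remark \ref{no resonance}, and it is this step, rather than the purely algebraic manipulations involving $G_0 = (-\Delta)^{-m}$, $V = Uv^2$, and $U^2 = 1$, that carries the analytic content of the proposition.
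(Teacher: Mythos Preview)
Your approach is essentially the same as the paper's, and the algebraic manipulations (using $U^2=1$, $V=Uv^2$, and the equivalence $H\psi=0\Leftrightarrow(I+G_0V)\psi=0$) are correctly identified. One small correction on the $L^2$ regularity step in the forward direction: the claim that $v\phi$ has ``arbitrarily fast polynomial decay'' is not accurate---what you actually have is $v\phi\in L^2_{\beta/2}(\mathbf{R}^n)$, since $|v|\lesssim(1+|x|)^{-\beta/2}$ and $\phi\in L^2$. But this is already enough, and no iteration is needed: because $n>4m$ gives $2m<n/2$, Lemma~\ref{Riesz-potential-boundedness}(1) applies with $\alpha=2m$, $s=\beta/2>2m$, $s'=0$, yielding $G_0:L^2_{\beta/2}\to L^2$ directly, so $\psi=-G_0v\phi\in L^2$ in one step. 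The bootstrap from Remark~\ref{no resonance} is meant for the situation where one only knows $\psi\in L^2_{-s}$ a priori; here the defining formula for $\psi$ already places it in $L^2$. Your explicit nontriviality check in the reverse direction is a nice addition that the paper leaves implicit.
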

	
	\begin{proposition}\label{classification-2m-odd}
	For $n=4m+1-2k$ with $1\le k\le [\frac{m}{2}]+1$ and $n$ is odd,	assume that $|V(x)|\lesssim(1+|x|)^{-\beta}$ with some $\beta>n+4k$. Then the following statements hold:
		
		(i) $\phi(x)\in S_{1}L^{2}(\mathbf{R}^{n})\setminus\{0\}$ if and only if $\phi(x)=Uv(x)\psi(x)$ where $\psi(x)\in W_{2m-\frac{n}{2}}(\mathbf{R}^{n})$ satisfies $H\psi(x)=0$ in the  distributional sense.
		
		(ii) For  $2\le j\le k$,  $\phi(x)=Uv(x)\psi(x)\in S_{j}L^{2}(\mathbf{R}^{n})\setminus\{0\}$ if and only if $\psi(x)\in W_{2m-\frac{n}{2}-j+1}(\mathbf{R}^{n})$  satisfies $H\psi(x)=0$ in the  distributional sense.
		
		(iii) $\phi(x)=Uv(x)\psi(x)\in S_{k+1}L^{2}(\mathbf{R}^{n})\setminus\{0\}$ if and only if $\psi(x)\in L^{2}(\mathbf{R}^{n})$ satisfies $H\psi(x)=0$ in the  distributional sense.
	\end{proposition}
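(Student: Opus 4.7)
The strategy is to iteratively decode the algebraic kernel conditions $T_{j-1}\phi=0$ defining $S_j$ into analytic conditions on the companion function $\psi:=-G_0v\phi$, using the structural identity $T_0\phi=U\phi+vG_0v\phi$ and the explicit form of the operators $G_j$ that arise in the zero expansion of $R_0(\mu^{2m})$. The base case (part (i)) is direct: if $\phi\in S_1L^2\setminus\{0\}$, set $\psi:=-G_0v\phi$, and rearrange $T_0\phi=0$ to $U\phi=v\psi$, yielding $\phi=Uv\psi$ and $v\phi=Uv^2\psi=V\psi$. Then $\psi=-G_0V\psi$ gives $H\psi=0$ distributionally after applying $(-\Delta)^m$. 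The Riesz-potential mapping properties of $G_0$ applied to $v\phi\in L^1\cap L^2$ (using $\beta>n+4k$) give $|\psi(x)|\lesssim\langle x\rangle^{2m-n}$, so $\psi\in W_{2m-n/2}$. The converse direction is immediate by reversing these steps starting from $\psi=-G_0V\psi$.

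For part (ii), I would argue by induction on $2\le j\le k$ that the stack of conditions $T_1\phi=0,\ldots,T_{j-1}\phi=0$ is equivalent to the vanishing of the moments
\begin{equation*}
\int_{\mathbf{R}^n} y^\alpha v(y)\phi(y)\,dy=0\qquad\text{for all }|\alpha|\le j-2.
\end{equation*}
The first step uses the rank-one form $T_1=S_1PS_1$ with $P=\|v\|^{-2}v\langle v,\cdot\rangle$, which forces $\langle v,\phi\rangle=0$ as soon as $S_1v\neq 0$. For $i\ge 2$, the kernel $G_i(x,y)=|x-y|^{2i-2}$ is a polynomial in $(x,y)$; expanding by the binomial theorem, the bilinear form $\langle\phi',vG_iv\phi\rangle$ on $S_iL^2\times S_iL^2$ decomposes into products of moments $\int x^\beta v\phi'\cdot\int y^\alpha v\phi$ with $|\alpha|+|\beta|\le 2i-2$, and the previously accumulated vanishing moments on $S_iL^2$ allow one to isolate exactly one new moment obstruction per step. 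On the analytic side, the multipole expansion
\begin{equation*}
|x-y|^{2m-n}=|x|^{2m-n}\sum_{N\ge 0}|x|^{-N}Q_N(y,x/|x|),
\end{equation*}
with $Q_N$ homogeneous of degree $N$ in $y$, shows that moments vanishing through order $j-2$ force $|\psi(x)|\lesssim\langle x\rangle^{2m-n-(j-1)}$, which is precisely the condition $\psi\in W_{2m-n/2-j+1}$.

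For part (iii), pushing this iteration to $j=k+1$ delivers moment vanishing through order $k-1$, giving $|\psi(x)|\lesssim\langle x\rangle^{2m-n-k}$. Since $n=4m+1-2k$ makes $2m-n-k=k-2m-1$ and $2(k-2m-1)+n<0$ for $k\le m$, we conclude $\psi\in L^2(\mathbf{R}^n)$, so $\psi$ is a genuine zero eigenfunction of $H$. The converse is clear because any $L^2$ eigenfunction produces $\phi=Uv\psi\in L^2$ with all higher moment obstructions automatically satisfied. The principal obstacle in the whole argument is the inductive step in part (ii): showing the Riesz projections $S_j$ cut out exactly, no more and no less than, the claimed moment conditions. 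This reduces to a non-degeneracy argument for the Gram-type matrix built from $\{v(x)x^\alpha\}$ in the finite-dimensional range $S_{j-1}L^2$, together with a careful tracking of how the polynomial structure of $G_i$ interacts with the inductively available orthogonality relations. As a byproduct, the same bookkeeping simultaneously establishes the finite-rank property of all $S_j$ and the equivalence of Definitions \ref{resdef} and \ref{resonance}.
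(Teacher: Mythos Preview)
Your overall architecture matches the paper's: characterize $S_jL^2$ via vanishing moments $\int y^\alpha v\phi\,dy=0$ for $|\alpha|\le j-2$ (this is exactly the paper's Lemma~\ref{S-projection}), and translate these moment conditions into membership of $\psi=-G_0v\phi$ in the correct $W_\sigma$. The paper carries out the second translation differently, however, and your sketch has two genuine gaps at that point.

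First, the multipole expansion $|x-y|^{2m-n}=|x|^{2m-n}\sum_{N\ge0}|x|^{-N}Q_N(y,x/|x|)$ only converges for $|y|<|x|$; since $v\phi$ is not compactly supported, you cannot simply subtract the first $j-1$ terms and claim the remainder is $O(\langle x\rangle^{2m-n-(j-1)})$. The paper instead subtracts from $|x-y|^{2m-n}$ an explicit finite sum of terms of the form $(1+|x|)^{-(n-2m+\ell)}P_\ell(x,y)$ (polynomial in $y$, homogeneous terms in $x$) and proves, by hand, a pointwise bound on the resulting kernel $K_j(x,y)$ of the form $\sum_\ell (1+|y|)^{a_\ell}(1+|x|)^{-b_\ell}|x-y|^{-c_\ell}$ with $b_\ell+c_\ell\ge n-2m+j-1$, valid for \emph{all} $x,y$. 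One then applies Lemma~\ref{Riesz-potential-boundedness} in weighted $L^2$, not a pointwise bound on $\psi$. If you want to salvage the multipole route, you must at least split into $|y|\le|x|/2$ and $|y|>|x|/2$ and use $\beta>n+4k$ to control the tail; as written, the step ``moment vanishing through order $j-2$ forces $|\psi(x)|\lesssim\langle x\rangle^{2m-n-(j-1)}$'' is not justified.

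Second, the converse directions in (ii) and (iii) are not automatic. Knowing $\psi\in W_{2m-\frac{n}{2}-(j-1)}$ does not by itself produce the moment condition of order $j-2$; one has to write $\psi=\int K_j(\cdot,y)v\phi\,dy+R_j(x)$, observe that the first term already lies in $W_{2m-\frac{n}{2}-(j-1)}$, and conclude that the explicit remainder $R_j(x)$ (a finite combination of $(1+|x|)^{-(n-2m+\ell)}$ times the degree-$(j-2)$ moments) must also lie there, which forces those moments to vanish because the corresponding power of $(1+|x|)$ is too slow. Your claim that for (iii) ``any $L^2$ eigenfunction produces $\phi=Uv\psi\in L^2$ with all higher moment obstructions automatically satisfied'' is exactly the step that requires this argument. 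Finally, the ``non-degeneracy of a Gram-type matrix built from $\{vx^\alpha\}$'' is not how the moment lemma is proved: the functions $vx^\alpha$ need not lie in $S_{j-1}L^2$. The paper's Lemma~\ref{S-projection} is proved instead by expanding $|x-y|^{2N-2}$ multinomially and observing that, after the previously accumulated cancellations, only the diagonal block in which $x$ and $y$ carry the same multi-index survives, with a definite sign.
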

	
	\begin{proposition}\label{classification-2m-even}
		 For $n=4m+2-2k$ with $1\le k\le [\frac{m}{2}]+1$ and $n$ is even, assume that $|V(x)|\lesssim (1+|x|)^{-\beta}$ with some $\beta>n+4k$. Then the following statements hold:
		
		(i) $\phi(x)\in S_{1}L^{2}(\mathbf{R}^{n})\setminus\{0\}$ if and only if $\phi(x)=Uv(x)\psi(x)$ where $\psi(x)\in W_{2m-\frac{n}{2}}(\mathbf{R}^{n})$ satisfies $H\psi(x)=0$ in the  distributional sense.
		
		(ii) For $2\le j\le k-1$,  $\phi(x)=Uv(x)\psi(x)\in S_{j}L^{2}(\mathbf{R}^{n})\setminus\{0\}$ if and only if $\psi(x)\in W_{2m-\frac{n}{2}-j+1}(\mathbf{R}^{n})$  satisfies $H\psi(x)=0$ in the distributional sense.
		
		(iii) $\phi(x)=Uv(x)\psi(x)\in S_{k}L^{2}(\mathbf{R}^{n})\setminus\{0\}$ if and only if $\psi(x)\in W_{0}(\mathbf{R}^{n})$ satisfies $H\psi(x)=0$ in the distributional sense.
		
		(iv) $\phi(x)=Uv(x)\psi(x)\in S_{k+1}L^{2}(\mathbf{R}^{n})\setminus\{0\}$ if and only if $\psi(x)\in L^{2}(\mathbf{R}^{n})$ satisfies $H\psi(x)=0$ in the distributional sense.
	\end{proposition}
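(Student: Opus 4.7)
The plan is to proceed by induction on $j$, in the same spirit as the proof of Proposition~\ref{classification-2m-odd}, while carefully tracking the logarithmic factors that enter the even-dimensional free-resolvent expansion~\eqref{2m-even-free-2}. The bridge between the algebraic characterization of $S_j L^2$ via the operators $T_0, T_1, \ldots$ of Definition~\ref{resonance} and the analytic statement about $\psi$ is the ansatz $\psi := -G_0 v\phi$, which forces $\phi = Uv\psi$ and $v\phi = V\psi$, so that $H\psi = 0$ holds distributionally precisely when $T_0\phi = 0$.

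For part~(i), I would first identify $\phi \in S_1 L^2\setminus\{0\}$ with the algebraic condition $T_0\phi = U\phi + vG_0v\phi = 0$, and then set $\psi = -G_0 v\phi$. Using that $G_0(x,y) = d_0|x-y|^{2m-n}$ is a constant multiple of the Riesz kernel of order $2m$, the relation $(-\Delta)^m\psi = -V\psi$ follows in $\mathscr{D}'$. The membership $\psi\in W_{2m-n/2}(\mathbf{R}^n)$ is then a consequence of the mapping properties of the Riesz potential between weighted $L^2$ spaces, combined with the decay assumption $|V(x)|\lesssim \langle x\rangle^{-\beta}$ for $\beta>n+4k$. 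The converse direction requires showing that any distributional solution $\psi \in W_{2m-n/2}$ of $H\psi=0$ can be represented as $-G_0 v\phi$ with $\phi = Uv\psi$; this rests on ruling out the polyharmonic null space of $(-\Delta)^m$ using the prescribed decay class.

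For parts~(ii) and~(iii), the key is the following iterative gain of decay. By Definition~\ref{resonance}, $\phi \in S_{j+1}L^2$ is characterized as a vector of $S_j L^2$ that is additionally annihilated by $S_j vG_j v S_j$. Since the kernels $G_j(x,y) = |x-y|^{2j-2}$ are polynomial in $|x-y|^2$ for $2\le j\le k-1$, these conditions translate into successive vanishing-moment conditions on $v\phi$. Taylor-expanding $G_0(x,y) = d_0|x-y|^{2m-n}$ in $y$ for $|x|\gg|y|$ shows that each vanishing moment of $v\phi$ improves the decay of $\psi = -G_0v\phi$ at infinity by one power of $|x|^{-1}$. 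Iterating this $j-1$ times lifts $\psi$ from $W_{2m-n/2}$ into $W_{2m-n/2-(j-1)}$, which for $n=4m+2-2k$ reads $\psi \in W_{k-j}$; in particular the endpoint $j=k$ of part~(iii) yields $\psi\in W_0$. The converse at each level is obtained by reading the asymptotic expansion of $\psi$ at infinity backwards to recover the vanishing moments, hence the membership in the successive kernels.

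For part~(iv), one further moment condition collapses $\psi$ into $L^2(\mathbf{R}^n)$, promoting the resonance to a genuine eigenfunction; this is verified by the same Taylor-expansion argument combined with a square-integrability check at infinity. The principal obstacle, absent in the odd-dimensional case, is the logarithmic factor $g(\mu)=d_k\ln\mu + c_k$ multiplying $vG_k v$ in the expansion~\eqref{M-2m-even}: the transition operator $S_k vG_k v S_k$ governing the step from $S_{k-1}L^2$ to $S_k L^2$ must be read off the appropriate coefficient in the joint expansion in $\mu$ and $\ln\mu$, and the logarithmic kernel $G_{k+1}(x,y) = d_{k+1}|x-y|^{2k-2}\ln|x-y|$ contributes a logarithmic tail that has to be absorbed by the previously established moment cancellations. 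Matching these log-coefficients against the decay gained from the Taylor expansion is precisely what produces the classes $W_0\setminus L^2$ for part~(iii) and $L^2$ for part~(iv); once this bookkeeping is done, the remainder of the argument proceeds in parallel with the odd-dimensional proof.
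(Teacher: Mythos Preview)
Your approach is essentially the paper's: the paper simply writes that the proof is ``completely similar'' to that of Proposition~\ref{classification-2m-odd} and omits it. Your inductive scheme via the moment conditions of Lemma~\ref{S-projection} and the Taylor-type kernel estimates on $G_0(x,y)=d_0|x-y|^{2m-n}$ is exactly what that similarity entails.

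However, your discussion of the ``principal obstacle'' is a red herring. The logarithmic function $g(\mu)$ and the kernel $G_{k+1}(x,y)=d_{k+1}|x-y|^{2k-2}\ln|x-y|$ play no role whatsoever in the classification of the spaces $S_jL^2$. The projections $S_2,\ldots,S_{k+1}$ are defined as the successive kernels of $T_1=S_1PS_1$ and $T_j=S_jvG_jvS_j$ for $2\le j\le k$, and in the even-dimensional expansion~\eqref{2m-even-free-2} one still has $G_j(x,y)=|x-y|^{2j-2}$ for $2\le j\le k$, exactly as in the odd case. Lemma~\ref{S-projection} therefore applies verbatim, and the kernel estimates for $K_j(x,y)$ in the proof of Proposition~\ref{classification-2m-odd} depend only on $G_0$, which has the same form $|x-y|^{2m-n}$ in both parities. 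The logarithmic data enter only in the resolvent expansion of $M(\mu)^{-1}$ (Proposition~\ref{M-expansions-2m-even}), not in the identification of the resonance subspaces. Once you drop that unnecessary detour, your proof is literally the odd-dimensional argument with $n=4m+2-2k$ in place of $n=4m+1-2k$, and the endpoint arithmetic $2m-\tfrac{n}{2}-(k-1)=0$ and $2m-\tfrac{n}{2}-k=-1$ gives $W_0$ and $W_{-1}\subset L^2$ for parts~(iii) and~(iv).
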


	\begin{remark}
		(1) For $H=(-\Delta)^m+V$ and $|V(x)|\lesssim (1+|x|)^{-\beta}$ with some $\beta>n+4$,  Proposition \ref{classification-4m} shows that  $H$ does not exist zero resonance  if $n>4m$. \par (2) If $S_{1}=0$, then zero is a regular point of $H$.   In the case of $3m\leq n\leq 4m$, if $S_{1}=S_{k+1}$, then zero is purely an eigenvalue of $H$; if $S_{k+1}=0$, then zero is purely a resonance of $H$; if $S_{k+1}\neq 0$, then $S_{j}L^{2}(\mathbf{R}^n)$ for $1\leq j< k+1$ contain the mixed state i.e. zero is both eigenvalue and resonance of $H$.
	\end{remark}

	\section{Higher energy decay estimates of $R_V(z)$}\label{high energy}
	This section are devoted the two aims. The first one is to obtain the higher energy decay estimates of $R_V(z)$. The other one is to study the boundary behavior of $R_V(z)$, which is to show that the following limit
	$$R_V(\lambda\pm i0)=s-\lim_{\epsilon\downarrow 0}R_V(\lambda\pm i\epsilon)$$
	exists in $B(s, -s')$ with suitable $s, s'>0$.
	
	\subsection{Higher energy decay estimates}
	In this section, we aim to study the decay rate of  $R_V(z)$ in $B(s, -s')$ as $z$ goes to infinity with some suitable $s, s'>0$.  Recall the symmetric resolvent identity \eqref{symmetric-resolvent-idnetity}:
	\begin{equation*}
		R_V(z)=R_0(z)-R_0(z)v\big[M(z)\big]^{-1}vR_0(z).
	\end{equation*}
	If  $M(z)=U+vR_0(z)v$ has uniform bounded inverse in $L^{2}(\mathbf{R}^n)$, then  $R_V(z)$ has the same decay rate as the  $R_{0}(z)$ in $B(s, -s')$ as $z\rightarrow\infty$.
	
	The free resolvent decomposition identity \eqref{free-resolvent-identity} shows that one can obtain higher energy decay estimates of $R_0(z)$ from the respect estimate of $R(-\Delta; \zeta)$. The following result is the fundamental Agmon-Kato estimate on decay rate for the free resolvent of Schr\"{o}dinger  operator $R(-\Delta; \zeta)$ as $\zeta$ goes to infinity in the weighted Lebesgue norms. It plays a crucial role in time-decay estimates of the solution to Schr\"{o}dinger equation.
	\begin{lemma}\label{high-free}
		(\cite[Theorem 16.1]{KK}) For $\zeta\in\mathbf{C}\setminus[0,+\infty)$, $k=0,1,2,3,\cdots$, any $s, s'>k+\frac{1}{2}$ and any $a>0$, then
		\begin{equation*}\label{22}
			\big\|R^{(k)}(-\triangle; \zeta)\big\|_{B(s, -s')}\leq C(s, a)|\zeta|^{-k/2},\,\,|\zeta|\ge a.
		\end{equation*}
	\end{lemma}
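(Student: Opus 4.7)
The plan is to establish this classical Agmon-Kato-Kuroda type bound by combining the Fourier representation of the resolvent with a trace theorem on spheres and a dilation scaling argument. First, I would write the resolvent derivative via the functional calculus,
\begin{equation*}
R^{(k)}(-\Delta;\zeta)f(x)=\frac{k!}{(2\pi)^{n/2}}\int_{\mathbf{R}^n}\frac{\widehat{f}(\xi)\,e^{ix\cdot\xi}}{(|\xi|^2-\zeta)^{k+1}}\,d\xi,
\end{equation*}
and pass to polar coordinates $\xi=r\theta$, $r>0$, $\theta\in S^{n-1}$, followed by the substitution $u=r^2$. The resulting expression has the form $\int_0^\infty(u-\zeta)^{-(k+1)}\Phi_f(u,x)\,du$, where $\Phi_f(u,x)$ involves the sphere-restricted Fourier transform $\widehat{f}\bigl|_{|\xi|=\sqrt{u}}$.

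Second, I would invoke Agmon's trace (restriction) theorem: for $s>1/2$, the map $f\mapsto\widehat{f}|_{|\xi|=\sqrt{u}}$ is bounded from $L^2_s(\mathbf{R}^n)$ into $L^2$ of the sphere, with norm depending polynomially on $u$. To handle the $(k+1)$-fold pole $(u-\zeta)^{-(k+1)}$, I would integrate by parts $k$ times in the radial variable $u$, reducing the singularity to a simple pole whose integral is controlled by the standard limiting absorption principle. Each integration by parts transfers one derivative onto $\Phi_f(u,x)$, which corresponds (via Plancherel and the commutator of $\partial_r$ with the Fourier transform) to one extra power of $\langle x\rangle$ in the weight; this is exactly the source of the sharp condition $s, s'>k+1/2$.

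Third, to upgrade the uniform bound into the decay $|\zeta|^{-k/2}$, I would use the Laplacian scaling. Letting $U_\rho f(x)=\rho^{-n/2}f(x/\rho)$ be the unitary dilation on $L^2(\mathbf{R}^n)$, the identity $U_\rho^{-1}(-\Delta)U_\rho=\rho^{-2}(-\Delta)$ and differentiation yield
\begin{equation*}
R^{(k)}(-\Delta;\zeta)=|\zeta|^{-(k+1)}\,U_{|\zeta|^{-1/2}}\,R^{(k)}\bigl(-\Delta;\zeta/|\zeta|\bigr)\,U_{|\zeta|^{-1/2}}^{-1}.
\end{equation*}
Since $\zeta/|\zeta|$ lies on the unit circle, the middle factor is bounded uniformly in $B(s,-s')$ by the step above. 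Carefully bookkeeping how the dilations act on the weighted spaces $L^2_s$ and $L^2_{-s'}$, and exploiting the fact that the sharp estimates at the unit sphere combine with the rescaling Jacobian to cancel the $|\zeta|^{(s+s')/2}$ growth of naive weight bounds, one extracts the claimed $|\zeta|^{-k/2}$ factor.

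The main obstacle is the last step: a crude scaling produces only a bound of the form $|\zeta|^{(s+s')/2-(k+1)}$, which is weaker than $|\zeta|^{-k/2}$. Recovering the sharp power requires leveraging the trace theorem in a refined form at the rescaled sphere of radius $|\zeta|^{1/2}$, so that the $k$ radial integrations by parts produce compensating gains of $|\zeta|^{-1/2}$ per derivative. Balancing this refinement against the weighted-norm behavior of $U_{|\zeta|^{-1/2}}$, under the hypothesis $s, s'>k+1/2$, is the technical heart of the proof.
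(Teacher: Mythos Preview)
This lemma is not proved in the paper; it is quoted directly from Komech--Kopylova \cite[Theorem~16.1]{KK}. The paper's own contribution is the analogous estimate for $(-\Delta)^m$ (Proposition~\ref{free-high energy}), and there the method is \emph{not} scaling plus trace theorems but rather the commutator recurrence
\[
\zeta\,R^{(k)}(-\Delta;\zeta)=-k\,R^{(k-1)}(-\Delta;\zeta)+\tfrac{1}{2}\bigl[x\cdot\nabla,\ R^{(k-1)}(-\Delta;\zeta)\bigr],
\]
together with induction on $k$, exactly as in \cite[p.~65]{KK}. The base case $k=0$ is the usual limiting absorption principle (uniform bound, no decay), and each application of the recurrence extracts one factor $|\zeta|^{-1/2}$ at the cost of one extra unit in the weight indices $s,s'$, which is why the hypothesis $s,s'>k+\tfrac12$ appears.

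Your route via the Fourier representation, radial integration by parts, and dilation is a different and in principle legitimate strategy, but as written it is incomplete precisely where you say it is. You correctly observe that the crude dilation bound on $U_{|\zeta|^{-1/2}}$ in $L^2_s$ and $L^2_{-s'}$ yields only $|\zeta|^{(s+s')/2-(k+1)}$, which for $s,s'>k+\tfrac12$ is strictly worse than $|\zeta|^{-k/2}$. Your proposed fix---that ``the $k$ radial integrations by parts produce compensating gains of $|\zeta|^{-1/2}$ per derivative'' at the rescaled sphere---is asserted but not carried out, and this is not a bookkeeping detail: the $u$-derivatives of $\Phi_f(u,x)$ involve both the chain-rule factor $(2\sqrt{u})^{-1}$ and a factor of $|x|$ coming from $\partial_r\widehat f(r\theta)=\widehat{(-i\theta\cdot x)f}(r\theta)$, and showing that these combine to give exactly $|\zeta|^{-k/2}$ after the trace estimate requires a genuine argument you have not supplied. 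The commutator--recurrence proof sidesteps this balancing act entirely, because the identity above already isolates one clean factor of $\zeta^{-1}$ per step, with the commutator absorbing exactly one unit of weight; that is why it is the standard argument.
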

	
	Note that the proof of Theorem 16.1 in \cite{KK} does not depend on the dimension $n$. The following is the similar conclusion for $H_0=(-\Delta)^m$.
	\begin{proposition}\label{free-high energy}
		For $z\in\mathbf{C}\setminus[0,+\infty),$ $k=0,1,2,3,\cdots$, any $s, s'>k+\frac{1}{2}$ and $a>0$, then
		\begin{equation}\label{eq-high-free}
			\big\|R^{(k)}_0(z)\big\|_{B(s, -s')}\le C(s, a)|z|^{-\frac{(2m-1)(k+1)}{2m}},\ \ \ |z|\ge a.
		\end{equation}
	\end{proposition}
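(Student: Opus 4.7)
The plan is to reduce the bound to the known Laplacian resolvent estimates in Lemma \ref{high-free} by means of the partial-fraction identity \eqref{free-resolvent-identity}, and then apply the Leibniz and chain rules after differentiating in $z$.

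For the base case $k=0$ I would start directly from
\[
R_0(z) = \frac{1}{mz}\sum_{\ell=0}^{m-1} z_\ell\, R(-\Delta;z_\ell),
\]
take $B(s,-s')$ norms with $s, s' > 1/2$, and apply Lemma \ref{high-free} to each $R(-\Delta;z_\ell)$. Using $|z_\ell| = |z|^{1/m}$, the combined prefactors $\tfrac{|z_\ell|}{m|z|}$ times the Laplacian bound should collapse to exactly $|z|^{-(2m-1)/(2m)}$, which matches the target for $k=0$.

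For the general case $k \geq 1$ I would differentiate \eqref{free-resolvent-identity} $k$ times in $z$. Since $z_\ell = z^{1/m} e^{i 2\ell\pi/m}$ satisfies $\tfrac{dz_\ell}{dz} = \tfrac{z_\ell}{mz}$, applying the Leibniz rule to the explicit prefactor $\tfrac{z_\ell}{mz}$ and Fa\`a di Bruno's formula to the chain-ruled resolvent $R(-\Delta;z_\ell)$ yields a finite linear combination of terms of the form $c\, z^{-p} z_\ell^{q}\, R^{(j)}(-\Delta;z_\ell)$, with $0 \leq j \leq k$ and integers $p,q$ fixed by the differentiation pattern. Each such term is then estimated via Lemma \ref{high-free}, which gives $\|R^{(j)}(-\Delta;z_\ell)\|_{B(s,-s')} \lesssim |z_\ell|^{-j/2}$ whenever $s, s' > j+1/2$; this is ensured by the standing hypothesis $s, s' > k+1/2$. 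Converting every $|z_\ell|$ to $|z|^{1/m}$ and summing, one checks that each term matches the target exponent $|z|^{-(2m-1)(k+1)/(2m)}$.

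The main obstacle will be the bookkeeping after $k$ differentiations: one must verify that \emph{no} term produced by the Leibniz / Fa\`a di Bruno expansion decays more slowly than the claimed rate, and that the worst-case term actually saturates the claimed exponent. A cleaner alternative would be to proceed by induction on $k$ using $R_0^{(k)}(z) = k!\, R_0(z)^{k+1}$ together with a composition lemma on weighted $L^2$ spaces; however, iterating $R_0(z)$ on $B(s,-s')$ without an interpolation of weights appears to demand slightly more flexibility in the weights than the hypothesis $s, s' > k+1/2$ affords, so the direct differentiation of \eqref{free-resolvent-identity} combined with Lemma \ref{high-free} seems the most efficient route.
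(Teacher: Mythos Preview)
Your base case $k=0$ matches the paper's argument exactly. For $k\ge 1$ the paper takes a different route: instead of differentiating the partial-fraction identity \eqref{free-resolvent-identity} and invoking Fa\`a di Bruno, it uses the scaling-based commutator recurrence
\[
zR^{(k)}_0(z)=-kR^{(k-1)}_0(z)+\frac{1}{2m}\big[x\cdot\nabla,\, R^{(k-1)}_0(z)\big]
\]
and then inducts on $k$ following \cite[p.~65]{KK}. Your approach is more elementary and self-contained (no commutator identities, no external induction machinery), at the cost of the combinatorial bookkeeping you already flag. The paper's recurrence method is cleaner once the identity is known, and it transfers the induction directly from the Laplacian case already worked out in \cite{KK}. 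Both routes deliver the same exponent; in your expansion the dominant contribution comes from the partition into singletons ($|\pi|=r=k$), which yields precisely $|z|^{-(2m-1)(k+1)/(2m)}$, while all other terms decay strictly faster.

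One small caveat: for your $k=0$ computation (and the paper's) to collapse to $|z|^{-(2m-1)/(2m)}$ you need $\|R(-\Delta;\zeta)\|_{B(s,-s')}\lesssim|\zeta|^{-1/2}$, i.e.\ the bound $|\zeta|^{-(j+1)/2}$ for $R^{(j)}$, not the $|\zeta|^{-j/2}$ you quote from Lemma~\ref{high-free}. The paper's own $k=0$ line implicitly uses this too, so Lemma~\ref{high-free} as stated appears to carry a typo; the standard Agmon--Kato bound in \cite{KK} is indeed $|\zeta|^{-(k+1)/2}$. With that correction your Fa\`a di Bruno argument goes through cleanly.
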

	\begin{proof}
		Firstly, we prove decay estimate \eqref{eq-high-free} for $k=0$.  By identity \eqref{free-resolvent-identity} and Lemma \ref{high-free}, we have
		\begin{equation*}\label{eq-free-1}
			\big\| R_0(z)\big\|_{B(s, -s')}
				\le\frac{1}{m|z|}\sum_{k=0}^{m-1}|z_k|\cdot\big\| R(-\Delta; z_k)\big\|_{B(s, -s')}
				\le C(s, a)|z|^{-\frac{(2m-1)}{2m}}.
		\end{equation*}
		Now we check decay estimate \eqref{eq-high-free} for $k\ge 1$. For $R_0(z)$ we have the recurrent relations
		\begin{equation}\label{eq-recurrent}
			zR^{(k)}_0(z)=-kR^{(k-1)}_0(z)+\frac{1}{2m}\big[x\cdot\nabla,\,\, R^{(k-1)}_0(z)\big].
		\end{equation}
		By a similar induction process as in \cite[P. 65]{KK}, we get the desired estimate \eqref{eq-high-free}.
	\end{proof}

	Next, we prove that $M(z)=U+vR_0(z)v$  has uniform bounded inverse in $L^2(\mathbf{R}^n)$. Note that, $V$ is $H_0-$relative bounded under the assumption of $V(x)$ in Theorem \ref{Kato-Jensen}.  Thus there exists a finite constant $V_0\in\mathbf{R}$, such that for any $\lambda\in\mathbf{R}\setminus[V_0,+\infty)$, $H-\lambda=(-\Delta)^m+V-\lambda>0$, then we have $\mathbf{C}\setminus[V_0,+\infty)\subset\rho(H)$ where $\rho(H)$ is the resolvent set of $H$ .
	\begin{lemma}\label{RV-inverse}
		Let $|V(x)|\lesssim (1+|x|)^{-\beta}$ with some $\beta>2$.  Then for $z\in\mathbf{C}\setminus[0, +\infty)$, $vR_0(z)v\in B(0, 0)$ are compact operators. Moreover, $M(z)=U+vR_0(z)v$ is invertible in $L^{2}(\mathbf{R}^n)$ for $z\in\mathbf{C}\setminus[V_0, +\infty)$.
	\end{lemma}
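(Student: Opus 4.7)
The plan is to establish the two assertions in turn: first the compactness of $vR_0(z)v$ on $L^2(\mathbf{R}^n)$, then the invertibility of $M(z)$ on $\mathbf{C}\setminus[V_0,+\infty)$ via the Fredholm alternative together with a null-space argument.

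For the compactness, I will write $vR_0(z)v$ as the composition of three maps: (i) multiplication by $v$ from $L^2$ to $L^2$, bounded since $v\in L^\infty$ (as $|V|\lesssim 1$); (ii) $R_0(z)$ from $L^2$ to $\mathcal{D}(H_0)=H^{2m}(\mathbf{R}^n)$, bounded for $z\in\rho(H_0)=\mathbf{C}\setminus[0,+\infty)$ since $(1+|\xi|^{2m})/(|\xi|^{2m}-z)$ is a bounded Fourier multiplier; and (iii) multiplication by $v$ from $H^{2m}(\mathbf{R}^n)$ to $L^2(\mathbf{R}^n)$. The crucial observation is that the last map is \emph{compact}: since $v\in L^\infty$ with $v(x)\to 0$ as $|x|\to\infty$ (a consequence of $\beta>0$), I split $v=v\chi_{\{|x|\le R\}}+v\chi_{\{|x|>R\}}$; the first factor $H^{2m}(\mathbf{R}^n)\to L^2(\{|x|\le R\})$ is compact by Rellich's embedding theorem, while the second has operator norm dominated by $\|v\chi_{\{|x|>R\}}\|_{L^\infty}\to 0$ as $R\to\infty$. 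Hence the multiplication in (iii) is a norm limit of compact operators and therefore compact, and so is the full composition $vR_0(z)v$.

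For the invertibility, I adopt the (harmless) convention $U(x)=1$ on $\{V=0\}$ so that $U$ becomes a unitary multiplication operator with $U^2=I$ and $vUv=V$ pointwise. Since $vR_0(z)v$ is compact, $M(z)=U+vR_0(z)v$ is a compact perturbation of the invertible operator $U$; by the Fredholm alternative, invertibility of $M(z)$ on $L^2$ reduces to checking that $\ker M(z)=\{0\}$. Suppose $\phi\in L^2$ satisfies $M(z)\phi=0$, and set $\psi:=R_0(z)v\phi$. Since $v\phi\in L^2$ and $z\in\rho(H_0)$, $\psi\in\mathcal{D}(H_0)=\mathcal{D}(H)$ (the equality of domains because $V$ is $H_0$-bounded). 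The equation $M(z)\phi=0$ rewrites as $\phi=-Uv\psi$, so $v\phi=-vUv\psi=-V\psi$, whence $(H_0-z)\psi=v\phi=-V\psi$, i.e., $(H-z)\psi=0$. Since $z\in\mathbf{C}\setminus[V_0,+\infty)\subset\rho(H)$ by hypothesis, we conclude $\psi=0$, hence $v\phi=(H_0-z)\psi=0$ and $\phi=-Uv\psi=0$, proving $\ker M(z)=\{0\}$.

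The main technical point is the compactness step, where one must control the multiplication $v:H^{2m}\to L^2$ under the weak hypothesis $\beta>2$; fortunately the cutoff argument above uses only $v\in L^\infty$ with $v(x)\to 0$ at infinity, both of which follow from $\beta>0$ alone (so the lemma holds under an even weaker decay assumption, with $\beta>2$ presumably reserved for subsequent estimates). The invertibility step is essentially algebraic: it is the standard Birman--Schwinger observation that $M(z)\phi=0$ forces $R_0(z)v\phi$ to be an $L^2$-eigenfunction of $H$ at energy $z$, which must vanish when $z\in\rho(H)$.
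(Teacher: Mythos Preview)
Your proof is correct, but the compactness step takes a genuinely different route from the paper. The paper uses the splitting identity $R_0(z)=\frac{1}{mz}\sum_{k=0}^{m-1}z_k(-\Delta-z_k)^{-1}$ together with an explicit pointwise bound on the kernel of $(-\Delta-z_k)^{-1}$ (from \cite[Lemma~3.1]{J}) to argue that each $v(-\Delta-z_k)^{-1}v$ is Hilbert--Schmidt. You instead factor $vR_0(z)v$ through $H^{2m}(\mathbf{R}^n)$ and prove that multiplication by $v$ is compact from $H^{2m}$ to $L^2$ via Rellich plus decay of $v$ at infinity. Your argument is more conceptual and, as you note, only requires $v\in L^\infty$ with $v(x)\to 0$ at infinity (so $\beta>0$ suffices for this part); the paper's approach yields a somewhat stronger Hilbert--Schmidt conclusion but relies on resolvent kernel estimates specific to the Laplacian decomposition. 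For the invertibility, both arguments are the standard Fredholm/Birman--Schwinger reduction to $\ker(H-z)=\{0\}$: the paper recomputes this by looking at the real and imaginary parts of $\big((H-z)\psi,\psi\big)$, whereas you simply invoke the paper's own observation that $\mathbf{C}\setminus[V_0,+\infty)\subset\rho(H)$---which is cleaner and entirely legitimate given what is already stated just before the lemma.
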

	\begin{proof}
		By the free resolvent decomposition identity \eqref{free-resolvent-identity}, we have
		\begin{equation*}
			vR_0(z)v=\frac{1}{mz}\sum_{k=0}^{m-1}z_k v(-\Delta-z_k)^{-1}v.
		\end{equation*}
		Thus we only need to show $v(-\Delta-z_k)^{-1}v$ is compact operator in $L^{2}(\mathbf{R}^n)$. Since $z\in\mathbf{C}\setminus [0, +\infty)$ and recall that we choose $0<\arg(z)<2\pi$, thus $z_{k}\neq0$ and ${\rm Im} (z_{k}^{1/2})\geq0$. Furthermore, from \cite[Lemma 3.1]{J}, we know the integral kernel $k(z_{k}; x, y)$ of $(-\Delta-z_k)^{-1}$ satisfies the following bound
		\begin{equation}
			\big|k(z_{k}; x, y)\big|\lesssim\big(|x-y|^{2-n}+|x-y|^{(1-n)/2}\big)(1+|z_{k}|)^{(n-3)/4},\,\, n\geq3.
		\end{equation}
		Thus one can check that the Hilbert-Schmidt norm of $v(-\Delta-z_k)^{-1}v$ is finite under the assumption on $V(x)$.
		
		Next, we show that $M(z)=U+vR_0(z)v$ is invertible by  Fredholm's alternative theorem.  We claim that $(H-z)\psi=0$ only has trivial solution in $L^{2}(\mathbf{R}^n)$.  In fact, for $z\in \mathbf{C}\setminus\mathbf{R}$, if $\psi\neq0$, then
		\begin{equation*}
			{\rm Im}\big((H-z)\psi, \psi\big)=-{\rm Im}\, z(\psi, \psi)\neq0.
		\end{equation*}
		Since $\big((H-z)\psi, \psi\big)=(H\psi, \psi)-z(\psi, \psi)$ and $(H\psi, \psi)\in\mathbf{R}$, hence $(H-z)\psi\neq0$ if $\psi\neq0$.  For $z\in\mathbf{C}\setminus\mathbf{R}$ and ${\rm Re} (z)< V_{0}$, then $${\rm Re}\big((H-z)\psi, \psi\big)\geq {\rm Re}(V_{0}-z)(\psi, \psi)\neq0 $$
		provided $\psi\neq0$. Thus $(H-z)\psi\neq0$ if $\psi\neq0$.
		
		Note that operator $M(z)=U+vR_0(z)v$ is the symmetric form of $1+VR_{0}(z)$. Since $M(z)\psi=0$ implies that $(H-z)\psi=0$, thus $M(z)\psi=0$ only has zero solution. Hence, Fredholm's alternative theorem tells that $M(z)=U+vR_0(z)v$ is invertible in $L^{2}(\mathbf{R}^n)$.
	\end{proof}
	
	Using Proposition \ref{free-high energy} and Lemma \ref{RV-inverse}, we obtain the following higher energy decay estimates for $R_V(z)$.
	\begin{proposition}\label{RV-high energy}
		For $k=0,1,2,3,\cdots$, let $|V(x)|\lesssim (1+|x|)^{-\beta}$ with some $\beta>2+2k$.  For $z\in\mathbf{C}\setminus[V_0, +\infty)$, any $s, s'>k+\frac{1}{2}$ and $a>0$, then
		\begin{equation}\label{eq-high}
			\Big\|R^{(k)}_V(z)\Big\|_{B(s, -s')}\le C(s, a)|z|^{-\frac{(2m-1)(1+k)}{2m}}.
		\end{equation}
	\end{proposition}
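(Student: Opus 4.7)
The plan is to differentiate the symmetric resolvent identity \eqref{symmetric-resolvent-idnetity} exactly $k$ times and then bound each resulting term using Proposition \ref{free-high energy} together with uniform bounds on $M(z)^{-1}$ and its derivatives. Concretely, by Leibniz's rule,
\begin{equation*}
R_V^{(k)}(z)=R_0^{(k)}(z)-\sum_{a+b+c=k}\frac{k!}{a!\,b!\,c!}\,R_0^{(a)}(z)\,v\,[M^{-1}]^{(b)}(z)\,v\,R_0^{(c)}(z).
\end{equation*}
The leading term $R_0^{(k)}(z)$ already satisfies the required decay $|z|^{-(2m-1)(k+1)/(2m)}$ in $B(s,-s')$ for $s,s'>k+\tfrac12$ by Proposition \ref{free-high energy}. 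The task is then to show that every summand on the right decays at least as fast.

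The first step is to establish the uniform bound $\sup_{|z|\ge a,\,z\in\mathbf{C}\setminus[V_0,+\infty)}\|M(z)^{-1}\|_{L^2\to L^2}<\infty$. By Lemma \ref{RV-inverse}, $M(z)^{-1}$ exists and depends continuously on $z$ for $z\in\mathbf{C}\setminus[V_0,+\infty)$; thus on any compact subset away from $[V_0,+\infty)$ it is bounded. For large $|z|$, the estimate $\|v R_0(z) v\|_{L^2\to L^2}\lesssim \|R_0(z)\|_{B(s,-s)}\lesssim |z|^{-(2m-1)/(2m)}\to 0$ (valid once $\beta/2\ge s>\tfrac12$, ensured by $\beta>2$) shows $M(z)=U+vR_0(z)v\to U$ in operator norm, so $M(z)^{-1}\to U$ as well. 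Combining these two observations yields the required uniform bound.

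Next I would extract the decay of $[M^{-1}]^{(b)}(z)$ in $B(0,0)$. Differentiating the identity $M^{-1}M=I$ recursively, $[M^{-1}]^{(b)}(z)$ is a finite algebraic combination of products of the form $M^{-1}\,M^{(j_1)}\,M^{-1}\cdots M^{-1}\,M^{(j_r)}\,M^{-1}$ with $j_1+\cdots+j_r=b$ and $r\ge 1$, where $M^{(j)}(z)=v R_0^{(j)}(z)v$. Since $\|v R_0^{(j)}(z)v\|_{L^2\to L^2}\lesssim \|R_0^{(j)}(z)\|_{B(s_j,-s_j)}\lesssim |z|^{-(2m-1)(j+1)/(2m)}$ for any $s_j>j+\tfrac12$ with $s_j\le\beta/2$, multiplying the factors gives $\|[M^{-1}]^{(b)}(z)\|_{L^2\to L^2}\lesssim |z|^{-(2m-1)(b+r)/(2m)}$ and the slowest decay comes from $r=1$, yielding $|z|^{-(2m-1)(b+1)/(2m)}$.

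Finally, for each summand, inserting weights $\langle x\rangle^{\pm s_j}$ and using that $v:L^2\to L^2_{s_j}$ and $v:L^2_{-s_j}\to L^2$ are bounded whenever $\beta/2\ge s_j$, I read the operator $R_0^{(a)}(z)\,v\,[M^{-1}]^{(b)}(z)\,v\,R_0^{(c)}(z)$ from $L^2_s$ to $L^2_{-s'}$ by chaining $R_0^{(c)}: L^2_s\to L^2_{-s_2}$, $v:L^2_{-s_2}\to L^2$, $[M^{-1}]^{(b)}:L^2\to L^2$, $v:L^2\to L^2_{s_1}$, $R_0^{(a)}:L^2_{s_1}\to L^2_{-s'}$; the relevant intermediate weights $s_1,s_2$ can be chosen in the interval $(k+\tfrac12,\beta/2]$ since $\beta>2k+2$. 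The multiplicative bound gives the summand a decay of $|z|^{-(2m-1)(a+b+c+3)/(2m)}=|z|^{-(2m-1)(k+3)/(2m)}$, which is strictly faster than the desired $|z|^{-(2m-1)(k+1)/(2m)}$ and so is absorbed. The main technical obstacle I anticipate is the bookkeeping of weights so that the interior spaces match up while staying within the range permitted by $v$'s decay, together with verifying that the uniform bound on $\|M(z)^{-1}\|_{L^2\to L^2}$ holds globally on $\{|z|\ge a\}\cap(\mathbf{C}\setminus[V_0,+\infty))$—the rest of the argument is then a clean multiplicative estimate.
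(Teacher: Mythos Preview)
Your proposal is correct and follows essentially the same approach as the paper: differentiate the symmetric resolvent identity via Leibniz's rule, establish the uniform $L^2$ bound on $M(z)^{-1}$ for large $|z|$ by a Neumann-series argument (since $\|vR_0(z)v\|_{L^2\to L^2}\to 0$), express $[M^{-1}]^{(b)}$ as a finite combination of products of $M^{-1}$ and $M^{(j)}=vR_0^{(j)}v$, and then chain the free estimates from Proposition~\ref{free-high energy} through the weights provided by $v$. Your write-up is in fact somewhat more explicit than the paper's about the structure of $[M^{-1}]^{(b)}$ and the weight bookkeeping, and your observation that each cross term decays strictly faster than $|z|^{-(2m-1)(k+1)/(2m)}$ is the same conclusion the paper reaches by induction.
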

	\begin{proof}
		For $k=0$, by  identity \eqref{symmetric-resolvent-idnetity} and Proposition \ref{free-high energy}, the above bound \eqref{RV-high energy} holds by the uniformly boundedness of $M(z)^{-1}$ for large $z\in\mathbf{C}\setminus[V_0, +\infty)$ in $L^{2}$.  It is equivalent to prove that for large $z\in\mathbf{C}\setminus[0,+\infty),$
		\begin{equation*}\label{eq-f}
			\big\|f\big\|_{L^2(\mathbf{R}^n)}\le C\big\|\big(U+vR_0(z)v\big)f\big\|_{L^2(\mathbf{R}^n)}.
		\end{equation*}
		In fact, by the triangle inequality we have
		\begin{equation*}\label{eq-triangle}
			\big|\|f\|_{L^2}-\|vR_0(z)vf\|_{L^2}\big|\le\big\|\big(U+vR_0(z)v\big)f\big\|_{L^2}\le \|f\|_{L^2}+\big\|vR_0(z)vf\big\|_{L^2}.
		\end{equation*}
		By the decay estimate \eqref{eq-high-free}, thus for $|z|$ large enough, we have
		\begin{equation*}
			\big\|vR_0(z)vf\big\|_{L^2}\le C(s, a)|z|^{-\frac{2m-1}{2m}}\|f\|_{L^2}\leq \frac{1}{4}\|f\|_{L^2}.
		\end{equation*}
		
		For $k\geq1$,  differentiating \eqref{symmetric-resolvent-idnetity} $k$ times in $z$, we have
		\begin{equation}
			R^{(k)}_{V}(z)=R^{(k)}_{0}(z)-\sum\limits_{k_1+k_2+k_3=k}R^{(k_1)}_{0}(z)v\frac{d^{k_2}}{dz^{k_2}}\Big[M(z)^{-1}\Big]vR^{(k_3)}_{0}(z).
		\end{equation}
		Note that the derivative term $\frac{d^{k_2}}{dz^{k_2}}\big[M(z)^{-1}\big]$ is the linear combination of terms such as $\big[M(z)^{-1}\big]^{j}M^{(\ell)}(z)$ with $0\leq j, \ell< k_{2}$. By the representation of $M(z)$, we know $M^{(\ell)}(z)=vR^{(\ell)}_{0}(z)v$	for $\ell\geq 1$. Since $v(x)(1+|x|)^{\ell+1/2}\in L^{\infty}$ under the assumption of $V(x)$, thus  \eqref{RV-high energy} holds by the  mathematical induction.
	\end{proof}

	\subsection{Limiting absorption principle of $R_V(z)$}
	The limiting absorption principle  means the existence and continuity of the resolvent in the continuous spectrum. The continuity of the resolvent for Schr\"odinger operator in the weighted Sobolev norms was established by Agmon \cite{Agmon}. H\"ormander \cite{H2} also considered such problem for  general  selfadjoint operator $P(D)$ with real coefficient.
	
	Denote by $\mathbf{C}^{+}=\{ z\in\mathbf{C}: {\rm Im}\,z>0\}$ and $\mathbf{C}^{-}=\{ z\in\mathbf{C}: {\rm Im}\,z<0\}$. Define ${\bf \Xi}$ be the disjoint union of $\overline{\mathbf{C}^{+}}$ and $\overline{\mathbf{C}^{-}}$ with the identified points $z\leq0$. Recall the limiting absorption principle results  of free Schr\"{o}dinger operator $R(-\Delta; \zeta)$ (see \cite{JK, J, J1}), we have:
	\begin{lemma}(\cite[Theorem 8.1]{JK})
		For $k\ge 0$ and $s, s'>k+\frac{1}{2}$, we have $R^{(k)}(-\Delta; \zeta)\in B(s, -s')$ is analytic for $\zeta \in{\bf \Xi}\setminus\{0\}$. Furthermore, the boundary values
		\begin{equation*}
			R^{(k)}(-\Delta; \lambda\pm i0)=\lim_{\epsilon\downarrow0}R^{(k)}(-\Delta; \lambda\pm i\epsilon)\in B(s, -s')
		\end{equation*}
		exist for any $\lambda\in(0, \infty)$. The decay estimate \eqref{22} can be extended from $\zeta \in\mathbf{C}\setminus[0,+\infty)$ to $\zeta\in{\bf \Xi}\setminus\{0\}$.
	\end{lemma}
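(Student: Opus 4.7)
My plan is to prove this by combining an explicit Fourier analysis of the boundary values with Agmon's trace lemma for restrictions to spheres. Starting from
$$R(-\Delta;\zeta)f(x) = \frac{1}{(2\pi)^n}\int_{\mathbf{R}^n}\frac{e^{ix\cdot\xi}\hat{f}(\xi)}{|\xi|^2-\zeta}\,d\xi,$$
holomorphy of $\zeta \mapsto R(-\Delta;\zeta) \in B(s,-s')$ on each open half-plane $\mathbf{C}^{\pm}$ is immediate. The point is to extend continuously to the cut $(0,\infty)$ from each side of ${\bf \Xi}$ and then to identify the two limits $R(-\Delta;\lambda\pm i0)$ via the Plemelj--Sokhotski formula applied to the distribution $(|\xi|^2-\zeta)^{-1}$. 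Formally,
$$R(-\Delta;\lambda\pm i0)f(x) = \frac{1}{(2\pi)^n}\,\mathrm{p.v.}\!\!\int\frac{e^{ix\cdot\xi}\hat f(\xi)}{|\xi|^2-\lambda}\,d\xi \;\pm\; \frac{i\pi}{2(2\pi)^n\lambda^{1/2}}\int_{|\xi|=\lambda^{1/2}} e^{ix\cdot\xi}\hat f(\xi)\,dS(\xi),$$
so everything reduces to controlling the principal value piece and the surface measure piece on the sphere $\{|\xi|^2 = \lambda\}$ in the weighted $L^2$ norm.

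The key analytical ingredient is Agmon's weighted trace lemma: for $s > 1/2$, restriction to the sphere $\{|\xi|=r\}$ is continuous $L^2_s(\mathbf{R}^n)\to L^2(S^{n-1}_r)$ with norm uniform in $r$ on compact subsets of $(0,\infty)$. This gives boundedness of the surface term on $L^2_s\to L^2_{-s'}$ for $s,s'>1/2$. For the principal value term I would either combine the trace lemma with a radial Hilbert transform argument, or work directly from the explicit kernel $R(-\Delta;\lambda+i0)(x,y)=c_n(\lambda^{1/2}/|x-y|)^{(n-2)/2}H^{(1)}_{(n-2)/2}(\lambda^{1/2}|x-y|)$, splitting into a near-diagonal Riesz-potential piece (handled by weighted Hardy--Littlewood--Sobolev) and an oscillatory tail (handled by stationary phase together with the trace bound). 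Continuity of $\zeta\mapsto R(-\Delta;\zeta)\in B(s,-s')$ up to the cut then follows from dominated convergence in the kernel representation, giving both the existence of the boundary values and the extension of the decay estimate to ${\bf \Xi}\setminus\{0\}$, since the uniformity of the trace bound propagates through the limit.

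For the derivatives $R^{(k)}(-\Delta;\zeta)$, I would differentiate the Fourier representation $k$ times to obtain integrands with $(|\xi|^2-\zeta)^{-(k+1)}$. In the boundary limit these produce distributions of order $k+1$ supported on the characteristic sphere, together with principal-value parts of the corresponding order. The required upgrade of Agmon's lemma states that, for $s>k+1/2$, the map sending $f$ to the $k$-jet of $\hat f$ along spheres of radius $r$ is bounded $L^2_s(\mathbf{R}^n)\to L^2(S^{n-1}_r)^{k+1}$ uniformly on compacts of $r$; this is exactly why the hypothesis $s,s'>k+1/2$ enters. The decay estimate at infinity is then extended across the cut by combining this uniformity with the recurrence relation $\zeta R^{(k)}_0(\zeta)=-kR^{(k-1)}_0(\zeta)+\tfrac{1}{2}[x\cdot\nabla,R^{(k-1)}_0(\zeta)]$ used in the proof of Proposition \ref{free-high energy}, which propagates the $|\zeta|^{-k/2}$ bound to $\zeta\in{\bf \Xi}\setminus\{0\}$.

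The main obstacle will be making the trace lemma and its higher-order analogues operate in the weighted operator norm $B(s,-s')$ with enough uniformity in $\zeta$ to pass to the boundary limit \emph{strongly} in operator topology, not merely pointwise on vectors. The pointwise limit of the kernel as $\epsilon\downarrow 0$ is routine from Hankel asymptotics; the real difficulty is showing that the associated operators converge in norm on weighted $L^2$, especially as $k$ grows and the singularity order on the characteristic sphere increases. Once the sharp Agmon trace inequality with $k$-th order derivatives is available, the remainder of the argument is a standard application of dominated convergence to the kernel representation together with the Plemelj formula.
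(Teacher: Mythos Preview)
The paper does not prove this lemma at all: it is quoted verbatim from Jensen--Kato \cite[Theorem~8.1]{JK} and used as a black box. So there is no ``paper's own proof'' to compare against.

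Your outline is essentially the standard Agmon approach and is correct in spirit. The Plemelj splitting into a principal-value part and a surface term on $\{|\xi|=\lambda^{1/2}\}$, controlled by the trace lemma $L^2_s\to L^2(S^{n-1}_r)$ for $s>1/2$, is exactly how the $k=0$ case is handled. For $k\ge 1$ the upgrade to $s>k+\tfrac12$ in order to control normal derivatives of $\hat f$ on the sphere is also the right mechanism. Two minor comments: first, the concern you raise about norm versus strong convergence is less of an obstacle than you suggest, since the trace bound is locally uniform in the radius and the kernel depends continuously on $\zeta$ in Hilbert--Schmidt type norms after multiplication by the weights; continuity in $B(s,-s')$ then follows directly. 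Second, the recurrence you invoke is indeed the $m=1$ case of the identity used in Proposition~\ref{free-high energy}, so that part is consistent. In short, your sketch would produce a correct proof of the cited Jensen--Kato result, but the present paper simply imports it.
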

	
	By the resolvent identity \eqref{free-resolvent-identity}, then for  $R_0(z)$ we have:
	\begin{corollary}\label{analytic}
		For $k\ge 0$ and $s, s'>k+\frac{1}{2}$, we have $R_0^{(k)}(z)\in B(s, -s)$ is analytic for $z \in{\bf \Xi}\setminus\{0\}$. Furthermore, the boundary values
		\begin{equation*}
			R_0^{(k)}(\lambda\pm i0)=\lim_{\epsilon\downarrow0}R^{(k)}_0(\lambda\pm i\epsilon)\in B(s, -s')
		\end{equation*}
		exist for any $\lambda\in(0, \infty)$, and the bound
		\begin{equation}
			\Big\|R_0^{(k)}(z)\Big\|_{B(s, -s')}=O\Big(|z|^{-\frac{(2m-1)(k+1)}{2m}}\Big)
		\end{equation}
		holds as $z\rightarrow\infty$ in ${\bf \Xi}\setminus\{0\}$.
	\end{corollary}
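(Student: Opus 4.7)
The plan is to reduce everything to the Laplacian case via the decomposition identity \eqref{free-resolvent-identity}, apply the Laplacian limiting absorption principle (the lemma stated immediately above) termwise, and then recycle the induction used in the proof of Proposition \ref{free-high energy} for the decay at infinity. All three claims---analyticity on ${\bf \Xi}\setminus\{0\}$, existence of boundary values, and the $|z|^{-(2m-1)(k+1)/(2m)}$ bound---are then direct consequences of the corresponding results for $-\Delta$, pieced together by the decomposition.

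First I would fix the branch of $z^{1/m}$ by $0<\arg(z^{1/m})<2\pi/m$, under which each $z_\ell=z^{1/m}e^{i2\ell\pi/m}$ is analytic on $\mathbf{C}\setminus[0,\infty)$. Analyticity of $R_0^{(k)}(z)\in B(s,-s')$ in the interior, for $s,s'>k+\tfrac12$, is then immediate by applying the chain and Leibniz rules to \eqref{free-resolvent-identity} and invoking the lemma for each factor $R^{(j)}(-\Delta;z_\ell)$ with $0\le j\le k$.

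Next I would track which summand in \eqref{free-resolvent-identity} can reach the Laplacian branch cut. As $z\to\lambda>0$ with $\arg z\to 0^+$, only $z_0=z^{1/m}$ tends to $\lambda^{1/m}$ from above, while $z_\ell\to\lambda^{1/m}e^{i2\ell\pi/m}$ stays bounded away from $[0,\infty)$ for $1\le\ell\le m-1$. Symmetrically, as $\arg z\to 2\pi^-$, only $z_{m-1}$ reaches the positive real axis, this time from below. In either scenario the lemma produces the required limit $R^{(j)}(-\Delta;\lambda^{1/m}\pm i0)$ in $B(s,-s')$ while the other terms pass to their analytic values, yielding the boundary operators $R_0^{(k)}(\lambda\pm i0)$. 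The two sheets of ${\bf \Xi}$ glue continuously across $(-\infty,0)$ because when $z$ lies on the negative real axis every $z_\ell$ sits off $[0,\infty)$, independent of which side of the negative axis it is approached from.

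The decay bound on ${\bf \Xi}\setminus\{0\}$ then follows from the proof of Proposition \ref{free-high energy} verbatim: the $k=0$ case uses the decomposition together with the Kato--Yajima type estimate, which the lemma explicitly asserts on ${\bf \Xi}\setminus\{0\}$; the inductive step uses the recurrence \eqref{eq-recurrent}, whose derivation is insensitive to whether $z$ lies in the interior or on the boundary. The only point deserving real attention---the main bookkeeping hurdle, though not a genuine obstacle---is the branch analysis above, which must confirm that exactly one $z_\ell$ lands on the positive real cut and from the matching side. Once this is settled, the corollary is essentially a transcription of the Laplacian statement through \eqref{free-resolvent-identity}.
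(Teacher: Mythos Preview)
Your proposal is correct and follows essentially the same route as the paper: the paper does not spell out a proof but presents the corollary as an immediate consequence of the decomposition identity \eqref{free-resolvent-identity} applied to the Laplacian limiting absorption principle lemma stated just before it, combined with the high-energy argument of Proposition \ref{free-high energy}. Your branch-tracking analysis (that only $z_0$ hits the cut from above and only $z_{m-1}$ from below) is the one point that genuinely deserves verification, and you carry it out correctly; the paper leaves this implicit.
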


	\begin{lemma}\label{compact-LAP}
		(1) Let $|V(x)|\lesssim (1+|x|)^{-\beta}$ with some $\beta>2m$. Then  $vR_0(0\pm i0)v\in B(0, 0) $  are compact operators.
	\par	(2) Let $|V(x)|\lesssim (1+|x|)^{-\beta}$ with some $\beta>2$.  Then for $\lambda>0$,  $vR_0(\lambda\pm i0)v\in B(0, 0) $  are compact operators.
	\end{lemma}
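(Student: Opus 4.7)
My strategy for both parts is to factor $vR_0(z)v$ as a three-step composition in which the resolvent supplies interior Sobolev regularity and $v$ then plays the role of a compact multiplication arrow out of the locally smooth weighted space, with Rellich--Kondrachov providing the local compactness and the decay of $v$ controlling the tails.

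For part (1), since $0$ is the tip of $\sigma(H_0)$, both boundary values coincide with the Riesz potential $R_0(0)=(-\Delta)^{-m}$, whose convolution kernel is $c_{n,m}|x-y|^{2m-n}$. I would choose $s$ in the nonempty interval $(m,\beta/2)$ and decompose
\[
v\,R_0(0)\,v \;=\; \bigl[v:L^2\to L^2_s\bigr]\;\circ\;\bigl[R_0(0):L^2_s\to L^2_{-s}\bigr]\;\circ\;\bigl[v:L^2_{-s}\to L^2\bigr].
\]
The outer two arrows are bounded because $\langle x\rangle^s v(x)\lesssim \langle x\rangle^{s-\beta/2}\in L^\infty$, and the middle arrow is the Stein--Weiss weighted estimate for the Riesz potential already invoked in Remark \ref{no resonance} (valid since $s+s\ge 2m$). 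To lift boundedness to compactness I would exploit the identity $(-\Delta)^m R_0(0)f=f$, so that interior elliptic regularity puts $R_0(0)f\in H^{2m}_{\mathrm{loc}}$ with bounds locally uniform in $\|f\|_{L^2_s}$. For a bounded sequence $u_k\in L^2$, the intermediate vectors $R_0(0)vu_k$ are then bounded in $L^2_{-s}$ and in $H^{2m}(B_R)$ for every $R$; Rellich--Kondrachov extracts an $L^2_{\mathrm{loc}}$-convergent subsequence, while the final multiplication by $v$ makes exterior tails uniformly small because $\|\langle x\rangle^s v\|_{L^\infty(|x|>R)}\to 0$.

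For part (2), I would use the decomposition \eqref{free-resolvent-identity} to write $R_0(\lambda\pm i0)$ as a linear combination of Laplacian resolvents $(-\Delta-\zeta_\ell)^{-1}$ with $\zeta_\ell=\lambda^{1/m}e^{i2\ell\pi/m}$, the $\mp i0$ limit being meaningful only at $\ell=0$. For $\ell\neq 0$, $\zeta_\ell$ lies off $[0,\infty)$, $(-\Delta-\zeta_\ell)^{-1}$ is bounded on $L^2$ with image in $H^2$, and the $v$-sandwich is immediately compact by the same Rellich plus decay argument. For $\ell=0$ I replace $L^2$-boundedness of the resolvent by Agmon's limiting absorption principle for $-\Delta$, giving $(-\Delta-\lambda^{1/m}\mp i0)^{-1}:L^2_s\to L^2_{-s}$ for any $s>1/2$, and combine with interior elliptic regularity for $-\Delta$ to obtain $H^2_{\mathrm{loc}}$ control on the image. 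Running the same three-step composition as in part (1) with $H^2$ in place of $H^{2m}$ and $s\in(1/2,\beta/2)$ (nonempty since $\beta>2$) yields the required compactness.

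The genuine technical obstacle in both parts is the Rellich compactness of the multiplication arrow $v:\{u\in L^2_{-s}\cap H^{2m}_{\mathrm{loc}}\}\to L^2$: one needs the interior smoothing produced by the resolvent \emph{together with} enough decay of $v$ to render exterior tails uniformly small, and this is precisely where the quantitative hypotheses $\beta>2m$ in (1) and $\beta>2$ in (2) enter. Everything else reduces to composing bounded arrows with this single compact one.
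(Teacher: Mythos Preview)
Your argument is correct and follows a genuinely different route from the paper. The paper's proof is a one-line Hilbert--Schmidt computation: for part (1) it observes that $v(x)\langle x\rangle^{m+\epsilon}\in L^\infty$ and claims the weighted kernel $\langle x\rangle^{-m-\epsilon}|x-y|^{2m-n}\langle y\rangle^{-m-\epsilon}$ has finite Hilbert--Schmidt norm; for part (2) it refers back to Lemma~\ref{RV-inverse}, where the decomposition \eqref{free-resolvent-identity} reduces matters to Hilbert--Schmidt bounds on the Laplacian resolvent kernel $k(z_\ell;x,y)$. Your approach instead factors $vR_0v$ through weighted spaces, uses interior elliptic regularity to place the image of $R_0$ in $H^{2m}_{\mathrm{loc}}$ (or $H^2_{\mathrm{loc}}$), and then extracts compactness from Rellich--Kondrachov together with the decay of $v$ to kill tails. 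The paper's method is shorter and more explicit when the squared kernel is locally integrable, but note that literal Hilbert--Schmidt finiteness of $|x-y|^{2(2m-n)}$ requires $2(n-2m)<n$, i.e.\ $n<4m$, so in high dimensions the HS claim needs an additional step (e.g.\ splitting near the diagonal or passing to a Schatten class). Your Rellich argument is dimension-agnostic in this respect and works uniformly for all $n>2m$; it is also the argument that generalizes most cleanly to operators whose resolvent kernels are not available in closed form.
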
 	
	\begin{proof}
		When $\lambda=0$, since $v(x)(1+|x|)^{m+}\in L^{\infty}$ and the Hilbert-Schmidt norm of $(1+|x|)^{-m-}|x-y|^{2m-n}(1+|y|)^{-m-}$ is finite, thus $vR_0(0\pm i0)v$  is compact in $L^{2}(\mathbf{R}^n)$.  The second conclusion holds by the same argument as in Lemma \ref{RV-inverse}.
	\end{proof}
	
	\begin{lemma}\label{RV-LAP}
		For $H=(-\Delta)^m+V$, assume that $H$ has no positive embedded eigenvalue.
		\par (1) Let $|V(x)|\lesssim (1+|x|)^{-\beta}$ with some $\beta>2$. Then for
		$s, s'>1/2$, $R_V(z)\in B(s, -s')$ is continuous for $z\in{\bf  \Xi}\setminus(\Sigma\cup\{0\})$. Furthermore, the boundary value
		\[R_V(\lambda\pm i0)=\lim_{\epsilon\downarrow0}R_V(\lambda\pm i\epsilon)\in B(s, -s')\]
		exists for $\lambda\in \sigma_c(H)\setminus (\Sigma\cup\{0\})$.
		\par (2) Let $|V(x)|\lesssim (1+|x|)^{-\beta}$ with some $\beta>2m$. Assume that $0$ is a regular point of $H$. Then for $s, s'>m$, the function $R_V(z)\in B(s, -s')$ defined on $z\in{\bf  \Xi}\setminus\Sigma$ is continuous at $z=0$.
	\end{lemma}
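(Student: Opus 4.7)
The plan is to derive both parts of the lemma from the symmetric resolvent identity
$$R_V(z) = R_0(z) - R_0(z)\,v\,M(z)^{-1}\,vR_0(z),$$
reducing the question to two ingredients: (a) continuity of $R_0(z)$ in the chosen $B(s,-s')$ topology, which is already provided by Corollary \ref{analytic} for positive $\lambda$ and by Proposition \ref{free-expansions} at $z=0$; and (b) continuity of $M(z)^{-1}$ as a bounded operator on $L^2(\mathbf{R}^n)$. Under the given decay on $V$, the weights $v(1+|x|)^{1/2+}\in L^\infty$ (resp. $v(1+|x|)^{m+}\in L^\infty$) convert the $B(s,-s')$ continuity of $R_0(z)$ into $L^2\to L^2$ continuity of $vR_0(z)v$, so step (a) feeds cleanly into step (b).

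For part (1), fix $\lambda_0>0$ in the continuous spectrum, not an embedded eigenvalue. By Lemma \ref{compact-LAP}(2), $vR_0(\lambda_0\pm i0)v$ is compact on $L^2$, so $M(\lambda_0\pm i0)=U+vR_0(\lambda_0\pm i0)v$ is a compact perturbation of the invertible operator $U$, hence Fredholm of index zero. Invertibility therefore reduces to injectivity. If $f\in L^2$ satisfies $M(\lambda_0+i0)f=0$, I would set $\psi:=-R_0(\lambda_0+i0)vf\in L^2_{-s'}$ for every $s'>1/2$; a direct computation using $V=Uv^2$ shows $(H-\lambda_0)\psi=0$ distributionally. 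Combining the outgoing radiation condition implicit in the ``$+i0$'' boundary value with the assumed absence of positive embedded eigenvalues forces $\psi\equiv 0$, hence $vf=0$ and therefore $f=0$. With $M(\lambda_0\pm i0)$ invertible, stability of bounded inverses under operator-norm perturbations gives continuity of $M(z)^{-1}$ as $z\to\lambda_0\pm i0$, and the resolvent identity transfers this continuity to $R_V(z)$.

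For part (2), the hypothesis that $0$ is regular means $T_0=U+vG_0v$ is invertible on $L^2$. Under $\beta>2m$ the leading Riesz kernel $G_0(x,y)\sim|x-y|^{2m-n}$ defines a bounded operator $L^2_s\to L^2_{-s'}$ whenever $s,s'>m$, and Proposition \ref{free-expansions} yields $R_0(z)\to G_0$ in $B(s,-s')$ as $z\to 0$ in ${\bf \Xi}\setminus\Sigma$. Consequently $vR_0(z)v\to vG_0v$ in $B(0,0)$ and $M(z)\to T_0$ in $L^2$-operator norm, so $M(z)^{-1}$ remains bounded and continuous near $z=0$. Plugging back into the symmetric resolvent identity delivers continuity of $R_V(z)$ at $z=0$ in $B(s,-s')$.

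The hardest step is the injectivity of $M(\lambda_0+i0)$ for $\lambda_0>0$ in part (1). The candidate function $\psi=-R_0(\lambda_0+i0)vf$ lives only in a weighted $L^2$ space, whereas the hypothesis ``no positive embedded eigenvalue'' concerns genuine $L^2$ eigenfunctions. Bridging this gap for $H=(-\Delta)^m+V$ requires a uniqueness argument tailored to polyharmonic operators, either a Rellich-type theorem upgrading $\psi$ to $L^2$, or a direct radiation-condition argument ruling out nontrivial outgoing solutions. Once that step is done, the remaining ingredients are essentially Fredholm theory and perturbation stability of inverses.
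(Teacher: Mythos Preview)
Your approach is essentially identical to the paper's: both use the symmetric resolvent identity $R_V(z)=R_0(z)-R_0(z)vM(z)^{-1}vR_0(z)$, invoke Lemma~\ref{compact-LAP} for compactness of $vR_0(\lambda\pm i0)v$, and reduce invertibility of $M(\lambda\pm i0)$ to injectivity via the Fredholm alternative. The paper's proof is considerably terser than yours---it simply asserts that $M(\lambda\pm i0)u=0$ implies $(H-\lambda)u=0$ and then invokes the absence of positive eigenvalues---without explicitly addressing the point you flag as ``the hardest step,'' namely that the candidate $\psi=-R_0(\lambda_0+i0)vf$ lies a priori only in $L^2_{-s'}$ rather than $L^2$. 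In that sense you are being more scrupulous than the paper itself; the bootstrap from weighted $L^2$ to genuine $L^2$ (via Agmon's bootstrap or a radiation-condition argument for $(-\Delta)^m$) is indeed the substantive content, and the paper leaves it implicit.
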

	\begin{proof}
		The conclusions follow from Lemma \ref{compact-LAP} and the symmetric resolvent identity \eqref{symmetric-resolvent-idnetity} provided
		\begin{equation*}
			\big[U+vR_{0}(\lambda\pm i\epsilon)v\big]^{-1}\rightarrow \big[U+vR_{0}(\lambda\pm i0)v\big]^{-1},\,\, \epsilon\downarrow 0.
		\end{equation*}
		The convergence holds if and only if both limit operators $U+vR_{0}(\lambda\pm i0)v: L^2(\mathbf{R}^n)\rightarrow L^2(\mathbf{R}^n) $ is invertible. According to Lemma \ref{compact-LAP} and Fredholm's alternative theorem, it is enough to show that $\big[U+vR_{0}(\lambda\pm i0)v\big]u=0$ only admits zero solution in $L^{2}(\mathbf{R}^n)$. Note that $\big[U+vR_{0}(\lambda\pm i0)v\big]u=0$ implies that $(H-\lambda)u=0$. Thus $u=0$ under the assumptions that zero is a regular point of $H$ and $H$ has no positive eigenvalue.
	\end{proof}

	\begin{theorem}\label{RV-limiting absorption}
		For $k=0,1,2,3,\cdots$, let $|V(x)|\lesssim (1+|x|)^{-\beta}$ with some $\beta>2+2k$.  Then for
		$s, s'>k+\frac{1}{2}$, $R_V^{(k)}(z)\in B(s, -s')$ is continuous for $z\in{\bf \Xi}\setminus(\Sigma\cup\{0\})$. Furthermore, the decay estimates \eqref{eq-high} can be extended from $z\in{\bf \Xi}\setminus[V_0,+\infty)$ to $z\in{\bf \Xi}\setminus (\Sigma \cup\{0\})$,  i.e. the bound
		\begin{equation}\label{eq-limiting}
			\Big\|R^{(k)}_V(z)\Big\|_{B(s, -s')}\lesssim|z|^{-\frac{(2m-1)(1+k)}{2m}}
		\end{equation}
		holds as $z\rightarrow \infty$ in ${\bf \Xi}\setminus(\Sigma\cup\{0\})$.
	\end{theorem}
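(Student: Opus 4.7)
My plan is to extend the already-established bounds for $R_V^{(k)}(z)$ from the resolvent set $\mathbf{C}\setminus[V_0,+\infty)$ (Proposition \ref{RV-high energy}) up to the boundary of the continuous spectrum, excluding the discrete singular set $\Sigma\cup\{0\}$. The strategy is to feed the already-known boundary behavior of $R_0^{(k)}(z)$ from Corollary \ref{analytic} into the symmetric resolvent identity and differentiate, so the problem reduces to understanding the boundary behavior and uniform bounds of $M(z)^{-1}=\bigl(U+vR_0(z)v\bigr)^{-1}$ together with its derivatives, for $z\in\mathbf{\Xi}\setminus(\Sigma\cup\{0\})$ and especially as $z\to\infty$.

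First I will treat $k=0$. Lemma \ref{RV-LAP} already gives continuity of $M(z)^{-1}$ across any $\lambda\in\sigma_c(H)\setminus(\Sigma\cup\{0\})$, because $vR_0(\lambda\pm i0)v$ is compact (Lemma \ref{compact-LAP}) and $M(\lambda\pm i0)$ is invertible by Fredholm's alternative together with the absence of positive embedded eigenvalues and the assumption that $0$ is regular. Combining this with Corollary \ref{analytic} inside the identity $R_V(z)=R_0(z)-R_0(z)vM(z)^{-1}vR_0(z)$ yields both the continuity statement and, for bounded $|z|$ away from $\Sigma\cup\{0\}$, uniform control in $B(s,-s')$. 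To get the asserted decay as $|z|\to\infty$ in $\mathbf{\Xi}\setminus(\Sigma\cup\{0\})$, I use the high-energy free estimate $\|vR_0(z)v\|_{L^2\to L^2}\lesssim|z|^{-(2m-1)/(2m)}\to 0$, which gives via a Neumann series that $M(z)^{-1}$ is uniformly bounded for $|z|$ large on the whole set $\mathbf{\Xi}\setminus(\Sigma\cup\{0\})$; substituting back into the symmetric identity and using Corollary \ref{analytic} for $R_0(z)$ produces the bound $\|R_V(z)\|_{B(s,-s')}\lesssim|z|^{-(2m-1)/(2m)}$.

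For $k\ge 1$ I proceed by induction, differentiating the symmetric resolvent identity $k$ times to get
\begin{equation*}
R_V^{(k)}(z)=R_0^{(k)}(z)-\sum_{k_1+k_2+k_3=k}c_{k_1,k_2,k_3}\,R_0^{(k_1)}(z)\,v\,\frac{d^{k_2}}{dz^{k_2}}\bigl[M(z)^{-1}\bigr]\,v\,R_0^{(k_3)}(z).
\end{equation*}
By the Leibniz/Faà di Bruno formula, $\frac{d^{k_2}}{dz^{k_2}}[M(z)^{-1}]$ is a finite sum of products of $M(z)^{-1}$ factors with derivative factors $M^{(\ell)}(z)=vR_0^{(\ell)}(z)v$ for $1\le\ell\le k_2$. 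Since under $\beta>2+2k$ we have $v(x)(1+|x|)^{\ell+1/2}\in L^\infty$ for every $\ell\le k$, Corollary \ref{analytic} gives $\|M^{(\ell)}(z)\|_{L^2\to L^2}\lesssim|z|^{-(2m-1)(\ell+1)/(2m)}$ uniformly on $\mathbf{\Xi}\setminus\{0\}$; together with uniform boundedness of $M(z)^{-1}$ established in the $k=0$ step (adapted by continuity on compacta of $\mathbf{\Xi}\setminus(\Sigma\cup\{0\})$ and smallness at infinity), each summand is bounded by $|z|^{-(2m-1)(k_1+1)/(2m)}\cdot|z|^{-(2m-1)(k_3+1)/(2m)}$ times a decaying factor from the derivatives of $M^{-1}$, and the dominant contribution reproduces exactly the rate $|z|^{-(2m-1)(k+1)/(2m)}$. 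Continuity in $B(s,-s')$ on $\mathbf{\Xi}\setminus(\Sigma\cup\{0\})$ follows because each factor is continuous there.

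The main obstacle is handling $M(z)^{-1}$ near the boundary $\lambda\in(0,\infty)\setminus\Sigma$ uniformly in $|z|$: away from $\Sigma\cup\{0\}$ on bounded sets one has continuity by compactness plus Fredholm, while for $|z|$ large the inverse is controlled by Neumann series from $\|vR_0(z)v\|\to 0$. The delicate point is to glue these two regimes so that $\|M(z)^{-1}\|_{L^2\to L^2}$ is uniformly bounded on all of $\mathbf{\Xi}\setminus(\Sigma\cup\{0\})$; this uses that the set $\Sigma$ is discrete (so one can extract a uniform bound on any closed subset avoiding $\Sigma\cup\{0\}$) together with the high-energy Neumann bound, which together cover the full region. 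Once this uniform control is in hand, everything else is a bookkeeping application of the Leibniz rule on the symmetric resolvent identity.
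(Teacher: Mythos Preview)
Your proposal is correct and follows essentially the same route as the paper, which simply records the result as a corollary of Proposition~\ref{RV-high energy} and Lemma~\ref{RV-LAP}; you have spelled out in detail the mechanism behind that corollary (symmetric resolvent identity, Neumann series for $M(z)^{-1}$ at high energy via Corollary~\ref{analytic}, Fredholm/compactness for bounded $z$, and Leibniz differentiation for $k\ge 1$), which is exactly what the paper uses in the proof of Proposition~\ref{RV-high energy} itself. One small remark: the theorem does not assume that $0$ is a regular point, so your parenthetical reference to that hypothesis is unnecessary---the point $0$ is already excluded from the domain $\mathbf{\Xi}\setminus(\Sigma\cup\{0\})$, and for $\lambda>0$ with $\lambda\notin\Sigma$ the invertibility of $M(\lambda\pm i0)$ follows from Fredholm's alternative and the fact that $\lambda$ is not an eigenvalue.
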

	\begin{proof}
		This is a corollary of Proposition \ref{RV-high energy} and Lemma \ref{RV-LAP}.
	\end{proof}
	
	\section{Kato-Jensen type decay estimates}\label{Kato-Jensen-estimate}
	
	In this section, with the helps of lower energy asymptotic expansions, higher energy decay estimates and the limiting absorption principle,  we derive the behavior of $\lambda\rightarrow 0$ and $\lambda\rightarrow\infty$ for the spectral density $dE(\lambda)$ of  $H=(-\Delta)^m+V$ in $B(s, -s')$ with suitable $s, s'>0$.  By Stone's formula, the difference of the perturbed resolvent provides the spectral density
	$$E'(\lambda)=\frac{1}{2\pi i}\big(R_{V}(\lambda+i0)-R_{V}(\lambda-i0)\big).$$
	Using the spectral representation theorem, we give the proof of Theorem \ref{Kato-Jensen decay} and the local decay estimates.

	According to the asymptotic expansions of $R_V(\mu^{2m})$ in Section \ref{free-resolvent}, we get the following results of spectral density $dE(\lambda)$ as $\lambda\rightarrow 0$.
	\begin{proposition}\label{E-low}
		For $H=(-\Delta)^m+V$ with $|V(x)|\lesssim (1+|x|)^{-\beta}$ for some $\beta>0$. Let $dE(\lambda)$ be the spectral density of $H$. For $0< \lambda\ll 1$, we obtain the following results:
		
        (I) For $n>2m$, let $\beta>n$ and $s, s'>\frac{n}{2}+2$. If zero is a regular point of $H$, then
			\begin{equation*}\label{E-4m-regular}
				dE(\lambda)=\lambda^{\frac{n-2m}{2m}}L_1+\lambda^{\frac{n-2m+2}{2m}}L_{2}+o(\lambda^{\frac{n-2m+2}{2m}}),
			\end{equation*}
			where $L_1={\rm Im}(\alpha_{\aleph})A_{\aleph}$ and $L_2={\rm Im}(\alpha_{\aleph+1})A_{\aleph+1}$ if $n>4m$. If $2m< n\leq 4m$, for $j=1, 2$:
 \begin{equation*}
 L_{j}=\begin{cases}
 {\rm Im} (\beta_{j})B_{j},\,\,\, & 2m<n\leq 4m\,\, \text{and}\,\, n\,\, \text {is odd};\\
 {\rm Im} (\tau_{j})C_{j},\,\,\, & 2m<n\leq 4m\,\, \text{and}\,\, n\,\, \text {is even}.
 \end{cases}
 \end{equation*}

		(II) For $n>4m$, let $\beta>n+4$ and $s, s'>\frac{n}{2}+2$.  If zero is an eigenvalue of $H$,  then
			\begin{equation*}\label{E-4m-eigenvalue}
				dE(\lambda)=\lambda^{\frac{n-6m}{2m}}\widetilde{A}_\aleph^e+\lambda^{\frac{n-6m+2}{2m}}\widetilde{A}_{\aleph+1}^e+o(\lambda^{\frac{n-6m+2}{2m}}),
			\end{equation*}
			where $\widetilde{A}_\ell^{e}={\rm Im}(\alpha_\ell^e)A_{\ell}^e$ for $\ell=\aleph,\, \aleph+1$.
		
		(III) For $n\leq 4m$ and $n$ is odd. Let $\beta>n+4k$ where $2k=4m-n+1$ and $1\le k \le [\frac{m}{2}]+1$.  Let $s, s'>\frac{n}{2}+2k$, we have the following asymptotic expansions in $B(s, -s')$:
		\begin{itemize}
						
			\item If zero is  the $j$-th kind resonance of $H$ with $1\le j\le k$, then
			\begin{equation*}\label{E-2m-odd-j}
				\begin{split}
					dE(\lambda)=&\frac{\widetilde{B}_0^j}{\lambda^{\frac{2(m-k+j)-1}{2m}}}+\sum_{\ell=1}^{k-j}\frac{\widetilde{B}_\ell^j}{\lambda^{\frac{2(m-k+j-\ell)-1}{2m}}}
					+\sum_{\ell=k-j+1}^{2m-3k+3j-2}\frac{\widetilde{B}_\ell^j}{\lambda^{\frac{2m-3k+3j-\ell-1}{2m}}}\\
					&\ +\widetilde{B}_{2m-3k+3j-1}^j+o(\lambda^{\frac{1}{2m}}),
				\end{split}
			\end{equation*}
			where $\widetilde{B}_{\ell}^{j}={\rm Im}(\beta_\ell^j)B_\ell^j$ for $\ell=0, 1, \cdots, 2m-3k+3j-1$.
			
			\item If  zero is an eigenvalue of $H$, then
			\begin{equation*}\label{E-2m-odd-k+1}
				dE(\lambda)=\frac{\widetilde{B}_{1}^{k+1}}{\lambda^{\frac{2m-1}{2m}}}+\sum_{\ell=2}^{2m-1}\frac{\widetilde{B}_{\ell}^{k+1}}{\lambda^{\frac{2m-l}{2m}}}+\widetilde{B}_{2m}^{k+1}+o(\lambda^{\frac{1}{2m}}),
			\end{equation*}
			where $\widetilde{B}_{\ell}^{k+1}={\rm Im}(\beta_\ell^{k+1})B_\ell^{k+1}$ for $\ell=0, 1, \cdots, 2m$.
		\end{itemize}
		
		(IV) For $n\leq 4m$ and $n$ is even. Let $\beta>n+4k+2$ where $2k=4m-n+2$ and $1\le k \le [\frac{m}{2}]+1$.  Let $s, s'>\frac{n}{2}+2k+1$, we have the following asymptotic expansions in $B(s, -s')$:
		\begin{itemize}	
			
			\item If zero is  the  $j$-th kind of resonance of $H$ with $1\le j\le k-1$, then
			\begin{equation*}\label{E-2m-even-j}
				\begin{split}
				dE(\lambda)=&\frac{\widetilde{C}_{0,0}^j}{\lambda^{\frac{m-k+j}{m}}}+\sum_{\ell=1}^{k-j}\frac{\widetilde{C}_{\ell ,0}^j}{\lambda^{\frac{m-k+j-l}{m}}}+
					\sum_{\ell=k-j+1}^{m-k+j-1}\Bigg[\frac{\widetilde{C}_{\ell ,0}^j}{\lambda^{\frac{m-k+j-\ell}{m}}}+\frac{\ln(\lambda^{1/2m})\widetilde{C}_{\ell,1}^j}{\lambda^{\frac{m-k+j-\ell}{m}}}\Bigg]\\
					& +\ln(\lambda^{1/2m})\widetilde{C}_{m-k+j,1}^j+\widetilde{C}_{m-k+j,0}^j+o(\lambda^{0+}),
				\end{split}
			\end{equation*}
			where $\widetilde{C}_{\ell, \theta}^{j}={\rm Im}(\tau_{\ell, 0}^j)C_{\ell, 0}^j$ for $\ell=0, 1, \cdots, m-k+j$ and $\theta=0, 1$.
			
			\item If zero is  the $k$-th kind of resonance of $H$, then
			\begin{equation*}\label{E-2m-even-k}
				\begin{split}
					dE(\lambda)=&\frac{\widetilde{C}_{0, 1}^k}{\lambda g(\lambda^{1/2m})}+\frac{\widetilde{C}_{0, 2}^k}{\lambda\big(g(\lambda^{1/2m})\big)^2}+\sum_{\ell=1}^{m-1}\Bigg[\frac{\widetilde{C}_{\ell, 0}^k}{\lambda^{\frac{m-\ell}{m}}}+\frac{\widetilde{C}_{\ell, 1}^k}{\lambda^{\frac{m-\ell}{m}}g(\lambda^{1/2m})}
					+\frac{\widetilde{C}_{\ell, 2}^k}{\lambda^{\frac{m-\ell}{m}}\big(g(\lambda^{1/2m})\big)^2}\Bigg]\\
					& +\widetilde{C}_{m, 0}^k+\big(g(\lambda^{1/2m})\big)^{-1}\widetilde{C}_{m, 1}^k+\big(g(\lambda^{1/2m})\big)^{-2}\widetilde{C}_{m, 2}^k+o(\lambda^{0+}),
				\end{split}
			\end{equation*}
			where $\widetilde{C}_{\ell, \theta}={\rm Im}(\tau_{\ell, \theta}^k)C_{\ell, \theta}^k$ for $\ell=0, 1, \cdots, m$ and $\theta=0, 1, 2$.
			
			\item If zero is an eigenvalue of $H$, then
			\begin{equation*}\label{E-2m-even-k+1}
					dE(\lambda)=\frac{\widetilde{C}_{0, 1}^{k+1}}{\lambda g(\lambda^{1/2m})}+\sum_{\ell=1}^{m-1}\Bigg[\frac{\widetilde{C}_{\ell, 0}^{k+1}}{\lambda^{\frac{m-\ell}{m}}}
					+\frac{\widetilde{C}_{\ell, 1}^{k+1}}{\lambda g(\lambda^{1/2m})}\Bigg]
					 +\widetilde{C}_{m, 0}^{k+1}\widetilde{C}_{m, 1}^{k+1}+o(\lambda^{0+}),
			\end{equation*}
			where $\widetilde{C}_{\ell, \theta}= {\rm Im}(\tau_{\ell, \theta}^{k+1})C_{\ell, \theta}^{k+1}$ for $\ell=0, 1, \cdots, m$ and $\theta=0, 1$.
		\end{itemize}
	\end{proposition}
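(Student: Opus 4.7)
The plan is to apply Stone's formula together with the limiting absorption principle (Theorem~\ref{RV-limiting absorption}) to convert the low-energy asymptotic expansions of $R_V(\mu^{2m})$ obtained in Theorems~\ref{RV-4m}, \ref{RV-expansions-2m-odd}, and \ref{RV-expansions-2m-even} into the stated expansions of $dE(\lambda)$. Concretely, for $\lambda > 0$ in the continuous spectrum,
\begin{equation*}
dE(\lambda) = \frac{1}{2\pi i}\bigl(R_V(\lambda + i0) - R_V(\lambda - i0)\bigr),
\end{equation*}
and under the branch convention $0<\arg(z)<2\pi$, $z=\mu^{2m}$, $0<\arg(\mu)<\pi/m$ used throughout Section~\ref{free-resolvent}, the two boundary values correspond to $\mu_+ = \lambda^{1/(2m)}$ (positive real) and $\mu_- = \lambda^{1/(2m)}e^{i\pi/m}$.

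I would then substitute the appropriate expansion among cases (I)--(IV) at $\mu_\pm$ and subtract. Every non-logarithmic term has the form $c_\ell \mu^{\nu_\ell} T_\ell$ where $T_\ell$ is one of the selfadjoint operators $A_\ell$, $B^j_\ell$, $C^j_{\ell,\theta}$ listed in the cited theorems and $c_\ell \in \{\alpha_\ell,\beta^j_\ell,\tau^j_{\ell,\theta}\}\subset\mathbf{C}$. The subtraction produces
\begin{equation*}
c_\ell\bigl(\mu_+^{\nu_\ell}-\mu_-^{\nu_\ell}\bigr)T_\ell
= c_\ell\bigl(1-e^{i\nu_\ell\pi/m}\bigr)\lambda^{\nu_\ell/(2m)}T_\ell,
\end{equation*}
and division by $2\pi i$ together with $1-e^{i\theta}=-2ie^{i\theta/2}\sin(\theta/2)$ turns each surviving term into a real multiple of $\mathrm{Im}(c_\ell)\,T_\ell$, matching the coefficients $\widetilde{A}^e_\ell$, $\widetilde{B}^j_\ell$, $\widetilde{C}^j_{\ell,\theta}$ in the statement. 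The selfadjoint ``regular'' contributions coming from real scalar factors---the terms $G_j$ built from the real coefficients $a_j,b_j$ of Proposition~\ref{free-expansions}---drop out automatically, since those exponents $\nu_\ell$ are integer multiples of $2m$ and hence $\mu_+^{\nu_\ell}=\mu_-^{\nu_\ell}$. The error terms $O(\mu^N)$ become $o(\lambda^{(N-\epsilon)/(2m)})$ for any $\epsilon>0$, giving the remainders $o(\lambda^{(n-2m+2)/(2m)})$, $o(\lambda^{1/(2m)})$, and $o(\lambda^{0+})$ recorded in the four parts.

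The main bookkeeping obstacle will be the even-dimensional case (IV), where the function $g(\mu)=d_k\ln\mu+c_k$ with $d_k\in\mathbf{R}\setminus\{0\}$ and $c_k\in\mathbf{C}\setminus\mathbf{R}$ appears in denominators. Under the branch convention one finds $g(\mu_-)=g(\mu_+)+id_k\pi/m$, so
\begin{equation*}
\frac{1}{g(\mu_-)}-\frac{1}{g(\mu_+)} = -\frac{id_k\pi/m}{g(\mu_+)g(\mu_-)},
\qquad
\frac{1}{g(\mu_-)^2}-\frac{1}{g(\mu_+)^2} = -\frac{(2id_k\pi/m)\,g(\mu_+)+(id_k\pi/m)^2}{g(\mu_+)^2 g(\mu_-)^2},
\end{equation*}
and since $g(\lambda^{1/(2m)})\sim (d_k/(2m))\ln\lambda$ as $\lambda\downarrow 0$, one keeps the factors $g(\lambda^{1/(2m)})^{-1}$ and $g(\lambda^{1/(2m)})^{-2}$ unexpanded to produce the form displayed in Proposition~\ref{E-low} (IV). Combining these with the positive powers $\mu^{-2(m-\ell)}$ and collecting terms to the indicated order completes the derivation.

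No new analytical input beyond the cited theorems is required; the finite-rank assertions for the coefficient operators $\widetilde{B}^j_\ell$, $\widetilde{C}^j_{\ell,\theta}$ are inherited from the finite-rank property of the projections $S_j$ used to build $B^j_\ell$, $C^j_{\ell,\theta}$ in Section~\ref{free-resolvent} (cf.\ Remark~\ref{properties-of-S-T}). Thus the proof reduces to a careful, case-by-case substitution and collection of terms, with the only genuinely delicate step being the treatment of the $g(\mu)$-denominators in (IV).
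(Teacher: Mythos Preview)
Your proposal is correct and follows essentially the same approach as the paper, whose entire proof reads: ``Using Stone's formula and the resolvent asymptotic expansions of $R_V(\mu^{2m})$ in Section~\ref{free-resolvent}, we can immediately obtain the above expansions.'' Your outline is in fact considerably more detailed than what the paper supplies; the one place where your argument could be tightened is the assertion that the subtraction produces a real multiple of $\mathrm{Im}(c_\ell)$---this is most cleanly seen by invoking $R_V(\lambda-i0)=R_V(\lambda+i0)^*$ (so that with $T_\ell$ selfadjoint the difference is $2i\,\mathrm{Im}(c_\ell)\,\lambda^{\nu_\ell/(2m)}T_\ell$), rather than via the $\mu_\pm$ substitution alone.
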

	
	\begin{proof}
		Using Stone's formula and the resolvent asymptotic expansions of $R_V(\mu^{2m})$ in Section \ref{free-resolvent}, we can immediately obtain the above expansions.
	\end{proof}
	
	For spectral density $dE(\lambda)$ as $\lambda\rightarrow\infty$, we have the following conclusions.
	\begin{proposition}\label{E-high}
		For $H=(-\Delta)^m+V$, let $|V(x)|\lesssim (1+|x|)^{-\beta}$ with some $\beta>2+2k$ and $ k\in\mathbf{N}$.  Then for any $s, s'>k+\frac{1}{2}$, the following estimate
		\begin{equation}\label{E-HIGH}
			\frac{d^{k+1}}{d\lambda^{k+1}}E(\lambda)=O\Big(\lambda^{-\frac{(2m-1)(k+1)}{2m}}\Big)
		\end{equation}
		holds in $B(s, -s')$ as $\lambda\rightarrow\infty$ .
	\end{proposition}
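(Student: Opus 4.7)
The plan is to reduce the statement directly to the higher energy resolvent decay established in Theorem \ref{RV-limiting absorption}. The bridge is Stone's formula, which expresses the spectral density as a jump of the resolvent across the positive real axis:
\begin{equation*}
E'(\lambda)=\frac{1}{2\pi i}\big(R_V(\lambda+i0)-R_V(\lambda-i0)\big),\qquad \lambda>0.
\end{equation*}
Once this is in hand, the bound on $\frac{d^{k+1}}{d\lambda^{k+1}}E(\lambda)$ becomes a bound on the $k$-th $\lambda$-derivatives of the two boundary values $R_V(\lambda\pm i0)$, which is precisely what Theorem \ref{RV-limiting absorption} delivers.

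First I would fix $k\in\mathbf{N}$, assume the decay hypothesis $|V(x)|\lesssim(1+|x|)^{-\beta}$ with $\beta>2+2k$, and record that under this hypothesis Theorem \ref{RV-limiting absorption} applies at order $k$: for $s,s'>k+\tfrac12$ the map $z\mapsto R_V^{(k)}(z)\in B(s,-s')$ is continuous on ${\bf \Xi}\setminus(\Sigma\cup\{0\})$ and satisfies
\begin{equation*}
\big\|R_V^{(k)}(z)\big\|_{B(s,-s')}\lesssim |z|^{-\frac{(2m-1)(k+1)}{2m}}\quad\text{as }z\to\infty\text{ in }{\bf \Xi}\setminus(\Sigma\cup\{0\}).
\end{equation*}
In particular this gives the same bound for the two boundary values $R_V^{(k)}(\lambda\pm i0)$ for $\lambda>0$ large.

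Next I would differentiate Stone's formula $k$ times in $\lambda$. Because $R_V^{(j)}(\lambda\pm i0)$ exists as a continuous $B(s,-s')$-valued function on $(0,\infty)\setminus\Sigma$ for every $0\le j\le k$ (again by Theorem \ref{RV-limiting absorption}), this differentiation is justified and yields
\begin{equation*}
\frac{d^{k+1}}{d\lambda^{k+1}}E(\lambda)=\frac{1}{2\pi i}\Big(R_V^{(k)}(\lambda+i0)-R_V^{(k)}(\lambda-i0)\Big).
\end{equation*}
Applying the bound of the previous paragraph to each term on the right immediately gives the claimed estimate \eqref{E-HIGH} in $B(s,-s')$ as $\lambda\to\infty$.

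There is no real obstacle here beyond bookkeeping: the essential analytic work, namely the existence of the boundary values of $R_V^{(k)}$ and the polynomial decay as $|z|\to\infty$, has already been carried out in Section \ref{high energy} via the symmetric resolvent identity and the Agmon-Kato type bound of Proposition \ref{free-high energy}. The only point that warrants attention is matching the hypotheses: the weight constraint $s,s'>k+\tfrac12$ and the potential decay $\beta>2+2k$ are exactly those required for Theorem \ref{RV-limiting absorption} at order $k$, so no additional assumptions need to be imposed beyond those stated in Proposition \ref{E-high}.
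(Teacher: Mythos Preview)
Your proposal is correct and follows exactly the approach of the paper, which proves this proposition in one line by invoking Stone's formula together with Theorem \ref{RV-limiting absorption}. You have simply spelled out in more detail what the paper compresses into a single sentence.
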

	\begin{proof}
		Using Stone's formula and Theorem \ref{RV-limiting absorption}, we  get the above estimate.
	\end{proof}
	
	As an directly application of the lower energy asymptotic expansions and higher energy decay estimates, we prove the local decay estimates and the time asymptotic expansion for the propagator $e^{-itH}$.
	\begin{proof}[\bf Proof of Theorem \ref{theo-local decay}]
		For $|z|\rightarrow \infty$, by Proposition \ref{E-high} we have
		\begin{equation*}
			\big\|\langle x\rangle^{-\sigma}(H-z)^{-1}\langle x\rangle^{-\sigma}\big\|_{L^{2}(\mathbf{R}^{n})}=O\big(|z|^{-\frac{2m-1}{2m}}\big),\,\,\, \sigma>1/2.
		\end{equation*}
		For $|z|\rightarrow 0$, by the lower energy asymptotic expansion of $R_{V}( z)$, we have
		\begin{equation*}
			\big\|\langle x\rangle^{-\sigma}(H-z)^{-1}\langle x\rangle^{-\sigma}\big\|_{L^{2}(\mathbf{R}^{n})}=O(1), \,\, \sigma>n/2.
		\end{equation*}
		Then the theorem holds by the Corollary of \cite[P.146]{RS2}.
	\end{proof}
	
		\begin{theorem}\label{Kato-Jensen}
For $H=(-\Delta)^m+V(x)$ with $|V(x)|\lesssim (1+|x|)^{-\beta}$ for some $\beta>0$. Assume that $H$ has no positive embedded eigenvalue.
		
        (I) For $n>2m$, let $\beta>n$ and $s, s'>\frac{n}{2}$. If zero is a regular point of $H,$ then in $B(s, -s')$
			\begin{equation*}\label{e-4m-regular}
			e^{-itH}P_{ac}(H)=|t|^{-\frac{n}{2m}}\bar{A}+o(|t|^{-\frac{n}{2m}}),\,\,t\rightarrow\infty
			\end{equation*}
			where the operator $\bar{A}\in B(s,-s').$

		(II) For $n>4m$, let $\beta>n+4$ and $s, s'>\frac{n}{2}+2$. If zero is an eigenvalue of $H$, then in $B(s, -s')$ we have:
			\begin{equation*}\label{e-4m-eigenvalue}
			e^{-itH}P_{ac}(H)=|t|^{2-\frac{n}{2m}}\bar{A}^e+o(|t|^{2-\frac{n}{2m}}), \,\,t\rightarrow\infty
			\end{equation*}
			where the operator $\bar{A}^e\in B(s,-s').$

		(III) For $n\leq 4m$ and $n$ is odd. Let $\beta>n+4k$ where $2k=4m-n+1$ and $1\le k \le [\frac{m}{2}]+1$.  Let $s, s'>\frac{n}{2}+2k$, then in $B(s, -s')$ we have the following asymptotic expansions:
		\begin{itemize}
			
			\item If zero is  the $j$-th kind of resonance of $H$ with $1\le j\le k$, then
			\begin{equation*}\label{Eith-2m-odd-j}
			e^{-itH}P_{ac}(H)=|t|^{\frac{2(j-k)-1}{2m}}\bar{B}^j+o(|t|^{\frac{2(j-k)-1}{2m}}), \,\,t\rightarrow\infty
			\end{equation*}	
		\end{itemize}
			where the operators $\bar{B}^j\in B(s, -s')$.
				\begin{itemize}	
			\item If  zero is an eigenvalue of $H$, then
			\begin{equation*}\label{e-2m-odd-k+1}
			e^{-itH}P_{ac}(H)=|t|^{-\frac{1}{2m}}\bar{B}^{k+1}+o(|t|^{-\frac{1}{2m}}), \,\,t\rightarrow\infty
			\end{equation*}
		\end{itemize}
	where the operator $\bar{B}^{k+1}\in B(s, -s')$.	
		
(IV) For $ n\leq 4m$ and $n$ is even.  Let $\beta>n+4k+2$ where $2k=4m-n+2$ and $1\le k \le [\frac{m}{2}]+1$.  Let $s, s'>\frac{n}{2}+2k+1$, then  in $B(s, -s')$ we have the following asymptotic expansions:
		\begin{itemize}					
			\item If zero is the $j$-th kind of resonance of $H$ with $1\le j\le k-1$, then
			\begin{equation*}\label{e-2m-even-j}
			e^{-itH}P_{ac}(H)=|t|^{\frac{j-k}{m}}\bar{C}^j+o(|t|^{\frac{j-k}{m}}), \,\,t\rightarrow\infty
			\end{equation*}
	\end{itemize}
			where the operators $\bar{C}^j\in B(s, -s')$.
		\begin{itemize}	
			\item If zero is  the $k$-th kind of resonance of $H$, then
			\begin{equation*}\label{e-2m-even-k}
			e^{-itH}P_{ac}(H)=(\ln|t|)^{-1}\bar{C}_{1}^k+(\ln|t|)^{-2}\bar{C}_{2}^k+o\big((\ln|t|)^{-1}\big), \,\,t\rightarrow\infty
			\end{equation*}	\end{itemize}
			where the operators $\bar{C}_1^k, \bar{C}_2^k\in B(s, -s')$.
		\begin{itemize}		
			\item If zero is an eigenvalue of $H$, then
			\begin{equation*}\label{e-2m-even-k+1}
			e^{-itH}P_{ac}(H)=(\ln|t|)^{-1}\bar{C}^{k+1}+o\big((\ln|t|)^{-1}\big), \,\,t\rightarrow\infty
			\end{equation*}
		\end{itemize}where the operator $\bar{C}^{k+1}\in B(s, -s')$.
	\end{theorem}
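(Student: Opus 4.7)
The plan is to combine Stone's formula with the asymptotic expansions of the spectral density from Proposition~\ref{E-low} and the high-energy bounds from Proposition~\ref{E-high}. Starting from
\[
e^{-itH}P_{ac}(H)=\int_{0}^{\infty}e^{-it\lambda}E'(\lambda)\,d\lambda
\]
in $B(s,-s')$, I would fix a smooth cutoff $\chi\in C_{c}^{\infty}([0,\infty))$ with $\chi\equiv 1$ near $0$ and write the integral as $I_{\mathrm{low}}(t)+I_{\mathrm{high}}(t)$, where $I_{\mathrm{low}}$ carries the factor $\chi(\lambda)$ and $I_{\mathrm{high}}$ carries $1-\chi(\lambda)$. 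The high-energy piece is negligible: integrating by parts $k+1$ times with $k$ large enough, the boundary terms vanish by the support condition on $1-\chi$, and the interior terms are controlled by $\|E^{(k+1)}(\lambda)\|_{B(s,-s')}=O(\lambda^{-(2m-1)(k+1)/2m})$ from Proposition~\ref{E-high}. Choosing $k$ large enough that this bound is integrable at infinity, I get $I_{\mathrm{high}}(t)=O(|t|^{-N})$ for any $N$, so only $I_{\mathrm{low}}$ contributes to the leading asymptotics.

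For $I_{\mathrm{low}}$, I would substitute the expansions of $E'(\lambda)$ given in Proposition~\ref{E-low} term by term and use the standard Fourier-transform lemmas for the building blocks. Concretely, for $\alpha>-1$ and any cutoff $\chi$,
\[
\int_{0}^{\infty}e^{-it\lambda}\chi(\lambda)\lambda^{\alpha}\,d\lambda=c_{\alpha}|t|^{-\alpha-1}+o(|t|^{-\alpha-1}),\qquad t\to\infty,
\]
with an analogous formula for the logarithmically modified power $\lambda^{\alpha}\ln\lambda$, giving an extra factor $\ln|t|$. Applying this to the most singular term of $E'(\lambda)$ yields the claimed leading behavior in every case: in the regular case (I) the dominant term is $\lambda^{(n-2m)/2m}$, producing $|t|^{-n/2m}$; in the $n>4m$ eigenvalue case (II) it is $\lambda^{(n-6m)/2m}$, producing $|t|^{2-n/2m}$; in case (III) the most singular term $\lambda^{-(2(m-k+j)-1)/2m}$ (resp.\ $\lambda^{-(2m-1)/2m}$) gives exponent $(2(j-k)-1)/2m$ (resp.\ $-1/2m$); and the subdominant remainders are absorbed into the little-$o$ by the same lemma. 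This is exactly where the fact that the operators $A_\ell,\ B_\ell^j,\ C_{\ell,\theta}^j$ are self-adjoint in $B(s,-s')$ matters: selfadjoint terms have purely real spectral coefficients, so the imaginary parts picked up by Stone's formula kill the polynomial-in-$\lambda$ terms of integer order and leave only the fractional/logarithmic ones that actually contribute to the time decay.

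The main obstacle will be the even-dimensional cases (IV) involving the branch $g(\mu)=d_{k}\ln\mu+c_{k}$, where the spectral density contains terms like $\lambda^{-1}\,g(\lambda^{1/2m})^{-1}$ and $\lambda^{-1}\,g(\lambda^{1/2m})^{-2}$. Here the elementary Fourier lemma above is not directly applicable, and one needs the refined asymptotic
\[
\int_{0}^{\infty}e^{-it\lambda}\chi(\lambda)\frac{d\lambda}{g(\lambda^{1/2m})}=\frac{c}{\ln|t|}+o\!\left(\frac{1}{\ln|t|}\right),\quad \int_{0}^{\infty}e^{-it\lambda}\chi(\lambda)\frac{d\lambda}{g(\lambda^{1/2m})^{2}}=O\!\left(\frac{1}{(\ln|t|)^{2}}\right),
\]
which can be obtained by a change of variable $\lambda=s/|t|$ and dominated convergence, exploiting that $g(s^{1/2m}/|t|^{1/2m})\sim -\frac{1}{2m}\ln|t|$ uniformly on compact $s$-sets. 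Combined with Proposition~\ref{E-low}(IV), this yields the leading $(\ln|t|)^{-1}$ asymptotics for the $k$-th kind resonance and for the eigenvalue case in even dimensions, with the two-term expansion in the $k$-th kind resonance case coming from keeping both the $g^{-1}$ and $g^{-2}$ contributions.

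Finally, once the leading term in each case is extracted as above, the $o(\cdot)$ remainder follows by the same split: the next term in the expansion of $E'(\lambda)$ is of strictly lower order in $\lambda$, so its Fourier transform is strictly of lower order in $|t|$, and all further remainder terms in Proposition~\ref{E-low} (which are $o(\lambda^{\ast})$ as $\lambda\to 0^{+}$ uniformly in $B(s,-s')$) transform into $o(|t|^{-\ast-1})$ by a standard Riemann--Lebesgue argument applied after rescaling. The operators $\bar A,\ \bar A^{e},\ \bar B^{j},\ \bar C^{j},\ \bar C^{k}_{1,2},\ \bar C^{k+1}\in B(s,-s')$ in the statement are then precisely $c_{\alpha}$ times the imaginary parts of the corresponding coefficient operators (or $c\cdot\widetilde{C}_{0,1}^{k}$, etc.) from Proposition~\ref{E-low}.
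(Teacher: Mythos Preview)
Your proposal is correct and follows essentially the same strategy as the paper: split via a smooth cutoff into low- and high-energy pieces, kill the high-energy piece by the derivative bounds of Proposition~\ref{E-high}, and read off the low-energy asymptotics from Proposition~\ref{E-low} via the standard oscillatory-integral lemma for $\int_0^\infty e^{-it\lambda}\chi(\lambda)\lambda^{\alpha}\,d\lambda$. Two small differences are worth noting. First, the paper performs the change of variable $\lambda\mapsto\lambda^{2m}$ before cutting off, so that the low-energy integrand becomes $e^{-it\lambda^{2m}}\lambda^{2m-1}\chi(\lambda)E'(\lambda^{2m})$ and the relevant asymptotic lemma is the one for $\int_0^\infty e^{i\lambda x}f(x)x^{\tau}\,dx$ from Stein; you stay in the original spectral variable, which is equivalent. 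Second, for the logarithmic terms in case~(IV) the paper does not argue directly but simply quotes Jensen~\cite{J1} for the fact that $\int_0^\infty e^{-it\lambda}\,\lambda^{-1}\big((\ln\lambda-a)^2+\pi^2\big)^{-1}\,d\lambda=O((\ln t)^{-1})$; your rescaling/dominated-convergence sketch is a legitimate alternative route to the same conclusion. One caution: your claim that $I_{\mathrm{high}}(t)=O(|t|^{-N})$ for \emph{any} $N$ overshoots, since $k+1$ integrations by parts require $s,s'>k+\tfrac12$ and the hypotheses fix $s,s'$; but you only need $N$ large enough to beat the leading low-energy decay rate, and the stated weight conditions give exactly enough room for that.
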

	\begin{proof}
		Let $\chi(\lambda)$ be a smooth cutoff function with support in $0<\lambda<1$. Then
		\begin{equation*}
			\begin{split}
				e^{-itH}P_{ac}(H)=&2m\int_{0}^{\infty}e^{-it\lambda^{2m}}\lambda^{2m-1} E'(\lambda^{2m}) d\lambda\\
				=&2m\int_{0}^{\infty}e^{-it\lambda^{2m}}\lambda^{2m-1} \chi(\lambda)E'(\lambda^{2m}) d\lambda+2m\int_{0}^{\infty}e^{-it\lambda^{2m}}\big(1-\chi(\lambda)\big)\lambda^{2m-1} E'(\lambda^{2m}) d\lambda\\
				:=&I+II.
			\end{split}
		\end{equation*}
		
		For term $II$, note that the integral is supported in $[1, \infty)$. By \eqref{E-HIGH} we know, it is actually the Fourier transform of the $L^{1}(\mathbf{R})$ integrable function $\big(1-\chi(\lambda)\big)\lambda^{2m-1} E'(\lambda^{2m})$. Thus, by the Riemann-Lebesgue's lemma, we know the contribution of term $II$ is $|t|^{-k}$ for any large $k>0$. 	
		
		For term $I$, we have:  for $f(x)\in C_{0}^{\infty}(\mathbf{R})$,
		\begin{equation}
			\int_{0}^{\infty}e^{i\lambda x}f(x)x^{\tau} dx\sim \sum_{j=0}\theta_{j}\lambda^{-j-1-\tau}
			, \,\, {\rm Re(\tau)>-1},
		\end{equation}
		where $\theta_{j}=i^{j+\tau+1}\frac{j!}{\Gamma(j+1+\tau)}f^{(j)}(0)$. See Stein's book \cite[P. 355]{Stein}.
		
		On the other hand, for $n=4m+2-2k$ with $1\le k\le [\frac{m}{2}]+1$,  we need to treat such term
		$$\int_{0}^{\infty}\frac{e^{-it\lambda}}{\lambda\big((\ln\lambda-a)^2+\pi^2\big)} d\lambda, \,\, a\neq 0.$$
		However, the integral is $O(1/\ln t)$ as $t\rightarrow\infty$. See \cite{J1}.
	\end{proof}	
	
	From the above expansions of $e^{-itH}P_{ac}(H)$ in $B(s, -s')$, then Theorem \ref{Kato-Jensen decay} is a corollary of Theorem \ref{Kato-Jensen}.

	\section{$L^{p}$-decay estimate}\label{Lp}
	
	In this section, we use the Kato-Jensen type decay estimates to derive the $L^{p}$-decay estimate for $e^{-itH}$. For the free propagator $e^{-it(-\Delta)^{m}}$, by the stationary phase methods, we have ( see e.g  Kim, Arnold and Yao \cite{KAY}):
	
	\begin{lemma}\label{free 1to infty} For any $m\ge 1$,
the kernel $K_{t}(x)$ of $e^{-it(-\Delta)^{m}}$ is smooth and satisfies the following pointwise estimate
		\begin{equation*}
			\big|D^\alpha K_{t}(x)\big|\lesssim |t|^{-\frac{n+|\alpha|}{2m}}\big(1+|t|^{-\frac{1}{2m}}|x|\big)^{-\frac{(m-1)n-|\alpha|}{2m-1}},\,\,\, t\neq0.
		\end{equation*}
which implies that for $0\le |\alpha|\leq (m-1)n$,
		\begin{equation*}
			\big\|D^{\alpha}e^{it(-\Delta)^m}u\big\|_{L^{\infty}(\mathbf{R}^n)}\lesssim |t|^{-\frac{n+|\alpha|}{2m}}\|u\|_{L^{1}\mathbf{R}^n}, \ \ t\neq0.
		\end{equation*} In particular,
		\begin{equation*}
			\big\|e^{-it(-\Delta)^{m}}u\big\|_{L^{\infty}(\mathbf{R}^{n})}\lesssim |t|^{-\frac{n}{2m}}\|u\|_{L^{1}(\mathbf{R}^{n})}, \ \ t\neq0.
		\end{equation*}
	\end{lemma}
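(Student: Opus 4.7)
The plan is to reduce the kernel estimates to a dimensionless oscillatory integral via scaling, and then carry out a standard non-degenerate stationary phase analysis for the rescaled integral. Start from the Fourier representation
\begin{equation*}
K_{t}(x) = (2\pi)^{-n}\int_{\mathbf{R}^{n}} e^{i(x\cdot\xi - t|\xi|^{2m})}\,d\xi,
\qquad
D^{\alpha}K_{t}(x) = (2\pi)^{-n}\int_{\mathbf{R}^{n}} e^{i(x\cdot\xi - t|\xi|^{2m})}(i\xi)^{\alpha}\,d\xi.
\end{equation*}
For $t>0$ substitute $\xi = t^{-1/(2m)}\eta$ (the case $t<0$ is symmetric), set $y = t^{-1/(2m)}x$, and get
\begin{equation*}
D^{\alpha}K_{t}(x) = (2\pi)^{-n}\,t^{-(n+|\alpha|)/(2m)}\,I_{\alpha}(y),
\qquad
I_{\alpha}(y) := \int_{\mathbf{R}^{n}} e^{i\phi(\eta;y)}(i\eta)^{\alpha}\,d\eta,
\end{equation*}
with phase $\phi(\eta;y) = y\cdot\eta - |\eta|^{2m}$. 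Consequently the pointwise claim reduces to
\begin{equation*}
|I_{\alpha}(y)|\ \lesssim\ (1+|y|)^{-\frac{(m-1)n-|\alpha|}{2m-1}},\qquad y\in\mathbf{R}^{n}.
\end{equation*}

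Next I would treat two regimes separately. In the bounded regime $|y|\lesssim 1$, introduce a smooth dyadic decomposition $1 = \chi_{0}(\eta) + \sum_{j\geq 1}\chi_{j}(\eta)$ with $\chi_{j}$ supported in $|\eta|\sim 2^{j}$. The piece with $\chi_{0}$ is a compactly supported smooth integrand and contributes $O(1)$. On $\mathrm{supp}\,\chi_{j}$ with $j\geq 1$, the radial derivative of $\phi$ satisfies $|\partial_{r}\phi| \geq 2m|\eta|^{2m-1} - |y| \gtrsim 2^{j(2m-1)}$; integration by parts in $\eta$ repeatedly produces arbitrary negative powers of $2^{j}$, so the dyadic sum converges and gives $|I_{\alpha}(y)|\lesssim 1$.

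For the large regime $|y|\gg 1$, the phase has a unique critical point on $\mathbf{R}^{n}\setminus\{0\}$: $\nabla_{\eta}\phi = y - 2m|\eta|^{2m-2}\eta = 0$ yields $\eta^{*} = (|y|/2m)^{1/(2m-1)}\,\hat{y}$, in particular $|\eta^{*}|\sim |y|^{1/(2m-1)}$. A direct computation in an orthonormal frame with first vector $\hat{\eta^{*}}$ gives
\begin{equation*}
\phi''(\eta^{*}) = -\mathrm{diag}\bigl(2m(2m-1)|\eta^{*}|^{2m-2},\ 2m|\eta^{*}|^{2m-2},\ldots,2m|\eta^{*}|^{2m-2}\bigr),
\end{equation*}
hence $|\det\phi''(\eta^{*})|^{1/2}\asymp |\eta^{*}|^{(m-1)n}$. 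Localizing with a cutoff $\psi$ supported in a ball of radius $c|\eta^{*}|$ around $\eta^{*}$, the non-degenerate stationary phase formula delivers
\begin{equation*}
\Bigl|\int e^{i\phi(\eta)}(i\eta)^{\alpha}\psi(\eta)\,d\eta\Bigr| \lesssim |\eta^{*}|^{|\alpha|}\,|\det\phi''(\eta^{*})|^{-1/2} \asymp |y|^{\frac{|\alpha|-(m-1)n}{2m-1}},
\end{equation*}
which is exactly the required decay. On the complement of the cutoff I would decompose again into the regions $|\eta|\ll|\eta^{*}|$, $|\eta|\gg|\eta^{*}|$, and an annulus $|\eta|\sim|\eta^{*}|$ away from $\eta^{*}$; in each, $|\nabla\phi|$ has a lower bound ($\gtrsim|y|$, $\gtrsim|\eta|^{2m-1}$, and $\gtrsim|y|^{(2m-2)/(2m-1)}\,|\eta-\eta^{*}|/|\eta^{*}|$ respectively), and repeated integration by parts yields contributions that are dominated by the stationary-phase term.

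Combining the two regimes gives the pointwise kernel bound. The $L^{1}\to L^{\infty}$ estimate for $D^{\alpha}e^{-it(-\Delta)^{m}}$ follows immediately by Young's inequality, since under $|\alpha|\leq (m-1)n$ the exponent $((m-1)n-|\alpha|)/(2m-1)$ is nonnegative, so $\sup_{x}|D^{\alpha}K_{t}(x)|\lesssim |t|^{-(n+|\alpha|)/(2m)}$; the case $\alpha=0$ specializes to the standard dispersive estimate. The main technical obstacle I anticipate is the stationary-phase step: tracking the amplitude $(i\eta)^{\alpha}$, which grows like $|\eta^{*}|^{|\alpha|}$ at the critical point, against the Hessian determinant $|\eta^{*}|^{2(m-1)n}$ (which gives the key gain), and controlling the error integrals in the transition zone $|\eta|\sim|\eta^{*}|$, where the phase is mildly non-stationary and requires integration by parts tangent to level sets of $\phi$.
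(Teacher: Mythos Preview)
Your approach is correct and matches what the paper does: the paper does not give its own proof of this lemma but simply cites Kim--Arnold--Yao \cite{KAY} and attributes the result to ``the stationary phase methods.'' Your scaling reduction followed by a non-degenerate stationary phase analysis of the dimensionless integral $I_{\alpha}(y)$ is exactly the standard argument behind that citation, and your Hessian and critical-point computations are accurate.
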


	In the case that $V(x)\neq0$, we can derive the weak estimates: $L^{1}\cap L^{2}\rightarrow L^{\infty}+L^{2}$-decay estimates.
	
	\begin{definition}
		For any measurable function $f,$ if $f=f_1+f_2$ with $f_1\in L^2(\mathbf{R}^n), f_2\in L^\infty(\mathbf{R}^n)$ and satisfies
		\[\inf \big\{\|f_1\|_{L^2(\mathbf{R}^n)}+\|f_2\|_{L^\infty(\mathbf{R}^n)}\big\}<\infty,\]
		where the infimum takes over all the splitting of $f$. Then we denote $f\in L^2+L^\infty(\mathbf{R}^n)$ and $L^2+L^\infty(\mathbf{R}^n)$ is a Banach space with the norm
		\[\|f\|_{L^2+L^\infty(\mathbf{R}^n)}=\inf \big\{ \|f_1\|_{L^2(\mathbf{R}^n)}+\|f_2\|_{L^\infty(\mathbf{R}^n)} \big\}.\]
	\end{definition}

Note that for $f\in L^2+L^\infty(\mathbf{R}^n)$,  since $f$ can be divided as $f=f+0=0+f$, then $\|f\|_{L^2+L^\infty(\mathbf{R}^n)}\le \|f\|_{L^2(\mathbf{R}^n)}$ and $\|f\|_{L^2+L^\infty(\mathbf{R}^n)}\le \|f\|_{L^\infty(\mathbf{R}^n)}$.

	In the sequel, we need the boundness of $P_{ac}(H)$ in the weighted $L^2$ spaces. Denotes $P_{disc}(H)$ be the projection onto the subspace of discrete spectrum of $H$. Denotes $\sharp$ be the number of discrete spectrum point.  Thus $P_{ac}(H)=I-P_{disc}(H)$ and $P_{disc}(H)=\sum_{j=0}^{\sharp}\langle\cdot, e_{j}\rangle e_{j}$ where $e_{j}$ is the eigenvector. Denotes $N(\gamma; H)$ be the number (count the multiplicity) of eigenvalues of $H$ which is less than or equal to $\gamma$. By using the Birman-Solomyak bound for operator $A_{\ell}(\alpha V)=(-\Delta)^{\ell}-\alpha V$ where $\ell, \alpha>0$ in \cite{BS}, then  for $n>2m$ we have
	\begin{equation}\label{Birman-Solom-bound}
		N\big(0; (-\Delta)^m+V\big)\leq C(n) \int_{\mathbf{R}^n}V_{-}^{n/2m}(x)dx
	\end{equation}
	where $V_{-}(x)=-\min\{0, V(x)\}$.
	
	\begin{lemma}\label{projectionbounded}
		For $H=(-\Delta)^m+V$, $V(x)\in L^{\infty}(\mathbf{R}^n)\cap L^{n/2m}(\mathbf{R}^n)$. Assume that 0 is a regular point of $H$ and there are only finite embedded eigenvalues. Then for $\sigma\in \mathbf{R}$ and any $1\leq p\leq \infty$, we have
		\begin{equation}\label{Pac-bound}
			\|(1+|x|)^{-\sigma}P_{ac}(H)(1+|x|)^{\sigma}f\|_{L^{p}(\mathbf{R}^n)}\lesssim \|f\|_{L^{p}(\mathbf{R}^n)}.
		\end{equation}
	\end{lemma}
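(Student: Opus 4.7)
The plan is to use the decomposition $P_{ac}(H)=I-P_{disc}(H)$ and treat the two pieces separately. The conjugated identity $(1+|x|)^{-\sigma}\,I\,(1+|x|)^{\sigma}$ is the identity operator on $L^p$, contributing the trivial bound $\|f\|_{L^p}$, so the entire argument reduces to controlling the conjugated discrete projection $(1+|x|)^{-\sigma}P_{disc}(H)(1+|x|)^{\sigma}$ on $L^p$.

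I would first show that $P_{disc}(H)$ has finite rank. The Birman--Solomyak bound \eqref{Birman-Solom-bound} (applicable because $V_-\in L^{n/2m}$) gives finiteness of the strictly negative eigenvalues, and the standing hypotheses that $0$ is regular and that there are only finitely many embedded positive eigenvalues take care of the rest. Writing $P_{disc}(H)f=\sum_{j=0}^{\sharp}\langle f, e_j\rangle e_j$ with $\sharp<\infty$, the conjugated operator becomes a finite sum of rank-one pieces
\[
f\longmapsto \bigl\langle f,\,(1+|\cdot|)^{\sigma}e_j\bigr\rangle\,(1+|x|)^{-\sigma}e_j.
\]
H\"older's inequality bounds the $L^p\!\to\!L^p$ norm of each piece by $\|(1+|\cdot|)^{\sigma}e_j\|_{L^{p'}}\,\|(1+|\cdot|)^{-\sigma}e_j\|_{L^p}$, so the lemma reduces to showing that every eigenfunction $e_j$ decays faster than any polynomial, i.e.\ lies in $(1+|x|)^{-\sigma}L^p$ for all $\sigma\in\mathbf{R}$ and all $1\le p\le\infty$.

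The main obstacle, and the only nontrivial step, is this decay of the bound-state and embedded eigenfunctions. For a negative eigenvalue $\lambda_j<0$, I would iterate the integral identity $e_j=-R_0(\lambda_j)Ve_j$ and exploit the exponential decay of the second-order resolvent kernel $(-\Delta-\zeta)^{-1}$ for $\zeta\notin[0,\infty)$, visible through the splitting \eqref{free-resolvent-identity}; a Combes--Thomas/Agmon-type bootstrap (equivalently, conjugation by $e^{c|x|}$ for small $c>0$) yields exponential decay of $e_j$, and elliptic regularity from $(-\Delta)^m e_j=(\lambda_j-V)e_j\in L^\infty$ upgrades this into pointwise decay of every order, which is more than enough for every weighted $L^p$. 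For a positive embedded eigenvalue exponential decay is not automatic; instead I would employ the boundary identity $e_j=-R_0(\lambda_j\pm i0)Ve_j$ together with the limiting-absorption mapping properties recorded in Lemma \ref{RV-LAP}, and bootstrap through the weighted $L^2_s$ scale to gain polynomial decay of arbitrary order. Converting this weighted-$L^2$ decay into weighted $L^p$ decay via Sobolev embedding through the eigenvalue equation, and summing the finitely many rank-one contributions, closes the argument.
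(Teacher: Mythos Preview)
Your overall architecture coincides with the paper's: write $P_{ac}(H)=I-P_{disc}(H)$, use Birman--Solomyak together with the hypotheses to get finite rank, expand $P_{disc}(H)$ as a finite sum of rank-one projections, and bound each conjugated rank-one piece via H\"older. The paper then disposes of the eigenfunction decay in one line by invoking H\"ormander \cite[Theorem~14.5.2]{H2}, which gives $(1+|x|)^N e_j\in L^2(\mathbf{R}^n)$ for all $N\in\mathbf{R}$ directly; from there H\"older again finishes.

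Your attempt to \emph{prove} the eigenfunction decay rather than cite it is where you diverge. For negative eigenvalues the Combes--Thomas/Agmon argument you sketch is standard and fine. For a positive embedded eigenvalue, however, your key identity $e_j=-R_0(\lambda_j\pm i0)Ve_j$ is not automatic: a priori $R_0(\lambda_j+i0)(-Ve_j)$ and $R_0(\lambda_j-i0)(-Ve_j)$ differ from $e_j$ by a solution of the homogeneous equation $(H_0-\lambda_j)u=0$ satisfying the corresponding radiation condition, and the limiting-absorption mapping $R_0(\lambda\pm i0):L^2_s\to L^2_{-s'}$ ($s,s'>1/2$) \emph{loses} weight, so naive iteration gains nothing. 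The missing step is to show that the restriction of $\widehat{Ve_j}$ to the level set $\{|\xi|^{2m}=\lambda_j\}$ vanishes; this follows from $e_j\in L^2$ (since $\widehat{e_j}=-\widehat{Ve_j}/(|\xi|^{2m}-\lambda_j)$ must be locally $L^2$ across that hypersurface), and once established it both validates the boundary identity and lets each application of $R_0(\lambda_j\pm i0)$ gain a unit of weight, making the bootstrap go through. You should also cite the free resolvent LAP (Corollary~\ref{analytic}) here rather than Lemma~\ref{RV-LAP}, which concerns $R_V$ and explicitly assumes the absence of embedded eigenvalues.
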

	\begin{proof}
		By the Birman-Solomyak bound \eqref{Birman-Solom-bound} of $H$, we know the number $\sharp$ is finite. It is enough to show that $P_{disc}(H)$ satisfies the estimate \eqref{Pac-bound}. Since
		\begin{equation*}
			\begin{split}
				&\big\|(1+|\cdot|)^{-\sigma}P_{ac}(H)(1+|\cdot|)^{\sigma}f\big\|_{L^{p}(\mathbf{R}^n)}\\
				=\,\,&\bigg\|\sum_{j=0}^{\sharp}\bigg\langle(1+|x|)^{\sigma} f,\,\,e_{j}\bigg\rangle (1+|y|)^{-\sigma}e_{j}\bigg\|_{L^{p}(\mathbf{R}^n)} \\
				\lesssim\,\, & \sum_{j=0}^{\sharp} \bigg|\bigg\langle f,\,\,(1+|x|)^{\sigma} e_{j}\bigg\rangle\bigg|\|(1+|y|)^{-\sigma}e_{j}\|_{L^{p}(\mathbf{R}^n)}\\
				\lesssim\,\, &\sum_{j=0}^{\sharp} \|f\|_{L^{p}(\mathbf{R}^n)}\|(1+|x|)^{\sigma} e_{j}\|_{L^{p^{\prime}}(\mathbf{R}^n)}\|(1+|y|)^{-\sigma}e_{j}\|_{L^{p}(\mathbf{R}^n)}.
			\end{split}
		\end{equation*}	
		Furthermore, by the Theorem 14.5.2 in \cite{H2}, we know that eigenfunction $e_{j}$ satisfies
		\begin{equation*}
			(1+|x|)^{N}e_{j}(x)\in L^{2}(\mathbf{R}^n)\,\, \text{for all}\,\,  N\in\mathbf{R}.
		\end{equation*}
		Hence, by the H\"older's inequality we know the sum in the last step is finite.
	\end{proof}
	Now, we start the proof of the $L^{1}\cap L^{2}- L^{\infty}+L^{2}$-decay estimates from the following lemma.

\begin{lemma}\label{time decay}
For any $a, b>0$ and $a+b\neq1$, we have
\begin{equation}\label{eq-time decay}
\int_0^t\frac{ds}{(1+|t-s|)^a(1+|s|)^b}\lesssim\begin{cases}
(1+|t|)^{-a-b+1},\ \  &\ \ 0<a,b<1,\\
(1+|t|)^{-\min\{a,b\}},\ \ &\ \ \text{otherwise}.
\end{cases}
\end{equation}
and we have
\begin{equation}\label{eq-time decay 2}
\int_0^t\frac{ds}{(1+|\ln (t-s)|)(1+|s|)^{a}}\lesssim \begin{cases}
(1+|t|)^{-a+1}(1+|\ln t|)^{-1},\ \  &\ \ 0<a<1,\\
(1+|\ln t|)^{-1},\ \ &\ \ \ a\geq1.
\end{cases}
\end{equation}
\end{lemma}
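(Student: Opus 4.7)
The plan is to prove both estimates by the classical $t/2$-splitting technique: divide the interval of integration $[0,t]$ into $[0,t/2]$ and $[t/2,t]$, and on each subinterval exploit that one of the two factors in the denominator is bounded below by its value at $t/2$, so that the remaining one-variable integral can be computed explicitly up to constants.

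For the first estimate, on $[0,t/2]$ one has $|t-s|\geq t/2$, hence $(1+|t-s|)^a \gtrsim (1+t)^a$, which yields
$$\int_0^{t/2} \frac{ds}{(1+|t-s|)^a(1+s)^b} \lesssim (1+t)^{-a} \int_0^{t/2} \frac{ds}{(1+s)^b}.$$
The remaining integral is $O(1)$ when $b>1$ and $O((1+t)^{1-b})$ when $0<b<1$. On $[t/2,t]$, the inequality $s\geq t/2$ gives $(1+s)^b\gtrsim (1+t)^b$, and the substitution $u=t-s$ reduces the piece to the symmetric bound with $a$ and $b$ exchanged. Adding the two contributions yields $(1+t)^{1-a-b}$ when both $a,b\in(0,1)$, and $(1+t)^{-\min\{a,b\}}$ otherwise. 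The hypothesis $a+b\neq 1$ ensures that we never land on the borderline $a+b=1$ where a logarithm would replace the power $(1+t)^{1-a-b}$ in the first regime.

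For the logarithmic estimate the same template applies. On $[0,t/2]$, since $t-s\geq t/2$ one has $1+|\ln(t-s)|\gtrsim 1+|\ln t|$ for large $t$, so this piece contributes $(1+|\ln t|)^{-1}\int_0^{t/2}(1+s)^{-a}\,ds$, producing $(1+t)^{1-a}(1+|\ln t|)^{-1}$ for $0<a<1$ and $(1+|\ln t|)^{-1}$ for $a\geq 1$. On $[t/2,t]$ the bound $(1+s)^a\gtrsim (1+t)^a$ combined with the change of variable $u=t-s$ bounds this piece by $(1+t)^{-a}\int_0^{t/2} du/(1+|\ln u|)$. The integrability of $du/(1+|\ln u|)$ near $0$ follows from the substitution $u=e^{-v}$, converting the relevant piece into $\int_1^\infty e^{-v}v^{-1}\,dv<\infty$; for the tail, a straightforward comparison gives $\int_1^{t/2}du/(1+\ln u)\lesssim t/\ln t$. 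Hence this second piece is $\lesssim t^{1-a}/\ln t$, which is absorbed into the claimed bound in each case.

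The estimates themselves are routine; the only real bookkeeping step is the case analysis needed to identify the dominant of the two half-interval contributions, which must be split according to whether each of $a,b$ lies above or below $1$ and (for the first estimate) whether $a+b\lessgtr 1$. The assumption $a+b\neq 1$ is invoked precisely to avoid the endpoint logarithmic degeneration, and with it the bounds as stated read off directly from the four-line calculation above.
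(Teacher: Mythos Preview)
Your proof is correct and follows the standard $t/2$-splitting argument that is the expected approach here; the paper itself does not write out a proof but simply cites Lemma~4.4 of \cite{FSY}, where the same method is used. Your write-up in fact supplies more detail than the paper gives.
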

\begin{proof}
The proof follows from the proof of Lemma 4.4 in \cite{FSY}.
\end{proof}
	\begin{proof}[\bf Proof of Theorem \ref{Ginibre-argument}]
		Our strategy is applying the iterated Duhamel formula
		\begin{equation}\label{Duhamel formula}
			\begin{split}
				e^{-itH}P_{ac}(H) =& e^{-itH_0}P_{ac}(H)+i\int_0^t e^{-i(t-s)H_0}VP_{ac}(H)e^{isH_0}ds\\
				&\ -\int_0^t\int_0^s e^{-i(t-s)H_0}Ve^{-i(s-\tau)H}P_{ac}(H)Ve^{-i\tau H_0}d\tau ds\\
				:=& I+II+III
			\end{split}
		\end{equation}
		and then estimate each term of \eqref{Duhamel formula}. By the definition of $L^2+L^\infty(\mathbf{R}^n),$ we have
		$$\big\|e^{-itH_0}P_{ac}(H)u\big\|_{L^2+L^\infty(\mathbf{R}^n)}\le\min\Big\{\big\|e^{-itH_0}P_{ac}(H)u\big\|_{L^2(\mathbf{R}^n)},\,\,  \big\|e^{-itH_0}P_{ac}(H)u\big\|_{L^\infty(\mathbf{R}^n)}\Big\}.$$
		Then for $0<|t|\le 1,$ we have
		\[\big\|e^{-itH_0}P_{ac}(H)u\big\|_{L^2+L^\infty(\mathbf{R}^n)}\le\|u\|_{L^2(\mathbf{R}^n)}.\]
		And for $|t|>1,$ by Lemma \ref{free 1to infty}, we have
		\[\big\|e^{-itH_0}P_{ac}(H)u\big\|_{L^2+L^\infty(\mathbf{R}^n)}\le |t|^{-\frac{n}{2m}}\|u\|_{L^1(\mathbf{R}^n)}.\]
		Thus for the first term $I$ we have
		\begin{equation}\label{estimates-I}
			\big\|e^{-itH_0}P_{ac}(H)u\big\|_{L^2+L^\infty(\mathbf{R}^n)}\lesssim(1+|t|)^{-\frac{n}{2m}}\|u\|_{L^1\cap L^2(\mathbf{R}^n)}.
		\end{equation}
		\indent For the second term $II$ of \eqref{Duhamel formula}, we have
		\begin{align*}
			&\int_0^t\big\|e^{-i(t-s)H_0}VP_{ac}(H)e^{is H_0}u\big\|_{L^2+L^\infty(\mathbf{R}^n)}ds\\
			\lesssim& \int_0^t\big(1+|t-s|\big)^{-\frac{n}{2m}}\big\|VP_{ac}(H)e^{-isH_0}u\big\|_{L^1\cap L^2(\mathbf{R}^n)}ds\\
			\lesssim&\int_0^t\big(1+|t-s|\big)^{-\frac{n}{2m}}\big\|V\langle x\rangle^\sigma\big\|_{L^\infty\cap L^2(\mathbf{R}^n)}\big\|\langle x\rangle^{-\sigma} P_{ac}(H)e^{-isH_0}u\big\|_{L^2(\mathbf{R}^n)}ds\\
			\lesssim&\int_1^t\big(1+|t-s|\big)^{-\frac{n}{2m}}\big\|\langle x\rangle^{-\sigma}\big\|_{L^2(\mathbf{R}^n)}\big\|P_{ac}(H)e^{-isH_0}u\big\|_{L^\infty(\mathbf{R}^n)}ds\\
			&\ \ +\int_0^1\big(1+|t-s|\big)^{-\frac{n}{2m}}\big\|\langle x\rangle^{-\sigma}\big\|_{L^\infty(\mathbf{R}^n)}\big\|P_{ac}(H)e^{-isH_0}u\big\|_{L^2(\mathbf{R}^n)}ds\\
			\lesssim &\int_1^t\big(1+|t-s|\big)^{-\frac{n}{2m}}s^{-\frac{n}{2m}}ds\|u\|_{L^1\cap L^2(\mathbf{R}^n)}+\int_0^1\big(1+|t-s|\big)^{-\frac{n}{2m}}2^\frac{n}{2m}\big(1+|s|\big)^{-\frac{n}{2m}}ds\|u\|_{L^1\cap L^2(\mathbf{R}^n)}\\
			\lesssim &\,\, \big(1+|t|\big)^{-\frac{n}{2m}}\|u\|_{L^2\cap L^1(\mathbf{R}^n)}.
		\end{align*}
	\par	For the third term $III$ of \eqref{Duhamel formula}, we have
		\begin{align*}
			&\ \int_0^t\int_0^s\Big\|e^{-i(t-s)H_0}Ve^{i(s-\tau)H}P_{ac}(H)Ve^{-i\tau H_0u}\Big\|_{L^2+L^\infty}d\tau ds\\
			\lesssim &\int_0^t\int_0^s\big(1+|t-s|\big)^{-\frac{n}{2m}}\big\|Ve^{-i(s-\tau)H}P_{ac}(H)Ve^{-i\tau H_0}u\big\|_{L^1\cap L^2}d\tau ds\\
			\lesssim & \int_0^t\int_0^s \big(1+|t-s|\big)^{-\frac{n}{2m}}\big\|V\langle x\rangle^\sigma\big\|_{L^\infty+L^2}\big\|\langle x\rangle^{-\sigma}e^{-i(s-\tau)H}P_{ac}(H)Ve^{-i\tau H_0}u\big\|_{L^2}d\tau ds\\
			\lesssim &\int_0^t\int_0^s\big(1+|t-s|\big)^{-\frac{n}{2m}}\big\|\langle x\rangle^{-\sigma}e^{-i(s-\tau)H}P_{ac}(H)\langle x\rangle^{-\sigma}\big\|_{L^2\rightarrow L^2}\big\|\langle x\rangle^{\sigma}V e^{-i\tau H_0}u\big\|_{L^2}d\tau ds\\
			\lesssim & \int_0^t\int_0^1\big(1+|t-s|\big)^{-\frac{n}{2m}}\big(1+|s-\tau|\big)^{-\frac{n}{2m}}\big\|\langle x\rangle^{\sigma}V\big\|_{L^\infty}\big\|e^{-i\tau H_0}u\big\|_{L^2}d\tau ds\\
			&\ \ + \int_0^t\int_1^s \big(1+|t-s|\big)^{-\frac{n}{2m}}\big(1+|s-\tau|\big)^{-\frac{n}{2m}}\big\|\langle x\rangle^\sigma V\big\|_{L^2}\big\|e^{-i\tau H_0}u\big\|_{L^\infty}d\tau ds\\
			\lesssim &\int_0^t\int_0^1\big(1+|t-s|\big)^{-\frac{n}{2m}}\big(1+|s-\tau|\big)^{-\frac{n}{2m}}2^\frac{n}{2m}\big(1+|\tau|\big)^{-\frac{n}{2m}}d\tau ds\|u\|_{L^2}\\
			&\ \ +\int_0^t\int_1^s\big(1+|t-s|\big)^{-\frac{n}{2m}}\big(1+|s-\tau|\big)^{-\frac{n}{2m}}\tau^{-\frac{n}{2m}}d\tau ds \big\|u\big\|_{L^1}\\
			\lesssim &\,\, \big(1+|t|\big)^{-\frac{n}{2m}}\big\|u\big\|_{L^1\cap L^2(\mathbf{R}^n)}.
		\end{align*}
		Thus we can combine the steps above to conclude the proof  for  the regular case of Theorem \ref{Ginibre-argument}. The resonance cases hold by the same processes as to the regular case.  We only need to plug into the respectively time decay rate of Kato-Jensen decay estimates instead of the regular case.
	\end{proof}

	\section{Endpoint Strichartz estimates} \label{endpoint-strichartz}
	In this section, we consider the following nonlinear higher-order Schr\"odinger equation
	\begin{equation}\label{NHSE}
		\begin{cases}i\partial_t\psi=\big((-\Delta)^m+V\big)\psi+h(t,x),& \\
			\psi(0,x)=\psi_0(x)\in L^2(\mathbf{R}^n),
		\end{cases}
	\end{equation} 	
	where $h(t, x)$ is the source term.	We aim to establish the endpoint Strichartz estimates for the solution of problem \eqref{NHSE}. With the lack of  the $L^{1} - L^{\infty}$ dispersive estimate of $e^{-it((-\Delta)^m+V)}$, we will apply the Kato-Jensen type decay estimates (see Corollary \ref{Kato-Jensen decay}) and the local decay estimates of $e^{-it((-\Delta)^m+V)}$ to obtain the endpoint Strichartz estimates of problem \eqref{NHSE}.
	
	\begin{definition}
		For $n>2m$,  if $q, r\in \mathbf{R}$ and $2\le q\le \infty$ satisfies
		\begin{equation}\label{eq-rongxudui}
			\frac{2m}{q}+\frac{n}{r}=\frac{n}{2},
		\end{equation}
		then we say $(q, r)$ is $m$-admissible pair. Note that if $q=2$, then $r=\frac{2n}{n-2m}$.
	\end{definition}
	
	\begin{theorem}\label{theo-endpoint-strichartz}
		Consider the nonlinear problem  \eqref{NHSE}. Let $|V(x)|\lesssim (1+|x|)^{-\beta}$ with some $\beta>n+4$.  Assume zero is a regular point of $H=(-\Delta)^m+V$ and there does not exist positive embedded eigenvalue of $H$. Then for any $m$-admissible pairs $(q, r)$ and $(\tilde{q}, \tilde{r})$, the following estimates hold:
		\begin{enumerate}
			\item Homogeneous Strichartz estimate
			\begin{equation}\label{homo-strichartz}
			\big\|e^{-itH}P_{ac}(H)\psi_0\big\|_{L_t^qL_x^r(\mathbf{R}\times\mathbf{R}^n)}\le C(n)\|\psi_0\|_{L^2(\mathbf{R}^n)};
			\end{equation}
			
			\item Dual homogeneous Strichartz estimate
			\begin{equation}\label{dual homo-strichartz}
				\Big\|\int_{\mathbf{R}}e^{-isH}P_{ac}(H)h(s,\cdot)ds\Big\|_{L_x^2(\mathbf{R}^n)}\le C(n)\|h\|_{L_t^{\tilde{q}'}L_x^{\tilde{r}'}(\mathbf{R}\times\mathbf{R}^{n})};
			\end{equation}
			
			\item The solution $\psi(x,t)$ of the problem \eqref{NHSE} satisfies
			\begin{equation}\label{strichartz-3}
				\big\|P_{ac}(H)\psi(x,t)\big\|_{L_t^qL_x^r(\mathbf{R}\times\mathbf{R}^n)}\le C(n)\|\psi_0\|_{L^2(\mathbf{R}^n)}+\|h\|_{L_t^{\tilde{q}'}L_x^{\tilde{r}'}(\mathbf{R}\times\mathbf{R}^n)}.
			\end{equation}
		\end{enumerate}
	\end{theorem}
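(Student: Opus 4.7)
The plan is to combine the Keel--Tao endpoint Strichartz estimates for the free propagator $e^{-itH_0}$ with the local decay estimate in Theorem~\ref{theo-local decay}, linking them through a Duhamel expansion. By Lemma~\ref{free 1to infty}, $\|e^{-itH_0}\|_{L^1\to L^\infty}\lesssim |t|^{-n/(2m)}$ with dispersive exponent $n/(2m)>1$ under the hypothesis $n>2m$, so Keel and Tao's theorem yields the full family of homogeneous, dual, and retarded inhomogeneous Strichartz estimates for $H_0$ at every $m$-admissible pair, including the endpoint $(q,r)=(2,\,2n/(n-2m))$. In particular
\begin{equation*}
\Big\|\int_0^t e^{-i(t-s)H_0}F(s)\,ds\Big\|_{L^q_tL^r_x}\lesssim \|F\|_{L^{\tilde q'}_tL^{\tilde r'}_x}
\end{equation*}
will hold for all admissible pairs $(q,r),(\tilde q,\tilde r)$.

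For the homogeneous estimate \eqref{homo-strichartz}, we start from the Duhamel identity
\begin{equation*}
e^{-itH}P_{ac}(H)\psi_0=e^{-itH_0}P_{ac}(H)\psi_0-i\int_0^t e^{-i(t-s)H_0}\,V\,e^{-isH}P_{ac}(H)\psi_0\,ds.
\end{equation*}
The first term will be controlled by free Strichartz together with the $L^2$-boundedness of $P_{ac}(H)$. For the Duhamel term, we factor $V=\langle x\rangle^{-\sigma}\cdot W$ with $W=\langle x\rangle^{\sigma}V$ for some $\sigma>n/2$; since $\beta>n+4$, we may arrange $\sigma$ so that $W\in L^{n/m}(\mathbf{R}^n)$. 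At the endpoint $(\tilde q,\tilde r)=(2,\,2n/(n-2m))$ we have $1/\tilde r'=m/n+1/2$, so H\"older's inequality in space gives
\begin{equation*}
\big\|Ve^{-isH}P_{ac}(H)\psi_0\big\|_{L^{\tilde r'}_x}\lesssim \|W\|_{L^{n/m}_x}\,\big\|\langle x\rangle^{-\sigma}e^{-isH}P_{ac}(H)\psi_0\big\|_{L^2_x}.
\end{equation*}
Integrating in $s$ (with $\tilde q'=2$) and invoking Theorem~\ref{theo-local decay} will yield $\|Ve^{-isH}P_{ac}(H)\psi_0\|_{L^2_sL^{\tilde r'}_x}\lesssim \|\psi_0\|_{L^2}$. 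Plugging this into the retarded free estimate above then closes the bound on the Duhamel term in $L^q_tL^r_x$.

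The dual estimate \eqref{dual homo-strichartz} will follow by taking adjoints in \eqref{homo-strichartz}, using self-adjointness of $H$ and $P_{ac}(H)$. The inhomogeneous estimate \eqref{strichartz-3} will then be obtained by writing the solution of \eqref{NHSE} through Duhamel and combining \eqref{homo-strichartz} with \eqref{dual homo-strichartz} in the standard $TT^*$-style fashion. The main delicate point will be the endpoint $(q,r)=(2,\,2n/(n-2m))$: at this pair the Christ--Kiselev lemma no longer converts non-retarded estimates into retarded ones, so we must rely on the genuine Keel--Tao endpoint argument for $H_0$, and the requirement $n/(2m)>1$ is precisely what makes this available. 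A secondary technical point is that $\sigma$ must simultaneously satisfy $\sigma>n/2$ (for local decay) and $\sigma<\beta-m$ (so that $\langle x\rangle^{\sigma}V\in L^{n/m}$); the hypotheses $\beta>n+4$ and $n>2m$ ensure this window is nonempty.
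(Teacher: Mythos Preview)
Your treatment of the homogeneous estimate \eqref{homo-strichartz} and its dual \eqref{dual homo-strichartz} matches the paper's proof essentially verbatim: Duhamel with respect to $H_0$, the factorization $V=\langle x\rangle^\sigma V\cdot\langle x\rangle^{-\sigma}$ with $\langle x\rangle^\sigma V\in L^{n/m}$, the free retarded endpoint Strichartz from Keel--Tao, and the local decay estimate of Theorem~\ref{theo-local decay}. This part is fine.

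The gap is in your handling of the inhomogeneous estimate \eqref{strichartz-3}. You write that it ``will then be obtained by writing the solution \dots\ through Duhamel and combining \eqref{homo-strichartz} with \eqref{dual homo-strichartz} in the standard $TT^*$-style fashion,'' and that at the double endpoint ``we must rely on the genuine Keel--Tao endpoint argument for $H_0$.'' But $TT^*$ applied to \eqref{homo-strichartz} and \eqref{dual homo-strichartz} only yields the \emph{non-retarded} bound $\|\int_{\mathbf R}e^{-i(t-s)H}P_{ac}h(s)\,ds\|_{L^q_tL^r_x}\lesssim\|h\|_{L^{\tilde q'}_tL^{\tilde r'}_x}$, and you correctly note that Christ--Kiselev cannot upgrade this to the retarded version when $q=\tilde q=2$. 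Invoking ``Keel--Tao for $H_0$'' does not close the gap: the Keel--Tao machinery produces a retarded endpoint estimate for a propagator only when one has the $L^1\to L^\infty$ dispersive bound for that propagator, and here you only have it for $e^{-itH_0}$, not for $e^{-itH}P_{ac}$. So nothing in your outline controls $\int_0^t e^{-i(t-s)H}P_{ac}h(s)\,ds$ at the double endpoint.

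The paper fills this gap with a genuinely additional ingredient. It expands $P_{ac}(H)\psi$ via Duhamel with respect to $H_0$ (so that the $h$-term is handled by the free retarded endpoint estimate), and reduces the remaining $VP_{ac}(H)\psi$ term to the weighted bound
\[
\Big\|\langle x\rangle^{-\sigma}\int_0^t e^{-i(t-s)H}P_{ac}(H)h(s)\,ds\Big\|_{L^2_tL^2_x}\lesssim \|\psi_0\|_{L^2}+\|h\|_{L^{\tilde q'}_tL^{\tilde r'}_x}.
\]
This ``source local decay'' is then proved by introducing an auxiliary Cauchy problem for $i\partial_t\phi=H_0\phi+h$ and using, crucially, the Kato--Jensen pointwise decay $\|\langle x\rangle^{-\sigma}e^{-itH}P_{ac}\langle x\rangle^{-\sigma}\|_{L^2\to L^2}\lesssim (1+|t|)^{-n/(2m)}$ from Theorem~\ref{Kato-Jensen decay regular}, whose integrability (since $n>2m$) is what makes the argument close. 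Your proposal never invokes Kato--Jensen decay, and without it (or some substitute) the retarded endpoint estimate for $H$ remains unproved.
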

	
	By Keel and Tao's $TT^*$-methods (see \cite{KeelTao}) and $L^1$-$L^\infty$ estimates of the free propagator $e^{-it(-\Delta)^{m}}$, we have the following Strichartz estimates.
	\begin{lemma}\label{free-strichartz}
		For the free propagator $e^{-it(-\Delta)^{m}}$, we have
		\begin{equation}\label{freestri}
			\big\|e^{-it(-\Delta)^{m}}\psi\big\|_{L_t^qL_x^r(\mathbf{R}\times\mathbf{R}^n)}\lesssim\|\psi\|_{L^2(\mathbf{R}^n)},
		\end{equation}
		\begin{equation}\label{freeretarded}
			\Big\|\int_{s<t}e^{-i(t-s)(-\Delta)^{m}}f(s)ds\Big\|_{L_t^qL_x^r(\mathbf{R}\times\mathbf{R}^n)}\lesssim\|f\|_{L_t^{\tilde{q}'}L_x^{\tilde{r}'}(\mathbf{R}\times\mathbf{R}^n)},
		\end{equation}
		where $(q,r)$ and $(\tilde{q}', \tilde{r}')$ are $m$-admissible pairs.
	\end{lemma}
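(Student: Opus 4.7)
The plan is to invoke the abstract $TT^{*}$ framework of Keel--Tao \cite{KeelTao} with the one-parameter group $U(t)=e^{-it(-\Delta)^{m}}$. Their theorem requires two inputs: an energy (unitarity) bound and an untruncated dispersive decay estimate. Both are already available. First, since $(-\Delta)^{m}$ is selfadjoint on $L^{2}(\mathbf{R}^{n})$, Stone's theorem gives
$$
\big\|e^{-it(-\Delta)^{m}}\psi\big\|_{L^{2}(\mathbf{R}^{n})}=\|\psi\|_{L^{2}(\mathbf{R}^{n})},\qquad t\in\mathbf{R}.
$$
Second, Lemma \ref{free 1to infty} supplies
$$
\big\|e^{-it(-\Delta)^{m}}u\big\|_{L^{\infty}(\mathbf{R}^{n})}\lesssim |t|^{-n/(2m)}\|u\|_{L^{1}(\mathbf{R}^{n})},\qquad t\neq 0,
$$
which, combined with the group property $U(t)U(s)^{*}=U(t-s)$, yields the required dispersive bound with exponent $\sigma=n/(2m)$.

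With these two inputs the abstract Keel--Tao theorem produces simultaneously the homogeneous estimate \eqref{freestri} and the retarded estimate \eqref{freeretarded} for every pair of exponents satisfying their $\sigma$-admissibility relation
$$
\frac{2}{q}+\frac{2\sigma}{r}=\sigma,\qquad q\geq 2,\qquad (q,r,\sigma)\neq(2,\infty,1).
$$
Inserting $\sigma=n/(2m)$ and clearing denominators turns this into $2m/q+n/r=n/2$, which is exactly the $m$-admissibility relation \eqref{eq-rongxudui}. The standing hypothesis $n>2m$ forces $\sigma>1$, so the forbidden triple $(2,\infty,1)$ is automatically avoided and the endpoint $q=2$, $r=2n/(n-2m)$ is included in the admissible range.

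The only delicate point is the retarded bound \eqref{freeretarded} at the double endpoint $q=\tilde q=2$: here the Christ--Kiselev trick is unavailable because $q$ is not strictly greater than $\tilde q'$, and one has to resort to the direct bilinear argument of \cite{KeelTao}, who dyadically decompose the time variable $|t-s|\sim 2^{j}$ and interpolate between the trivial $L^{2}\to L^{2}$ bound on each slab and the dispersive $L^{1}\to L^{\infty}$ bound. That bilinear argument is precisely what makes their framework apply to our situation without further work, and it is also the only genuinely nontrivial step in the proof; we simply quote it. All remaining assertions are consequences of standard $TT^{*}$ duality and real interpolation, so no additional computation is required.
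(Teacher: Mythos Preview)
Your proposal is correct and follows exactly the approach indicated in the paper: the paper does not give a detailed proof of this lemma but simply states that it follows from Keel and Tao's $TT^{*}$ method together with the $L^{1}$--$L^{\infty}$ estimate of Lemma~\ref{free 1to infty}, which is precisely what you carry out. Your additional remarks on the admissibility conversion $\sigma=n/(2m)$ and the double endpoint are accurate elaborations of that same argument.
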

	
	Next, we give the proof of Theorem \ref{theo-endpoint-strichartz}.
	\begin{proof}[\bf Proof of Theorem \ref{theo-endpoint-strichartz}]
		We divide the proof into the following three steps.
		
		Step 1)\,\,We aim to show the homogeneous Strichartz estimate \eqref{homo-strichartz} and the dual homogeneous Strichartz estimate \eqref{dual homo-strichartz}.
		
		Consider the following problem
		\begin{equation}\label{freefourthorder equ}
			\left\{ \begin{gathered}
				i\partial_t \psi = H_0\psi+V\psi, \hfill \\
				\psi(0,\cdot)=\psi_0 \in L^{2}(\mathbf{R}^n) . \hfill \\
			\end{gathered}  \right.
		\end{equation}
		For the homogeneous Strichartz estimate \eqref{homo-strichartz}, using Duhamel formula it is enough to show
		\begin{equation*}
			\Big\|\int_{0}^{t} e^{-i(t-s)H_{0}}Ve^{-isH}P_{ac}(H)\psi ds\Big\|_{L_{t}^{q}L_{x}^{r}(\mathbf{R}\times\mathbf{R}^n)}\lesssim\|\psi\|_{L^{2}(\mathbf{R}^n)}.
		\end{equation*}
		In fact, by \eqref{freeretarded} and the local decay estimates \eqref{local decay}, we have
		\begin{equation*}
			\begin{split}
				&\quad\Big\|\int_{0}^{t} e^{-i(t-s)H_{0}}Ve^{-isH}P_{ac}(H)\psi ds\Big\|_{L_{t}^{q}L_{x}^{r}(\mathbf{R}\times\mathbf{R}^n)}
				\lesssim\big\|Ve^{-itH}P_{ac}(H)\psi \big\|_{L_{t}^{2}L_{x}^{\frac{2n}{n+2m}}(\mathbf{R}\times\mathbf{R}^n)}\\
				&\lesssim\big\|V\langle x\rangle^{\sigma}\big\|_{L^{\frac{n}{m}}(\mathbf{R}^n)}\big\|\langle x\rangle^{-\sigma}e^{-itH}P_{ac}(H)\psi\big\|_{L_{t}^{2}L_{x}^{2}(\mathbf{R}\times\mathbf{R}^n)}\lesssim\|\psi\|_{L^{2}(\mathbf{R}^n)}.
			\end{split}
		\end{equation*}
		Further, the dual homogeneous Strichartz estimate \eqref{dual homo-strichartz} follows by the $TT^*$-method.
		
		Step 2)\,\, We aim to show the retarded Strichartz estimate \eqref{strichartz-3}. The solution $\Psi(t, x)$ of equation \eqref{NHSE} satisfies
		\begin{equation*}
			P_{ac}(H)\Psi(t,x)=e^{-itH_0}P_{ac}(H)\Psi_0-i\int_{0}^{t}e^{-i(t-s)H_0}VP_{ac}(H)\Psi(s)ds-i\int_{0}^{t}e^{-i(t-s)H_0}P_{ac}(H)h(s)ds.
		\end{equation*}
		Then by H\"older's inequality and Step 1, we have
		\begin{equation*}
			\begin{split}
				&\big\|P_{ac}(H)\Psi(t,x)\big\|_{L_{t}^{q}L_{x}^{r}(\mathbf{R}\times\mathbf{R}^n)}\\
				\lesssim&\big\|e^{-itH_0}P_{ac}(H)\Psi_0\big\|_{L_{t}^{q}L_{x}^{r}(\mathbf{R}\times\mathbf{R}^n)}
				+\Big\|\int_{0}^{t}e^{-i(t-s)H_0}P_{ac}(H)h(s,\cdot)ds\Big\|_{L_{t}^{q}L_{x}^{r}(\mathbf{R}\times\mathbf{R}^n)}\\
				&  +\Big\|\int_{0}^{t}e^{-i(t-s)H_0}VP_{ac}(H)\Psi(s)ds\Big\|_{L_{t}^{q}L_{x}^{r}(\mathbf{R}\times\mathbf{R}^n)}\\
				\lesssim &\|\Psi_0\|_{L^2}+\|h(t)\|_{L_{t}^{\tilde{q}^{\prime}}L_{x}^{\tilde{r}^{\prime}}(\mathbf{R}\times\mathbf{R}^n)}
				+\big\|VP_{ac}(H)\Psi(t)\big\|_{L_{t}^{2}L_{x}^{\frac{2n}{n+2m}}(\mathbf{R}\times\mathbf{R}^n)}\\
				\lesssim &\|\Psi_0\|_{L^2}+\|h(t)\|_{L_{t}^{\tilde{q}^{\prime}}L_{x}^{\tilde{r}^{\prime}}(\mathbf{R}\times\mathbf{R}^n)}
				+\|V\langle x\rangle^{\sigma}\|_{L^{\frac{n}{m}}(\mathbf{R}^n)}\big\|\langle x\rangle^{-\sigma}P_{ac}(H)\Psi(t)\big\|_{L_{t}^{2}L_{x}^{2}(\mathbf{R}\times\mathbf{R}^n)}.
			\end{split}
		\end{equation*}
		
		Now we aim to show that
		\begin{equation}
			\big\|\langle x\rangle^{-\sigma}P_{ac}(H)\Psi(t)\big\|_{L_{t}^{2}L_{x}^{2}(\mathbf{R}\times\mathbf{R}^n)}\lesssim\|\Psi_0\|_{L^2(\mathbf{R}^n)}
			+\|h\|_{L_{t}^{\tilde{q}^{\prime}}L_{x}^{\tilde{r}^{\prime}}(\mathbf{R}\times\mathbf{R}^n)}.
		\end{equation}
		First, by Duhamel's formula for $\Psi$, we have
		\begin{equation*}
			\begin{split}
			&	\big\|\langle x\rangle^{-\sigma}P_{ac}(H) \Psi(t)\big\|_{L_{t}^{2}L_{x}^{2}(\mathbf{R}\times\mathbf{R}^n)}\\
		\lesssim &   \big\|\langle x\rangle^{-\sigma}e^{-itH}P_{ac}(H)\Psi_{0}\big\|_{L_{t}^{2}L_{x}^{2}(\mathbf{R}\times\mathbf{R}^n)} +\Big\|\int_{0}^{t}\langle x\rangle^{-\sigma}e^{-i(t-s)H}P_{ac}(H)h(s)ds\Big\|_{L_{t}^{2}L_{x}^{2}(\mathbf{R}\times\mathbf{R}^n)}.
			\end{split}
		\end{equation*}
		Then, we will finish the proof which only needs to show
		\begin{equation}\label{source local decay}
			\Big\|\int_{0}^{t}\langle x\rangle^{-\sigma}e^{-i(t-s)H}P_{ac}(H)h(s)ds\Big\|_{L_{t}^{2}L_{x}^{2}(\mathbf{R}\times\mathbf{R}^n)}
			\lesssim\|\Psi_0\|_{L^2(\mathbf{R})}
			+\|h\|_{L_{t}^{\tilde{q}^{\prime}}L_{x}^{\tilde{r}^{\prime}}(\mathbf{R}\times\mathbf{R}^n)}.
		\end{equation}

		Step 3)\,\, We show the local decay estimate of the source term \eqref{source local decay}.
		
		Consider the Cauchy problem
		\begin{equation}
			\left\{ \begin{gathered}
				i\partial_{t}\phi= H_0\phi+h(t)= H\phi-V\phi+h(t), \hfill \\
				\phi(0,\cdot)=\Psi_{0}. \hfill \\
			\end{gathered}  \right.
		\end{equation}
		Then Duhamel formula for the solution $\phi(t,x)$ reads
		\begin{equation}\label{01}
			P_{ac}(H)\phi(t)=e^{-itH}P_{ac}(H)\Psi_{0}+i\int_{0}^{t}e^{-i(t-s)H}P_{ac}(H)V\phi(s)ds-i\int_{0}^{t}e^{-i(t-s)H}P_{ac}(H)h(s)ds.
		\end{equation}
		For the left hand side of \eqref{01}, since $\phi(t)$ is also a solution of $i\partial_{t}\phi=H_0\phi+h(t)$,
		by \eqref{freestri}, \eqref{freeretarded} and Duhamel formula again, we have
		\begin{equation*}
			\begin{split}
				&\,\,\big\|\langle x\rangle^{-\sigma}P_{ac}(H)\phi(t)\big\|_{L^{2}_{t}L_{x}^{2}(\mathbf{R}\times\mathbf{R}^n)}\\
				\lesssim&\,\,\big\|\langle x\rangle^{-\sigma}P_{ac}(H)\langle x\rangle^{\sigma}\langle x\rangle^{-\sigma}e^{-itH_{0}}\Psi_0\big\|_{L^{2}_{t}L_{x}^{2}(\mathbf{R}\times\mathbf{R}^n)}\\
				& +\Big\|\langle x\rangle^{-\sigma}P_{ac}(H)\langle x\rangle^{\sigma}\langle x\rangle^{-\sigma}\int_{0}^{t}e^{-i(t-s)H_0}h(s)ds\Big\|_{L^{2}_{t}L_{x}^{2}(\mathbf{R}\times\mathbf{R}^n)}\\
				\lesssim&\,\,\|\Psi_0\|_{L^2(\mathbf{R}^n)}+\Big\|\int_{0}^{t}e^{-i(t-s)H_0}h(s)ds\Big\|_{L^{2}_{t}L_{x}^{\frac{2n}{n-2m}}(\mathbf{R}\times\mathbf{R}^n)}\\
				\lesssim&\,\,\|\Psi_0\|_{L^2(\mathbf{R}^n)}+\|h\|_{L_{t}^{\tilde{q}^{\prime}}L_{x}^{\tilde{r}^{\prime}}(\mathbf{R}\times\mathbf{R}^n)}.
			\end{split}
		\end{equation*}
		Here, we apply Lemma \ref{projectionbounded}, the boundness of $P_{ac}(H)$. The local decay estimate for the first term on the right hand side of \eqref{01} follows from \eqref{local decay}.
		
		For the second term of the right hand side of \eqref{01}, notice that
		\begin{equation*}
			\begin{split}
				&\Big\|\int_{0}^{t}\langle x\rangle^{-\sigma}e^{-i(t-s)H}P_{ac}(H)V\phi(s)ds\Big\|_{L^{2}_{t}L_{x}^{2}(\mathbf{R}\times\mathbf{R}^n)}\\
				\lesssim &\,\, \Big\|\int_{0}^{t}\big\|\langle x\rangle^{-\sigma}e^{-i(t-s)H}P_{ac}(H)V\phi(s)\big\|_{L_{x}^{2}(\mathbf{R}^n)}ds\Big\|_{L^{2}_{t}(\mathbf{R})}\\
				\lesssim &\,\, \Big\|\int_{0}^{t}(1+|t-s|)^{-\frac{n}{2m}}\big\|\langle x\rangle^{\sigma}V\phi(s)\big\|_{L_{x}^{2}(\mathbf{R}^n)}ds\Big\|_{L^{2}_{t}(\mathbf{R})}\\
				\lesssim &\,\, \|\Psi_0\|_{L^2}+\|h\|_{L_{t}^{\tilde{q}^{\prime}}L_{x}^{\tilde{r}^{\prime}}(\mathbf{R}\times\mathbf{R}^n)}.
			\end{split}
		\end{equation*}
	\end{proof}

	\section{Absence of embedded eigenvalues}\label{eigenvalue-problem}
	
	\subsection{Examples of existence of positive embedded eigenvalue}
	In 1929, von Neumann and Wigner \cite{vonNeumannWigner} found an example of  Schr\"{o}dinger operator acting in $L^{2}(\mathbf{R}^{3})$ with a spherically symmetric potential which vanishes like $O(|x|^{-1})$ at infinity such that it possesses a positive eigenvalue embedded in the continuous spectrum. On the other hand, Kato in the famous work \cite{K} also showed that $-\Delta+V$ has no positive eigenvalues if the potential $V(x)$ decays fast enough at infinity (e.g. $ o(|x|^{-1})$). So the Wigner-von Neumann counterexample shows that Kato's result is sharp in essence. 

	For higher order Schr\"{o}dinger type operator $H=(-\Delta)^m+V$ with  $m\geq2$, we will give some higher-order examples to show that there still exists some positive eigenvalue embedded in the continuous spectrum, even for $C_{0}^{\infty}$-potential. For even $m$, if $\phi\in L^{2}(\mathbf{R}^{n})$ is the eigenfunction of $H=(-\Delta)^m+V$ with eigenvalue $+1$, i.e. $\big[(-\Delta)^m+V\big]\phi=\phi$. If $\phi$ is strictly positive(or negative), then
	\begin{equation}\label{V}
		V(x)=\phi^{-1}(x)\big(\phi(x)-(-\Delta)^m\phi(x)\big).
	\end{equation}
	Our strategy is to find a $\phi(x)$ such that $\phi(x)=(-\Delta)^m\phi(x)$ for $|x|>r_{0}>0$, then by \eqref{V} the potential $V$ has compact support in $ B(0, r_{0})$.
\begin{lemma}
Let $\Phi(x)=\mathscr{F}^{-1}\big((1+|\cdot|^2)^{-1}\big)(x)$, then $\Phi\in C^{\infty}(\mathbf{R}^n\setminus\{0\})$. Moreover, $\Phi(x)$ is positive and $\Phi(x)=O(|x|^{-N})$ for any $N>0$ as $|x|\rightarrow\infty$.
\end{lemma}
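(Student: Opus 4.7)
The plan is to identify $\Phi$ as the Bessel potential kernel, namely the fundamental solution of $(I-\Delta)u=\delta_0$, and then derive all three claims from a single explicit integral representation.

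First, I would write $(1+|\xi|^2)^{-1} = \int_0^\infty e^{-t(1+|\xi|^2)}\,dt$ for $\xi\in\mathbf{R}^n$, and then apply the inverse Fourier transform under the integral (justified by Fubini, since the double integral converges absolutely after inserting $e^{-\epsilon|\xi|^2}$ and passing to the limit). Using the standard Gaussian formula $\mathscr{F}^{-1}(e^{-t|\xi|^2})(x) = (4\pi t)^{-n/2} e^{-|x|^2/(4t)}$, this yields the subordination representation
\begin{equation*}
\Phi(x) = \frac{1}{(4\pi)^{n/2}}\int_0^\infty e^{-t}\, e^{-|x|^2/(4t)}\, t^{-n/2}\,dt, \qquad x\in\mathbf{R}^n\setminus\{0\}.
\end{equation*}
Positivity of $\Phi(x)$ on $\mathbf{R}^n\setminus\{0\}$ is then immediate from the integrand being strictly positive. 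For convergence when $x\neq 0$, note that near $t=0$ the factor $e^{-|x|^2/(4t)}$ kills the $t^{-n/2}$ singularity, and near $t=\infty$ the factor $e^{-t}$ gives absolute convergence.

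Next, to obtain $\Phi\in C^\infty(\mathbf{R}^n\setminus\{0\})$, I would differentiate under the integral sign: for each multi-index $\alpha$, $\partial_x^\alpha\big(e^{-|x|^2/(4t)}\big)$ is a polynomial in $x/t$ times $e^{-|x|^2/(4t)}$, and on any compact set $K\subset\mathbf{R}^n\setminus\{0\}$ the resulting integrand is dominated by an integrable function of $t$, uniformly in $x\in K$. (Alternatively, one may invoke elliptic regularity applied to the distributional identity $(I-\Delta)\Phi=\delta_0$, which shows $\Phi$ is smooth off the origin.)

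Finally, for the decay, I would perform a Laplace/saddle-point estimate on the phase $\varphi(t)=t+|x|^2/(4t)$, whose unique minimum on $(0,\infty)$ occurs at $t_*=|x|/2$ with minimum value $\varphi(t_*)=|x|$. Splitting $\int_0^\infty = \int_0^{|x|/4} + \int_{|x|/4}^{4|x|} + \int_{4|x|}^\infty$ and bounding each piece by $e^{-|x|}$ times a polynomial in $|x|$ yields $\Phi(x)\lesssim (1+|x|)^{(1-n)/2} e^{-|x|}$ as $|x|\to\infty$; in particular $\Phi(x)=O(|x|^{-N})$ for every $N>0$. The main (and only mildly delicate) obstacle is the careful splitting of the integral in the saddle-point bound and justifying differentiation under the integral sign for the smoothness claim; both amount to elementary dominated-convergence arguments and involve no new ideas beyond the subordination formula.
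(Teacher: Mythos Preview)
Your proposal is correct and follows essentially the same route as the paper: both identify $\Phi$ as the Bessel potential kernel and invoke the subordination formula $\Phi(x)=(4\pi)^{-n/2}\int_0^\infty e^{-t}e^{-|x|^2/(4t)}t^{-n/2}\,dt$ to read off positivity, smoothness, and decay. The only minor difference is that the paper, citing Stein's book, also records the alternative representation (for $n\ge 3$)
\[
\Phi(x)=\frac{e^{-|x|}}{2(2\pi)^{(n-1)/2}\Gamma((n-1)/2)}\int_0^\infty e^{-|x|t}(t+t^2)^{(n-3)/2}\,dt,
\]
from which the exponential decay is immediate without a saddle-point argument; your Laplace estimate achieves the same end by hand.
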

Indeed, $\Phi(x)$ is the kernel of Bessel potential $(1-\Delta)^{-1}$. Thus, we have
\begin{equation}
\Phi(x)=\frac{1}{4\pi}\int_{0}^{\infty}e^{-|x|^2/t}e^{-t/4\pi}t^{-n/2}dt,\,\, x\in\mathbf{R}^n.
\end{equation}
In additional, if $n\geq3$, then
\begin{equation}
\Phi(x)=\frac{e^{-|x|}}{2(2\pi)^{(n-1)/2}\Gamma((n-1)/2)}\int_{0}^{\infty}e^{-|x|t}(t+t^2)^{(n-3)/2}dt,\,\, x\in\mathbf{R}^n.
\end{equation}
See Section V.3 in Stein's book \cite{Stein70} or \cite{Bessel-potential}. Note that $\Phi(x)=\Phi(r)$ where $r=|x|$. For $r>r_{0}>0$, by the above lemma, then $(-\Delta)^m\Phi=\Phi$ if $m$ is an even integer.
	%
	For any $\epsilon>0$,  define a radial function $u(r)$:
		\begin{equation}
			u(r)=\begin{cases}\Phi(r), \,\, & r>r_{0}+\epsilon;  \\ u_{0}(r), \,\, & 0\leq r\leq r_{0}-\epsilon;\end{cases}
		\end{equation}
 such that $0<u_{0}(r)\in C^{\infty}(\mathbf{R^+})$. Let the eigenfunction $\phi(x)=u(|x|)$,  and then $\phi\in L^{2}(\mathbf{R}^{3})$.  By \eqref{V},  we have $V(x)\in C_{0}^{\infty}(\mathbf{R}^{n})$ and $(\Delta^m+V)\phi=\phi$ for each even integer $m\ge 2$.

	\subsection{Proof of Theorem \ref{absence of positive eigenvalue}}
	In this part, we give the proof of Theorem \ref{absence of positive eigenvalue}. The proof relies on the virial identity for  homogeneous  operator $h(D)+V$,  where $h$ is a homogeneous function including $(-\Delta)^{s}$ for $s\in\mathbf{R}^{+}$. The virial identity is useful in the area of spectral analysis.
	
	\begin{lemma}\label{the virial theorem h}
		For operators $H=h(D)+V(x)$, $h$ is a homogeneous function with degree $\varrho$. $V(x)$ is a real-valued function on $\mathbf{R}^{n}$. Suppose that $V$ satisfies the following:
		
		(i) $V$ is $h(D)$-bounded with relative bound less than one;
		
		(ii) There exists a multiplication operator $\mathcal{V}$ on $L^{2}(\mathbf{R}^{n})$ with $\mathscr{D}(\mathcal{V})\supset \mathscr{D}(h(D))$,
		such that for all $\phi\in \mathscr{D}(h(D))$,
		\begin{equation}\label{V-limit-1}
			s-\lim_{\theta\rightarrow1}(\theta-1)^{-1}(V_{\theta}-V)\phi(x)=\mathcal{V}\phi(x),
		\end{equation}
		where $\mathscr{D}(h(D))$ and $\mathscr{D}(\mathcal{V})$ are the self-adjoint domain of $h(D)$ and $\mathcal{V}$ respectively.
		
		For any eigenfunction $\psi$ of $H$ with related eigenvalue $\lambda$, that is
		\begin{equation*}
			h(D)\psi+V\psi=\lambda\psi, \quad \psi\in\mathscr{D}(h(D)).
		\end{equation*}
		Then we have
		\begin{equation}\label{virial identity}
			\varrho\big(\psi, h(D)\psi\big)=\big(\psi,  \mathcal{V}\psi\big)=\varrho\big(\psi,  (\lambda-V)\psi\big).
		\end{equation}
	\end{lemma}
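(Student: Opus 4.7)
The plan is to derive \eqref{virial identity} by differentiating a dilation deformation of the eigenvalue equation at the identity. Specifically, I will use the one-parameter unitary dilation group $U_\theta\phi(x)=\theta^{n/2}\phi(\theta x)$, $\theta>0$, on $L^{2}(\mathbf{R}^{n})$. Since $h$ is homogeneous of degree $\varrho$, a Fourier-side computation yields $U_\theta^{*}h(D)U_\theta=\theta^{\varrho}h(D)$, while the multiplication operator transforms as $U_\theta^{*}VU_\theta=V(\cdot/\theta)$. Consequently
\begin{equation*}
(U_\theta\psi,HU_\theta\psi)-\lambda=(\theta^{\varrho}-1)(\psi,h(D)\psi)+\bigl(\psi,\bigl(V(\cdot/\theta)-V\bigr)\psi\bigr).
\end{equation*}

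Next I would exploit $(H-\lambda)\psi=0$ to rewrite the left hand side. Expanding $U_\theta\psi=\psi+(U_\theta-I)\psi$ and using selfadjointness of $H$, the cross terms vanish, giving
\begin{equation*}
(U_\theta\psi,(H-\lambda)U_\theta\psi)=\bigl((U_\theta-I)\psi,(H-\lambda)(U_\theta-I)\psi\bigr).
\end{equation*}
Combining these two displays, dividing by $\theta-1$, and sending $\theta\to 1$, the first term on the left converges to $\varrho(\psi,h(D)\psi)$; the second term, after the substitution $s=1/\theta$, converges to $-(\psi,\mathcal{V}\psi)$ by assumption \eqref{V-limit-1}. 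Provided the right hand side is $o(\theta-1)$, this produces the identity $\varrho(\psi,h(D)\psi)=(\psi,\mathcal{V}\psi)$, and the remaining equality in \eqref{virial identity} follows by substituting $h(D)\psi=(\lambda-V)\psi$ and taking the inner product with $\psi$.

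The principal obstacle is controlling the right hand side $\bigl((U_\theta-I)\psi,(H-\lambda)(U_\theta-I)\psi\bigr)$ as $\theta\to 1$. Strong continuity of $U_\theta$ on $L^{2}$ gives $\|(U_\theta-I)\psi\|\to 0$, so by Cauchy--Schwarz it suffices to prove $\|(H-\lambda)(U_\theta-I)\psi\|=O(\theta-1)$. Since $h$ is homogeneous, $U_\theta$ preserves $\mathscr{D}(h(D))=\mathscr{D}(H)$, the last identity using the relative $h(D)$-boundedness of $V$ with bound less than one. Writing
\begin{equation*}
(H-\lambda)U_\theta\psi=(\theta^{\varrho}-1)\lambda U_\theta\psi+U_\theta\bigl((V(\cdot/\theta)-\theta^{\varrho}V)\psi\bigr),
\end{equation*}
each summand is $O(\theta-1)$ in $L^{2}$: the first by direct computation, the second by decomposing $V(\cdot/\theta)-\theta^{\varrho}V=(V(\cdot/\theta)-V)+(1-\theta^{\varrho})V$ and invoking hypothesis \eqref{V-limit-1} together with the $h(D)$-boundedness of $V$. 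This yields the required bound and completes the argument.
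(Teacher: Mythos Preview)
Your argument is correct, but it proceeds along a somewhat different route from the paper. The paper takes the \emph{off-diagonal} pairing $(\psi_\theta,\psi)$ with $\psi_\theta=U_\theta\psi$: conjugating the eigenvalue equation by $U_\theta$ gives $(h(D)+\theta^{\varrho}V_\theta)\psi_\theta=\theta^{\varrho}\lambda\psi_\theta$, and pairing this with $\psi$ while pairing $H\psi=\lambda\psi$ with $\psi_\theta$ makes the $h(D)$-contributions cancel \emph{exactly} by selfadjointness. One is then left with
\[
\lambda(\theta^{\varrho}-1)(\psi_\theta,\psi)=(\theta^{\varrho}-1)(V_\theta\psi_\theta,\psi)+\bigl(\psi_\theta,(V_\theta-V)\psi\bigr),
\]
and dividing by $\theta-1$ and letting $\theta\to1$ yields \eqref{virial identity} directly, with no remainder term to estimate. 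Your approach instead uses the \emph{diagonal} quadratic form $(U_\theta\psi,(H-\lambda)U_\theta\psi)$, which forces you to control the second-order remainder $\bigl((U_\theta-I)\psi,(H-\lambda)(U_\theta-I)\psi\bigr)$; you do this correctly via the identity $(H-\lambda)U_\theta\psi=(\theta^{\varrho}-1)\lambda U_\theta\psi+U_\theta\bigl((V(\cdot/\theta)-\theta^{\varrho}V)\psi\bigr)$ together with hypothesis \eqref{V-limit-1}. The paper's cross-pairing trick is shorter and avoids this extra step, while your variational computation is perhaps more transparent about why the quadratic remainder is negligible.
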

	
	\begin{remark}
		Notice that $\mathcal{V}$ is just $x\cdot\nabla V$ formally. In fact, if \eqref{V-limit-1} holds, $\mathcal{V}$ is given by $x\cdot\nabla V$ where the derivatives are interpreted in the sense of distributions. On the other hand, if there exists a function $\mathcal{V}(x)$ such that
		\begin{equation*}
			\lim_{\theta\rightarrow1}(\theta-1)^{-1}(V_{\theta}(x)-V(x))=\mathcal{V}(x)\,\, a. e. \,\, x\in\mathbf{R}^{n},
		\end{equation*}
		then \eqref{V-limit-1} holds.
	\end{remark}
	\begin{proof}
		For $\theta>0$,  $U(\theta)$ be the unitary family defined as follows:
		\begin{equation*}
			(U(\theta)\psi)(x)=\theta^{n/2}\psi(\theta x), \quad \psi\in L^{2}(\mathbf{R}^{n}).
		\end{equation*}
		Denote $V_{\theta}(x)=V(\theta x)$, then for $V(x)$  as a multiplication operator $V$, we have
		\begin{equation}\label{dilation identity V}
			V_{\theta}=U(\theta)VU^{-1}(\theta).
		\end{equation}

		Notice that for $h(D)$, we have
		\begin{equation}\label{dilation identity}
			U(\theta)h(D)U^{-1}(\theta)=\theta^{-\varrho}h(D)=h\big(U(\theta)DU^{-1}(\theta)\big).
		\end{equation}
		Indeed, for $-i\frac{\partial }{\partial x_{j}}$, $j=1,2,\cdots,d$, we have
		\begin{equation*}
				U(\theta)(-i\frac{\partial }{\partial x_{j}})U^{-1}(\theta)f(x) = \theta^{-1}(-i\frac{\partial }{\partial x_{j}})f(x).
\end{equation*}
		Thus \eqref{dilation identity} holds by the homogeneous of $h$.
		\par
		
		Since $\psi$ is an eigenfunction of $H=h(D)+V$ with related eigenvalue $\lambda$, by the identities \eqref{dilation identity V} and \eqref{dilation identity}, we have
		\begin{equation*}
			\lambda\psi_{\theta}=U(\theta)\lambda\psi U^{-1}(\theta)=U(\theta)\big(h(D)\psi+V\psi\big)U^{-1}(\theta)=\big(\theta^{-\varrho}h(D)+V_{\theta}\big)\psi_{\theta},
		\end{equation*}
		where $\psi_{\theta}(x)=\psi(\theta x)$. Thus we get
		\begin{equation}\label{theta-eigen}
			\big(h(D)+\theta^{\varrho}V_{\theta}\big)\psi_{\theta}=\theta^{\varrho}\lambda\psi_{\theta}.
		\end{equation}
		By the above equality \eqref{theta-eigen} and $H\psi=\lambda\psi$, we have
		\begin{equation*}
			\begin{split}
				\lambda(\theta^{\varrho}-1)(\psi_{\theta}, \psi) & = \big((h(D)+\theta^{\varrho}V_{\theta})\psi_{\theta}, \psi\big)-\big(\psi_{\theta}, (h(D)+V)\psi\big) \\
				& = \theta^{\varrho}(V_{\theta}\psi_{\theta}, \psi)-(\psi_{\theta}, V\psi)\\
				& = (\theta^{\varrho}-1)(V_{\theta}\psi_{\theta}, \psi)+\big(\psi_{\theta}, (V_{\theta}-V)\psi\big).
			\end{split}
		\end{equation*}
		since $V(x)$ is real valued. Thus
		\begin{equation*}
			\frac{\theta^{\varrho}-1}{\theta-1}(V_{\theta}\psi_{\theta}, \psi)+\frac{1}{\theta-1}\big(\psi_{\theta}, (V_{\theta}-V)\psi\big)=\lambda\frac{\theta^{\varrho}-1}{\theta-1}(\psi_{\theta}, \psi).
		\end{equation*}
		Taking the limit as $\theta\rightarrow1$ yields \eqref{virial identity}.
	\end{proof}
	
	\begin{proof}[\bf Proof of Theorem \ref{absence of positive eigenvalue}]
		Since $V$ satisfies the hypotheses of Proposition \ref{the virial theorem h}, then for any eigenfunction $\psi$ of $H_{h}=h(D)+V$,  we have the following virial identity:
		\begin{equation}\label{short identity}
			\big(\psi,\,\, h(D)\psi\big)=\frac{1}{\varrho}\big(\psi,\,\,  \mathcal{V}\psi\big).
		\end{equation}
		Note that the positivity of $h(D)$ implies that ${\rm{Ker}}(h(D))=\{0\}$.
		\par In case (1), $V(\gamma x)<V(x)$ with $\gamma>1$ implies that $\mathcal{V}(x)\leq0$. Hence the virial identity \eqref{short identity} holds only if $\psi=0$.
		\par In case (2), $\mathcal{V}=-\nu V$, so by \eqref{virial identity}, we know
		\begin{equation*}
			\begin{split}
				\lambda(\psi, \psi)& = (\varrho)^{-1}(\varrho-\nu)(\psi, V\psi)=-(\nu\varrho)^{-1}(\tau-\nu)(\psi, \mathcal{V}\psi)\\
				& = -\nu^{-1}(\varrho-\nu)(\psi, h(D)\psi).
			\end{split}
		\end{equation*}
		By the positivity of $h(D)$ and $\varrho-\nu>0$,  we conclude that $\lambda<0$ if $\psi\neq0$.
		\par In case (3), by \eqref{short identity} we have
		\begin{equation*}
			\begin{split}
				-a\lambda(\psi, \psi) & =-a\big(\psi,\,\, (h(D)+V)\psi\big)+(a+1)\big(\psi,\,\, (h(D)-\varrho^{-1}\mathcal{V})\psi\big) \\
				& =\big(\psi,\,\, \big(h(D)-\tau^{-1}(1+a)\mathcal{V}-aV\big)\psi\big).
			\end{split}
		\end{equation*}
		Thus \eqref{V condition 3} implies that $\lambda\leq0$ if $\psi\neq0$.
	\end{proof}

	\section{Proof of asymptotic expansions and identification of  resonance spaces}\label{proof}
	
	By the symmetric resolvent identity \eqref{symmetric-resolvent-idnetity}, we need to derive the asymptotic expansions of $M^{-1}(\mu)$ in $L^{2}(\mathbf{R}^{n})$ for $\mu$ near zero. In this section, we show the processes of deriving the expansion of $M^{-1}(\mu)$ and identify the resonance spaces case by case.
	
	Here we give the needed lemmas in the following.
	\begin{lemma}\label{Feshbach-formula}
		Let $A$ be a closed operator and $S$ a projection. Suppose $A+S$ has a bounded inverse, then $A$ has a bounded inverse if and only if
		\begin{equation}
			B\equiv S-S(A+S)^{-1}S
		\end{equation}
		has a bounded inverse in $S\mathcal{H}$. Furthermore,
		\begin{equation}
			A^{-1}=(A+S)^{-1}+(A+S)^{-1}SB^{-1}S(A+S)^{-1}.
		\end{equation}
	\end{lemma}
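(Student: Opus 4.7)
The plan is to reduce the invertibility of $A$ on $\mathcal{H}$ to the invertibility of the Schur complement $B = S - S(A+S)^{-1}S$ on the (typically much smaller) range $S\mathcal{H}$, via a standard Feshbach-type manipulation. Abbreviating $C = (A+S)^{-1}$, the starting point is the decomposition $A = (A+S) - S$, which yields the two basic algebraic identities
\begin{equation*}
CA = I - CS, \qquad AC = I - SC.
\end{equation*}
Together they reduce the invertibility of $A$ on $\mathcal{H}$ to that of $I - CS$ (equivalently $I - SC$); the remaining task is to collapse this equivalence onto the subspace $S\mathcal{H}$.

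For the sufficiency direction together with the explicit formula, I would verify directly that $A^{-1} = C + CSB^{-1}SC$ is a two-sided inverse. The key observation is that, when restricted to $S\mathcal{H}$, the operator $I - SC$ coincides with $B$: indeed $(I-SC)S = S - SCS = B$ as a map from $S\mathcal{H}$ to $S\mathcal{H}$. Thus
\begin{equation*}
A\bigl(C + CSB^{-1}SC\bigr) = (I-SC) + (I-SC)SB^{-1}SC = (I-SC) + BB^{-1}SC = I,
\end{equation*}
and a symmetric computation using $S(I-CS) = B$ gives the left-inverse identity.

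For the necessity direction, I would establish a bijection $\ker A \leftrightarrow \ker B|_{S\mathcal{H}}$. If $A\phi = 0$, then $(A+S)\phi = S\phi$, whence $\phi = CS\phi$; applying $S$ yields $S\phi = SCS\phi$, that is $B(S\phi)=0$. Moreover $S\phi \neq 0$, for otherwise $\phi = CS\phi = 0$. Conversely, given $\eta \in S\mathcal{H}\setminus\{0\}$ with $B\eta = 0$, the vector $\phi := C\eta$ is nonzero (else $\eta = (A+S)\phi = 0$) and satisfies
\begin{equation*}
A\phi = AC\eta = (I-SC)\eta = \eta - SCS\eta = B\eta = 0,
\end{equation*}
where $\eta = S\eta$ was used to rewrite $SC\eta = SCS\eta$. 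Combined with the explicit right-inverse formula from the previous step, this forces the equivalence of bounded invertibility.

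The only mildly delicate point is ensuring that boundedness is preserved at each stage: $B$ is automatically a bounded operator on the closed subspace $S\mathcal{H}$ (since $S$ is a bounded projection and $C$ is bounded), and a bounded $B^{-1}$ on $S\mathcal{H}$ immediately yields a bounded $A^{-1}$ through the explicit formula with norm estimate $\|A^{-1}\| \le \|C\| + \|C\|^2\|B^{-1}\|$. No Fredholm, index, or closed-graph input is needed; the entire argument is purely algebraic, driven by the single identity $(I-SC)S = B = S(I-CS)$ on $S\mathcal{H}$.
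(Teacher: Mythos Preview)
The paper states this lemma without proof; it is a standard Feshbach/Schur-complement identity, quoted as a tool and not argued in the text. So there is no proof in the paper to compare against, and your proposal stands on its own.

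Your sufficiency direction and the verification of the explicit formula are clean and correct: the identities $AC=I-SC$, $CA=I-CS$ together with the observation $(I-SC)S=B=S(I-CS)$ on $S\mathcal{H}$ do all the work, and the two-sided inverse check goes through exactly as you wrote.

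There is, however, a genuine gap in your necessity direction. The kernel bijection $\ker A\leftrightarrow\ker B$ only shows that $B$ is \emph{injective} on $S\mathcal{H}$ when $A$ is invertible; it does not by itself yield surjectivity or a bounded inverse (unless $S\mathcal{H}$ happens to be finite-dimensional, which the lemma does not assume). Your closing remark that the kernel bijection ``combined with the explicit right-inverse formula from the previous step'' forces bounded invertibility is not justified: that formula presupposes the existence of $B^{-1}$, so it cannot be invoked to produce it. The fix is short and in the same spirit as your argument: when $A$ is invertible, $I-SC=AC$ is invertible on all of $\mathcal{H}$ with inverse $(AC)^{-1}=(A+S)A^{-1}=I+SA^{-1}$, and one checks directly that $I+SA^{-1}$ maps $S\mathcal{H}$ into itself (since $S(\eta+SA^{-1}\eta)=\eta+SA^{-1}\eta$ for $\eta\in S\mathcal{H}$). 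Hence $B=(I-SC)|_{S\mathcal{H}}$ has the explicit bounded inverse $(I+SA^{-1})|_{S\mathcal{H}}$, which closes the argument without any appeal to Fredholm or closed-graph machinery.
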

	
	\begin{lemma}\label{inverse formula}
		(\cite[Proposition 1]{JN2}) Let $F\subset\mathbf{C}$ have zero as an accumulation point. Let $T(z), z\in F$ be a family of bounded operators of the form
		\begin{equation*}
			T(z)=T_{0}+zT_{1}(z)
		\end{equation*}
		with $T_{1}(z)$ uniformly bounded as $z\rightarrow0$. Suppose $0$ is an isolated point of the spectrum of $T_{0}$, and let $S$ be the corresponding Riesz projection. If $T_{0}S=0$, then for sufficient small $z\in F$ the operator $\widetilde T(z):S\mathcal{H}\rightarrow S\mathcal{H}$ defined by
		\begin{equation}\label{equ:61}
			\widetilde{T}(z)=\frac{1}{z}\big(S-S(T(z)+S)^{-1}S\big)=\sum_{j=0}^{\infty}(-1)^{j}z^{j}S\big[T_{1}(z)(T_{0}+S)^{-1}\big]^{j+1}S
		\end{equation}
		is uniformly bounded as $z\rightarrow0$. The operator $T(z)$ has a bounded inverse in $\mathcal{H}$ if and only if $\widetilde{T}(z)$ has a bounded inverse in $S\mathcal{H}$, and in this case
		\begin{equation}\label{equ:62}
			T(z)^{-1}=\big(T(z)+S\big)^{-1}+\frac{1}{z}\big(T(z)+S\big)^{-1}S\widetilde{T}(z)^{-1}S\big(T(z)+S\big)^{-1}.
		\end{equation}
	\end{lemma}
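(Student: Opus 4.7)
The plan is to reduce the claim to a direct application of the Feshbach/Schur complement identity in Lemma~\ref{Feshbach-formula}, once we have established a convergent Neumann series for $(T(z)+S)^{-1}$ and simplified the sandwiched expression $S(T(z)+S)^{-1}S$. The decisive observation, which makes everything line up, is that the hypothesis $T_0S=0$, combined with the projection identity $S^2=S$ and selfadjointness of the Riesz projection at an isolated spectral point, forces
\begin{equation*}
	(T_0+S)S = S + T_0 S = S = S + ST_0 = S(T_0+S),
\end{equation*}
so that $(T_0+S)^{-1}S = S = S(T_0+S)^{-1}$. This is the step I expect to be the only real subtlety; everything else is bookkeeping around it.

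First, I would write $T(z)+S = (T_0+S)\bigl(I + z(T_0+S)^{-1}T_1(z)\bigr)$. Since $(T_0+S)^{-1}$ is bounded (as $0$ is isolated in the spectrum of $T_0$ with Riesz projection $S$) and $T_1(z)$ is uniformly bounded for small $|z|$, a standard Neumann series argument yields, for $|z|$ small enough,
\begin{equation*}
	(T(z)+S)^{-1} = \sum_{j=0}^{\infty}(-z)^{j}\bigl[(T_0+S)^{-1}T_1(z)\bigr]^{j}(T_0+S)^{-1}.
\end{equation*}
Sandwiching by $S$ on both sides and using the commutation identity $S(T_0+S)^{-1}=(T_0+S)^{-1}S=S$ derived above, the $j=0$ term collapses to $S^2=S$ and each subsequent term can be rewritten as
\begin{equation*}
	S\bigl[(T_0+S)^{-1}T_1(z)\bigr]^{j}(T_0+S)^{-1}S = S\bigl[T_1(z)(T_0+S)^{-1}\bigr]^{j}S,
\end{equation*}
after either absorbing the leading $(T_0+S)^{-1}$ into $S$ or, symmetrically, absorbing the trailing one.

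Subtracting and reindexing $j \mapsto j+1$ then produces
\begin{equation*}
	S - S(T(z)+S)^{-1}S = -\sum_{j=1}^{\infty}(-z)^{j}S\bigl[T_1(z)(T_0+S)^{-1}\bigr]^{j}S = z\sum_{j=0}^{\infty}(-1)^{j}z^{j}S\bigl[T_1(z)(T_0+S)^{-1}\bigr]^{j+1}S,
\end{equation*}
so that after dividing by $z$ we obtain the series representation \eqref{equ:61} for $\widetilde{T}(z)$, together with its uniform boundedness as $z\to 0$ inherited from the geometric series estimate.

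Finally, the equivalence of invertibility and the inversion formula \eqref{equ:62} follow from Lemma~\ref{Feshbach-formula} applied to $A=T(z)$. Indeed, since $(T(z)+S)^{-1}$ exists for small $|z|$ by the Neumann construction above, Lemma~\ref{Feshbach-formula} tells us that $T(z)$ is invertible iff $B(z) := S - S(T(z)+S)^{-1}S = z\widetilde{T}(z)$ is invertible on $S\mathcal{H}$; as $z\neq 0$ is a scalar, this is the same as invertibility of $\widetilde{T}(z)$. Substituting $B(z)^{-1} = z^{-1}\widetilde{T}(z)^{-1}$ into the inversion identity of Lemma~\ref{Feshbach-formula} yields exactly \eqref{equ:62}, completing the proof.
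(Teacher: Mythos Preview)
The paper does not prove this lemma; it simply cites \cite[Proposition~1]{JN2}. Your argument is the standard one and is essentially correct, so there is nothing to compare against.

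There is one genuine slip in your justification. You obtain $ST_0=0$ from $T_0S=0$ by invoking ``selfadjointness of the Riesz projection at an isolated spectral point.'' But the Riesz projection of a general bounded operator is merely an idempotent, not selfadjoint, and even if it were, you would still need $T_0$ selfadjoint to take adjoints of $T_0S=0$. The correct (and simpler) reason is that the Riesz projection always commutes with the operator: $S$ is a contour integral of the resolvent of $T_0$, hence $ST_0=T_0S$, and the hypothesis $T_0S=0$ then gives $ST_0=0$ directly. With this fix your identities $(T_0+S)^{-1}S=S=S(T_0+S)^{-1}$ hold, and the rest of the argument (Neumann series for $(T(z)+S)^{-1}$, collapse of the sandwiched terms, reindexing to obtain \eqref{equ:61}, and the appeal to Lemma~\ref{Feshbach-formula} for \eqref{equ:62}) goes through as written.
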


	\begin{lemma}\label{Riesz-potential-boundedness}
		(\cite[Lemma 2.3]{J}) Denote the Riesz potential $I_{\alpha}=(-\Delta)^{-\alpha/2}$ on $\mathbf{R}^{n}$ with $0<\alpha<n$.
		\begin{enumerate}
			\item If  $0<\alpha<n/2$, $s, s'\geq0$ and $s+s'\geq\alpha$, then
			$I_{\alpha}\in B(s, -s').$
			\item If  $n/2\leq\alpha<n$, $s, s'>\alpha-n/2$ and $s+s'\geq\alpha$, then
			$I_{\alpha}\in B(s, -s').$
		\end{enumerate}
	\end{lemma}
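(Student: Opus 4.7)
The plan is to recast $I_\alpha \in B(s,-s')$ as the $L^2(\mathbf{R}^n)\to L^2(\mathbf{R}^n)$ boundedness of the integral operator with kernel
\begin{equation*}
K(x,y) = c_{n,\alpha}\,\langle x\rangle^{-s'}\,|x-y|^{\alpha-n}\,\langle y\rangle^{-s},
\end{equation*}
and to verify this via Schur's test. The computational heart of the argument is an auxiliary asymptotic for the model integral
\begin{equation*}
\int_{\mathbf{R}^n} |x-y|^{\alpha-n}\langle y\rangle^{-\gamma}\,dy \lesssim
\begin{cases}\langle x\rangle^{\alpha-\gamma}, & \alpha<\gamma<n,\\ \langle x\rangle^{\alpha-n}, & \gamma>n,\end{cases}
\end{equation*}
valid for $0<\alpha<n$, with logarithmic corrections at the borderline exponents $\gamma=\alpha$ and $\gamma=n$. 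I would establish this by splitting $\mathbf{R}^n$ into the three standard zones $|y|\le|x|/2$, $|y-x|\le|x|/2$, and $|y|\ge 2|x|$, and estimating each by direct integration.

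Next I would run Schur's test with a power weight $\omega(x)=\langle x\rangle^{-c}$ on both sides of the kernel. Multiplying $K(x,y)$ by $\omega(y)$ and integrating in $y$ via the auxiliary asymptotic reduces the first Schur bound to a constraint $c\in(\alpha-s,\,n-s)$, and a symmetric computation yields $c\in(\alpha-s',\,n-s')$ from the second bound. Any choice of $c$ in the intersection $(\max(\alpha-s,\alpha-s'),\,\min(n-s,n-s'))$ then delivers the $L^2$-boundedness of $K$. In Case~(1), the hypotheses $s,s'\ge 0$, $s+s'\ge\alpha$, and $\alpha<n/2$ guarantee this intersection is nonempty; in Case~(2) the strict condition $s,s'>\alpha-n/2$ is precisely what keeps the interval open once $\alpha$ enters the upper half of $(0,n)$, while $s+s'\ge\alpha$ again ensures compatibility of the two Schur constraints.

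The main obstacle is the scale-critical endpoint $s+s'=\alpha$, where the straightforward Schur argument produces a marginal (logarithmically divergent) estimate. I would resolve this by invoking the Stein--Weiss weighted extension of the Hardy--Littlewood--Sobolev inequality, which is tailored exactly to this scaling; an alternative would be to perform a dyadic decomposition of the kernel in $|x-y|$ and sum the resulting geometric series by hand, absorbing the log into the summation. Once the boundary case $s+s'=\alpha$ is established, the monotonic inclusion $L^2_{s_1}\hookrightarrow L^2_{s_0}$ for $s_1\ge s_0$ upgrades the conclusion automatically to all larger values of $s$ and $s'$ admitted by the statement.
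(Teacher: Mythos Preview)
The paper does not prove this lemma at all; it is quoted directly from Jensen \cite[Lemma~2.3]{J}, so there is no in-paper argument to compare against. Your proposal is thus a self-contained proof, and the Stein--Weiss route you describe for the scale-critical case $s+s'=\alpha$, followed by the monotone embedding $L^2_{s_1}\hookrightarrow L^2_{s_0}$ for $s_1\ge s_0$ to cover $s+s'>\alpha$, is correct and is the standard way to establish this result.

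There is, however, a genuine gap in the Schur-test portion. You assert that in Case~(1) the hypotheses guarantee the interval
\[
\bigl(\max(\alpha-s,\alpha-s'),\ \min(n-s,n-s')\bigr)
\]
is nonempty, but this interval is nonempty if and only if $|s-s'|<n-\alpha$, a constraint nowhere implied by $s,s'\ge0$, $s+s'\ge\alpha$, and $\alpha<n/2$. For instance, with $n=3$, $\alpha=1$, $s=0$, $s'=5$ the two Schur intervals are $(1,3)$ and $(-4,-2)$, which are disjoint, yet these parameters satisfy all the hypotheses of Case~(1). The same obstruction appears in Case~(2). So the Schur test with a single power weight does not directly cover the full parameter range claimed. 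This is not fatal: your Stein--Weiss plus monotonicity argument already handles everything, since given admissible $(s,s')$ one can always find $\tilde s\le s$, $\tilde s'\le s'$ with $\tilde s+\tilde s'=\alpha$ and $\tilde s,\tilde s'<n/2$. The cleanest fix is to drop the Schur paragraph entirely, or else to first reduce by monotonicity to the regime $|s-s'|<n-\alpha$ before invoking Schur.
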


	In the following, we denote $D_{j}=\big(T_{j}+S_{j+1}\big)^{-1}$ where $T_{j}$ and $S_{j}$ see Definition \ref{resonance}.
	
	\subsection{Proof of resolvent asymptotic expansions} In this subsection, we give the proof of the resolvent asymptotic expansions at zero-resonance case by case.
	
	{\bf Proof of Theorem \ref{RV-4m} ($n>4m$).}
	In this case, for $|V(x)|\lesssim(1+|x|)^{-\beta}$ with some $\beta>n+4$,  we have the following expansions for $M(\mu)$ in $B(0,0)$:
	\begin{equation}\label{8.5}
		M(\mu)=U+vG_0v+\sum_{j=1}^{\aleph-1}\mu^{2mj}vG_jv+\mu^{n-2m}vG_\aleph v+vE_0(\mu)v.
	\end{equation}
	Denote
	\[\mathcal{M}(\mu)=\sum_{j=1}^{\aleph-1}\mu^{2mj}vG_jv+\mu^{n-2m}vG_\aleph v+vE_0(\mu)v.\]
	
	Recall that $\aleph=[\frac{n}{2m}]$. Theorem \ref{RV-4m} holds by symmetric resolvent identity \eqref{symmetric-resolvent-idnetity} and the following Proposition.
	
	\begin{proposition}\label{4m}
		Assume that $|V(x)|\lesssim(1+|x|)^{-\beta}$ with some $\beta>n+4$. For $n>4m$ and $0<|\mu|\ll1,$ we obtain the following expansions of $\big(M(\mu)\big)^{-1}$ in $B(0, 0):$
		
		(i) If zero is a regular point  of $H,$ then we have
		\begin{equation}\label{M-4m-regular}
			\begin{split}
				\Big(M(\mu)\Big)^{-1}=& T_0^{-1}+\sum_{l=1}^{\aleph-1}\mu^{2ml}\widetilde{A}_l+\tilde{\alpha}_{\aleph}\mu^{n-2m}\widetilde{A}_{\aleph}+O(\mu^{n-2m+2})
			\end{split}
		\end{equation}
		where the operators $\widetilde{A}_l\in B(0,0)$ and $\tilde{\alpha}_{\aleph}\in\mathbf{C}\setminus\mathbf{R}$.
		
		(ii) If zero is an eigenvalue of $H$, then we have
		\begin{equation}\label{M-4m-eigenvalue}
			\Big(M(\mu)\Big)^{-1}=\frac{\widetilde{A}_1^e}{\mu^{2m}}+\sum_{l=2}^{\aleph-1}\mu^{2m(l-2)}\widetilde{A}_j^e+\tilde{\alpha}_{\aleph}^e\mu^{n-6m}\widetilde{A}_\aleph^e
			+O(\mu^{n-6m+2})
		\end{equation}
		where the operators $\widetilde{A}_l^e\in B(0,0)$ and $\tilde{\alpha}_{\aleph}^e\in\mathbf{C}\setminus\mathbf{R}$. Furthermore, $\widetilde{A}_1^e=S_{1}\big(S_1vG_1vS_1\big)^{-1}S_{1}$.
	\end{proposition}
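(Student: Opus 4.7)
The plan is to derive both expansions from the decomposition $M(\mu) = T_0 + \mathcal{M}(\mu)$ together with the estimate $\|\mathcal{M}(\mu)\|_{B(0,0)} = O(\mu^{2m})$, which holds for $n > 4m$ because each $vG_j v$ is bounded on $L^2(\mathbf{R}^n)$ under the assumption $\beta > n+4$ (using a Hilbert--Schmidt bound coming from $|G_j(x,y)| \lesssim |x-y|^{2m(j+1)-n}$ and Lemma \ref{Riesz-potential-boundedness}). The two cases then diverge: the regular case reduces to a Neumann series, while the eigenvalue case requires the Jensen--Nenciu inverse formula (Lemma \ref{inverse formula}) to unfold the $1/\mu^{2m}$ singularity.

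In the regular case, $T_0$ is invertible on $L^2(\mathbf{R}^n)$, so I would write
\[
M(\mu)^{-1} = T_0^{-1} \sum_{k=0}^{\infty} \bigl(-\mathcal{M}(\mu) T_0^{-1}\bigr)^k
\]
and collect by powers of $\mu$. The $k$-th iterate contributes at orders $\mu^{2m(j_1+\cdots+j_k)}$ and, if at least one factor is $vG_\aleph v$, an extra $\mu^{n-4m}$. Since $G_1,\dots,G_{\aleph-1}$ are real and self-adjoint while $G_\aleph$ carries the complex coefficient $\alpha_\aleph$, the first non-self-adjoint term appears precisely at order $\mu^{n-2m}$, with coefficient $\tilde\alpha_\aleph = \alpha_\aleph$ times $\widetilde{A}_\aleph := -T_0^{-1} v G_\aleph v T_0^{-1}$. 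The error $O(\mu^{n-2m+2})$ is inherited from $vE_0(\mu)v$ via Proposition \ref{free-expansions} and from the next iterate of the Neumann series.

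In the eigenvalue case, $T_0$ has nontrivial kernel with Riesz projection $S_1$. Rewriting $M(\mu) = T_0 + \mu^{2m} T_1(\mu)$, where $T_1(\mu) = vG_1 v + \mu^{2m} vG_2 v + \cdots + \mu^{n-4m} v G_\aleph v + \mu^{-2m} v E_0(\mu) v$ is uniformly bounded in $B(0,0)$, I would apply Lemma \ref{inverse formula} with $z = \mu^{2m}$. Using $D_0 S_1 = S_1 D_0 = S_1$, the leading term of $\widetilde{M}(\mu)$ in \eqref{equ:61} is $S_1 T_1(0) S_1 = S_1 v G_1 v S_1 = T_1$; by Proposition \ref{classification-4m}, $S_1 L^2(\mathbf{R}^n)$ coincides with the zero-eigenspace of $H$ (no resonance exists for $n > 4m$), and the Fredholm alternative applied on this finite-dimensional subspace yields invertibility of $T_1$ on $S_1 L^2$. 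Hence $\widetilde{M}(\mu)^{-1}$ can be expanded as a Neumann series around $T_1^{-1}$. Substituting into \eqref{equ:62}, the prefactor $1/\mu^{2m}$ produces the singular term $\widetilde{A}_1^e/\mu^{2m}$ with $\widetilde{A}_1^e = S_1 T_1^{-1} S_1$; further orders $\mu^{2m(l-2)}$ come from coupling the two Neumann series, and $\mu^{n-6m}$ is exactly where the first complex coefficient $\alpha_\aleph$ reappears after the $\mu^{-2m}$ shift.

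The main obstacle is combinatorial bookkeeping: two families of powers ($\mu^{2mj}$ from $G_j$ and $\mu^{n-2m}$ from $G_\aleph$) must be interleaved through the outer Neumann series and, in the eigenvalue case, through the inner expansion of $\widetilde{M}(\mu)^{-1}$ and both surrounding factors $(M(\mu)+S_1)^{-1}$. One must verify that every contribution of order strictly less than $\mu^{n-2m}$ (respectively $\mu^{n-6m}$) remains self-adjoint so that the first complex term arises exactly from $\alpha_\aleph G_\aleph$, and that the remainder is genuinely $O(\mu^{n-2m+2})$ (respectively $O(\mu^{n-6m+2})$). The invertibility of $T_1$ on $S_1 L^2(\mathbf{R}^n)$, which is the structural input distinguishing the pure-eigenvalue regime from the resonance cases in lower dimensions, is the only non-algebraic ingredient and relies crucially on the identification result of Proposition \ref{classification-4m}.
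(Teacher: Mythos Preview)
Your overall architecture matches the paper's: Neumann series around $T_0^{-1}$ in the regular case, and a Feshbach-type reduction in the eigenvalue case. The paper uses Lemma~\ref{Feshbach-formula} directly and then factors out $\mu^{2m}$ from $M_1(\mu)=S_1-S_1(M(\mu)+S_1)^{-1}S_1$, while you invoke the already-rescaled Lemma~\ref{inverse formula} with $z=\mu^{2m}$; these are the same mechanism.

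There is, however, a genuine gap in your justification of the invertibility of $T_1=S_1vG_1vS_1$ on $S_1L^2(\mathbf{R}^n)$. You write that Proposition~\ref{classification-4m} identifies $S_1L^2$ with the zero eigenspace and that ``the Fredholm alternative applied on this finite-dimensional subspace yields invertibility.'' But on a finite-dimensional space the Fredholm alternative only reduces invertibility to injectivity; knowing that $S_1L^2$ equals the eigenspace of $H$ does not by itself force $\ker(S_1vG_1vS_1)=\{0\}$. The paper supplies this missing step separately in Lemma~\ref{4m-S1-kernel}: if $\phi\in S_1L^2$ satisfies $S_1vG_1vS_1\phi=0$, then
\[
0=\langle G_1v\phi,v\phi\rangle
=\lim_{\mu\to 0}\int_{\mathbf{R}^n}\frac{|\widehat{v\phi}(\xi)|^2}{|\xi|^{2m}(|\xi|^{2m}-\mu^{2m})}\,d\xi
=\int_{\mathbf{R}^n}\frac{|\widehat{v\phi}(\xi)|^2}{|\xi|^{4m}}\,d\xi,
\]
using $G_1=\lim_{\mu\to 0}\mu^{-2m}(R_0(\mu^{2m})-G_0)$ and dominated convergence along $\operatorname{Re}(\mu^{2m})<0$. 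This forces $v\phi=0$, and then $\phi=-UvG_0v\phi=0$. This Fourier-side positivity argument is the actual ``non-algebraic ingredient''; Proposition~\ref{classification-4m} is not what delivers it.
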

	\begin{proof}
		(i) Since
		\[M(\mu)=T_0+\mathcal{M}(\mu),\]
		and $T_0$ is invertible, then $\mathcal{M}(\mu)T_0^{-1}$ is uniformly bounded with bound less than 1 for tiny $|\mu|.$ Thus
		\begin{equation}\label{4m-M-inverse}
			\begin{split}
				\Big(M(\mu)\Big)^{-1}=&T_0^{-1}\big(I+\mathcal{M}(\mu)T_0^{-1}\big)^{-1}\\
				=&T_{0}^{-1}-T_{0}^{-1}\mathcal{M}(\mu)T_{0}^{-1}+T_{0}^{-1}\Big(\mathcal{M}(\mu)T_{0}^{-1}\Big)^{2}\big(I+\mathcal{M}(\mu)T_{0}^{-1}\big)^{-1}.
			\end{split}
		\end{equation}
		
		(ii) If $T_0$ is not invertible, then we know that zero is an eigenvalue of $H$. 	Applying Lemma \ref{Feshbach-formula} to $\big(M(\mu)\big)^{-1}$, we have
		\begin{equation}\label{M-inverse}
		\Big(M(\mu)\Big)^{-1}=\Big(M(\mu)+S_1\Big)^{-1}+\Big(M(\mu)+S_1\Big)^{-1}S_1\Big(M_1(\mu)\Big)^{-1}S_1\Big(M(\mu)+S_1\Big)^{-1}
		\end{equation}
	where $M_1(\mu)=S_1-S_1(M(\mu)+S_1)^{-1}S_1$.	By \eqref{M-4m}, we have
		\begin{equation}\label{8.9}
			\begin{split}
				\Big(M(\mu)+S_1\Big)^{-1}
				=&\big(T_0+S_1+\mathcal{M}(\mu)\big)^{-1}=D_0\big(I+\mathcal{M}(\mu)D_0\big)^{-1}\\
				=&D_0-D_0\mathcal{M}(\mu)D_0+D_0\big(\mathcal{M}(\mu)D_0\big)^2+O(\mu^{6m}).
			\end{split}
		\end{equation}
		Thus for $M_1(\mu)$, we have
		\begin{equation}
			\begin{split}
				M_1(\mu)=&S_1-S_1\Big(D_0-D_0\mathcal{M}(\mu)D_0+D_0\big(\mathcal{M}(\mu)D_0\big)^2+O(\mu^{6m})\Big)S_1\\
				=&S_1\mathcal{M}(\mu)S_1-S_1\big(\mathcal{M}(\mu)D_0\big)^2S_1+O(\mu^{6m})\\
				:=&\mu^{2m}S_1vG_1vS_1+\mathcal{M}_1(\mu)
			\end{split}
		\end{equation}
	where $\mathcal{M}_{1}(\mu)=O(\mu^{4m})$.	Since $S_1vG_1vS_1$ is invertible on $S_1L^2(\mathbf{R}^n)$, see Lemma \ref{4m-S1-kernel}. Then we can obtain the asymptotic expansions of $\big(M_1(\mu)\big)^{-1}$ by expanding $\big(\mu^{-2m}M_1(\mu)\big)^{-1}=\big(S_1vG_1vS_1+\mu^{-2m}\mathcal{M}_1(\mu)\big)^{-1}$ into  Neumann series.

		Substituting the expansions \eqref{8.9} of $\big(M(\mu)+S_1\big)^{-1}$ and the Neumann series of  $\big(M_1(\mu)\big)^{-1}$ into \eqref{M-inverse}, we obtain the expansion \eqref{M-4m-eigenvalue}.
	\end{proof}
	\begin{lemma}\label{4m-S1-kernel}
		Assume that $|V(x)|\lesssim(1+|x|)^{-\beta}$ with some $\beta>n+4$. Then we obtain
		\[\ker\big(S_1vG_1vS_1\big)=\{0\}.\]
	\end{lemma}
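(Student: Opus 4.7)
The plan is to exploit the identification of $S_1 L^2(\mathbf{R}^n)$ with the zero eigenspace of $H$ given by Proposition \ref{classification-4m}, combined with the fact that $G_1$ is (a positive multiple of) the Riesz potential $(-\Delta)^{-2m}$. By expanding the symbol in Fourier, $(|\xi|^{2m}-\mu^{2m})^{-1}=|\xi|^{-2m}+\mu^{2m}|\xi|^{-4m}+O(\mu^{4m})$, and comparing with Proposition \ref{free-expansions}, one identifies $G_1=c_1(-\Delta)^{-2m}$ with a strictly positive constant $c_1$ (equal to $\Gamma(n/2-2m)/(4^{2m}\pi^{n/2}\Gamma(2m))>0$ because $n>4m$). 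This positivity is what will convert an apparently weak injectivity statement into a clean bilinear form argument.

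Now suppose $\phi\in S_1L^2(\mathbf{R}^n)$ satisfies $S_1vG_1vS_1\phi=0$. Pairing with $\phi$ and using $S_1\phi=\phi$ together with self-adjointness of $S_1$, $v$ and $G_1$, the hypothesis reduces to
\[
0=\langle\phi,\,S_1vG_1vS_1\phi\rangle=\langle v\phi,\,G_1 v\phi\rangle.
\]
By Proposition \ref{classification-4m}, $\phi=Uv\psi$ for some $\psi\in L^2(\mathbf{R}^n)$ with $H\psi=0$ and $\psi=-G_0v\phi$. Consequently $v\phi=Uv^2\psi=V\psi=-(-\Delta)^m\psi$ in the distributional sense, and $\psi=-(-\Delta)^{-m}V\psi$ places $(-\Delta)^{-m}V\psi$ in $L^2(\mathbf{R}^n)$ a priori.

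Using Plancherel and the above identity,
\[
\langle v\phi,\,G_1v\phi\rangle=c_1\int_{\mathbf{R}^n}|\xi|^{-4m}\bigl|\widehat{V\psi}(\xi)\bigr|^2\,d\xi=c_1\bigl\|(-\Delta)^{-m}V\psi\bigr\|_{L^2}^2=c_1\|\psi\|_{L^2}^2.
\]
The vanishing of the left-hand side together with $c_1>0$ forces $\psi=0$, whence $\phi=Uv\psi=0$, which is the desired conclusion.

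The main point of care is to verify that each step of the calculation takes place in a well-defined $L^2$ framework: $V\psi\in L^2$ is immediate since $V\in L^\infty$ decays faster than $(1+|x|)^{-(n+4)}$ and $\psi\in L^2$; the Fourier integral converges absolutely thanks to the a priori relation $\psi=-(-\Delta)^{-m}V\psi\in L^2$ coming from Proposition \ref{classification-4m}; and the coefficient $c_1$ is positive precisely because we are in the regime $n>4m$, so the Riesz potential kernel $|x-y|^{4m-n}$ is a bona fide positive convolution kernel. This is essentially the only place the dimensional restriction intervenes, and no further structural input on $V$ is required beyond the decay hypothesis already assumed.
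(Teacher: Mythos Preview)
Your proof is correct and follows essentially the same approach as the paper's: both pair the equation with $\phi$ to obtain $\langle G_1v\phi,v\phi\rangle=0$, pass to the Fourier side, and recognize this as $\int_{\mathbf{R}^n}|\xi|^{-4m}|\widehat{v\phi}(\xi)|^2\,d\xi=\|G_0v\phi\|_{L^2}^2$, which forces $G_0v\phi=0$ and hence $\phi=0$. The only cosmetic difference is that the paper reaches the Fourier identity via the limit $G_1=\lim_{\mu\to0}\mu^{-2m}(R_0(\mu^{2m})-G_0)$ and dominated convergence (choosing $\mathrm{Re}(\mu^{2m})<0$), whereas you identify $G_1=c_1(-\Delta)^{-2m}$ directly from the symbol expansion and then invoke Proposition~\ref{classification-4m} to rewrite $\|G_0v\phi\|_{L^2}^2$ as $\|\psi\|_{L^2}^2$; since $\psi=-G_0v\phi$ by that proposition, the two conclusions are literally the same.
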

	\begin{proof}
		Take $\phi\in S_1L^2(\mathbf{R}^n)$ with $S_1vG_1vS_1\phi=0.$ Then using \eqref{4m-free}, we have
		$$G_1=\displaystyle\lim_{\mu\rightarrow0}\frac{R_0(\mu^{2m})-G_0}{\mu^{2m}},$$
		thus
		\begin{align*}
			0=&\langle S_1vG_1vS_1\phi,\phi\rangle=\langle G_1v\phi,v\phi\rangle=\lim_{\mu\rightarrow 0}\Bigg\langle\Big(\frac{R_0(\mu^{2m})-G_0}{\mu^{2m}}\Big)v\phi,v\phi\Bigg\rangle\\
			=&\lim_{\mu\rightarrow 0}\frac{1}{\mu^{2m}}\int_{\mathbf{R}^n}\Big(\frac{1}{|\xi|^{2m}-\mu^{2m}}-\frac{1}{|\xi|^2}\Big)\widehat{v\phi}(\xi)\overline{\widehat{v\phi}}(\xi)d\xi\\
			=&\lim_{\mu\rightarrow0}\int_{\mathbf{R}^n}\frac{|\widehat{v\phi}(\xi)|^2}{|\xi|^{2m}(|\xi|^{2m}-\mu^{2m})}d\xi=\int_{\mathbf{R}^n}\frac{|\widehat{v\phi}(\xi)|^2}{|\xi|^{4m}}d\xi
			=\langle G_0v\phi, G_0v\phi\rangle.
		\end{align*}
		Here we used the dominated convergence theorem as $\mu\rightarrow 0$ with chosen $\mu$ such that $\rm{Re}(\mu^{2m})<0$ on the last equality. This implies that $\widehat{v\phi}=0$ and $v\phi=0.$ Recall that $S_{k+1}\leq S_{1}$, then $\phi\in S_{1}L^2$ which implies that $\phi=-UvG_{0}v\phi$. Thus the kernel of $S_1vG_1vS_1$ is trivial.
	\end{proof}
	
	{\bf Proof of Theorem \ref{RV-expansions-2m-odd} ($2m<n\le 4m$ and odd).}
	In this case, for $|V(x)|\lesssim (1+|x|)^{-\beta}$ with some $\beta>0$ , we have the following expansions for $M(\mu)$ in $B(0,0)$:
	\begin{equation}
		M(\mu)=U+vG_0v+\tilde{c}_1\mu^{2(m-k)+1}P+\sum_{j=2}^kc_j\mu^{2(m+j-k)-1}vG_jv+\mu^{2m}vG_{k+1}v+vE_3(\mu)v.
	\end{equation}
	Denote
	\begin{align*}
		\mathcal{M}(\mu)=\tilde{c}_1\mu^{2(m-k)+1}P+\sum_{j=2}^kc_j\mu^{2(m+j-k)-1}vG_jv+\mu^{2m}vG_{k+1}v+vE_3(\mu)v.
	\end{align*}
	
	Theorem \ref{RV-expansions-2m-odd} holds by symmetric resolvent identity \eqref{symmetric-resolvent-idnetity} and the following Proposition.
	
	\begin{proposition}\label{M-2m odd}
		For $n=4m+1-2k$ with $k$ chosen as follows and $0<|\mu|\ll 1$.  Assume that $|V(x)|\lesssim (1+|x|)^{-\beta}$ with some $\beta>n+4k$.  We obtain the following expansions of $\big(M(\mu)\big)^{-1}$ in $B(0, 0):$
		
		(i) For $1\le k\le m$, if zero is a regular  point of $H$, then  we have
		\begin{equation}\label{M-2m-odd-regular}
			\Big(M(\mu)\Big)^{-1}=\widetilde{B}_0+\sum_{l=1}^k\tilde{\beta}_l\mu^{2(m+l-k)-1}\widetilde{B}_l+\mu^{2m}\widetilde{B}_{k+1}+O(\mu^{2m+})
		\end{equation}
		where the operators $\widetilde{B}_l\in B(s, -s')$ with $s,s'>\frac{n}{2}+2k$ and $\tilde{\beta}_l\in\mathbf{C}\setminus\mathbf{R}$. Furthermore, $\widetilde{B}_0=T_0^{-1}$.
		
		(ii) For $1\le k\le [\frac{m}{2}]+1$, if zero is of the  $j$-th kind of resonance of $H$ with $1\le j\le k$, then  we have
		\begin{equation}\label{M-2m-odd-j}
			\begin{split}
				\Big(M(\mu)\Big)^{-1}=&\frac{\beta_0^jB_0^j}{\mu^{2(m-k+j)-1}}+\sum_{l=1}^{k-j}\frac{\beta_l^jB_l^j}{\mu^{2(m-k+j-l)-1}}
				+\sum_{l=k-j+1}^{2m-3k+3j-2}\frac{\beta_l^jB_l^j}{\mu^{2m-3k+3j-l-1}}\\
				&\ \ \ \tilde{\beta}_{2m-3k+3j-1}^j\widetilde{B}_{2m-3k+3j-1}^j+O(\mu),
			\end{split}
		\end{equation}
		where the operators $\widetilde{B}_l^j\in B(s,-s')$ with $s,s'>\frac{n}{2}+2k$ and $\tilde{\beta}_l^j\in\mathbf{C}\setminus\mathbf{R}$. Furthermore, $\widetilde{B}_0^j=S_{j}\big(S_jvG_jvS_j\big)^{-1}S_{j}$ and all $\widetilde{B}_l^j$ for $0\leq l\leq2m-3k+3j-2$ are finite rank.
		
		(iii) For $1\le k\le[\frac{m}{2}]+1$, if zero is an eigenvalue of $H$, then  we have
		\begin{equation}\label{M-2m-odd-k+1}
			\begin{split}
				\Big(M(\mu)\Big)^{-1}=&\frac{\widetilde{B}_0^{k+1}}{\mu^{2m}}+\sum_{l=1}^{2m-1}\frac{\tilde{\beta}_{l}^{k+1}\widetilde{B}_l^{k+1}}{\mu^{2m-l}}
				+\tilde{\beta}_{2m}^{k+1}\widetilde{B}_{2m}^{k+1}+O(\mu),
			\end{split}
		\end{equation}
		where the operators $\widetilde{B}_l^{k+1}\in B(s,-s')$ with $s,s'>\frac{n}{2}+2k$ and $\tilde{\beta}_l^{k+1}\in\mathbf{C}\setminus\mathbf{R}$. Furthermore, $\widetilde{B}_{0}^{k+1}=S_{k+1}\big(S_{k+1}vG_{k+1}vS_{k+1}\big)^{-1}S_{k+1}$ and all $\widetilde{B}_l^{k+1}$ for $0\leq l\leq2m-1$ are finite rank.
	\end{proposition}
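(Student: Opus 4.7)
The plan is to establish Proposition \ref{M-2m odd} by iterated application of the Jensen-Nenciu inversion formula (Lemma \ref{inverse formula}), peeling off one projection $S_i$ at each stage and reducing the problem to an invertibility question on the smaller subspace $S_i L^2(\mathbf{R}^n)$. In the regular case (i), $T_0$ is invertible on $L^2(\mathbf{R}^n)$, so I write $M(\mu) = T_0 + \mathcal{M}(\mu)$ with $\|\mathcal{M}(\mu)\|_{B(0,0)} = O(\mu^{2(m-k)+1})$ and expand
\[
\bigl(M(\mu)\bigr)^{-1} = T_0^{-1}\bigl(I + \mathcal{M}(\mu)T_0^{-1}\bigr)^{-1}
\]
as a Neumann series for $|\mu|$ small. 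Collecting terms up to order $\mu^{2m}$ and absorbing $vE_3(\mu)v = O(\mu^{2m+1})$ into the remainder gives \eqref{M-2m-odd-regular} with $\widetilde{B}_0 = T_0^{-1}$.

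For case (ii) with the $j$-th kind of resonance, I iterate Lemma \ref{inverse formula} exactly $j$ times. For the first step, write $M(\mu) = T_0 + \mu^{2(m-k)+1} N_1(\mu)$ with $N_1(\mu) = \tilde{c}_1 P + O(\mu^2)$ (the gap of $\mu^2$ arises because the next surviving term $c_2 \mu^{2(m+2-k)-1} v G_2 v$ sits two powers of $\mu$ above $\mu^{2(m-k)+1}$). Since $T_0 S_1 = 0$, Lemma \ref{inverse formula} yields
\[
\bigl(M(\mu)\bigr)^{-1} = \bigl(M(\mu)+S_1\bigr)^{-1} + \mu^{-(2(m-k)+1)}\bigl(M(\mu)+S_1\bigr)^{-1} S_1 \widetilde{M}_1(\mu)^{-1} S_1 \bigl(M(\mu)+S_1\bigr)^{-1},
\]
where $\widetilde{M}_1(\mu)$ acts on $S_1L^2$ and, via \eqref{equ:61} together with $S_1 D_0 = S_1$, has leading term $\tilde{c}_1 S_1 P S_1$, which coincides up to a scalar with $T_1 = S_1 v G_1 v S_1$ (since the kernel of $v G_1 v$ is $v(x)v(y)$, a rank-one multiple of $P$). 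If $j = 1$, then $T_1$ is invertible on $S_1L^2$ and a Neumann expansion of $\widetilde{M}_1(\mu)^{-1}$ substituted back yields \eqref{M-2m-odd-j} with $\widetilde{B}_0^1 = S_1 T_1^{-1} S_1$. If $j \ge 2$, then $T_1 S_2 = 0$ and I apply Lemma \ref{inverse formula} again to $\widetilde{M}_1(\mu)$ with small parameter $\mu^2$, extracting a factor $\mu^{-2}$ and a new reduced operator $\widetilde{M}_2(\mu)$ on $S_2L^2$ whose leading term is proportional to $T_2 = S_2 v G_2 v S_2$. Repeating this $j-1$ additional times, each iteration (after the first) contributes an extra factor $\mu^{-2}$; the cumulative negative power of $\mu$ in the leading coefficient of $(M(\mu))^{-1}$ is therefore $\mu^{-(2(m-k)+1) - 2(j-1)} = \mu^{-(2(m-k+j)-1)}$, matching the claimed leading order.

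For case (iii), the eigenvalue case, the same iteration runs for $k+1$ steps, terminating at the stage where $T_{k+1} = S_{k+1}vG_{k+1}vS_{k+1}$ is automatically invertible on $S_{k+1}L^2$. The last iteration contributes only $\mu^{-1}$ (the gap between $c_k \mu^{2m-1} v G_k v$ and the term $\mu^{2m} v G_{k+1} v$ is a single power of $\mu$), so the total accumulated factor is $\mu^{-(2(m-k)+1) - 2(k-1) - 1} = \mu^{-2m}$, which produces the structure of \eqref{M-2m-odd-k+1}. The main obstacle will be the careful bookkeeping at each of the $j$ (resp.\ $k+1$) iteration levels: one must (a) verify that the Neumann-type expansion \eqref{equ:61} indeed produces the claimed leading term by invoking $S_i D_{i-1} = S_i$ at each stage; (b) track the tail $vE_3(\mu)v = O(\mu^{2m+1})$ through all the inversions, ensuring it remains subordinate to the terms retained in \eqref{M-2m-odd-j}, \eqref{M-2m-odd-k+1}; and (c) verify that each correction operator $\widetilde{B}_\ell^j$ (for $0 \le \ell \le 2m-3k+3j-2$) and $\widetilde{B}_\ell^{k+1}$ is finite rank, which is a direct consequence of being sandwiched between the finite-rank projections $S_j$ or $S_{k+1}$ (cf.\ Remark \ref{properties-of-S-T}(2)). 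The nondegeneracy required to invoke Lemma \ref{inverse formula} at each stage is precisely the invertibility of $T_i$ on $S_iL^2$ encoded in Definition \ref{resonance}.
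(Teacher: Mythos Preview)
Your proposal is correct and follows essentially the same iterative scheme as the paper. The only cosmetic difference is that the paper phrases each step via Lemma \ref{Feshbach-formula} and then manually factors out the relevant power of $\mu$ by defining $\widetilde{M}_j(\mu) = M_j(\mu)/(\tilde c_j \mu^{p_j})$, whereas you invoke Lemma \ref{inverse formula} directly, which packages that factoring into the formula \eqref{equ:62}; the two formulations are equivalent and the power-counting you give ($\mu^{-(2(m-k)+1)}$ at the first step, $\mu^{-2}$ at each of steps $2,\dots,k$, and $\mu^{-1}$ at the final step) matches the paper's computation exactly.
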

	
	\begin{proof}
		(i) The proof of \eqref{M-2m-odd-regular} follows from the proof of \eqref{M-4m-regular}.
		
		(ii)Using \eqref{M-2m-odd}, we have
		\begin{equation}
			\begin{split}
				\Big(M(\mu)+S_1\Big)^{-1}
				=&\big(T_0+S_1+\mathcal{M}(\mu)\big)^{-1}\\
				=&D_0\big(I+\mathcal{M}(\mu)D_0\big)^{-1}\\
				=&D_0-D_0\mathcal{M}(\mu)D_0+D_0\big(\mathcal{M}(\mu)D_0\big)^2+O\big(\mu^{4(m-k)+2+}\big)
			\end{split}
		\end{equation}
		Then for $M_1(\mu)=S_1-S_1\big(M(\mu)+S_1\big)S_1,$ since $S_1D_0=D_0S_1=S_1,$ we have
		\begin{equation}\label{M1-2m-odd}
			\begin{split}
				M_1(\mu)=&S_1-S_1\Big(D_0-D_0\mathcal{M}(\mu)D_0+D_0\big(\mathcal{M}(\mu)D_0\big)^2+O(\mu^{4(m-k)+2+})\Big)S_1\\
				=&S_1\mathcal{M}(\mu)S_1-S_1\big(\mathcal{M}(\mu)D_0\big)^{2}S_1+O(\mu^{4(m-k)+2+})\\
				=&\tilde{c}_1\mu^{2(m-k)+1}S_1PS_1+\sum_{l=2}^kc_l\mu^{2(m+l-k)-1}S_1vG_lvS_1\\
				&\ \ +\mu^{2m}S_1vG_{k+1}vS_1-\tilde{c}_1^2\mu^{4(m-k)+2}S_1D_0PD_0PD_0S_1+O(\mu^{4(m-k)+2+})\\
				:=&\tilde{c}_1\mu^{2(m-k)+1}S_1PS_1+\mathcal{M}_1(\mu)
			\end{split}
		\end{equation}
		
		By the definition of the first kind of resonance, the operator $T_1=S_1PS_1$ is invertible on $S_1L^2(\mathbf{R}^n),$ then we can obtain the expansions of $\big(M_1(\mu)\big)^{-1}$ by Neumann series.
		
		From Lemma \ref{Feshbach-formula}, we also have \eqref{M-inverse}. Substituting the expansions of $\big(M(\mu)+S_1\big)^{-1}$ and $\big(M_1(\mu)\big)^{-1}$ into \eqref{M-inverse}, we obtain \eqref{M-2m-odd-j} with $j=1$.
		
		 For the second kind of resonance, we aim to derive the expansions of $\big(M_1(\mu)\big)^{-1}.$ Define $\widetilde{M}_1(\mu)=\frac{M_1(\mu)}{\tilde{c}_1\mu^{2(m-k)+1}},$ using \eqref{M1-2m-odd} we have
		\begin{equation}
			\begin{split}
				\widetilde{M}_1(\mu)=& S_1PS_1+\sum_{l=2}^{k}\tilde{c}_l\mu^{2l-2}S_1vG_lvS_1-\tilde{c}_1\mu^{2(m-k)+1}S_1PD_0PS_1\\
				&\ \ +\tilde{c}_{k+1}\mu^{2k-1}S_1vG_{k+1}vS_1+O(\mu^{2k-1+})\\
				:=&T_1+\widetilde{\mathcal{M}}_1(\mu).
			\end{split}
		\end{equation}
		By the definition of the second kind of resonance, we know that the operator $T_1$ is not invertible on $S_1L^2(\mathbf{R}^n).$ Since $S_2$ is the Riesz projection onto the kernel of $T_1$ on $S_1L^2(\mathbf{R}^n),$ so $T_1+S_2$ is invertible on $S_1L^2(\mathbf{R}^n).$ Furthermore, we can obtain
		\begin{equation}
			\begin{split}
				\Big(\widetilde{M}_1(\mu)+S_2\Big)^{-1}
				=&\big(T_1+S_2+\widetilde{\mathcal{M}}_1(\mu)\big)^{-1}\\
				=&D_1-D_1\widetilde{\mathcal{M}}_1(\mu)D_1+D_1\big(\widetilde{\mathcal{M}}_1(\mu)D_1\big)^2+O(\mu^{4+})
			\end{split}
		\end{equation}
		Then for $M_2(\mu)=S_2-S_2\big(\widetilde{M}_1(\mu)+S_2\big)^{-1}S_2,$ since $S_2D_1=D_1S_2=S_2$ and $S_2P=PS_2=0,$ we have
		\begin{equation}\label{M2-odd}
			\begin{split}
				M_2(\mu)
				=&S_2-S_2\Big(D_1-D_1\widetilde{\mathcal{M}}_1(\mu)D_1+D_1\big(\widetilde{\mathcal{M}}_1(\mu)D_1\big)^2+O(\mu^{4+})\Big)S_2\\
				=&S_2\widetilde{\mathcal{M}}_1(\mu)S_2-S_2\big(\widetilde{\mathcal{M}}_1(\mu)D_1\big)^2S_2+O(\mu^{4+})\\
				=&\tilde{c}_2\mu^2S_2vG_2v+\sum_{l=3}^k\tilde{c}_l\mu^{2l-2}S_2vG_lvS_2+\tilde{c}_{k+1}\mu^{2k-1}S_2vG_{k+1}vS_2+O(\mu^{2k-1})\\
				:=&\tilde{c}_2\mu^2S_2vG_2vS_2+\mathcal{M}_2(\mu).
			\end{split}
		\end{equation}
		Since the operator $S_2vG_2vS_2$ is invertible on $S_2L^2(\mathbf{R}^n),$ then we can obtain the expansions of $\big(M_2(\mu)\big)^{-1}$ by Neumann series. From Lemma \ref{Feshbach-formula}, we have
		\begin{equation}\label{M1-inverse}
			\Big(\widetilde{M}_1(\mu)\Big)^{-1}=\Big(\widetilde{M}_1(\mu)+S_2\Big)^{-1}+\Big(\widetilde{M}_1(\mu)+S_2\Big)^{-1}S_2\Big(M_2(\mu)\Big)^{-1}\Big(\widetilde{M}_1(\mu)+S_2\Big)^{-1}
		\end{equation}
		Substituting the expansions of $\big(\widetilde{M}_1(\mu)+S_2\big)^{-1}$ and $\big(M_2(\mu)\big)^{-1}$ into \eqref{M1-inverse}, we obtain the inverse of $\widetilde{M}_1(\mu)$, then we can obtain the expansions of $\big(M_1(\mu)\big)^{-1}.$ Furthermore, we can obtain \eqref{M-2m-odd-j} with $j=2$.
		
		By an induction process we can get the expansions of \eqref{M-2m-odd-j} and \eqref{M-2m-odd-k+1}.
	\end{proof}
	
	Now, we show that the kernel of the operator $S_{k+1}vG_{k+1}vS_{k+1}$ on $S_{k+1}L^2(\mathbf{R}^n)$ is trivial which means the iterated process stop here.
	\begin{lemma}\label{2m-odd-kernel}
		Assume that $|V(x)|\lesssim(1+|x|)^{-\beta}$ with some $\beta>n+4k$.  Then we have
		\[\ker(S_{k+1}vG_{k+1}vS_{k+1})=\{0\}.\]
	\end{lemma}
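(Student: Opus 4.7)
The plan is to mirror the Fourier-analytic argument of Lemma \ref{4m-S1-kernel} by realising $\langle G_{k+1} v\phi, v\phi\rangle$ as a suitable $\mu\to 0$ limit of matrix elements of $R_0(\mu^{2m})$, showing that this quantity is a positive multiple of $\|G_0 v\phi\|_{L^2}^2$, and then using the defining relation of $S_1L^2$ to conclude $\phi=0$.

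First I would exploit that $\phi\in S_{k+1}L^2$ automatically lies in $S_j L^2$ for every $1\le j\le k+1$, so $T_{j-1}\phi=0$ for every such $j$. Testing against $\phi$ and using $S_j\phi=\phi$, this gives the vanishing moments $|\langle v,\phi\rangle|^2=\langle Iv\phi, v\phi\rangle = 0$ (from $j=2$, since $T_1=S_1PS_1$ and $vIv$ is proportional to $P$) and $\langle G_{j-1}v\phi, v\phi\rangle=0$ for $3\le j\le k+1$. These orthogonality conditions kill precisely the intermediate coefficients in the free-resolvent expansion \eqref{2m-odd-free-2} when tested against the rank-one tensor built from $v\phi$.

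Next, choosing $\mu\in\mathbf{C}$ with $\mathrm{Re}(\mu^{2m})<0$ and using the Fourier representation of $R_0(\mu^{2m})$, one has
\begin{equation*}
\langle (R_0(\mu^{2m})-G_0)v\phi, v\phi\rangle = \mu^{2m}\int_{\mathbf{R}^n}\frac{|\widehat{v\phi}(\xi)|^2}{|\xi|^{2m}(|\xi|^{2m}-\mu^{2m})}\,d\xi.
\end{equation*}
Combined with the moment vanishings of the preceding step, the expansion \eqref{2m-odd-free-2} leaves only the $c_{k+1}\mu^{2m}G_{k+1}$-term at leading order, and dominated convergence yields
\begin{equation*}
\langle G_{k+1}v\phi, v\phi\rangle \,=\, \frac{1}{c_{k+1}}\int_{\mathbf{R}^n}\frac{|\widehat{v\phi}(\xi)|^2}{|\xi|^{4m}}\,d\xi \,=\, \frac{1}{c_{k+1}}\|G_0 v\phi\|_{L^2}^2.
\end{equation*}
The hypothesis $S_{k+1}vG_{k+1}vS_{k+1}\phi=0$ then forces $G_0 v\phi=0$; combined with $\phi=-UvG_0 v\phi$ (valid because $\phi\in S_1L^2$ means $T_0\phi=0$), this yields $\phi=0$.

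The main obstacle I anticipate is justifying the integrability $\int|\widehat{v\phi}(\xi)|^2|\xi|^{-4m}\,d\xi<\infty$ needed both to apply dominated convergence and to guarantee that the limiting integral is finite. This follows from Proposition \ref{classification-2m-odd}(iii): $\phi=Uv\psi$ for some $\psi\in L^2(\mathbf{R}^n)$, and $\psi=-G_0 v\phi$, so Plancherel immediately gives $\int|\widehat{v\phi}|^2|\xi|^{-4m}\,d\xi = \|\psi\|_{L^2}^2<\infty$. All other ingredients are direct analogues of the computations in Lemma \ref{4m-S1-kernel} together with bookkeeping of the non-leading terms in the expansion \eqref{2m-odd-free-2}.
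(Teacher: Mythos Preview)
Your proposal is correct and follows essentially the same Fourier-analytic route as the paper: use the moment vanishings from $\phi\in S_jL^2$ to reduce $\langle G_{k+1}v\phi,v\phi\rangle$ to $\lim_{\mu\to 0}\mu^{-2m}\langle(R_0(\mu^{2m})-G_0)v\phi,v\phi\rangle$, identify this via Plancherel with $\|G_0v\phi\|_{L^2}^2$, and conclude $\phi=0$ from $\phi=-UvG_0v\phi$. Two cosmetic points: in \eqref{2m-odd-free-2} the constant $c_{k+1}$ is already absorbed into $G_{k+1}$, so your factor $1/c_{k+1}$ is spurious (harmless here since you are proving the quantity vanishes), and the forward reference to Proposition~\ref{classification-2m-odd}(iii) is unnecessary since along the path $\mu^{2m}=-t<0$ the integrand is nonnegative and monotone convergence handles the integrability directly, as in the paper's argument.
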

	\begin{proof}
		Take $\phi\in S_{k+1}L^2(\mathbf{R}^n)$ with $S_{k+1}vG_{k+1}vS_{k+1}\phi=0.$ Then we have
		\[\langle S_{k+1}vG_{k+1}vS_{k+1}\phi,\phi\rangle=\langle G_{k+1}v\phi,v\phi\rangle=0.\]
		By the definition of $S_j$ with $1\le j\le k+1,$ we also have
		\[\langle v,\phi\rangle=\langle G_jv\phi,v\phi\rangle=0.\]
		Then using \eqref{2m-odd-free}, we obtain
		\[G_{k+1}=\lim_{\mu\rightarrow0}\frac{R_0(\mu^{2m})-G_0-\sum_{j=1}^kc_j\mu^{2(m-k+j)-1}G_j}{\mu^{2m}}.\]
		Thus, we have
		\begin{align*}
			0=&\langle S_{k+1}vG_{k+1}vS_{k+1}\phi,\phi\rangle=\langle G_{k+1}v\phi,v\phi\rangle\\
			=&\lim_{\mu\rightarrow0}\Bigg\langle\Bigg(\frac{R_0(\mu^{2m})-G_0-\sum_{j=1}^kc_j\mu^{2(m-k+j)-1}G_j}{\mu^{2m}}\Bigg)v\phi,v\phi\Bigg\rangle\\
			=&\lim_{\mu\rightarrow0}\frac{1}{\mu^{2m}}\Big\langle\big(R_0(\mu^{2m})-G_0\big)v\phi,v\phi\Big\rangle\\
			=&\lim_{\mu\rightarrow0}\frac{1}{\mu^{2m}}\int_{\mathbf{R}^n}\Big(\frac{1}{|\xi|^{2m}-\mu^{2m}}-\frac{1}{|\xi|^{2m}}\Big)\widehat{v\phi}(\xi)\overline{\widehat{v\phi}}(\xi)d\xi\\
			=&\lim_{\mu\rightarrow0}\int_{\mathbf{R}^n}\frac{|\widehat{v\phi}(\xi)|^2}{|\xi|^{2m}(|\xi|^{2m}-\mu^{2m})}d\xi=\int_{\mathbf{R}^n}\frac{|\widehat{v\phi}(\xi)|^2}{|\xi|^{4m}}d\xi
			=\langle G_0v\phi, G_0v\phi\rangle.
		\end{align*}
		Here we used the dominated convergence theorem as $\mu\rightarrow 0$ with chosen $\mu$ such that $\rm{Re}(\mu^{2m})<0$ on the last equality. This implies that $\widehat{v\phi}=0$ and $v\phi=0$. Recall that $S_{k+1}\leq S_{1}$, then $\phi\in S_{1}L^2$ which implies that $\phi=-UvG_{0}v\phi$. Thus the kernel of $S_{k+1}vG_{k+1}vS_{k+1}$ is trivial.
	\end{proof}

	{\bf Proof of Theorem \ref{RV-expansions-2m-even} ($2m<n\le 4m$ and even).}
	In this case, for $|v(x)|\lesssim(1+|x|)^{-\beta}$ with some $\beta>0$, we have the following expansions for $M(\mu)$ in $B(0, 0):$
	\begin{equation}
		\begin{split}
			M(\mu)=&U+vG_0v+\tilde{d}_1\mu^{2(m-k+1)}P+\sum_{j=2}^{k-1}d_j\mu^{2(m+j-k)}vG_jv\\
			&+\mu^{2m}g(\mu)vG_kv
			+\mu^{2m}vG_{k+1}v+vE_4(\mu)v.
		\end{split}
	\end{equation}
	Denote
	\[\mathcal{M}(\mu)=\tilde{d}_1\mu^{2(m-k+1)}P+\sum_{j=2}^{k-1}d_j\mu^{2(m+j-k)}vG_jv+\mu^{2m}g(\mu)vG_kv
	+\mu^{2m}vG_{k+1}v+vE_4(\mu)v.\]

	Theorem \ref{RV-expansions-2m-even} holds by the following Proposition.
	\begin{proposition}\label{M-expansions-2m-even}
		For $n=4m+2-2k$ with $k$ chosen as follows and $0<|\mu|\ll1$. Assume $|V(x)|\lesssim (1+|x|)^{-\beta}$ with some $\beta>n+4k$.  we obtain the following expansions of $R_V(\mu^{2m})$ in $B(s, -s')$:
		
		(i) For $1\le k\le m$, if zero is a regular  point of $H$, then  we have
		\begin{equation}\label{M-2m-even-regular}
			\Big(M(\mu)\Big)^{-1}=\widetilde{C}_0+\sum_{l=1}^{k-1}\mu^{2(m+l-k)}\tilde{\tau}_l\widetilde{C}_l+\mu^{2m}g(\mu)\widetilde{C}_k+\mu^{2m}\widetilde{C}_{k+1}+O(\mu^{2m+})
		\end{equation}
		where the operators $\widetilde{C}_l\in B(s, -s')$ with $s,s'>\frac{n}{2}+2k$ and $\tilde{\tau}_l\in\mathbf{C}\setminus\mathbf{R}$. Furthermore, $\widetilde{C}_0=T_0^{-1}.$
		
		(ii) For $1\le k\le [\frac{m}{2}]+1$, if zero is of the $j-$th kind of resonance of $H$ with $1\le j\le k-1$, then  we have
		\begin{equation}\label{M-2m-even-j}
			\begin{split}
				\Big(M(\mu)\Big)^{-1}=&\frac{\tilde{\tau}_{0,0}^j\widetilde{C}_{0,0}^j}{\mu^{2(m-k+j)}}+\sum_{l=1}^{k-j}\frac{\tilde{\tau}_{l,0}^j\widetilde{C}_{l,0}^j}{\mu^{2(m-k+j-l)}}+
				\sum_{l=k-j+1}^{m-k+j-1}\Bigg[\frac{\tilde{\tau}_{l,0}^j\widetilde{C}_{l,0}^j}{\mu^{2(m-k+j-l)}}+\frac{\ln(\mu)\tilde{\tau}_{l,1}^j\widetilde{C}_{l,1}^j}{\mu^{2(m-k+j-l)}}\Bigg]\\
				&\ \ \ +\ln(\mu)\tilde{\tau}_{m-k+j,1}^j\widetilde{C}_{m-k+j,1}^j+\tilde{\tau}_{m-k+j,0}^j\widetilde{C}_{m-k+j,0}^j+O(\mu^{0+}),
			\end{split}
		\end{equation}
		where the operators $\widetilde{C}_{l,0}^j,\widetilde{C}_{l,1}^j\in B(s,-s')$ with $s,s'>\frac{n}{2}+2k$ and $\tilde{\tau}_{l,0}^j,\tilde{\tau}_{l,1}^j\in\mathbf{C}\setminus\mathbf{R}$. Furthermore, $\widetilde{C}_{0,0}^j=S_{j}\big(S_jvG_jvS_j\big)^{-1}S_{j}$ and all $\widetilde{C}_{l, 0}^j,~ \widetilde{C}_{l, 1}^j$ for $0\leq l\leq m-k+j-1$ are finite rank.
		
		(iii) For $1\le k\le [\frac{m}{2}]+1$, if zero is of the $k-$th kind of resonance of $H$, then  we have
		\begin{equation}\label{M-2m-even-k}
			\begin{split}
				\Big(M(\mu)\Big)^{-1}=&\frac{\tilde{\tau}_{0,1}^k\widetilde{C}_{0,1}^k}{\mu^{2m}g(\mu)}+\frac{\tilde{\tau}_{0,2}^k \widetilde{C}_{0,2}^k}{\mu^{2m}\big(g(\mu)\big)^2}+\sum_{l=1}^{m-1}\Bigg[\frac{\tilde{\tau}_{l,0}^k\widetilde{C}_{l,0}^k}{\mu^{2(m-l)}}
				+\frac{\tilde{\tau}_{l,1}^k\widetilde{C}_{l,1}^k}{\mu^{2(m-l)}g(\mu)}
				+\frac{\tilde{\tau}_{l,2}^k\widetilde{C}_{l,2}^k}{\mu^{2(m-l)}\big(g(\mu)\big)^2}\Bigg]\\
				&\ \ \ +\tilde{\tau}_{m,0}^k\widetilde{C}_{m,0}^k+\big(g(\mu)\big)^{-1}\tilde{\tau}_{m,1}^k\widetilde{C}_{m,1}^k+\big(g(\mu)\big)^{-2}\tilde{\tau}_{m,2}^k \widetilde{C}_{m,2}^k+O(\mu^{0+}),
			\end{split}
		\end{equation}
		where the operators $\widetilde{C}_{l,0}^k, \widetilde{C}_{l,1}^k, \widetilde{C}_{l,2}^k\in B(s,-s')$ with $s,s'>\frac{n}{2}+2k$ and $\tilde{\tau}_{l,0}^k, \tilde{\tau}_{l,1}^k,\tilde{\tau}_{l,2}^k\in\mathbf{C}\setminus\mathbf{R}$. Furthermore, $\widetilde{C}_{0,1}^k=S_{k}\big(S_kvG_kvS_k\big)^{-1}S_{k}$ and all $\widetilde{C}_{l, 0}^k,~ \widetilde{C}_{l, 1}^k,~ \widetilde{C}_{l, 2}^k$ for $0\leq l\leq m-1$ are finite rank.
		
		(iv) For $1\le k\le[\frac{m}{2}]+1$, if zero is an eigenvalue of $H$, then  we have
		\begin{equation}\label{M-2m-even-k+1}
			\begin{split}
				\Big(M(\mu)\Big)^{-1}=&\frac{\widetilde{C}_{0,0}^{k+1}}{\mu^{2m}}+\frac{\tilde{\tau}_{0,1}^{k+1}\widetilde{C}_{0,1}^{k+1}}{\mu^{2m}g(\mu)}
				+\sum_{l=1}^{m-1}\Bigg[\frac{\tilde{\tau}_{l,0}^{k+1}\widetilde{C}_{l,0}^{k+1}}{\mu^{2(m-l)}}
				+\frac{\tilde{\tau}_{l,1}^{k+1}\widetilde{C}_{l,1}^{k+1}}{\mu^{2m}g(\mu)}\Bigg]\\
				&\ \ \ +\tilde{\tau}_{m,0}^{k+1}\widetilde{C}_{m,0}^{k+1}+\big(g(\mu)\big)^{-1}\tilde{\tau}_{m,1}^{k+1}\widetilde{C}_{m,1}^{k+1}+O(\mu^{0+}),
			\end{split}
		\end{equation}
		where the operators $\widetilde{C}_{l,0}^{k+1},\widetilde{C}_{l,1}^{k+1}\in B(s,-s')$ with $s,s'>\frac{n}{2}+2k$ and $\tilde{\tau}_{l,0}^{k+1},\tilde{\tau}_{l,1}^{k+1}\in\mathbf{C}\setminus\mathbf{R}$. Furthermore, $\widetilde{C}_{0,0}^{k+1}=S_{k+1}\big(S_{k+1}vG_{k+1}vS_{k+1}\big)^{-1}S_{k+1}$ and all $\widetilde{C}_{l, 0}^{k+1},~ \widetilde{C}_{l, 1}^{k+1},~ \widetilde{C}_{l, 2}^{k+1}$ for $0\leq l\leq m-1$ are finite rank.
	\end{proposition}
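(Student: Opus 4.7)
The plan is to mimic the iterative Feshbach--Grushin reduction that was used to establish Proposition \ref{M-2m odd} in the odd case, but tracking the extra logarithmic factor $g(\mu)=d_{k}\ln(\mu)+c_{k}$ that appears at the order $\mu^{2m}$ in the free expansion \eqref{2m-even-free-2}. Write $M(\mu)=T_{0}+\mathcal{M}(\mu)$ with $\mathcal{M}(\mu)=O(\mu^{2(m-k+1)})$ in $B(0,0)$. In the regular case, $T_{0}$ is invertible and a direct Neumann expansion of $\bigl(I+\mathcal{M}(\mu)T_{0}^{-1}\bigr)^{-1}$ yields \eqref{M-2m-even-regular}; the only new feature compared to the odd case is that the coefficient in front of $G_{k}$ is $g(\mu)$ rather than a pure power of $\mu$, which produces the $\mu^{2m}g(\mu)\widetilde{C}_{k}$ summand.

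In the non-regular case, I will apply Lemma \ref{Feshbach-formula} with the Riesz projection $S_{1}$ onto $\ker(T_{0})$ to write
\begin{equation*}
(M(\mu))^{-1}=(M(\mu)+S_{1})^{-1}+(M(\mu)+S_{1})^{-1}S_{1}(M_{1}(\mu))^{-1}S_{1}(M(\mu)+S_{1})^{-1},
\end{equation*}
where $M_{1}(\mu)=S_{1}-S_{1}(M(\mu)+S_{1})^{-1}S_{1}$. Expanding $(M(\mu)+S_{1})^{-1}=D_{0}(I+\mathcal{M}(\mu)D_{0})^{-1}$ in Neumann series and using $S_{1}D_{0}=D_{0}S_{1}=S_{1}$, a short computation gives
\begin{equation*}
M_{1}(\mu)=\tilde{d}_{1}\mu^{2(m-k+1)}S_{1}PS_{1}+\mathcal{M}_{1}(\mu),
\end{equation*}
with $\mathcal{M}_{1}(\mu)$ of strictly higher order. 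If $T_{1}=S_{1}PS_{1}$ is invertible on $S_{1}L^{2}$, we are in the first-kind resonance case and another Neumann expansion yields \eqref{M-2m-even-j} with $j=1$. Otherwise, I iterate: at stage $j\le k-1$ the leading term of $M_{j}(\mu)$ is a pure power $\mu^{2(m-k+j)}$ times $T_{j}=S_{j}vG_{j}vS_{j}$, and the whole bookkeeping is parallel to that of Proposition \ref{M-2m odd}. Invertibility at each non-terminal step is built into Definition \ref{resonance}, and Lemma \ref{inverse formula} provides a clean statement of the Feshbach reduction at each level.

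The main obstacle, and the only genuinely new point compared to the odd case, occurs at the $k$-th iteration. When $T_{k-1}$ fails to be invertible, the leading behaviour of $M_{k}(\mu)$ on $S_{k}L^{2}$ is not a pure power of $\mu$ but $\mu^{2m}g(\mu)S_{k}vG_{k}vS_{k}$ plus a $\mu^{2m}S_{k}vG_{k+1}vS_{k}$ correction. Dividing by $\mu^{2m}g(\mu)$ to pull out a Neumann series forces $g(\mu)^{-1}=(d_{k}\ln\mu+c_{k})^{-1}$ into every subsequent coefficient, and because the correction term $\mu^{2m}S_{k}vG_{k+1}vS_{k}$ is of the same power of $\mu$ but with no $\ln\mu$, Neumann expansion produces both $g(\mu)^{-1}$ and $g(\mu)^{-2}$ contributions. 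This is precisely the source of the three columns indexed by $\theta=0,1,2$ in \eqref{M-2m-even-k}. A careful accounting of which powers of $\mu^{-1}$ pair with which power of $g(\mu)^{-1}$, together with truncation at the level where the remainder is $O(\mu^{0+})$, gives the claimed expansion.

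Finally, the iteration terminates at the $(k{+}1)$-th step: if $T_{k}$ is still not invertible we define $S_{k+1}$, and the operator $T_{k+1}=S_{k+1}vG_{k+1}vS_{k+1}$ is automatically invertible on $S_{k+1}L^{2}$ by the analogue of Lemma \ref{2m-odd-kernel}, namely by showing that $\langle G_{k+1}v\phi,v\phi\rangle=0$ together with $\langle G_{j}v\phi,v\phi\rangle=0$ for $1\le j\le k$ forces $v\phi=0$ via the Plancherel/dominated convergence argument used there. This invertibility allows the last Neumann expansion and yields \eqref{M-2m-even-k+1}. The finite rank of the subleading operators $\widetilde{C}_{\ell,\theta}^{j}$, $\widetilde{C}_{\ell,\theta}^{k}$, $\widetilde{C}_{\ell,\theta}^{k+1}$ (for $\ell$ below the cut-off) is inherited from the finite rank of the projections $S_{j}$ recalled in Remark \ref{properties-of-S-T}(2), since each such coefficient is of the form $S_{j}(\cdots)S_{j}$ up to composition with bounded operators. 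Substituting the result into the symmetric resolvent identity \eqref{symmetric-resolvent-idnetity} then yields Theorem \ref{RV-expansions-2m-even}.
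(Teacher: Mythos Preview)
Your proposal is correct and follows essentially the same approach as the paper's own proof: an iterated Feshbach--Grushin reduction (Lemma \ref{Feshbach-formula}/Lemma \ref{inverse formula}) combined with Neumann expansions at each stage, with termination guaranteed by the triviality of $\ker(S_{k+1}vG_{k+1}vS_{k+1})$ via Lemma \ref{2m-odd-kernel}. Your identification of the mechanism producing the $g(\mu)^{-1}$ and $g(\mu)^{-2}$ factors at the $k$-th stage is in fact more explicit than what the paper writes; the paper simply states that the induction continues through stages $j\le k-1$ and then ``derives \eqref{M-2m-even-k} and \eqref{M-2m-even-k+1}'' without spelling out the interaction between $\mu^{2m}g(\mu)S_kvG_kvS_k$ and $\mu^{2m}S_kvG_{k+1}vS_k$.

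One minor imprecision: when you write that at stage $j\le k-1$ the leading term of $M_j(\mu)$ is $\mu^{2(m-k+j)}T_j$, this is the \emph{cumulative} power pulled out relative to $M(\mu)$; the actual leading term of $M_j(\mu)$ after the previous normalization is $\tilde d_j\mu^{2}T_j$ for $j\ge 2$ (see the paper's computation of $M_2(\mu)$). This does not affect the argument, since the product of the normalizing factors is indeed $\mu^{2(m-k+j)}$ and gives the correct leading singularity in \eqref{M-2m-even-j}.
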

	
	\begin{proof}
		(i) The proof of \eqref{M-2m-even-regular} follows from the proof of \eqref{M-4m-regular}.
		
		(ii)Using \eqref{M-2m-even}, we have
		\begin{equation}
			\begin{split}
				\Big(M(\mu)+S_1\Big)^{-1}
				=&\big(T_0+S_1+\mathcal{M}(\mu)\big)^{-1}\\
				=&D_0\big(I+\mathcal{M}(\mu)D_0\big)^{-1}\\
				=&D_0-D_0\mathcal{M}(\mu)D_0+D_0\big(\mathcal{M}(\mu)D_0\big)^2+O(\mu^{4(m-k)+2+})
			\end{split}
		\end{equation}
		Then for $M_1(\mu)=S_1-S_1\big(M(\mu)+S_1)S_1,$ since $S_1D_0=D_0S_1=S_1,$ we have
		\begin{equation}\label{M1-2m-even}
			\begin{split}
				M_1(\mu)=&S_1-S_1\Big(D_0-D_0\mathcal{M}(\mu)D_0+D_0\big(\mathcal{M}(\mu)D_0\big)^2+O(\mu^{4(m-k)+2+})\Big)S_1\\
				=&S_1\mathcal{M}(\mu)S_1-S_1(\mathcal{M}(\mu)D_0)^{2}S_1+O(\mu^{4(m-k)+2+})\\
				=&\tilde{d}_1\mu^{2(m-k)+2}S_1PS_1+\sum_{l=2}^{k-1}d_{l}\mu^{2(m+l-k)}S_1vG_lvS_1
				+\mu^{2m}g(\mu)S_1vG_kvS_1\\
				&\ \ +\mu^{2m}S_1vG_{k+1}vS_1-\tilde{d}_1^2\mu^{4(m-k)+4}S_1PD_0PS_1+O(\mu^{4(m-k)+4})\\
				:=&\tilde{d}_1\mu^{2(m-k)+2}S_1PS_1+\mathcal{M}(\mu)
			\end{split}
		\end{equation}
		
		By the definition of the first kind of resonance, the operator $T_1=S_1PS_1$ is invertible on $S_1L^2(\mathbf{R}^n),$ then we can obtain the expansions of $\big(M_1(\mu)\big)^{-1}$ by Neumann series.
		
		From Lemma \ref{Feshbach-formula}, we also have \eqref{M-inverse}. Substituting the expansions of $\big(M(\mu)+S_1\big)^{-1}$ and $\big(M_1(\mu)\big)^{-1}$ into \eqref{M-inverse}, we obtain \eqref{M-2m-even-j} with $j=1$.
		
		 For the second kind of resonance, we aim to derive the expansions of $\big(M_1(\mu)\big)^{-1}.$ Define $\widetilde{M}_1(\mu)=\frac{M_1(\mu)}{\tilde{d}_1\mu^{2(m-k)+2}},$ using \eqref{M1-2m-even} we have
		\begin{equation}
			\begin{split}
				\widetilde{M}_1(\mu)=& S_1PS_1+\sum_{l=2}^{k-1}\tilde{d}_l\mu^{2l-2}S_1vG_lvS_1-\tilde{d}_1\mu^{2(m-k)+2}S_1PD_0PS_1\\
				&\ \ +\tilde{d}_{k}\mu^{2k-2}g(\mu)S_1vG_{k}vS_1+\tilde{d}_{k+1}\mu^{2k-2}S_1vG_{k+1}vS_1+O(\mu^{2k-2+})\\
				:=&T_1+\widetilde{\mathcal{M}}_1(\mu).
			\end{split}
		\end{equation}
		By the definition of the second kind of resonance, we know that the operator $T_1$ is not invertible on $S_1L^2(\mathbf{R}^n).$ Since $S_2$ is the Riesz projection onto the kernel of $T_1$ on $S_1L^2(\mathbf{R}^n),$ so $T_1+S_2$ is invertible on $S_1L^2(\mathbf{R}^n).$ Furthermore, we can obtain
		\begin{equation}
			\begin{split}
				\Big(\widetilde{M}_1(\mu)+S_2\Big)^{-1}
				=&\big(T_1+S_2+\widetilde{\mathcal{M}}_1(\mu)\big)^{-1}\\
				=&D_1-D_1\widetilde{\mathcal{M}}_1(\mu)D_1+D_1\big(\widetilde{\mathcal{M}}_1(\mu)D_1\big)^2+O(\mu^{4+})
			\end{split}
		\end{equation}
		Then for $M_2(\mu)=S_2-S_2\big(\widetilde{M}_1(\mu)+S_2\big)^{-1}S_2,$ since $S_2D_1=D_1S_2=S_2$ and $S_2P=PS_2=0,$ we have
		\begin{equation}\label{M2-odd-n}
			\begin{split}
				M_2(\mu)
				=&S_2-S_2\Big(D_1-D_1\widetilde{\mathcal{M}}_1(\mu)D_1+D_1\big(\widetilde{\mathcal{M}}_1(\mu)D_1\big)^2+O(\mu^{4+})\Big)S_2\\
				=&S_2\widetilde{\mathcal{M}}_1(\mu)S_2-S_2\big(\widetilde{\mathcal{M}}_1(\mu)D_1\big)^2S_2+O(\mu^{4+})\\
				=&\tilde{d}_2\mu^2S_2vG_2vS_2+\sum_{l=3}^{k-1}\tilde{d}_l\mu^{2l-2}S_2vG_2vS_2+\tilde{d}_k\mu^{2k-2}g(\mu)S_2vG_kvS_2\\
				&\ \ +\tilde{d}_{k+1}\mu^{2k-2}S_2vG_{k+1}vS_2+O(\mu^{(2k-2)+})\\
				:=&\tilde{d}_2\mu^2S_2vG_2vS_2+\mathcal{M}_2(\mu).
			\end{split}
		\end{equation}
		Since the operator $S_2vG_2vS_2$ is invertible on $S_2L^2(\mathbf{R}^n),$ then we can obtain the expansions of $\big(M_2(\mu)\big)^{-1}$ by Neumann series.
		From Lemma \ref{Feshbach-formula}, we have
		\begin{equation}\label{M1-inverse-n}
			\Big(\widetilde{M}_1(\mu)\Big)^{-1}=\Big(\widetilde{M}_1(\mu)+S_2\Big)^{-1}+\Big(\widetilde{M}_1(\mu)+S_2\Big)^{-1}S_2\Big(M_2(\mu)\Big)^{-1}\Big(\widetilde{M}_1(\mu)+S_2\Big)^{-1}
		\end{equation}
		Substituting the expansions of $\big(\widetilde{M}_1(\mu)+S_2\big)^{-1}$ and $\big(M_2(\mu)\big)^{-1}$ into \eqref{M1-inverse}, we obtain the inverse of $\widetilde{M}_1(\mu)$, then we can obtain the expansions of $\big(M_1(\mu)\big)^{-1}.$ Furthermore, we can obtain \eqref{M-2m-even-j} when $j=2$.
		
		By an induction process we can get the expansions of \eqref{M-2m-even-j} for all $2\le j\le k-1$, then we derive \eqref{M-2m-even-k} and \eqref{M-2m-even-k+1}. What's more, by Lemma \ref{2m-odd-kernel}, we know that $\ker(S_{k+1}vG_{k+1}vS_{k+1})=\{0\},$ which means the iterated process stop here.
	\end{proof}
	
	\subsection{Proof of identification of resonance subspace}\label{identfication}
	  In this part, we aim to identify the resonance subspace of each kind for different dimensional cases with $n>2m$.
	
	\begin{proof}[\bf Proof of Proposition \ref{classification-4m}($n>4m$)]
			
		We first note that
		\[\big[(-\Delta)^m+V\big]\psi=0\Longleftrightarrow (I+G_0V)\psi=0.\]
		First, suppose that $\phi\in S_1L^2\setminus\{0\}$. Then $(U+vG_0v)\phi=0$. Multiplying by $U$, one has
		\[\phi(x)=-UvG_0v\phi=Uv(x)\int_{\mathbf{R}^n}\frac{c~v(y)\phi(y)}{|x-y|^{n-2m}}dy.\]
		Accordingly, we define
		\begin{equation}
			\psi(x)=c\int_{\mathbf{R}^n}\frac{v(y)\phi(y)}{|x-y|^{n-2m}}dy\ \ \ \ (=-G_0v\phi).
		\end{equation}
		Since $v\phi\in L^2_{\frac{\beta}{2}}(\mathbf{R}^n)\subset  L^2_{\frac{n}{2}+}(\mathbf{R}^n)$, we have $\psi\in L^2(\mathbf{R}^n)$ by Lemma \ref{Riesz-potential-boundedness} (1). Further, $\phi(x)=Uv(x)\psi(x)$ and
		$$\psi(x)=-G_0v\phi(x)=-G_0V\psi(x),$$
		which implies $(I+G_0V)\psi(x)=0$.
		
		Secondly, assume $\phi(x)=Uv(x)\psi(x)$ for $\psi(x)$ a non-zero distributional solution to $H\psi=0$. It is clear that $\phi\in L^2_{\frac{\beta}{2}}(\mathbf{R}^n)\subset L^2_{\frac{n}{2}+2-}(\mathbf{R}^n)$ and now
		\[(U+vG_0v)\phi(x)=v(x)\psi(x)+v(x)G_0V\psi(x)=v(x)(I+G_0V)\psi(x)=0.\]
		Thus showing that $\phi\in S_1L^2(\mathbf{R}^n)$.
	\end{proof}
	
	For $3m\leq n\leq 4m$, there exists $N$ kinds of resonances where $N\geq2$ by Definition \ref{resonance}. If $n$ goes down to close $3m$, then $N$ goes up to close $[m/2]+1$. Next, we give the identity condition of each projection $S_{N}$ for $1\leq N\leq [m/2]+1$.
	
	\begin{lemma}\label{S-projection}
		For $1\leq N\leq [m/2]+1$, then $\phi\in S_{N+1}L^{2}$ if and only if
			\begin{equation}\label{Y}
		\int_{\mathbf{R}^n}P_{N-1}(y_1, y_{2}\cdots y_{n})v(y)\phi(y)dy=0
		\end{equation}
		for any polynomial $P_{N-1}$ of degree $N-1$.
	\end{lemma}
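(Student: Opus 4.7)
The plan is to pass to the associated solution $\psi = -G_0 v \phi$ of $H\psi=0$ and read off the moment conditions from a Taylor expansion of the Riesz kernel at infinity. For $\phi \in S_1 L^2(\mathbf{R}^n)$, set
\begin{equation*}
\psi(x) := -G_0 v\phi(x) = -c \int_{\mathbf{R}^n} |x-y|^{2m-n}\, v(y)\phi(y)\, dy,
\end{equation*}
so that $(I+G_0V)\psi=0$ in the distributional sense. By Propositions \ref{classification-2m-odd}--\ref{classification-2m-even}, membership $\phi\in S_{N+1}L^2$ is equivalent to the decay condition $\psi\in W_{2m-n/2-N}(\mathbf{R}^n)$, i.e.\ $\langle x\rangle^{-s}\psi\in L^2$ for every $s>2m-n/2-N$. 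Thus it suffices to characterize this prescribed decay of $\psi$ in terms of moments of $v\phi$.

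The key step is a Taylor expansion of the Riesz kernel in $y$ about the origin,
\begin{equation*}
|x-y|^{2m-n} = \sum_{|\alpha|\le N-1} \frac{(-y)^\alpha}{\alpha!}\, \partial_x^\alpha |x|^{2m-n} + R_N(x,y),
\end{equation*}
valid for $|y|\le |x|/2$, where each $\partial_x^\alpha|x|^{2m-n}$ is homogeneous of degree $2m-n-|\alpha|$ in $x$ and $R_N(x,y)=O(|y|^N|x|^{2m-n-N})$. Splitting the defining integral for $\psi$ at $|y|=|x|/2$, using the strong decay $|V(y)|\lesssim \langle y\rangle^{-\beta}$ with $\beta>n+4k$ to bound the tail region, and substituting the expansion on the main region yields
\begin{equation*}
\psi(x) = -c\sum_{|\alpha|\le N-1} \frac{(-1)^{|\alpha|}}{\alpha!}\, \partial_x^\alpha |x|^{2m-n}\cdot \mu_\alpha(\phi) + O\bigl(|x|^{2m-n-N}\bigr),
\end{equation*}
where $\mu_\alpha(\phi) := \int y^\alpha v(y)\phi(y)\,dy$ and the remainder piece already belongs to $W_{2m-n/2-N}$. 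Each homogeneous term $\partial_x^\alpha|x|^{2m-n}$ sits in $W_{2m-n/2-|\alpha|}$ but fails to belong to the smaller space $W_{2m-n/2-|\alpha|-1}$, so these are precisely the contributions that could obstruct the desired decay of $\psi$.

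The two directions are then near-immediate: if $\phi\in S_{N+1}L^2$ then $\psi\in W_{2m-n/2-N}$, and matching homogeneities in the expansion forces $\mu_\alpha(\phi)=0$ for every $|\alpha|\le N-1$, which is exactly \eqref{Y}; conversely, the vanishing of those moments collapses the leading sum and leaves $\psi(x)=O(|x|^{2m-n-N})$, placing $\psi$ in $W_{2m-n/2-N}$ and returning $\phi\in S_{N+1}L^2$ via Propositions \ref{classification-2m-odd}--\ref{classification-2m-even}. The main obstacle is the linear-independence step: one must verify that no nontrivial combination of the functions $\{\partial_x^\alpha|x|^{2m-n}\}_{|\alpha|\le N-1}$ lies in $W_{2m-n/2-N}$, which reduces to separating homogeneous components of different degrees and, within a fixed degree, separating angular profiles using the polynomial structure of $\partial_x^\alpha|x|^{2m-n}$. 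A secondary technical point is the uniform control of the remainder integral $\int R_N(x,y)v(y)\phi(y)\,dy$, which is exactly where the decay hypothesis $\beta>n+4k$ and the constraint $N\le [m/2]+1$ on the admissible resonance orders enter.
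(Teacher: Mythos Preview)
Your argument is circular. You invoke Propositions \ref{classification-2m-odd}--\ref{classification-2m-even} to translate membership in $S_{N+1}L^2$ into a decay statement about $\psi=-G_0v\phi$, and then extract the moment conditions from that decay. But in the paper those Propositions are proved \emph{using} Lemma \ref{S-projection}: their proofs begin with ``by Lemma \ref{S-projection}'' and then establish the decay of $\psi$ by subtracting exactly the polynomial moments that the Lemma guarantees vanish. So you cannot quote them here.

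The paper's proof works directly from the operator-theoretic definition of $S_{N+1}$ as the kernel of $T_N=S_NvG_NvS_N$ on $S_NL^2$, with $G_N$ the integral operator with kernel $|x-y|^{2N-2}$. For $\phi\in S_{N+1}L^2$ one has $0=\langle G_Nv\phi,v\phi\rangle$, and the multinomial expansion of $|x-y|^{2N-2}=(|x|^2-2x\cdot y+|y|^2)^{N-1}$ is used together with the inductive hypothesis (the lower-order moments already vanish because $S_{N+1}\le S_N\le\cdots\le S_1$) to isolate a single surviving sum of squares of degree-$(N-1)$ moments, forcing them to vanish. No appeal to the decay of $\psi$ is made, and no Taylor expansion of the Riesz kernel is needed. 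Your Taylor-expansion idea is essentially the mechanism behind Propositions \ref{classification-2m-odd}--\ref{classification-2m-even}, not behind this Lemma; the logical order is Definition \ref{resonance} $\Rightarrow$ Lemma \ref{S-projection} $\Rightarrow$ Propositions \ref{classification-2m-odd}--\ref{classification-2m-even}, and you have reversed the last two steps.
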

	\begin{proof}
		For $N=1$, if $\phi\in S_{2}L^{2}\setminus\{0\}$,  by the definition of $S_{2}$,  then
		\begin{equation*}
			0=\langle S_{1}PS_{1}\phi, \phi \rangle=\|P\phi\|_{L^{2}}^{2},
		\end{equation*}
		thus $P\phi=0$, that is
		\begin{equation}
			\int_{\mathbf{R}^n}v(y)\phi(y)dy=0.
		\end{equation}
		
		For $N=2$, if $\phi\in S_{3}L^{2}\setminus\{0\}$, then
		\begin{equation*}
		\begin{split}
			0=&\langle S_{2}vG_{2}vS_{2}\phi, \phi \rangle=\langle G_{2}v\phi, v\phi \rangle\\
		=&\int_{\mathbf{R}^n}\int_{\mathbf{R}^n}v(x)\phi(x)|x-y|^2v(y)\bar{\phi}(y)dxdy\\
		=& \int_{\mathbf{R}^n}\int_{\mathbf{R}^n}v(x)\phi(x)\big(|x|^2+|y|^2-2x\cdot y\big)v(y)\bar{\phi}(y)dxdy\\
		=& \int_{\mathbf{R}^n}\int_{\mathbf{R}^n}v(x)\phi(x)(-2x\cdot y)v(y)\bar{\phi}(y)dxdy\\
		=& -2\bigg |\int_{\mathbf{R}^n} y v(y) \phi(y) dy \bigg| ^2
		\end{split}
		\end{equation*}
		thus for all $j=1, 2, \cdots, n$,
		\begin{equation}
			\int_{\mathbf{R}^n}y_{j}v(y)\phi(y)dy=0.
		\end{equation}
		
		Suppose that the conclusion \eqref{Y} holds for all $1\leq j\leq N$.
		If $\phi\in S_{N+1}L^{2}\setminus\{0\}$, then by the multinomial theorem, we have
		\begin{equation*}
			\begin{split}
				0=&\langle S_{N}vG_{N}vS_{N}\phi, \phi \rangle=\langle G_{N}v\phi, v\phi \rangle\\
				=&\int_{\mathbf{R}^n}\int_{\mathbf{R}^n}v(x)\phi(x)|x-y|^{2N-2}v(y)\bar{\phi}(y)dxdy\\
				=& \int_{\mathbf{R}^n}\int_{\mathbf{R}^n}v(x)\phi(x)\big(|x|^2-2x\cdot y+|y|^2\big)^{N-1}v(y)\bar{\phi}(y)dxdy\\
				=& \int_{\mathbf{R}^n}\int_{\mathbf{R}^n}v(x)\phi(x)
				\sum_{k_1+k_2+k_3=N-1}C({\bf k})|x|^{2k_1}(-2x\cdot y)^{k_2}|y|^{2k_3}v(y)\bar{\phi}(y)dxdy\\
				=& \int_{\mathbf{R}^n}\int_{\mathbf{R}^n}v(x)\phi(x)
				\sum_{k_1+k_2+k_3=N-1}C({\bf k})|x|^{2k_1}\\
				&\times \sum_{a_1+\cdots+a_n=k_2}C({\bf a})(-2)^{k_2}(x_{1}y_{1})^{a_1}\cdots(x_{n}y_{n})^{a_n}|y|^{2k_3}v(y)\bar{\phi}(y)dxdy\\
				=& \int_{\mathbf{R}^n}\int_{\mathbf{R}^n}v(x)\phi(x)
				\sum_{k_1+k_2+k_3=N-1}C({\bf k})
				\sum_{ a_1, \cdots, a_{k_2}=1; a_1\leq \cdots\leq a_{k_2}}^{k_2}C({\bf a})(-2)^{k_2}\\
				&\times |x|^{2k_1}(x_{a_1}x_{a_2}\cdots x_{a_{k_2}})(y_{a_{1}}y_{a_2}\cdots y_{a_{k_2}})|y|^{2k_3}v(y)\bar{\phi}(y)dxdy
			\end{split}
		\end{equation*}
		where $C({\bf k})=\frac{(N-1)!}{k_{1}!\cdot k_{2}! \cdots k_{m}!}$ and $C({\bf a})=\frac{k_2!}{a_{1}!\cdot a_{2}! \cdots a_{n}!}$.
		
		Note that the term $$|x|^{2k_1}\sum_{ a_1, \cdots, a_{k_2}=1; a_1\leq \cdots\leq a_{k_2}}^{k_2}(x_{a_1}x_{a_2}\cdots x_{a_{k_2}})$$ has actually the same form as $\sum_{ a_1, \cdots, a_{k_2}=1; a_1\leq \cdots\leq a_{k_2}}^{k_2}(x_{a_1}x_{a_2}\cdots x_{a_{k_2}})$. Since $|x-y|^{2N-2}$ is symmetric about $x$ and $y$, by $S_{N+1}\leq S_{N}\leq\cdots\leq S_{1}$, then the terms contribute zero is  the term as following ($x, y$ has the same order)
		$$\int_{\mathbf{R}^n}\int_{\mathbf{R}^n}v(x)\phi(x)
		\sum_{a_1, \cdots, a_{N-1}=1; a_1\leq \cdots\leq a_{N-1}}^{N-1}(x_{a_1}x_{a_2}\cdots x_{a_{N-1}})(y_{a_{1}}y_{a_2}\cdots y_{a_{N-1}})v(y)\bar{\phi}(y)dxdy.$$
		Note that the above terms come from terms like $|x|^{2a}(-2x\cdot y)^{N-1-2a}|y|^{2a}$, thus all the above terms have the same sign ($+$ or $-$). Thus Lemma \ref{S-projection} holds by the mathematical induction.
	\end{proof}

	\begin{proof}[\bf Proof of Proposition \ref{classification-2m-odd}($2m<n\le 4m$ and odd)]
      We give the detail proof of Proposition \ref{classification-2m-odd} for the cases when $j=1,2,3,4,$  then by an induction process, we prove the general cases for $5\le j\le k+1. $

		(i) For $j=1,$ suppose that $\phi\in S_1L^2\setminus\{0\}$. Then $(U+vG_0v)\phi=0$, and multiplying by $U$,  one has
		\[\phi(x)=-UvG_0v\phi=Uv(x)\int_{\mathbf{R}^n}\frac{c~v(y)\phi(y)}{|x-y|^{n-2m}}dy.\]
		Accordingly, we define
		\begin{equation}\label{psi}
			\psi(x)=c\int_{\mathbf{R}^n}\frac{v(y)\phi(y)}{|x-y|^{n-2m}}dy\ \ (=-G_0v\phi).
		\end{equation}
		Since $v\phi\in L^2_{\frac{\beta}{2}}(\mathbf{R}^n)\subset L^2_{\frac{n}{2}+2+}(\mathbf{R}^n)$, we have that $\psi\in W_{2m-\frac{n}{2}}(\mathbf{R}^n)$ by Lemma \ref{Riesz-potential-boundedness}(2). Further $\phi(x)=Uv(x)\psi(x)$ and
		\[\psi(x)=-G_0v\phi(x)=-G_0V\psi(x) \Longrightarrow (I+G_0V)\psi(x)=0.\]
		
		On the other hand, assume $\phi(x) =Uv(x)\psi(x)$ for $\psi(x)$ a non-zero distributional solution to $H\psi=0$. It is clear that $\phi\in L^2_{(\frac{\beta}{2}-2m+\frac{n}{2})-}(\mathbf{R}^n) \subset L^2_{(n+2k-2m)-}(\mathbf{R}^n)$ and then
		\[(U+vG_0v)\phi(x)=v(x)\psi(x)+v(x)G_0V\psi(x)=v(x)(I+G_0V)\psi(x)=0.\]
		Thus showing that $\phi\in S_1L^2(\mathbf{R}^n)$.
		
		(ii) For $j=2,$ assume first that $\phi\in S_2L^2(\mathbf{R}^n)\setminus\{0\}$. Since $S_2\le S_1$, then by Lemma \ref{S-projection} and our definition of $\psi(x)$, we have
		\begin{align*}
        \psi(x)=&~c\int_{\mathbf{R}^n}\Big(\frac{1}{|x-y|^{n-2m}}-\frac{1}{(1+|x|)^{n-2m}}\Big)v(y)\phi(y)dy\\
              :=&~c\int_{\mathbf{R}^n}K_2(x,y)v(y)\phi(y)dy.
        \end{align*}
        And we can rewrite the integral kernel $K_2(x,y)$ as
        \begin{align*}
        K_2(x,y)=& \frac{\sum_{l=0}^{n-2m-1}C_{n-2m}^l|x|^l}{(1+|x|)^{n-2m}|x-y|^{n-2m}}
				+\frac{|x|^{n-2m}-|x-y|^{n-2m}}{(1+|x|)^{n-2m}|x-y|^{n-2m}}\\
               :=& K_2^1(x,y)+K_2^2(x,y).
        \end{align*}
        Here and subsequently, for $k, l\in\mathbf{N}$, $C_{k}^{l}=\frac{k!}{l!(k-l)!}$ is the combination number.
		Since
        \begin{equation}\label{eq-subspace2-1}
        \Big|K_2^1(x,y)\Big|\lesssim\frac{1}{(1+|x|)|x-y|^{n-2m}},
        \end{equation}
        and
        \begin{equation}\label{eq-subspace2-2}
        \begin{split}
        \Big|K_2^2(x,y)\Big|
        \lesssim~& \frac{|x|^{n-2m-1}\Big||x|-|x-y|\Big|}{(1+|x|)^{n-2m}|x-y|^{n-2m}}+\frac{\Big||x|^{n-2m-1}-|x-y|^{n-2m-1}\Big|}{(1+|x|)^{n-2m}|x-y|^{n-2m-1}}\\
        \lesssim~& \frac{1+|y|}{(1+|x|)|x-y|^{n-2m}}+\frac{(1+|y|)^{n-2m-1}}{(1+|x|)^2|x-y|^{n-2m-1}}.
        \end{split}
        \end{equation}
       then by \eqref{eq-subspace2-1} and \eqref{eq-subspace2-2} we can obtain that
        \begin{equation}\label{eq-subspace2}
        \begin{split}
        \Big|K_2(x,y)\Big|
                          \lesssim~\frac{1+|y|}{(1+|x|)|x-y|^{n-2m}}+\frac{(1+|y|)^{n-2m-1}}{(1+|x|)^2|x-y|^{n-2m-1}}.
        \end{split}
        \end{equation}
		Thus Lemma \ref{Riesz-potential-boundedness} shows that $\psi\in W_{2m-\frac{n}{2}-1}(\mathbf{R}^n)$ as desired.
		
		On the other hand, if $\phi=Uv\psi$ with $\psi(x)\in W_{2m-\frac{n}{2}-1}(\mathbf{R}^n)$, we have
		\begin{equation*}
			\psi(x)=c\int_{\mathbf{R}^n}K_2(x,y)v(y)\phi(y)dy+\frac{c}{(1+|x|)^{n-2m}}\int_{\mathbf{R}^n}v(y)\phi(y)dy.
		\end{equation*}
		The first term in the right hand and $\psi(x)$ are in $W_{2m-\frac{n}{2}-1}(\mathbf{R}^n)$, thus we must have that
		\[\frac{1}{(1+|x|)^{n-2m}}\int_{\mathbf{R}^n}v(y)\phi(y)dy\in W_{2m-\frac{n}{2}-1}(\mathbf{R}^n),\]
		this necessitates that
		\begin{equation}\label{eq-orthogonal1}
			\int_{\mathbf{R}^n}v(y)\phi(y)dy=0
		\end{equation}
		that is $0=P\phi=S_1PS_1\phi$ and $\phi\in S_2L^2(\mathbf{R}^n)$ as desired.
		
		(iii) For $j=3$, assume first that $\phi\in S_3L^2(\mathbf{R}^n)\setminus\{0\}$. Since $S_3\le S_2\le S_1$, then by Lemma \ref{S-projection}, we have
        \begin{align*}
        \psi(x)=&c\int_{\mathbf{R}^n}\bigg[\frac{1}{|x-y|^{n-2m}}-\frac{1}{(1+|x|)^{n-2m}}-\frac{C_{n-2m}^{1}}{(1+|x|)^{n-2m+1}}-\frac{\big(2C_{\frac{n-1}{2}-m}^1+1\big)x\cdot y}{(1+|x|)^{n-2m+2}}\bigg]v(y)\phi(y)dy\\
              :=&c\int_{\mathbf{R}^n}K_3(x,y)v(y)\phi(y)dy.
        \end{align*}
		We divide the kernel $K_3(x,y)$ into three parts as follows:
        \begin{align*}
        K_3(x,y)=& \frac{\sum_{l=0}^{n-2m-2}|x|^l}{(1+|x|)^{n-2m}|x-y|^{n-2m}}
				+\Bigg(\frac{C_{n-2m}^1|x|^{n-2m-1}}{(1+|x|)^{n-2m}|x-y|^{n-2m}}-\frac{C_{n-2m}^1}{(1+|x|)^{n-2m+1}}\Bigg)\\
				&\ \ +\Bigg(\frac{|x|^{n-2m}-|x-y|^{n-2m}}{(1+|x|)^{n-2m}|x-y|^{n-2m}}-\frac{\big(2C_{\frac{n-1}{2}-m}^1+1\big)x\cdot y}{(1+|x|)^{n-2m+2}}\Bigg)\\
               :=& K_3^1(x,y)+K_3^2(x,y)+K_3^3(x,y).
        \end{align*}
		For the first part $K_3^1(x,y)$, we have
		\begin{equation}\label{eq-subspace3-1}
        \Big|K_3^1(x,y)\Big|\lesssim\frac{1}{(1+|x|)^2|x-y|^{n-2m}}.
        \end{equation}
		For the second part $K_3^2(x,y)$, using \eqref{eq-subspace2-2}, we have
		\begin{equation}\label{eq-subspace3-2}
        \begin{split}
		\Big|K_3^2(x,y)\Big|\lesssim~ & \frac{|x|^{n-2m-1}}{(1+|x|)^{n-2m+1}|x-y|^{n-2m}}+\frac{1}{(1+|x|)}\Bigg|
        \frac{|x|^{n-2m}-|x-y|^{n-2m}}{(1+|x|)^{n-2m+1}|x-y|^{n-2m}}\Bigg|\\
        \lesssim~& \frac{1+|y|}{(1+|x|)^2|x-y|^{n-2m}}+\frac{(1+|y|)^{n-2m-1}}{(1+|x|)^{3}|x-y|^{n-2m-1}}.
        \end{split}
		\end{equation}
		Then for the third part $K_3^3(x,y)$, we have
		\begin{align*}
		K_3^3(x,y)= & \Bigg(\frac{|x|^{n-2m-1}\big(|x|-|x-y|\big)}{(1+|x|)^{n-2m}|x-y|^{n-2m}} -\frac{x\cdot y}{(1+|x|)^{n-2m+2}}\Bigg)\\
                    &\ \ +\Bigg(\frac{|x|^{n-2m-1}-|x-y|^{n-2m-1}}{(1+|x|)^{n-2m}|x-y|^{n-2m-1}}-\frac{2C_{\frac{n-1}{2}-m}^1x\cdot y}{(1+|x|)^{n-2m+2}}\Bigg).\\
                 := & K_3^{3,1}(x,y)+K_3^{3,2}(x,y).
		\end{align*}
		Furthermore, for $K_3^{3,1}(x,y)$ we have
		\begin{equation}\label{eq-subspace3-3-1}
		\begin{split}
		\Big|K_3^{3,1}(x,y)\Big|=~ & \Bigg|\frac{|x|^{n-2m-1}\big(|x|^2-|x-y|^2\big)}{(1+|x|)^{n-2m+2}|x-y|^{n-2m}(|x-y|+|x|)} -\frac{x\cdot y}{(1+|x|)^{n-2m+2}}\Bigg|\\
		\lesssim~ & \Bigg|\frac{2x\cdot y|x|^{n-2m-1}}{(1+|x|)^{n-2m}|x-y|^{n-2m}(|x-y|+|x|)} -\frac{x\cdot y}{(1+|x|)^{n-2m+2}}\Bigg|\\
        &~+\frac{|x|^{n-2m-1}|y|^2}{(1+|x|)^{n-2m}|x-y|^{n-2m}(|x-y|+|x|)}\\
		\lesssim~& |x\cdot y|\Bigg|\frac{2|x|^{n-2m-1}(1+|x|)^2-|x-y|^{n-2m}(|x-y|+|x|)}{(1+|x|)^{n-2m+2}|x-y|^{n-2m}(|x-y|+|x|)}\Bigg|+ \frac{|y|^2}{(1+|x|)|x-y|^{n-2m+1}}\\
        \lesssim~ &|x\cdot y|\Bigg|\frac{|x|^{n-2m+1}-|x-y|^{n-2m+1}}{(1+|x|)^{n-2m+2}|x-y|^{n-2m}(|x-y|+|x|)}\Bigg|+ \frac{1+|y|}{(1+|x|)^2|x-y|^{n-2m+1}}\\
        &+|x\cdot y|\Bigg|\frac{|x|(|x|^{n-2m}-|x-y|^{n-2m})}{(1+|x|)^{n-2m+2}|x-y|^{n-2m}(|x-y|+|x|)}\Bigg|+\frac{(1+|y|)^2}{(1+|x|)|x-y|^{n-2m+1}}\\
        \lesssim~&\frac{(1+|y|)^{n-2m+2}}{(1+|x|)^{2}|x-y|^{n-2m+1}}+\frac{(1+|y|)^{n-2m}}{(1+|x|)^2|x-y|^{n-2m}}.
		\end{split}
		\end{equation}
       And for $K_3^{3,2}(x,y),$ we obtain
       \begin{equation}\label{eq-subspace3-3-2}
       \begin{split}
       \Big|K_3^{3,2}(x,y)\Big|=~ & \Bigg|\frac{|x|^{n-2m-1}-(|x|^2-2x\cdot y+|y|^2)^{\frac{n-1}{2}-m}}{(1+|x|)^{n-2m}|x-y|^{n-2m-1}}-\frac{2C_{\frac{n-1}{2}-m}^1x\cdot y}{(1+|x|)^{n-2m+2}}\Bigg|\\
       \lesssim~&\Bigg|\frac{2C_{\frac{n-1}{2}-m}^1|x|^{n-2m-3}x\cdot y}{(1+|x|)^{n-2m}|x-y|^{n-2m-1}}-\frac{2C_{\frac{n-1}{2}-m}^1x\cdot y}{(1+|x|)^{n-2m+2}}\Bigg|+\frac{C_{\frac{n-1}{2}-m}^1|x|^{n-2m-3}|y|^2}{(1+|x|)^{n-2m}|x-y|^{n-2m-1}}\\
       &~+\Bigg|\sum_{l=2}^{\frac{n-1}{2}-m}\frac{C_{\frac{n-1}{2}-m}^l|x|^{n-2m-1-2l}(-2x\cdot y+|y|^2)^l}{(1+|x|)^{n-2m}|x-y|^{n-2m-1}}\Bigg|\\
       \lesssim~&\frac{(1+|y|)^{n-2m}}{(1+|x|)^{3}|x-y|^{n-2m-1}}.
       \end{split}
       \end{equation}
       Then by \eqref{eq-subspace3-3-1} and \eqref{eq-subspace3-3-2}, we have
		\begin{equation}\label{eq-subspace3-3}
        \begin{split}
		\Big|K_3^3(x,y)\Big|
        \lesssim& \frac{(1+|y|)^{n-2m+2}}{(1+|x|)^{2}|x-y|^{n-2m+1}}+\frac{(1+|y|)^{n-2m}}{(1+|x|)^2|x-y|^{n-2m}}
         +\frac{(1+|y|)^{n-2m}}{(1+|x|)^{3}|x-y|^{n-2m-1}}.
        \end{split}
		\end{equation}
        According to \eqref{eq-subspace3-1}, \eqref{eq-subspace3-2} and \eqref{eq-subspace3-3}. We obtain that
        \begin{equation}\label{eq-subspace3}
        \begin{split}
        \Big|K_3(x,y)\Big|\lesssim& \frac{(1+|y|)^{n-2m+2}}{(1+|x|)^{2}|x-y|^{n-2m+1}}+\frac{(1+|y|)^{n-2m}}{(1+|x|)^2|x-y|^{n-2m}} +\frac{(1+|y|)^{n-2m}}{(1+|x|)^{3}|x-y|^{n-2m-1}}.
        \end{split}
        \end{equation}
		Thus Lemma \ref{Riesz-potential-boundedness} shows that $\psi\in W_{2m-\frac{n}{2}-2}(\mathbf{R}^n)$ as desired.

       On the other hand, if $\phi=Uv\psi$ with $\psi(x)\in W_{2m-\frac{n}{2}-2}(\mathbf{R}^n)$, since $$W_{2m-\frac{n}{2}-2}(\mathbf{R}^n)\subset W_{2m-\frac{n}{2}-1}(\mathbf{R}^n),$$
       thus $\phi\in S_2L^2(\mathbf{R}^n),$ using \eqref{eq-orthogonal1}, we have
		\begin{equation*}
		\begin{split}
        \psi(x)=&c\int_{\mathbf{R}^n}K_3(x,y)v(y)\phi(y)dy
        +c\int_{\mathbf{R}^n}\frac{\big(2C_{\frac{n-1}{2}-m}^1+1\big)x\cdot y}{(1+|x|)^{n-2m+2}}v(y)\phi(y)dy\\
        \end{split}
		\end{equation*}
		The first term in the right hand and $\psi(x)$ are in $W_{2m-\frac{n}{2}-2}(\mathbf{R}^n)$, thus we must have that
		\[\frac{1}{(1+|x|)^{n-2m+2}}\int_{\mathbf{R}^n}x\cdot yv(y)\phi(y)dy=\sum_{j=1}^n\frac{x_j}{(1+|x|)^{n-2m+2}}\int_{\mathbf{R}^n}y_jv(y)\phi(y)dy\in W_{2m-\frac{n}{2}-2}(\mathbf{R}^n),\]
		this necessitates that
		\begin{equation}\label{eq-orthogonal2}
			\int_{\mathbf{R}^n}y_jv(y)\phi(y)dy=0,\ \ \ \forall 1\le j\le n.
		\end{equation}
		Hence by Lemma \ref{S-projection}, we have $\phi\in S_3L^2(\mathbf{R}^n)$ as desired.

        (iv) For $j=4$, assume first that $\phi\in S_4L^2(\mathbf{R}^n)\setminus\{0\}$. Since $S_4\le S_3\le S_2\le S_1$, then by Lemma \ref{S-projection}, we have
        \begin{align*}
        \psi(x)=
        &c\int_{\mathbf{R}^n}\Bigg[~\frac{1}{|x-y|^{n-2m}}-\frac{1}{(1+|x|)^{n-2m}}-\frac{C_{n-2m}^1}{(1+|x|)^{n-2m+1}}-\frac{C_{n-2m}^1+C_{n-2m}^2}{(1+|x|)^{n-2m+2}}\\
        &~~~\ \ \ \ \ \ \  -\frac{\big(2C_{\frac{n-1}{2}-m}^1+1\big)(2x\cdot y-|y|^2)}{2(1+|x|)^{n-2m+2}}-\frac{\big(C_{n-2m}^1+2\big)(2C_{\frac{n-1}{2}-m}^1+1\big)x\cdot y}{(1+|x|)^{n-2m+3}}\\
        &~~~\ \ \ \ \ \ \ -\frac{\Big(4\big(C_{\frac{n-1}{2}-m}^1\big)^2+C_{\frac{n+1}{2}-m}^1+C_{\frac{n-1}{2}-m}^1+1-4C_{\frac{n-1}{2}-m}^2\Big)(x\cdot y)^2}{(1+|x|)^{n-2m+4}}\Bigg]v(y)\phi(y)dy\\
        :=&~c\int_{\mathbf{R}^n} K_4(x,y)v(y)\phi(y)dy.
        \end{align*}
        Similar to the proof of $j=3,$ we divide the kernel $K_4(x,y)$ into four parts as follows:
        \begin{equation*}
			\begin{split}
			&~~ K_4(x,y)\\
           =&\frac{\sum_{l=0}^{n-2m-3}|x|^l}{(1+|x|)^{n-2m}|x-y|^{n-2m}}
				+\Bigg(\frac{C_{n-2m}^2|x|^{n-2m-2}}{(1+|x|)^{n-2m}|x-y|^{n-2m}}-\frac{C_{n-2m}^2}{(1+|x|)^{n-2m+2}}\Bigg)\\
				& +\Bigg(\frac{C_{n-2m}^1|x|^{n-2m-1}}{(1+|x|)^{n-2m}|x-y|^{n-2m}}-\frac{C_{n-2m}^1}{(1+|x|)^{n-2m+1}}-\frac{C_{n-2m}^1}{(1+|x|)^{n-2m+2}}
                     -\frac{C_{n-2m}^1\big(2C_{\frac{n-1}{2}-m}^1+1\big)x\cdot y}{(1+|x|)^{n-2m+3}}\Bigg)\\
                & +\Bigg(\frac{|x|^{n-2m}-|x-y|^{n-2m}}{(1+|x|)^{n-2m}|x-y|^{n-2m}}-\frac{\big(2C_{\frac{n-1}{2}-m}^1+1\big)(2x\cdot y-|y|^2)}{2(1+|x|)^{n-2m+2}}-\frac{\big(4C_{\frac{n-1}{2}-m}^1+2)x\cdot y}{(1+|x|)^{n-2m+3}}\\
                &\ \ \ \ \ \ \ -\frac{\Big(4\big(C_{\frac{n-1}{2}-m}^1\big)^2+C_{\frac{n+1}{2}-m}^1+C_{\frac{n-1}{2}-m}^1+1-4C_{\frac{n-1}{2}-m}^2\Big)(x\cdot y)^2}{(1+|x|)^{n-2m+4}}\Bigg)\\
				:=& K_4^1(x,y)+K_4^2(x,y)+K_4^3(x,y)+K_4^4(x,y).
			\end{split}
		\end{equation*}
        For the first part $K_4^1(x,y)$,~we have
        \begin{equation}\label{eq-subspace4-1}
        \Big|K_4^1(x,y)\Big|\lesssim\frac{1}{(1+|x|)^3|x-y|^{n-2m}}.
        \end{equation}
        For the second part $K_4^2(x,y)$,~using \eqref{eq-subspace2-2},~we have
        \begin{equation}\label{eq-subspace4-2}
        \begin{split}
        K_4^2(x,y)\lesssim~ &\frac{1}{(1+|x|)^2}\Bigg|\frac{|x|^{n-2m-2}+2|x|^{n-2m-1}+|x|^{n-2m}-|x-y|^{n-2m}}{(1+|x|)^{n-2m}|x-y|^{n-2m}}\Bigg|\\
        \lesssim~&  \frac{1}{(1+|x|)^3|x-y|^{n-2m}}+\frac{1}{(1+|x|)^2}\Bigg|\frac{|x|^{n-2m}-|x-y|^{n-2m}}{(1+|x|)^{n-2m}|x-y|^{n-2m}}\Bigg|\\
        \lesssim~&  \frac{1+|y|}{(1+|x|)^3|x-y|^{n-2m}}+\frac{(1+|y|)^{n-2m-1}}{(1+|x|)^4|x-y|^{n-2m-1}}.
        \end{split}
        \end{equation}
       For the third part $K_4^3(x,y)$,~using \eqref{eq-subspace2-2} and \eqref{eq-subspace3}, we have
        \begin{equation}\label{eq-subspace4-3}
        \begin{split}
        \Big|K_4^3(x,y)\Big|\lesssim &\frac{1}{(1+|x|)}\Big|K_3(x,y)\Big|+\frac{1}{(1+|x|)^2}\Big|K_2^2(x,y)\Big|+\frac{|x|^{n-2m-1}}{(1+|x|)^{n-2m+2}|x-y|^{n-2m}}\\
        \lesssim~& \frac{(1+|y|)^{n-2m+2}}{(1+|x|)^{2}|x-y|^{n-2m+1}}+\frac{(1+|y|)^{n-2m}}{(1+|x|)^2|x-y|^{n-2m}} +\frac{(1+|y|)^{n-2m}}{(1+|x|)^{3}|x-y|^{n-2m-1}}.
        \end{split}
        \end{equation}
        Then for the fourth part $K_4^4(x,y)$, we have
        \begin{align*}
        &K_4^4(x,y)\\
        =~& \Bigg(\frac{|x|^{n-2m-1}(|x|-|x-y|)}{(1+|x|)^{n-2m}|x-y|^{n-2m}}-\frac{2x\cdot y-|y|^2}{2(1+|x|)^{n-2m+2}}-\frac{2x\cdot y}{(1+|x|)^{n-2m+3}}-\frac{C_{\frac{n+1}{2}-m}^1(x\cdot y)^2}{(1+|x|)^{n-2m+4}}\Bigg)\\
        &+\Bigg(\frac{|x|^{n-2m-1}-|x-y|^{n-2m-1}}{(1+|x|)^{n-2m}|x-y|^{n-2m-1}}-\frac{C_{\frac{n-1}{2}-m}^1(2x\cdot y-|y|^2)}{(1+|x|)^{n-2m+2}}-\frac{4C_{\frac{n-1}{2}-m}^1x\cdot y}{(1+|x|)^{n-2m+3}}\\
        &\ \ \ \ \ \ -\frac{4\Big(\big(C_{\frac{n-1}{2}-m}^1\big)^2-C_{\frac{n-1}{2}-m}^2\Big)(x\cdot y)^2}{(1+|x|)^{n-2m+4}|x-y|^{n-2m-1}}\Bigg)\\
        :=~& K_4^{4,1}(x,y)+K_4^{4,2}(x,y).
        \end{align*}
        It is obvious that we can rewrite $K_4^{4,1}(x,y)$ as follows:
        \begin{align*}
        K_4^{4,1}(x,y)
        =&\frac{|x|^{n-2m-1}(|x|^2-|x-y|^2)}{(1+|x|)^{n-2m}|x-y|^{n-2m}(|x|+|x-y|)}-\frac{2x\cdot y-|y|^2}{2(1+|x|)^{n-2m+2}}\\
        &\ -\frac{2x\cdot y}{(1+|x|)^{n-2m+3}}-\frac{C_{\frac{n+1}{2}-m}^1(x\cdot y)^2}{(1+|x|)^{n-2m+4}}\\
        =&\Bigg(\frac{x\cdot y(|x|^{n-2m+1}-|x-y|^{n-2m+1})}{(1+|x|)^{n-2m+2}|x-y|^{n-2m}(|x|+|x-y|)}-\frac{C_{\frac{n+1}{2}-m}^1(x\cdot y)^2}{(1+|x|)^{n-2m+4}}\Bigg)\\
        &+\Bigg(\frac{x\cdot y|x|^{n-2m}(|x|-|x-y|)}{(1+|x|)^{n-2m+2}|x-y|^{n-2m}(|x|+|x-y|)}-\frac{(x\cdot y)^2}{(1+|x|)^{n-2m+4}}\Bigg)\\
        &+\Bigg(\frac{x\cdot y|x|(|x|^{n-2m-1}-|x-y|^{n-2m-1})}{(1+|x|)^{n-2m+2}|x-y|^{n-2m-1}(|x|+|x-y|)}-\frac{C_{\frac{n-1}{2}-m}(x\cdot y)^2}{(1+|x|)^{n-2m+4}}\Bigg)\\
        &+\Bigg(\frac{4x\cdot y|x|^{n-2m}}{(1+|x|)^{n-2m+2}|x-y|^{n-2m}(|x|+|x-y|)}-\frac{2x\cdot y}{(1+|x|)^{n-2m+3}}\Bigg)\\
        &+\Bigg(-\frac{|y|^2|x|^{n-2m-1}}{(1+|x|)^{n-2m}|x-y|^{n-2m}(|x|+|x-y|)}+\frac{|y|^2}{2(1+|x|)^{n-2m+2}}\Bigg).
        \end{align*}
        thus for $K_4^{4,1}(x,y),$ we have
        \begin{equation}\label{eq-subspace4-4-1}
        \begin{split}
        \Big|K_4^{4,1}(x,y)\Big|\lesssim&\frac{(1+|y|)^{n-2m+4}}{(1+|x|)^{3}|x-y|^{n-2m+2}}+\frac{(1+|y|)^{n-2m+3}}{(1+|x|)^{3}|x-y|^{n-2m+1}}+
        \frac{(1+|y|)^{n-2m+2}}{(1+|x|)^{3+2}|x-y|^{n-2m}}.
        \end{split}
        \end{equation}
        Furthermore, we have
        \begin{align*}
        K_4^{4,2}(x,y)=&\frac{-\sum_{l=3}^{\frac{n-1}{2}-m}C_{\frac{n-1}{2}-m}^l|x|^{n-2m-1-2l}(-2x\cdot y+|y|^2)^l}{(1+|x|)^{n-2m}|x-y|^{n-2m-1}}\\
        &+\Bigg(\frac{-C_{\frac{n-1}{2}-m}^2|x|^{n-2m-5}(2x\cdot y-|y|^2)^2}{(1+|x|)^{n-2m}|x-y|^{n-2m-1}}+\frac{4C_{\frac{n-1}{2}-m}^2(x\cdot y)^2}{(1+|x|)^{n-2m+4}}\Bigg)\\
        &+\Bigg(\frac{C_{\frac{n-1}{2}-m}^1|x|^{n-2m-3}(2x\cdot y-|y|^2)}{(1+|x|)^{n-2m}|x-y|^{n-2m-1}}-\frac{C_{\frac{n-1}{2}-m}^1(2x\cdot y-|y|^2)}{(1+|x|)^{n-2m+2}}-\frac{4C_{\frac{n-1}{2}-m}^1x\cdot y}{(1+|x|)^{n-2m+3}}\\
        &\ \ \ \ \ -\frac{4\Big(C_{\frac{n-1}{2}-m}^1\Big)^2(x\cdot y)^2}{(1+|x|)^{n-2m+4}|x-y|^{n-2m-1}}\Bigg).
        \end{align*}
        then we obtain that
        \begin{equation}\label{eq-subspace4-4-2}
        \begin{split}
        \Big|K_4^{4,2}(x,y)\Big|\lesssim&\frac{(1+|y|)^{n-2m+1}}{(1+|x|)^{4}|x-y|^{n-2m-1}}.
        \end{split}
        \end{equation}
        By \eqref{eq-subspace4-4-1} and \eqref{eq-subspace4-4-2}, we have
        \begin{equation}\label{eq-subspace4-4}
        \begin{split}
        \Big|K_4^4(x,y)\Big|\lesssim& \frac{(1+|y|)^{n-2m+4}}{(1+|x|)^{3}|x-y|^{n-2m+2}}+\frac{(1+|y|)^{n-2m+3}}{(1+|x|)^{3}|x-y|^{n-2m+1}}\\
        &\ +\frac{(1+|y|)^{n-2m+2}}{(1+|x|)^{3+2}|x-y|^{n-2m}}+\frac{(1+|y|)^{n-2m+1}}{(1+|x|)^{4}|x-y|^{n-2m-1}}.
        \end{split}
        \end{equation}\label{eq-subspace}
        Hence using \eqref{eq-subspace4-1}, \eqref{eq-subspace4-2}, \eqref{eq-subspace4-3} and \eqref{eq-subspace4-4}, we obtain that
        \begin{equation}\label{eq-subspace4}
        \begin{split}
        \Big|K_4(x,y)\Big|\lesssim& \frac{(1+|y|)^{n-2m+4}}{(1+|x|)^{3}|x-y|^{n-2m+2}}+\frac{(1+|y|)^{n-2m+3}}{(1+|x|)^{3}|x-y|^{n-2m+1}}\\
        &\ +\frac{(1+|y|)^{n-2m+2}}{(1+|x|)^{3+2}|x-y|^{n-2m}}+\frac{(1+|y|)^{n-2m+1}}{(1+|x|)^{4}|x-y|^{n-2m-1}}.
        \end{split}
        \end{equation}
        Thus Lemma \ref{Riesz-potential-boundedness} shows that $\psi\in W_{2m-\frac{n}{2}-3}(\mathbf{R}^n)$ as desired.

        On the other hand, if $\phi=Uv\psi$ with $\psi(x)\in W_{2m-\frac{n}{2}-3}(\mathbf{R}^n)$, since
        \[W_{2m-\frac{n}{2}-3}(\mathbf{R}^n)\subset W_{2m-\frac{n}{2}-2}(\mathbf{R}^n)\]
       which implies that $\phi\in S_3L^2(\mathbf{R}^n),$ using \eqref{eq-orthogonal1} and \eqref{eq-orthogonal2}, we have
		\begin{align*}
        \psi(x)=
        &c\int_{\mathbf{R}^n}K_4(x,y)v(y)\phi(y)dy-c\int_{\mathbf{R}^n}\frac{\big(2C_{\frac{n-1}{2}-m}^1+1\big)|y|^2}{2(1+|x|)^{n-2m+2}}v(y)\phi(y)dy\\
        &+c\int_{\mathbf{R}^n} \frac{\Big(4\big(C_{\frac{n-1}{2}-m}^1\big)^2+C_{\frac{n+1}{2}-m}^1+C_{\frac{n-1}{2}-m}^1+1-4C_{\frac{n-1}{2}-m}^2\Big)(x\cdot y)^2}{(1+|x|)^{n-2m+4}}v(y)\phi(y)dy.
        \end{align*}
		The first term in the right hand and $\psi(x)$ are in $W_{2m-\frac{n}{2}-3}(\mathbf{R}^n)$, thus we need the second and third terms are also in $W_{2m-\frac{n}{2}-3}(\mathbf{R}^n),$ similar to the case for $j=3,$ this necessitates that
		\begin{equation}\label{eq-orthogonal3}
			\int_{\mathbf{R}^n}y_iy_jv(y)\phi(y)dy=0,\ \ \ \forall 1\le i, j\le n.
		\end{equation}
		Hence by Lemma \ref{S-projection}, we obtain $\phi\in S_4L^2(\mathbf{R}^n)$ as desired.

		(v) For $5\le j\le k+1$ , assume first $\phi(x)\in S_{j}L^2(\mathbf{R}^n)\setminus\{0\}$, using Lemma \ref{S-projection} and $$S_{j}\leq S_{j-1}\leq\cdots\leq S_{1},$$ by an induction process, we have
		\begin{equation*}
		\begin{split}
		\psi(x)=&c\int_{\mathbf{R}^n}\bigg[\frac{1}{|x-y|^{n-2m}}-\frac{1}{(1+|x|)^{n-2m}}
		-\sum_{l=1}^{2(j-2)}\sum_{j_1+2j_2\leq j-2}\frac{C_{1}(l)+C_{2}(l)(x\cdot y)^{j_1}|y|^{2j_2}}{(1+|x|)^{n-2m+l}}\bigg]v(y)\phi(y)dy\\
        :=&c\int_{\mathbf{R}^n}K_j(x,y)v(y)\phi(y)dy.
		\end{split}
		\end{equation*}
		where the constants $C_{1}(l), C_{2}(l)$ which only depend on $l,j_1,j_2$ can be choosed case by case. According to the proof of the cases with $j=2,3,4,$ by an induction process, we can divided the kernel $K_j(x,y)$ into $j$ parts $K_j^k(x,y)$ with $1\le l \le j $, and we can prove that for each $1\le l\le j,$
       \begin{equation}\label{eq-subspace-j-l}
       \Big|K_j^l(x,y)\Big|\lesssim \frac{(1+|y|)^{a_l}}{(1+|x|)^{b_l}|x-y|^{c_l}}
       \end{equation}
        with all $a_l,b_l,c_l\in\mathbf{N}$ and $b_l+c_l\ge n-2m+j-1.$
        Then we can obtain that
        \begin{equation}\label{eq-subspace-j}
        \Big|K_j(x,y)\Big|\lesssim \sum_{l=1}^j\Big|K_j^l(x,y)\Big|\lesssim \frac{(1+|y|)^{\tilde{a}_j}}{(1+|x|)^{\tilde{b}_j}|x-y|^{\tilde{c}_j}}
        \end{equation}
        with all $\tilde{a}_j,\tilde{b}_j,\tilde{c}_j\in\mathbf{N}$ and $\tilde{b}_j+\tilde{c}_j\ge n-2m+j-1.$

        Thus Lemma \ref{Riesz-potential-boundedness} shows that $\psi\in W_{2m-\frac{n}{2}-(j-1)}(\mathbf{R}^n)$ with $5\le j\le k+1$ as desired. In particular, if $j=k+1$, since $n=4m+1-2k,$ and by our definition of $W_{\sigma}(\mathbf{R}^n),$ we have $\psi\in W_{2m-\frac{n}{2}-k}(\mathbf{R}^n)\subset L^2(\mathbf{R}^n).$

        On the other hand, for $5\le j\le k+1,$ if $\phi(x)=Uv(x)\psi(x)$ with $\psi(x)\in W_{2m-\frac{n}{2}-(j-1)}(\mathbf{R}^n),$ similar to the process for $j=3,$ and by Lemma \ref{S-projection}, we can obtain that $\phi(x)\in S_jL^2(\mathbf{R}^n).$
	\end{proof}

	\begin{proof}[\bf Proof of Proposition \ref{classification-2m-even} ($2m< n \le 4m$ and even)]
    Since the proof of Proposition \ref{classification-2m-even} is completely similar to the Proposition \ref{classification-2m-odd}, so we omit it.	
	\end{proof}
	
{\bf Acknowledgements:} The authors would like to express their sincere gratitude to the reviewing referee for his/her many constructive comments which helped us greatly improve the previous version. H. L. Feng is supported by the China  Postdoctoral Science Fundation, Grant No.2019M653135. A. Soffer is partially supported by NSFC grant No.11671163 and NSF grant DMS-1600749.  X. H. Yao  is partially supported by  NSFC (No.11771165) and the program for Changjiang Scholars and Innovative Research Team in University (IRT13066). 	Part of this work was done while the second author was a visiting professor at Central China Normal University (CCNU). The authors wolud like to thank Dr. S. L. Huang for interesting discussions, who also obtained a similar result about the absence of positive eigenvalue.



\begin{thebibliography}{FHHOHO82}

\bibitem[ABG96]{ABG}
Werner~O. Amrein, Anne Boutet~de Monvel, and Vladimir Georgescu,
  \emph{{$C_0$}-groups, commutator methods and spectral theory of {$N$}-body
  {H}amiltonians}, Modern Birkh\"{a}user Classics, Birkh\"{a}user/Springer,
  Basel, 1996, [2013] reprint of the 1996 edition. \MR{3136195}

\bibitem[Agm70]{Agmon1}
Shmuel Agmon, \emph{Lower bounds for solutions of {S}chr\"{o}dinger equations},
  J. Analyse Math. \textbf{23} (1970), 1--25. \MR{0276624}

\bibitem[Agm75]{Agmon}
\bysame, \emph{Spectral properties of {S}chr\"{o}dinger operators and
  scattering theory}, Ann. Scuola Norm. Sup. Pisa Cl. Sci. (4) \textbf{2}
  (1975), no.~2, 151--218. \MR{0397194}

\bibitem[AS61]{Bessel-potential}
N.~Aronszajn and K.~T. Smith, \emph{Theory of {B}essel potentials. {I}}, Ann.
  Inst. Fourier (Grenoble) \textbf{11} (1961), 385--475. \MR{0143935}


%

\bibitem[BS91]{BS}
M.~Sh. Birman and M.~Z. Solomyak, \emph{Estimates for the number of negative
  eigenvalues of the {S}chr\"{o}dinger operator and its generalizations},
  Estimates and asymptotics for discrete spectra of integral and differential
  equations ({L}eningrad, 1989--90), Adv. Soviet Math., vol.~7, Amer. Math.
  Soc., Providence, RI, 1991, pp.~1--55. \MR{1306507}

\bibitem[Caz03]{Cazenave}
Thierry Cazenave, \emph{Semilinear {S}chr\"{o}dinger equations}, Courant
  Lecture Notes in Mathematics, vol.~10, New York University, Courant Institute
  of Mathematical Sciences, New York; American Mathematical Society,
  Providence, RI, 2003. \MR{2002047}

\bibitem[CS01]{CoSo}
O.~Costin and A.~Soffer, \emph{Resonance theory for {S}chr\"{o}dinger
  operators}, Comm. Math. Phys. \textbf{224} (2001), no.~1, 133--152, Dedicated
  to Joel L. Lebowitz. \MR{1868995}

\bibitem[D'A15]{DAncona}
Piero D'Ancona, \emph{Kato smoothing and {S}trichartz estimates for wave
  equations with magnetic potentials}, Comm. Math. Phys. \textbf{335} (2015),
  no.~1, 1--16. \MR{3314497}

\bibitem[Dav97]{Davies}
E.~B. Davies, \emph{Limits on {$L^p$} regularity of self-adjoint elliptic
  operators}, J. Differential Equations \textbf{135} (1997), no.~1, 83--102.
  \MR{1434916}

\bibitem[DDY14]{DDY}
Qingquan Deng, Yong Ding, and Xiaohua Yao, \emph{Gaussian bounds for
  higher-order elliptic differential operators with {K}ato type potentials}, J.
  Funct. Anal. \textbf{266} (2014), no.~8, 5377--5397. \MR{3177340}

\bibitem[DF08]{DAnFane}
Piero D'Ancona and Luca Fanelli, \emph{Strichartz and smoothing estimates of
  dispersive equations with magnetic potentials}, Comm. Partial Differential
  Equations \textbf{33} (2008), no.~4-6, 1082--1112. \MR{2424390}

\bibitem[DH98a]{Davies-Hinz-MathZ}
E.~B. Davies and A.~M. Hinz, \emph{Explicit constants for {R}ellich
  inequalities in {$L_p(\Omega)$}}, Math. Z. \textbf{227} (1998), no.~3,
  511--523. \MR{1612685}

\bibitem[DH98b]{DaHi}
\bysame, \emph{Kato class potentials for higher order elliptic operators}, J.
  London Math. Soc. (2) \textbf{58} (1998), no.~3, 669--678. \MR{1678156}

%


%
\bibitem[EGT19]{Erdogan-Green-Toprak}
M.~Burak Erdo\u{g}an, William~R. Green, and Ebru Toprak, \emph{On the fourth
  order schr\"odinger equation in three dimensions: dispersive estimates and
  zero energy resonance}, https://arxiv.org/abs/1905.02890 (2019).

\bibitem[ES04]{ES1}
M.~Burak Erdo\u{g}an and Wilhelm Schlag, \emph{Dispersive estimates for
  {S}chr\"{o}dinger operators in the presence of a resonance and/or an
  eigenvalue at zero energy in dimension three. {I}}, Dyn. Partial Differ. Equ.
  \textbf{1} (2004), no.~4, 359--379. \MR{2127577}

\bibitem[ES06]{ES2}
\bysame, \emph{Dispersive estimates for {S}chr\"{o}dinger operators in the
  presence of a resonance and/or an eigenvalue at zero energy in dimension
  three. {II}}, J. Anal. Math. \textbf{99} (2006), 199--248. \MR{2279551}

\bibitem[FHHH82]{FHHH}
Richard Froese, Ira Herbst, Maria Hoffmann-Ostenhof, and Thomas
  Hoffmann-Ostenhof, \emph{On the absence of positive eigenvalues for one-body
  {S}chr\"{o}dinger operators}, J. Analyse Math. \textbf{41} (1982), 272--284.
  \MR{687957}

\bibitem[FSY18]{FSY}
Hongliang Feng, Avy Soffer, and Xiaohua Yao, \emph{Decay estimates and
  {S}trichartz estimates of fourth-order {S}chr\"{o}dinger operator}, J. Funct.
  Anal. \textbf{274} (2018), no.~2, 605--658. \MR{3724151}

\bibitem[FWY18]{FWY}
Hongliang Feng, Zhao Wu, and Xiaohua Yao, \emph{Time asymptotic expansions of
  solution for fourth-order schr\"{o}dinger equation with zero resonance or
  eigenvalue}, https://arxiv.org/abs/1812.00223 (2018).

\bibitem[GG15]{Goldberg-Green-1}
Michael Goldberg and William~R. Green, \emph{Dispersive estimates for higher
  dimensional {S}chr\"{o}dinger operators with threshold eigenvalues {I}: {T}he
  odd dimensional case}, J. Funct. Anal. \textbf{269} (2015), no.~3, 633--682.
  \MR{3350725}

\bibitem[GG17]{Goldberg-Green-2}
\bysame, \emph{Dispersive estimates for higher dimensional {S}chr\"{o}dinger
  operators with threshold eigenvalues {II}. {T}he even dimensional case}, J.
  Spectr. Theory \textbf{7} (2017), no.~1, 33--86. \MR{3629407}

\bibitem[GLS16]{GLS}
Vladimir Georgescu, Manuel Larenas, and Avy Soffer, \emph{Abstract theory of
  pointwise decay with applications to wave and {S}chr\"{o}dinger equations},
  Ann. Henri Poincar\'{e} \textbf{17} (2016), no.~8, 2075--2101. \MR{3522025}

\bibitem[GS04]{Goldberg-Schlag}
M.~Goldberg and W.~Schlag, \emph{Dispersive estimates for {S}chr\"{o}dinger
  operators in dimensions one and three}, Comm. Math. Phys. \textbf{251}
  (2004), no.~1, 157--178. \MR{2096737}

\bibitem[GT19]{Green-Toprak}
William~R. Green and Ebru Toprak, \emph{On the fourth order schr\"odinger
  equation in four dimensions: dispersive estimates and zero energy resonance},
  to appear in J. Differential Equations, 62 pp. (2019).

\bibitem[GV06]{GV}
Michael Goldberg and Monica Visan, \emph{A counterexample to dispersive
  estimates for {S}chr\"{o}dinger operators in higher dimensions}, Comm. Math.
  Phys. \textbf{266} (2006), no.~1, 211--238. \MR{2231971}

\bibitem[H\"{o}05]{H2}
Lars H\"{o}rmander, \emph{The analysis of linear partial differential
  operators. {II}}, Classics in Mathematics, Springer-Verlag, Berlin, 2005,
  Differential operators with constant coefficients, Reprint of the 1983
  original. \MR{2108588}

\bibitem[HS15]{Herbst-Skibsted-Adv-2015}
I.~Herbst and E.~Skibsted, \emph{Decay of eigenfunctions of elliptic {PDE}'s,
  {I}}, Adv. Math. \textbf{270} (2015), 138--180. \MR{3286533}

\bibitem[HS17]{Herbst-Skibsted-Adv-2017}
\bysame, \emph{Decay of eigenfunctions of elliptic {PDE}'s, {II}}, Adv. Math.
  \textbf{306} (2017), 177--199. \MR{3581301}

\bibitem[IJ03]{IJ}
A.~D. Ionescu and D.~Jerison, \emph{On the absence of positive eigenvalues of
  {S}chr\"{o}dinger operators with rough potentials}, Geom. Funct. Anal.
  \textbf{13} (2003), no.~5, 1029--1081. \MR{2024415}

\bibitem[Jen80]{J}
Arne Jensen, \emph{Spectral properties of {S}chr\"{o}dinger operators and
  time-decay of the wave functions results in {$L^{2}({\bf R}^{m})$}, {$m\geq
  5$}}, Duke Math. J. \textbf{47} (1980), no.~1, 57--80. \MR{563367}

\bibitem[Jen84]{J1}
\bysame, \emph{Spectral properties of {S}chr\"{o}dinger operators and
  time-decay of the wave functions. {R}esults in {$L^{2}({\bf R}^{4})$}}, J.
  Math. Anal. Appl. \textbf{101} (1984), no.~2, 397--422. \MR{748579}

\bibitem[JK79]{JK}
Arne Jensen and Tosio Kato, \emph{Spectral properties of {S}chr\"{o}dinger
  operators and time-decay of the wave functions}, Duke Math. J. \textbf{46}
  (1979), no.~3, 583--611. \MR{544248}

\bibitem[JN01]{JN}
Arne Jensen and Gheorghe Nenciu, \emph{A unified approach to resolvent
  expansions at thresholds}, Rev. Math. Phys. \textbf{13} (2001), no.~6,
  717--754. \MR{1841744}

\bibitem[JN04]{JN2}
\bysame, \emph{Erratum: ``{A} unified approach to resolvent expansions at
  thresholds'' [{R}ev. {M}ath. {P}hys. {\bf 13} (2001), no. 6, 717--754;
  mr1841744]}, Rev. Math. Phys. \textbf{16} (2004), no.~5, 675--677.
  \MR{2079122}

\bibitem[JSS91]{JSS}
J.-L. Journ\'{e}, A.~Soffer, and C.~D. Sogge, \emph{Decay estimates for
  {S}chr\"{o}dinger operators}, Comm. Pure Appl. Math. \textbf{44} (1991),
  no.~5, 573--604. \MR{1105875}

\bibitem[Kat59]{K}
Tosio Kato, \emph{Growth properties of solutions of the reduced wave equation
  with a variable coefficient}, Comm. Pure Appl. Math. \textbf{12} (1959),
  403--425. \MR{0108633}

\bibitem[KAY12]{KAY}
JinMyong Kim, Anton Arnold, and Xiaohua Yao, \emph{Global estimates of
  fundamental solutions for higher-order {S}chr\"{o}dinger equations}, Monatsh.
  Math. \textbf{168} (2012), no.~2, 253--266. \MR{2984150}

\bibitem[KK12]{KK}
Alexander Komech and Elena Kopylova, \emph{Dispersion decay and scattering
  theory}, John Wiley \& Sons, Inc., Hoboken, NJ, 2012. \MR{3015024}

\bibitem[KT98]{KeelTao}
Markus Keel and Terence Tao, \emph{Endpoint {S}trichartz estimates}, Amer. J.
  Math. \textbf{120} (1998), no.~5, 955--980. \MR{1646048}

\bibitem[KT06]{KoTa}
Herbert Koch and Daniel Tataru, \emph{Carleman estimates and absence of
  embedded eigenvalues}, Comm. Math. Phys. \textbf{267} (2006), no.~2,
  419--449. \MR{2252331}

\bibitem[Kur78]{Kur}
S.~T. Kuroda, \emph{An introduction to scattering theory}, Lecture Notes
  Series, vol.~51, Aarhus Universitet, Matematisk Institut, Aarhus, 1978.
  \MR{528757}

\bibitem[LS15]{LS}
Manuel Larenas and Avy Soffer, \emph{Abstract theory of decay estimates:
  perturbed hamiltonians}, https://arxiv.org/abs/1508.04490 (2015).


\bibitem[Mou81]{Mourre}
E.~Mourre, \emph{Absence of singular continuous spectrum for certain
  selfadjoint operators}, Comm. Math. Phys. \textbf{78} (1980/81), no.~3,
  391--408. \MR{603501}

\bibitem[Mur82]{MM}
Minoru Murata, \emph{Asymptotic expansions in time for solutions of
  {S}chr\"{o}dinger-type equations}, J. Funct. Anal. \textbf{49} (1982), no.~1,
  10--56. \MR{680855}

\bibitem[Mur84]{MM2}
\bysame, \emph{High energy resolvent estimates. {II}. {H}igher order elliptic
  operators}, J. Math. Soc. Japan \textbf{36} (1984), no.~1, 1--10. \MR{723588}

\bibitem[MW11]{MoWe}
Jacob~S. M{\o}ller and Matthias Westrich, \emph{Regularity of eigenstates in
  regular {M}ourre theory}, J. Funct. Anal. \textbf{260} (2011), no.~3,
  852--878. \MR{2737399}

\bibitem[Rau78]{Rauch-CMP-1978}
Jeffrey Rauch, \emph{Local decay of scattering solutions to {S}chr\"{o}dinger's
  equation}, Comm. Math. Phys. \textbf{61} (1978), no.~2, 149--168.
  \MR{0495958}

\bibitem[RS78]{RS2}
Michael Reed and Barry Simon, \emph{Methods of modern mathematical physics.
  {IV}. {A}nalysis of operators}, Academic Press [Harcourt Brace Jovanovich,
  Publishers], New York-London, 1978. \MR{0493421}

\bibitem[RS04]{RodSchl}
Igor Rodnianski and Wilhelm Schlag, \emph{Time decay for solutions of
  {S}chr\"{o}dinger equations with rough and time-dependent potentials},
  Invent. Math. \textbf{155} (2004), no.~3, 451--513. \MR{2038194}

\bibitem[Sch71]{Schechter}
Martin Schechter, \emph{Spectra of partial differential operators},
  North-Holland Publishing Co., Amsterdam-London; American Elsevier Publishing
  Co., Inc., New York, 1971, North-Holland Series in Applied Mathematics and
  Mechanics, Vol. 14. \MR{0447834}

\bibitem[Sch05]{Schlag-CMP}
W.~Schlag, \emph{Dispersive estimates for {S}chr\"{o}dinger operators in
  dimension two}, Comm. Math. Phys. \textbf{257} (2005), no.~1, 87--117.
  \MR{2163570}

\bibitem[Sch07]{Schlag}
\bysame, \emph{Dispersive estimates for {S}chr\"{o}dinger operators: a survey},
  Mathematical aspects of nonlinear dispersive equations, Ann. of Math. Stud.,
  vol. 163, Princeton Univ. Press, Princeton, NJ, 2007, pp.~255--285.
  \MR{2333215}

\bibitem[Sim69]{Simon3}
Barry Simon, \emph{On positive eigenvalues of one-body {S}chr\"{o}dinger
  operators}, Comm. Pure Appl. Math. \textbf{22} (1969), 531--538. \MR{0247300}

\bibitem[Sim15a]{Simon1}
\bysame, \emph{Harmonic analysis}, A Comprehensive Course in Analysis, Part 3,
  American Mathematical Society, Providence, RI, 2015. \MR{3410783}

\bibitem[Sim15b]{Simon2}
\bysame, \emph{Operator theory}, A Comprehensive Course in Analysis, Part 4,
  American Mathematical Society, Providence, RI, 2015. \MR{3364494}

\bibitem[Sim18a]{Simon-Review-1}
\bysame, \emph{Tosio {K}ato's work on non-relativistic quantum mechanics: part
  1}, Bull. Math. Sci. \textbf{8} (2018), no.~1, 121--232. \MR{3775269}

\bibitem[Sim18b]{Simon-Review-2}
\bysame, \emph{Tosio {K}ato's work on non-relativistic quantum mechanics: part
  2}, Bull. Math. Sci. (2018), In Press.

\bibitem[Ste70]{Stein70}
Elias~M. Stein, \emph{Singular integrals and differentiability properties of
  functions}, Princeton Mathematical Series, No. 30, Princeton University
  Press, Princeton, N.J., 1970. \MR{0290095}

\bibitem[Ste93]{Stein}
\bysame, \emph{Harmonic analysis: real-variable methods, orthogonality, and
  oscillatory integrals}, Princeton Mathematical Series, vol.~43, Princeton
  University Press, Princeton, NJ, 1993, With the assistance of Timothy S.
  Murphy, Monographs in Harmonic Analysis, III. \MR{1232192}

\bibitem[SYY18]{SYY}
Adam Sikora, Lixin Yan, and Xiaohua Yao, \emph{Spectral multipliers,
  {B}ochner-{R}iesz means and uniform {S}obolev inequalities for elliptic
  operators}, Int. Math. Res. Not. IMRN (2018), no.~10, 3070--3121.
  \MR{3805196}

\bibitem[Tao06]{TT2}
Terence Tao, \emph{Nonlinear dispersive equations}, CBMS Regional Conference
  Series in Mathematics, vol. 106, Published for the Conference Board of the
  Mathematical Sciences, Washington, DC; by the American Mathematical Society,
  Providence, RI, 2006, Local and global analysis. \MR{2233925}

\bibitem[Wig93]{vonNeumannWigner}
Eugene~Paul Wigner, \emph{The collected works of {E}ugene {P}aul {W}igner.
  {P}art {A}. {T}he scientific papers. {V}ol. {I}}, Springer-Verlag, Berlin,
  1993, With a preface by Jagdish Mehra and Arthur S. Wightman, With a
  biographical sketch by Mehra, and annotation by Brian R. Judd and George W.
  Mackey, Edited by Wightman. \MR{1383096}

\bibitem[Yaj95]{Yajima-JMSJ-95}
Kenji Yajima, \emph{The {$W^{k,p}$}-continuity of wave operators for
  {S}chr\"{o}dinger operators}, J. Math. Soc. Japan \textbf{47} (1995), no.~3,
  551--581. \MR{1331331}


\end{thebibliography}

\end{document}